\documentclass[11pt]{amsart}
\usepackage{amsmath, amsfonts, amsthm, amssymb}
\usepackage[all,cmtip,line]{xy}\usepackage{mathtools}
\usepackage{hyperref} 
\usepackage{array}
\usepackage{enumerate}
\usepackage{srcltx} 

\usepackage{todonotes}
\newtheorem{thm}{Theorem}[section]
\newtheorem{lem}{Lemma}[section]
\newtheorem{prop}[lem]{Proposition}
\newtheorem{cor}[lem]{Corollary}

\newtheorem{defn}[lem]{Definition}
\newtheorem{rem}[lem]{Remark}

\numberwithin{equation}{section}

\newlength{\originalbase}
\title{Poincar\'e type J-equation}

\author{Xiuxiong Chen, Yulun Xu}

\date{\today}

\begin{document}

\begin{abstract}
We introduce a two-parameter continuity path for the J-equation and use it to characterize the solvability of the J-equation for Kähler metrics with Poincaré type singularities along a divisor $D$, allowing simple normal crossings and self-intersections.
On Kähler surfaces, we show that the classical subsolution condition in the smooth setting implies solvability in the Poincaré type setting for any smooth divisor 
$D$. As a consequence, if $X$ contains no curves of negative self-intersections and $K_X[D]$ is ample, then the K-energy is bounded from below on any Poincaré type Kähler class.
In the smooth divisor case, we further analyze the asymptotic behavior of solutions near $D$, and show that existence of a Poincaré type solution implies existence of a solution to the J-equation on $D$.
\end{abstract}

\maketitle

\tableofcontents

\section{Introduction}
\subsection{Background} The central theme in K\"ahler geometry currently is studying Calabi-Donaldson theory on the geometry of extremal K\"ahler metrics.  In 1976, S. T. Yau proved Calabi conjecture \cite{Y} when the first Chern class is either negative or zero (In negative case, T. Aubin has an independent proof in \cite{Au}). In the case of positive first Chern class, there are obstructions to the existence of K\"ahler Einstein (KE) metrics; around 1980s, Yau proposed a conjecture which relates the existence of KE metrics to the stability of underlying tangent bundles. This conjecture was settled in 2012 through a series of work of Chen-Donaldson-Sun \cite{CDS} \cite{CDS2} and \cite{CDS3} and we refer interested readers to this set of papers for further references in the subject of KE metrics.\\

In 1987, S. Bando and T. Mabuchi \cite{BM} proved that, if there is a KE metric in the canonical K\"ahler class, then the K energy functional
has a uniform lower bound. A dacade later (1998), following Donaldson's program in K\"ahler geometry \cite{D2},
Chen \cite{C0} first extended BM's theorem to the more general setting of constant
scalar curvature K\"ahler (cscK) metrics when the first Chern class is negative. It is then fundamentally important to understand when the K energy has a uniform lower bound
in a given K\"ahler class in absence of the existence of cscK metrics. Chen \cite{C2} found a decomposition formula for the K energy functional where he decomposed the K energy
into three terms: the entropy function which always has a lower bound, the $J$ functional and a normalization term. Following Chen's decomposition formula, it is clear that if $J$ functional bounded from below uniformly, so does the K energy functional. In the special case when the first Chern class is negative,  the $J$ functional (sutiably defined) is path convex.
Thus, the existence of a solution to $J$ equation implies that
the K energy has a uniform lower bound, moreover, proper. Decades later, a fundamental theorem  is proved by Chen-Cheng \cite{CC1} \cite{CC2} that, in K\"ahler surface with $C_1 <  0$ and no curves with negative self intersections, then it supports a cscK metrics in every K\"ahler class.\\

In 2011, Auvray initiated the study of extremal K\"ahler metrics with poincar\'e type
singularities and subsequently it has attracted wide attentions (c.f. \cite{A}, \cite{A2}, \cite{A3}, \cite{F2}, \cite{XZ} and \cite{X}).
Inspired by his work, we hope to extend Calabi-Donaldson theory  to the cusp settings, fortunately, there has been a lot in depth discussions on
this "hope" and we refer interested readers to  Székelyhidi \cite{Sz}, Auvray \cite{A3} and xiao \cite{Xi} for more infomation.   As a first step, we introduce the $J$-equation  to this settings
and when we want to attack the existnce problem, we encounters some fundamental new difficulty which is not previously treated in compact case. Following the earlier work of Weinkove-Song \cite{SW}, the subsolution method
has been a key tool in the tradtional method and in the cusp settings,  subsolution (to be as effective as in compact settings) should have certain asymptotic behavior near cusp singularity(Such control seems to be difficult to be preserved under traditional continuity path). This phenomenon is common in complete and noncompact manifolds, for example, in the study of complete Tian-Yau metric \cite{TY1}.

\subsection{ The J-equation}
Let $X$ be a closed K\"aher manifold. Let $\omega_X$ and $\chi$ be two K\"ahler metrics on $X$. Following Chen \cite{C2}, we can define J-equation as:
\begin{equation}\label{e j}
\omega_{\varphi}^n =C_{[\omega],[\chi]} \omega_{\varphi}^{n-1}\wedge \chi,\qquad C_{[\omega],[\chi]}= \frac{\int_{X\setminus D} \omega^n}{\int_{X\setminus D} \omega^{n-1}\wedge \chi}
\end{equation}
Here $\omega_{\varphi}= \omega_X + dd^c \varphi >0$ in the sense of current.  Note that J-equation can be seen as a generalization of the complex Monge-Amp\'ere equation (more generally, complex Hessian equations): 
\begin{equation*}
\omega_{\varphi}^n=e^F \omega_X^n.
\end{equation*}

S. Donaldson conjectured in \cite{D} that the J-equation is solvable under the assumption that
\begin{equation}\label{e sub}
\frac{n [\chi]\cdot[\omega_X]^{n-1}}{[\omega_X]^n} [\omega_X] -[\chi] >0.
\end{equation}

Chen \cite{C2} confirmed Donaldson's conjecture when $n=2$. Later on, Weinkove-Song \cite{SW} proved that the existence of solution to J-equation is equivalent to a subsolution condition: there exists $\underline{u}$ and a constant $\delta>0$ such that
\begin{equation}\label{e sub0}
n \omega_{\underline{u}}^{n-1} \ge (1+\delta) C_{[\omega],[\chi]} (n-1)\omega_{\underline{u}}^{n-2} \wedge \chi.
\end{equation} 
This subsolution condition was replaced by a stability condition formulated by Lejmi-Székelyhidi in \cite{LS} as a necessary and sufficient condition.  It is first proved by G. Chen \cite{CG} and later by Song \cite{S1}. Interested reasders may go to \cite{CG} and \cite{S1} for more complete and updated references.

\subsection{Poincar\'e type K\"ahler metric}
In this paper, we study the J-equation for Poincar\'e type K\"ahler metrics and its relation with K-energy. On $\mathbb{C}^n$, we can write down the standard local model for the Poincar\'e type K\"ahler metric: 
\begin{equation}\label{standard cusp}
    \omega_0= \Sigma_{j=1}^k \sqrt{-1}\frac{ 2 dz^j \wedge d\bar z^j}{|z^j|^2 log^2(|z^j|^2)} +\Sigma_{j=k+1}^{n}\sqrt{-1}dz^j \wedge d \bar z^j.
\end{equation}
Write $z^j$ in the form of polar coordinates, we have that:
\begin{equation*}
    z^j =r_j e^{i\theta_j}.
\end{equation*}
Then we can take $t_j = \log (-2 \log r_j)$. Then we can write $\omega_0$ as:
\begin{equation}\label{standard cusp polar}
    \omega_0 = \Sigma_{j=1}^k- 2e^{- t_j} dt \wedge d\theta_j +\Sigma_{j=k+1}^{n}\sqrt{-1}dz^j \wedge d \bar z^j. 
\end{equation}
Let $D=\Sigma_{j=1}^k D_j$ be a divisor on $X$ as a decomposition of irreducible components. Assume that each $D_j$ is smooth except for some simple normal  self intersections, and  $D_i$ only have simple normal crossings with $D_j$ for $i \neq j$. Let $M$ be the complement of $D$. We follow notation in \cite{A}:
\begin{defn}\label{def 1.1}
    We say that $\omega$ is a Poincar\'e type K\"ahler metric, of class $\Omega=[\omega_X]_{dR}$, denoted by $\omega \in \mathcal{PM}_{\Omega}$, if for any point $p\in D$, and any holomorphic coordinate $U$ of $X$ around $p$ such that in the coordinate $D= \cup_{j=1}^k \{z_j=0\}$( We call this kind of coordinate cusp coordinate from now on), $\omega$ satisfies:
    \begin{enumerate}
        \item There exists a constant $C$ such that $\frac{1}{C}\omega_0 \le \omega \le C \omega_0$ holds in $U$.
        \item There exists a function $\varphi$ such that $\omega=\omega_X +dd^c \varphi$. There exists a constant $C(l)$ such that in $U$, $|\nabla^l_{\omega_0}\varphi|_{\omega_0}\le C(l)$ for any $l\ge 1$. Moreover, $\varphi=\Sigma_{j=1}^k O(log(-log|z_j|))$.
        \item $\omega$ is a smooth K\"ahler metric on $M$.
    \end{enumerate}
\end{defn}
\begin{rem}
    In our definition, we assume that $D_j$ could possibly have self-intersections while in Auvray's definition, $D_j$ doesn't have self-intersections. Since the definition of weighted Sobolev and H\"older  norms are purely local which is not affected by the self-intersections, the main difficulty as far as we can see is the construction of a background Poincar\'e type K\"ahler metric.  For example, assume that $S$ is a defining section of $D_1$ and near a self-intersection point of $D_1$, $S=u z_1 z_2$ for some nonzero function $u$. we can see that $ -\log (\lambda -\log (|S|^2))$ can't be used to define a Poincar\'e type metric near $\{z_1=z_2=0\}$. Our idea is that we can glue $ -\log (\lambda -\log (|S|^2))$ with a model Poincar\'e type metric near the self-intersection point sets and get a background Poincar\'e type metric. The details can be seen in the section 2.1.
\end{rem}

In this paper, $D$ is always a divisor with possibly  simple normal crossing singularities (including self-intersection points) unless otherwise explicitely stated.
\subsection{Main results}

The main theorem that we prove in this paper is:
\begin{thm}\label{main thm}
Let $(X,D)$ be a closed K\"ahler manifold $X$ with a divisor $D\;$ which may have simple normal crossings and self-intersections. Let $\omega$ and $\chi$ be two Poincar\'e type K\"ahler metrics on $X$ with singularity at $D$.  Then (\ref{e j}) admits a unique solution $\omega_{\varphi}$ in $\mathcal{PM}_{[\omega]_{dR}}$ 
if and only if there exists a subsolution $\omega_{\underline{u}} \in \mathcal{PM}_{[\omega]_{dR}}$ satisfying (\ref{e sub0}) and an asymptotic growth condition  

\begin{equation}\label{sub asym}
\frac{\omega_{\underline{u}}^{n}}{\omega_{\underline{u}}^{n-1}\wedge \chi}= C_{[\omega],[\chi]} +O(\rho^{-\eta}).
\end{equation}

\end{thm}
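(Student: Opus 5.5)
The two directions are of very different difficulty, so I treat them separately, and uniqueness last.

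\emph{Necessity.} If $\omega_\varphi\in\mathcal{PM}_{[\omega]_{dR}}$ solves (\ref{e j}), take $\underline{u}=\varphi$. Then $\omega_{\underline u}^n=C_{[\omega],[\chi]}\,\omega_{\underline u}^{n-1}\wedge\chi$ holds exactly, so the asymptotic condition (\ref{sub asym}) is trivial. For (\ref{e sub0}) I diagonalize $\chi$ with respect to $\omega_\varphi$ pointwise, with eigenvalues $\lambda_i>0$. The equation says $\sum_i\lambda_i=n/C$, where $C=C_{[\omega],[\chi]}$. The subsolution inequality amounts to $\sum_{j\ne i}\lambda_j<(n-1)/((1+\delta)C)$ for each $i$, i.e. $\lambda_i>\frac{n}{C}-\frac{n-1}{(1+\delta)C}$. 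This holds uniformly for small $\delta$, because $\omega_\varphi$ and $\chi$ are both quasi-isometric to $\omega_0$ near $D$, so $\lambda_i\ge c>0$ globally on $M$.

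\emph{Sufficiency.} I use the two-parameter continuity path of the abstract, designed so that (\ref{sub asym}) is preserved along it. Set $F=\omega_{\underline u}^n/(\omega_{\underline u}^{n-1}\wedge\chi)$ and consider, for $s,t\in[0,1]$,
\[
\omega_{\varphi}^n=\big(tC+(1-t)F\big)\,\omega_\varphi^{n-1}\wedge\chi_s ,\qquad \chi_s=s\chi+(1-s)\tfrac{1}{?}\cdots .
\]
Concretely, first move the right-hand constant from $F$ to $C$ (parameter $t$), with $\varphi=0$ solving at $t=0$. The $O(\rho^{-\eta})$ decay of $F-C$ makes the perturbation lie in the weighted H\"older space $\rho^{-\eta}C^{k,\alpha}$. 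Along the path, $\omega_{\underline u}$ stays a subsolution, since $tC+(1-t)F\le(1+\delta')C$ after shrinking $\delta$. The second parameter is a small positive damping, for example $\varphi$ replaced by $e^{-\epsilon\varphi}$-type weights, or a multiple $\epsilon\varphi$ on the right. Its role is to make the linearized operator invertible on weighted spaces on the complete manifold $M$. It is removed at the end using uniform estimates.

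The steps are as follows.
\begin{enumerate}
\item \emph{Openness.} The linearization is $L\psi=\Delta_{\tilde\omega}\psi$-type, with $\tilde\omega$ Poincar\'e type. I invoke the weighted Fredholm theory for Poincar\'e type metrics (Auvray), on the spaces built in Section 2. This works for functions decaying like $\rho^{-\eta}$, and for the damped operator $L-\epsilon$.
\item \emph{$C^0$ estimate.} I use the maximum principle on $M$, applied via the Yau/Omori-type principle on complete manifolds, or via the barrier $\epsilon\log\rho$. Comparison with $\underline u$ then gives a bound on $\varphi-\underline u$.
\item \emph{$C^2$ estimate.} I follow Song--Weinkove, computing $\Delta$ of $\log\operatorname{tr}_\chi\omega_\varphi-A(\varphi-\underline u)$. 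The subsolution gives the good term, and the maximum is attained or handled by the Omori--Yau principle, since the bisectional curvature of $\chi$ is bounded for Poincar\'e metrics.
\item \emph{Higher order.} Evans--Krylov and Schauder, applied in the quasi-coordinates of Poincar\'e type geometry, give uniform $C^{k,\alpha}$ bounds.
\item \emph{Decay.} I show $\varphi-\underline u$ lies in the weighted space, so that closedness holds within $\mathcal{PM}$.
\end{enumerate}

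\emph{Main obstacle.} I expect step 5 to be the hardest: preserving the asymptotics, i.e.\ closedness in the weighted space uniformly as the damping parameter goes to $0$. My first attempt is a barrier $\pm K\rho^{-\eta'}$ for the linear equation satisfied by the difference, using that $\Delta_{\omega_0}\rho^{-\eta'}\approx c\eta'\rho^{-\eta'}$ in the cusp, for $\eta'<\eta$.

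\emph{Uniqueness.} This follows from the maximum principle applied to the difference of two solutions, which is bounded; the standard concavity argument then applies on the complete manifold $M$.
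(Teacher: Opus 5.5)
There is a genuine gap at your Step 2, which is the heart of the theorem, and the obstacle you single out (Step 5) is not an obstacle at all.

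\textbf{The missing $C^0$ estimate.} The cone condition (\ref{e sub0}) is not a comparison principle. It does not make $\underline u$ a pointwise subsolution of the equation solved by $\varphi$. Moreover, the undamped $J$-equation is invariant under $\varphi\mapsto\varphi+c$. So ``maximum principle plus comparison with $\underline u$'' gives no bound on $\varphi-\underline u$; even on compact manifolds the $C^0$ estimate for the $J$-equation is not obtained this way. Once the damping $e^{\epsilon\varphi}$ is added, the Omori--Yau principle gives only $\epsilon|\varphi|\le C$, i.e.\ $|\varphi|\le C/\epsilon$. That bound is useless exactly when you remove the damping. Your Step 3 also fails then, since $\omega_{\underline u}$ stays a subsolution of the damped equation only if $e^{\epsilon\varphi}$ is close to $1$.

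What is needed is an $L^\infty$ bound independent of $\epsilon$. This is the paper's key step (Proposition \ref{prop improve linfty}), proved by an integral argument:
\begin{enumerate}
\item If $\epsilon\varphi\ge-\delta_0$, then $\varphi$ is an almost-subsolution. Concavity of $\log(\sigma_{n-1}/\sigma_{n-2})$ then makes $\int_0^1\big(n\omega_{s\varphi}^{n-1}-(n-1)\omega_{s\varphi}^{n-2}\wedge\chi\big)ds$ quantitatively positive.
\item Integrating by parts (Lemma \ref{G-S}) gives $\int p|\varphi|^{p-1}|d\varphi|^2\omega^n\le C\int\varphi|\varphi|^{p-1}(\omega^n-\omega^{n-1}\wedge\chi)$.
\item The Poincar\'e inequality, together with Jensen-type bounds on the averages of $\varphi$, gives an $L^2$ bound.
\item Moser iteration with Auvray's weighted Sobolev inequality (Lemma \ref{weighted sobolev}) closes, because (\ref{sub asym}) makes the source $\omega^n-\omega^{n-1}\wedge\chi$ carry the weight $\rho^{-\eta}$.
\end{enumerate}
Since $\epsilon C_0\le\delta_0/2$ for small $\epsilon$, the set of $t$ with $\epsilon|\varphi_{\epsilon,t}|\le\delta_0$ is then open and closed. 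This is where (\ref{sub asym}) is really used. No decay of $\varphi-\underline u$ is needed: membership in $\mathcal{PM}$ (Definition \ref{def 1.1}) only requires a bounded potential with bounded derivatives in quasi-coordinates.

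\textbf{The path cannot support the bootstrap.} Take the path you actually write down: right-hand side $tC+(1-t)F$, $\chi$ fixed, plus damping. For $t<1$, $\int(tC+(1-t)F)\,\omega_\varphi^{n-1}\wedge\chi$ depends on $\varphi$ and is not fixed by cohomology. So the undamped equation is solvable only after inserting an implicit constant $c_t$, which need not vanish. As $\epsilon\to0$ the damping must absorb it ($\epsilon\varphi_{\epsilon,t}\to c_t$), so $\epsilon\varphi$ cannot be kept small. This is exactly the difficulty with unknown constants that the paper avoids with the path (\ref{new path}), using $\chi_t=t\chi+(1-t)\omega$ and a constant right-hand side. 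Under the normalization $\int\omega^n=\int\omega^{n-1}\wedge\chi=1$, one has $\int\omega^{n-1}\wedge\chi_t=1$ for all $t$. In addition, $\omega=\omega_{\underline u}$ itself satisfies (\ref{e sub0}) and (\ref{sub asym}) with respect to every $\chi_t$. Your justification via $tC+(1-t)F\le(1+\delta')C$ is also invalid, since $F$ is close to $C$ only near $D$.

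\textbf{Smaller points.}
\begin{enumerate}
\item In the necessity direction the cone condition reads $\sum_{j\ne i}\lambda_j\le n/((1+\delta)C)$, not $(n-1)/((1+\delta)C)$. This gives $\lambda_i\ge n\delta/((1+\delta)C)$, which does follow from $\lambda_i\ge c$ for small $\delta$. Your version would instead require $\lambda_i>(1+n\delta)/((1+\delta)C)$, which is not implied.
\item For uniqueness, the paper integrates by parts on the finite-volume $M$ against the analogous positive form along the segment between the two solutions. This avoids having to prove that a bounded solution of the linearized equation is constant when its supremum need not be attained.
\end{enumerate}
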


Clearly, this can be viewed as generalization of Song-Weinkove theorem \cite{SW} to the cusp settings. This addtional constraints (\ref{sub asym}) is needed
for complete manifolds. This goes back to the study of complete Tian–Yau metrics \cite{TY} \cite{CL}, in which a background metric is assumed to be asymptotically Ricci-flat. Similar asymtoptic condition is also used by Auvray \cite{A} in solving Poincar\'e type complex Monge-Amp\'ere equations.
Roughly speaking, in a cusp coordinate around $D$ in which $D=\cup_{i=1}^k \{z_i=0\}$, we have that $\rho \approx \Pi_{i=1}^k \frac{1}{-\log |z_i|^2}$. In the case that $k=1$, $-\log \rho$ is comparable to a distance function defined by a background Poincar\'e type metric (c.f. section 2.1 for precise definition). (\ref{sub asym}) means that asymptotically the subsolution solves J-equation. \\

On K\"ahler surfaces, any smooth divisor is precisely a compact Riemann surface, so we can solve the J-equation on it for free. 
Then we can use the subsolution condition in the smooth setting to construct Poincar\'e type metric satisfying (\ref{e sub0}) and (\ref{sub asym}), which implies the solvability of Poincar\'e type J-equation. Consequently, we prove generalized Chen's theorem \cite{C2}  on Donaldson's conjecture in $(X,D)$ at dimension 2.
\begin{thm}\label{prop 2d}
    Let $X$ be a closed K\"ahler surface. Let $D$ be a smooth divisor on $X$. Let $[\omega_0]$ and $[\chi_0]$ be two K\"ahler classes on $X$ satifying  Donaldson's condition (\ref{e sub}), i.e, $2\omega_0 > C_{[\omega_0],[\chi_0]} \chi_0,$ then there exist  Poincar\'e type metrics $\omega_1 \in \mathcal{PM}_{[\omega_0]}$ and $\chi_1\in \mathcal{PM}_{[\chi_0]}$ such that
\begin{equation*}
\omega_1^2 =C_{[\omega_0],[\chi_0]} \omega_1\wedge \chi_1.
\end{equation*}
\end{thm}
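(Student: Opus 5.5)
The plan is to reduce Theorem \ref{prop 2d} to Theorem \ref{main thm}. It suffices to construct Poincar\'e type metrics $\omega_1\in\mathcal{PM}_{[\omega_0]}$ and $\chi_1\in\mathcal{PM}_{[\chi_0]}$ for which $\omega_1$ itself (or a small perturbation of it) is a subsolution satisfying (\ref{e sub0}) and the asymptotic condition (\ref{sub asym}). Two facts drive the construction. First, in dimension $2$ the subsolution condition (\ref{e sub0}) reads $2\omega_{\underline u} \ge (1+\delta) C\chi$. Second, the cusp contributions are cohomologically trivial: $dd^c(-\log(\lambda-\log|S|^2))$ adds no class, so $C_{[\omega_1],[\chi_1]}=C_{[\omega_0],[\chi_0]}=:C$. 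Donaldson's condition already gives a smooth $\omega_0$ with $2\omega_0>(1+\delta)C\chi_0$ for some $\delta>0$, after passing to a representative via Chen's/Song--Weinkove's theorem if needed.

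\textbf{Step 1: the model near $D$.} Write $\omega_1=\omega_0+a\,dd^c(-\log(\lambda-\log|S|^2))+dd^c\psi_1$ and $\chi_1=\chi_0+b\,dd^c(-\log(\lambda-\log|S|^2))+dd^c\psi_2$, with $a,b>0$. Near $D$, in cusp coordinates, $\omega_1$ behaves like
\[
a\,\frac{2\sqrt{-1}dz\wedge d\bar z}{|z|^2\log^2|z|^2}+\omega_0|_{D}+\dots,
\]
and similarly for $\chi_1$ with $b$. Hence
\[
\frac{\omega_1^2}{\omega_1\wedge\chi_1}\to \frac{2a\,\omega_0|_D}{a\,\chi_0|_D+b\,\omega_0|_D},
\]
pointwise along $D$, up to $O(\rho)$ errors. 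Requiring this limit to equal $C$ is exactly a one-dimensional J-type equation on the compact Riemann surface $D$. Since $D$ is a curve, we can first modify $\omega_0|_D$ and $\chi_0|_D$ inside their classes on $D$ (solving a Laplace equation on $D$) so that $\omega_0|_D/\chi_0|_D$ is constant. We then extend the corresponding potentials to a neighborhood of $D$ via a cutoff; these are the $\psi_i$, chosen so that positivity and $2\omega_0>(1+\delta)C\chi_0$ persist. With that ratio constant, we choose the ratio $b/a$ so that the limit equals $C$. This requires $2\omega_0|_D > C\chi_0|_D$, which is the restriction of Donaldson's inequality.

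\textbf{Step 2: verifying the hypotheses of Theorem \ref{main thm}.} First we show that (\ref{e sub0}) holds globally: $2\omega_1-(1+\delta')C\chi_1 = (2\omega_0-(1+\delta')C\chi_0)+(2a-(1+\delta')Cb)\,dd^c(\cdots)$ plus the compactly supported perturbation terms. The first bracket is positive for $\delta'<\delta$. For the second, the constraint from Step 1 forces $Cb<2a$, and we need in addition that $(2a-(1+\delta')Cb)$ times the cusp form remains semipositive. The cusp potential's $dd^c$ is positive in the normal direction, while its tangential part is $O(1/(\lambda-\log|S|^2))$ and can be made small by taking $\lambda$ large. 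Second, (\ref{sub asym}) with rate $\rho^{-\eta}$ follows from the expansion in Step 1, since all corrections decay like powers of $1/(-\log|S|)$. Theorem \ref{main thm} then produces $\omega_\varphi\in\mathcal{PM}_{[\omega_0]}$ solving the equation with $\chi_1$, and we rename $\omega_\varphi$ as $\omega_1$.

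\textbf{Main obstacle.} The delicate point is the compatibility in Step 1. The limiting ratio involves both the normal cusp coefficients $a,b$ and the tangential restrictions, and the cusp normal direction contributes to both $\omega^2$ and $\omega\wedge\chi$. Making this limit exactly $C$ with an $O(\rho^{-\eta})$ error, while simultaneously keeping the strict inequality $2\omega_1>(1+\delta')C\chi_1$ uniformly up to $D$, requires careful bookkeeping of cross terms. The cross terms come from $\partial\log|S|^2$ and from the $h$-dependence of $|S|^2$. If the direct choice fails, the first thing to try is absorbing these cross terms by adjusting the hermitian metric on $[D]$ together with the extension of the $\psi_i$.
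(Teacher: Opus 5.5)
Your architecture matches the paper's. You tune the cusp coefficients so that the limiting ratio $\frac{2a\,\omega_D}{a\chi_D+b\,\omega_D}$ equals $C$, after first making $\omega_D/\chi_D\equiv C_D$ on the curve $D$ by a Laplace equation. You use $2C_D>C$ (restriction of Donaldson's class) to get $b>0$ and $2a>Cb$, then check (\ref{e sub0}) and (\ref{sub asym}) and invoke Theorem \ref{main thm}.

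The gap is the one sentence that carries the real content: extending the tangential potential off $D$ ``via a cutoff'' so that positivity and $2\omega_0>(1+\delta)C\chi_0$ persist. Take an ordinary cutoff $\psi_1=\eta\,p^*\varphi_D$, with $\eta$ varying on $\{r\le |S|\le 2r\}$. The terms $p^*\varphi_D\,dd^c\eta$ and $d\eta\wedge d^c p^*\varphi_D$ then have normal--normal size $r^{-2}$ and mixed tangential--normal size $r^{-1}$. The cusp term at $|S|\sim r$ contributes only about $r^{-2}(\lambda-\log r^2)^{-2}$ in the normal direction, so it cannot absorb them. Even where $\eta\equiv 1$, $dd^c p^*\varphi_D$ has no sign in the normal direction, so $2\omega_0+2dd^c\psi_1-(1+\delta)C\chi_0$ need not be positive. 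Your Step 2 never confronts these terms: it uses the cusp only to dominate its own $O(1/\lambda)$ tangential part and treats $dd^c\psi_i$ as harmless perturbations. The obstacle you do flag is not one. The cross terms from $\partial\log|S|^2$ and from the hermitian metric are $O((\lambda-\log|S|^2)^{-1})$ relative to the Poincar\'e metric, and they just produce the $O(\rho^{-1})$ error in (\ref{sub asym}).

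The paper closes this by cutting off at the Poincar\'e scale. It takes $\eta_0$ to be a function of $t=\log(-\log|S|^2)$, equal to $0$ for $t\le N$ and $1$ for $t\ge 2N$. Then $|\nabla\eta_0|_\omega+|\nabla^2\eta_0|_\omega\le C/N$, and the derivative terms are absorbed by $\epsilon\chi_1$. On $\{\eta_0>0\}$, the normal and mixed defect of $dd^c p^*\varphi_D$ is bounded in smooth terms by $C\sqrt{-1}dz_2\wedge d\bar z_2$. That defect is absorbed by the positive normal part of $(2a-(1+\delta')Cb)\,dd^c(-\log(\lambda-\log|S|^2))$, fixing $\lambda$ first and then taking $N$ large. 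So in the paper the cusp term is what makes the extension work, and your Steps 1 and 2 cannot be decoupled as written.

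Alternatively, you can keep your decoupled structure by proving an extension lemma. The class $[2\omega_0-(1+\delta)C\chi_0]$ is K\"ahler, and $(2C_D-(1+\delta)C)\chi_0|_D$ is a K\"ahler form in its restriction to $D$. This form extends to a K\"ahler form $\theta$ on $X$, built as follows:
\begin{enumerate}
\item Near $D$ take $\tilde f+A|S|_h^2$, where $\tilde f$ extends the potential on $D$. Adding $A|S|_h^2$ does not change the restriction to $TD$, but makes the normal direction positive.
\item Glue this to $\epsilon\log|S|_h^2+K$ by a regularized maximum.
\end{enumerate}
Then $\tilde\omega_0=\frac12(\theta+(1+\delta)C\chi_0)\in[\omega_0]$ satisfies $\tilde\omega_0|_D=C_D\,\chi_0|_D$ and $2\tilde\omega_0-(1+\delta)C\chi_0=\theta>0$. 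Your Step 2 then goes through with only the cusp terms added. One of these two mechanisms has to be supplied; as written, the key positivity claim is unsupported.
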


\begin{cor}\label{negative self}
Let $X$ be a K\"ahler surface with no curves of negative self-intersections. Let $D$ be a smooth divisor on $X$. Suppose that $K_X[D]$ is ample. Then for any K\"ahler class $[\omega]$, there exist Poincar\'e type K\"ahler metrics $\omega_1\in \mathcal{PM}_{[\omega]}$ and $\chi \in \mathcal{PM}_{K_X[D]}$ such that 
    \begin{equation*}
        \omega_1^2 = C_{[\omega_0],K_X[D]} \omega_1 \wedge \chi.
    \end{equation*}    
\end{cor}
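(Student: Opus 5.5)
The corollary should follow from Theorem \ref{prop 2d}, applied with $[\omega_0]=[\omega]$ and $[\chi_0]=K_X[D]=c_1(K_X)+[D]$. Since $K_X[D]$ is ample, it is a Kähler class. What remains is to check Donaldson's condition (\ref{e sub}) in dimension two, namely that
\[
\frac{2\,[\chi_0]\cdot[\omega]}{[\omega]^2}\,[\omega]-[\chi_0]
\]
is a Kähler class. Theorem \ref{prop 2d} then produces $\omega_1\in\mathcal{PM}_{[\omega]}$ and $\chi\in\mathcal{PM}_{K_X[D]}$ solving $\omega_1^2=C\,\omega_1\wedge\chi$.

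To verify the condition, set $\alpha=\frac{2([\chi_0]\cdot[\omega])}{[\omega]^2}[\omega]-[\chi_0]$ and use the Nakai–Moishezon criterion, in Buchweitz–Lamari's form for Kähler surfaces. On a surface, a real $(1,1)$ class $\alpha$ is Kähler if $\alpha^2>0$, $\alpha\cdot[\omega]>0$ for some Kähler class $[\omega]$, and $\alpha\cdot C>0$ for every curve $C$ with $C^2<0$. The hypothesis that $X$ has no curves of negative self-intersection makes the curve condition vacuous. The first two conditions are a direct computation. With $a=[\omega]^2$, $b=[\chi_0]\cdot[\omega]$ and $c=[\chi_0]^2$, we have $\alpha\cdot[\omega]=2b-b=b>0$. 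We also have $\alpha^2=\frac{4b^2}{a}-\frac{4b^2}{a}+c=c=[\chi_0]^2>0$, since $[\chi_0]$ is ample.

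So $\alpha$ lies in the positive cone component containing $[\omega]$, and the criterion gives that $\alpha$ is Kähler. This is exactly condition (\ref{e sub}), written as $2\omega_0>C_{[\omega_0],[\chi_0]}\chi_0$. Here $C_{[\omega],[\chi]}$ agrees with the smooth cohomological constant, because Poincaré type volumes equal the cohomological intersection numbers.

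The main point needing care is the precise form of the Kähler cone criterion. In the non-projective Kähler case one has to check the component of the positive cone and the treatment of curves with $C^2\ge 0$. The standard statement, due to Buchweitz and Lamari, is that the Kähler cone equals the component of the positive cone containing a Kähler class, cut out by $\alpha\cdot C>0$ for curves of negative self-intersection only. Given that, the rest is immediate.
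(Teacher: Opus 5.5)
Your proposal is correct and follows the paper's proof. The paper likewise computes $(2[\omega]-C\,K_X[D])^2=C^2K_X[D]^2>0$ and $(2[\omega]-C\,K_X[D])\cdot[\omega]=[\omega]^2>0$, then uses the absence of negative curves to invoke Lamari's criterion (Proposition 4.5 in \cite{SW}), and finally applies Theorem \ref{prop 2d}; your class $\alpha$ is just the positive multiple $\frac{[\chi_0]\cdot[\omega]}{[\omega]^2}\bigl(2[\omega]-C\,[\chi_0]\bigr)$ of the paper's class, so the two computations are equivalent.
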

The assumptions of the Corollary \ref{negative self} implies that $2\omega_0 > C_{[\omega_0],[K_X[D]]} K_X[D]$. Then the Corollary \ref{negative self} follows from the Theorem \ref{prop 2d}.

We prove that there exists $\chi \in \mathcal{PM}_{K_X[D]}$ which can be written as $\chi=Ric(\omega_1)$ for some $\omega_1 \in \mathcal{PM}_{[\omega]}$ and at the same time can be used as in Corollary \ref{negative self}. Then the following Corollary follows from the Corollary \ref{negative self}:
\begin{cor}\label{thm lower k}
   Let $X$ be a K\"ahler surface with  no curves of negative self-intersections. Let $D$ be a smooth divisor on $X$. Suppose that $K_X[D]$ is ample. Then for any K\"ahler class $[\omega]$, K-energy is bounded from below on $\mathcal{PM}_{[\omega]}$.
\end{cor}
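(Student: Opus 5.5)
The plan is to combine Chen's decomposition of the K-energy with Corollary \ref{negative self}, adapted to the Poincar\'e type setting. For $\varphi$ with $\omega_\varphi \in \mathcal{PM}_{[\omega]}$ I write
\begin{equation*}
E(\varphi) = \int_{X\setminus D} \log\frac{\omega_\varphi^2}{\omega_1^2}\,\frac{\omega_\varphi^2}{V} + J_{\chi}(\varphi) + \text{(terms independent of } \varphi),
\end{equation*}
with $\chi = -Ric(\omega_1)$ (sign convention as in the negative first Chern class case). Here $\omega_1\in\mathcal{PM}_{[\omega]}$ is the reference metric and $J_\chi$ is the $J$-functional whose Euler--Lagrange equation is (\ref{e j}). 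I will check this formula by differentiating along paths in $\mathcal{PM}_{[\omega]}$. Boundary terms from integrating by parts on $X\setminus D$ should vanish, because Definition \ref{def 1.1} gives bounded derivatives of potentials measured in $\omega_0$ and $\omega_0$ has finite volume. I will justify this with a cutoff argument using functions of $\log(-\log|S|^2)$, whose gradients have small $L^2$ norm with respect to $\omega_0$.

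The entropy term is bounded below by Jensen's inequality, since $\omega_\varphi^2/V$ and $\omega_1^2/V$ are probability measures of equal total mass. It remains to bound $J_\chi$ from below. This needs two things: first, a $\chi$ of the form $-Ric(\omega_1)$ for some $\omega_1\in\mathcal{PM}_{[\omega]}$ that lies in $\mathcal{PM}_{K_X[D]}$; second, an application of Corollary \ref{negative self} with this particular $\chi$.

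\textbf{Construction of $\chi$.} Since $K_X[D]$ is ample, I take a smooth metric $\theta\in c_1(K_X[D])$ and perturb it to a Poincar\'e type metric $\chi$ in the same class. I then solve the Poincar\'e type Monge--Amp\`ere equation $Ric(\omega_1) = -\chi$ with $\omega_1 = \omega_X + dd^c\psi$ for the prescribed class $[\omega]$. This is not a K\"ahler--Einstein equation, since $[\omega]$ is arbitrary. Instead I take a volume form $\Omega_D$ of Poincar\'e type with $-Ric(\Omega_D)$ equal to $\chi$ plus $dd^c$ of a bounded term, and solve $\omega_1^2 = e^{F}\Omega_D$ via Auvray's Poincar\'e type Calabi--Yau theorem, for which the asymptotic condition holds once $\Omega_D$ is chosen compatibly. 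This fixes $\omega_1$ up to adjusting $\chi$ within its class, and $\chi$ stays Poincar\'e type because the Ricci form of a Poincar\'e type metric is again of Poincar\'e type, with bounded geometry. Corollary \ref{negative self}, whose proof through Theorem \ref{prop 2d} allows $\chi$ to be any Poincar\'e type representative satisfying the asymptotic condition (\ref{sub asym}), then gives a solution $\omega_\phi$ of the $J$-equation for this $\chi$.

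\textbf{Lower bound for $J_\chi$.} This is the main obstacle. In the compact case, $J_\chi$ is convex along geodesics, so a critical point minimizes it. In the cusp setting I would avoid geodesics. I will instead use the explicit argument that, for $n=2$, the $J$-functional with a critical point $\phi$ satisfies
\begin{equation*}
J_\chi(\varphi)-J_\chi(\phi)\ \ge\ 0 ,
\end{equation*}
proved by writing the difference as an integral along the linear path $\phi + s(\varphi-\phi)$. Its second variation is $\int |\partial(\varphi-\phi)|^2_{\omega_s}\,\chi\wedge\omega_s$-type positive terms, where $\omega_s$ is the metric along this path, and the first-order term vanishes by the equation. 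For this I need integration by parts on $X\setminus D$ for the differences $\varphi-\phi$. These differences are only $O(\log(-\log|z|))$, so the cutoff argument must use the bounded $\omega_0$-gradients and the finite $\omega_0$-volume. Combining this bound with the entropy bound yields the corollary.
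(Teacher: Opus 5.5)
Your architecture matches the paper's: Chen's decomposition (\ref{decom kenergy}) with reference $\omega_1$ and $\chi=-Ric(\omega_1)$, the entropy bounded below, solvability of the $J$-equation for this $\chi$, and minimality of the $J$-functional at the solution $\phi$. However, the step you identify as the main obstacle does not work as proposed. Along the linear path $\varphi_s=\phi+su$ with $u=\varphi-\phi$, one has $\ddot\varphi_s=0$. With $C_{[\omega],[\chi]}=1$, the second derivative of $\frac{\bar R}{n+1}\mathcal{E}+\mathcal{E}^{\chi}$ is
\begin{equation*}
n\int_{X\setminus D} du\wedge d^cu\wedge\big(n\,\omega_s^{n-1}-(n-1)\,\omega_s^{n-2}\wedge\chi\big),\qquad \omega_s=\omega_\phi+s\,dd^cu .
\end{equation*}
The $\chi$-contribution therefore has the wrong sign: $\mathcal{E}^{\chi}$ is concave along linear paths. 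The integrand is nonnegative only where $\omega_s$ satisfies the cone condition (\ref{e sub0}) with $\delta=0$. For $n=2$ this condition reads $2\omega_s\ge\chi$. It holds at $s=0$, but it is affine in $s$, so it fails near $s=1$ as soon as $\omega_\varphi$ violates it, and a general element of $\mathcal{PM}_{[\omega]}$ does. Your argument therefore only proves $J_\chi(\varphi)\ge J_\chi(\phi)$ on the convex set of subsolutions. The inequality is true on all of $\mathcal{PM}_{[\omega]}$, but the missing idea is to use geodesics instead of segments. Along a geodesic, $\ddot\varphi=|\partial\dot\varphi|^2_{\omega_\varphi}$ compensates the negative gradient term, $\mathcal{E}$ is affine, and the Hessian becomes $\int\pi^*\chi\wedge(\pi^*\omega+dd^cU)^n\ge0$. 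This is exactly the paper's Lemma \ref{lem K-energy}, which uses $C^{1,1}$ Poincar\'e type geodesics and the Hessian formula of \cite{XZ}, Lemma 4.1.

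There is also a smaller gap in how you apply Corollary \ref{negative self}. Condition (\ref{sub asym}) constrains the subsolution, not $\chi$. Moreover, the proof of Theorem \ref{prop 2d} does not handle an arbitrary Poincar\'e type representative. It checks $2\omega'-C_{[\omega_0],[\chi_0]}\chi>0$ using the explicit shape $\chi=\chi_0+dd^c(\eta_0p^*\varphi_D')-a\,dd^c\log(\lambda-\log|S|^2)+dd^ch$ with $dd^ch$ small. Since you adjust $\chi$ within its class in an uncontrolled way when solving for $\omega_1$, you must show that $\chi=-Ric(\omega_1)$ can be taken of this shape. The paper does this in Lemma \ref{lem negative ric}, in four steps:
\begin{enumerate}
\item it uses that $K_D=K_X[D]|_D$ is ample to make $-Ric$ of the $D$-part positive;
\item it applies Auvray's $\partial\bar\partial$-lemma;
\item it modifies $f\mapsto f_0$ so that $dd^c(f_0-f)$ is compactly supported and arbitrarily small;
\item it solves with Auvray's Monge--Amp\`ere theorem.
\end{enumerate}
Finally, your claim that the Ricci form of a Poincar\'e type metric is again of Poincar\'e type is not correct as stated. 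Such Ricci forms are bounded with bounded derivatives, but they need not be positive, so they do not by themselves keep $\chi$ K\"ahler.
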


\begin{defn}
   Suppose that $D=\Sigma_i D_i$, where $D_i$ are smooth complex codimension one manifolds and $D_i$ doesn't intersect with $D_j$ for any $i\neq j$. We say that a Poincar\'e type K\"ahler metric $\omega$ is locally asymptotic to a product metric near $D$, if for each $i$, there exists positive constants $a_i$ and $\eta$ and a K\"ahler metric $\omega_{D_i}$ on $D_i$ such that 
   \begin{equation*}
       \omega= p^* \omega_{D_i} +2a_i e^{-t} dt \wedge \widetilde{\eta} +O(e^{-\eta t}).
   \end{equation*}
   The definition of $p$, $t$ and $\widetilde{\eta}$ are given in the section 2.5.
\end{defn}

Then, we prove the following asymptotic behaviour of solution to Poincar\'e type J-equation which is of its own interest.
\begin{thm}\label{thm asy}
Let $D=\Sigma_i D_i$ be a disjoint union of smooth divisors. Suppose that $\chi$ and $\omega$ are both Poincar\'e type K\"ahler metrics. Suppose that $\chi$ is locally asymptotic to a product metric near each $D_i$:
\begin{equation*}
    \chi= p^* \chi_{D_i} + 2b_i e^{-t} dt \wedge \widetilde{\eta} +O(e^{-\eta t}).
\end{equation*}
Suppose that $\omega_{\varphi}$ satisfies (\ref{e j}).
Then for each $i$, $\omega_{\varphi}$ is locally asymptotic to a product metric near $D_i$:
\begin{equation*}
\omega_{\varphi}= p^* \omega_{D_i} +2a_i e^{-t}dt \wedge \widetilde{\eta} + O(e^{-\eta t}).
\end{equation*}
Moreover, $\omega_{D_i}$ solves a J-equation on $D_i$:
\begin{equation*}
    tr_{\omega_{D_i}}\chi_{D_i} = n-\frac{b_i}{a_i}.
\end{equation*}
\end{thm}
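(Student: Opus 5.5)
We will analyze the solution near each component $D_i$ by passing to the model end. Fix $D_i$ and work in the region $\{t > T\}$, where $t$ is the cusp coordinate of section 2.5, $p$ is the projection to $D_i$, and $\widetilde{\eta}$ is the connection $1$-form. Theorem \ref{main thm} gives that $\varphi$ has bounded $\omega_0$-derivatives of all orders. So on the quasi-coordinate charts (lifting a neighbourhood of the cusp via $z\mapsto e^{...}$) the equation becomes uniformly elliptic with uniformly bounded coefficients, and we have uniform $C^{k,\alpha}$ bounds.

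\textbf{Step 1: extract a limit on $D_i$.}
Translating along $t\to\infty$ and averaging in the $S^1$-direction, we get subsequential limits of $\omega_\varphi$. These are $S^1$-invariant and $t$-translation-invariant product-type metrics of the form $p^*\omega_{D_i} + 2a e^{-t}dt\wedge\widetilde\eta$. We write
\[
\varphi = a_i\,t + \psi_{D_i}\circ p + v .
\]
The linear-in-$t$ part accounts for the coefficient $a_i$, since $dd^c$ of a function of $t$ produces the cusp term. Using $\varphi=O(\log(-\log|z|))=O(t)$, we determine $a_i$ by comparing the $dt\wedge\widetilde\eta$ component, or equivalently the residue and mass of $\omega_\varphi$ along $D_i$. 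This gives $a_i$ as the cusp coefficient in the class $[\omega]$.

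\textbf{Step 2: the equation on $D_i$.}
Substitute the ansatz into $\omega_\varphi^n = C\,\omega_\varphi^{n-1}\wedge\chi$, using the assumed expansion of $\chi$. Rewrite the equation as $\operatorname{tr}_{\omega_\varphi}\chi = n/C$ up to normalization. At the product level, the trace splits:
\[
\operatorname{tr}_{\omega_\varphi}\chi = \operatorname{tr}_{\omega_{D_i}}\chi_{D_i} + \frac{b_i}{a_i} + O(e^{-\eta t}).
\]
Hence the limit metric $\omega_{D_i}$ satisfies the stated $J$-equation on $D_i$, after matching constants. The constant $C$ is the same on $D_i$ by cohomological integration.

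\textbf{Step 3: exponential decay of $v$ (the main obstacle).}
We linearize the $J$-operator at the product model. The linearized operator is
\[
L v = \Delta_{\omega_\varphi,\chi}\, v ,
\]
an elliptic operator of the form $\omega_\varphi^{-1}\chi\,\omega_\varphi^{-1}$ acting on $\partial\bar\partial v$. On the model end it separates into a $D_i$-part and an $(t,\theta)$-part. Using the Fourier decomposition in $\theta$ and the spectral decomposition on $D_i$, the nonzero modes decay exponentially in $t$. The $S^1$-invariant, $D_i$-constant mode is handled by the normalization in Step 1.

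The difficulty is that $L$ is not the Laplacian of $\omega_\varphi$. One must show it has a spectral gap on $D_i$. We will use that the linearized $J$-operator on $D_i$ at $\omega_{D_i}$ is elliptic and self-adjoint with respect to a suitable volume form, so its kernel consists only of constants.

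We then run a barrier/maximum-principle argument for the nonlinear error, in weighted Hölder spaces $e^{-\eta t}C^{k,\alpha}$ as in Auvray's setting. The remaining terms are:
\begin{itemize}
\item quadratic terms in $v$;
\item the $O(e^{-\eta t})$ error coming from $\chi$.
\end{itemize}
Both are absorbed by choosing $\eta$ smaller than the spectral gap. This yields $v = O(e^{-\eta t})$ together with its derivatives, which proves the expansion for $\omega_\varphi$.
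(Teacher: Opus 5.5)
\textbf{The gap is in Step 1, and Steps 2 and 3 inherit it.} Uniform derivative bounds only give subsequential translated limits that are $S^1$-invariant solutions of the $J$-equation on the model $\Delta^*\times D_i$. In these limits $\chi$ is replaced by the product $\chi_0$, and $t$ ranges over all of $\mathbb{R}$.

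Nothing in your argument prevents such a limit from depending on $t$, for instance by oscillating. So the claim that the limits are $t$-translation-invariant products is unjustified. This Liouville-type statement is the heart of the paper's proof (Proposition \ref{splitting}):
\begin{itemize}
\item Differentiating the equation along $Z=\mathrm{Re}[z\log z\,\partial_z]$, which preserves $\omega_1$ and $\chi_0$, gives $(n\omega_\varphi^{n-1}-(n-1)\omega_\varphi^{n-2}\wedge\chi_0)\wedge dd^c\dot\varphi=0$ (Lemma \ref{dotvarphi laplacian}).
\item Since $dd^c e^t=0$, an $e^t$-weighted integration by parts shows that the weighted energy of $\dot\varphi$ is finite, with monotone bounded boundary terms $\psi_\varphi(s)$ (Lemma \ref{lem l2 finite}).
\item A second translation argument shows $\psi_\varphi(\pm\infty)=0$ (Lemma \ref{zero l2}). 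Hence $\dot\varphi$ is constant, and it is zero by boundedness.
\end{itemize}
You also need all subsequential limits to agree. The paper gets this from uniqueness of the $J$-equation on $D$ (Remark \ref{rem almost solve J}), which you never invoke. Without this $o(1)$ convergence to the product model (Proposition \ref{asymp 1}), Step 3 cannot start. Absorbing the quadratic terms requires knowing a priori that $v\to 0$ together with its derivatives, and that is exactly what is missing.

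\textbf{Second, $a_i$ is not the cusp coefficient of the class $[\omega]$; no such invariant exists.} $\mathcal{PM}_\Omega$ allows potentials of size $O(\log(-\log|S|^2))$, and $dd^c\log(\lambda-\log|S|^2)$ puts no mass on $D$. So $\omega_X-A\,dd^c\log(\lambda-\log|S|^2)$ lies in the same class for every $A>0$ (with $\lambda$ large).

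The coefficient is instead forced by the equation. Integrating the limit equation over $D_i$ gives $n-b_i/a_i=C_{D_i}=(n-1)[\omega]|_{D_i}^{n-2}\cdot[\chi]|_{D_i}/[\omega]|_{D_i}^{n-1}$. One must then prove $C_{D_i}<n$, which the paper does in Lemma \ref{lem cd} using that the solution is a strict subsolution.

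Likewise, your ansatz $\varphi=a_it+\psi_{D_i}\circ p+v$ with a bounded (let alone small) remainder is not free. A priori one only has $\varphi=O(t)$. The paper needs Proposition \ref{prop bdd} to show $\varphi$ is bounded. That argument takes a background with cusp coefficient $A=2b/(n-C_D)$. It then uses an $e^t$-weighted integration over $\{t\le s\}$ that isolates $\varphi_{00}(s)$ times the positive number $n[\omega_D]^{n-1}-(n-1)[\omega_D]^{n-2}\cdot[\chi_D]$.

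That boundedness is also what kills the $ct$ term for the $S^1$-invariant, $D$-constant mode at the end. For that mode the model operator $-\frac{b}{a^2}(\partial_t-\partial_t^2)$ has indicial roots $0$ and $1$, so there is no spectral gap to appeal to.
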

\begin{rem}
 We expect that the above theorem also holds for $D$ in the Theorem \ref{main thm}.
\end{rem}
The asymptotic behaviour of Poincar\'e type extremal K\"ahler metrics  was proved by Auvray in \cite{A2}.

\subsection{Key ingredients}
The traditional continuity path for J-equation is:
\begin{equation}\label{e tra}
    \omega_{\varphi_t}^n = e^{C_t + tf}\omega_{\varphi_t}^{n-1}\wedge \chi,
\end{equation}
where $e^f= \frac{\omega^n}{\omega^{n-1}\wedge \chi}$ and $C_t$ is some constant depending on $t$. 
There are two difficulties in using this continuity path in the Poincar\'e type case:

\begin{enumerate}
\item Although one may assume conditions (\ref{e sub0}) and (\ref{sub asym}) at $t=0$, it is generally difficult to verify that these conditions persist for varying $t$ without an explicit understanding of the constant $C_t$. A characterization of $C_t$ was given by Guo--Song \cite{GS}; however, in general, no explicit formula for $C_t$ is available. This issue is particularly important in the study of uniqueness of viscosity solutions for general complex Hessian equations on Hermitian manifolds (cf. \cite{CX}). It also plays a crucial role in the Poincaré-type setting when one seeks to show that (\ref{e sub0}) and (\ref{sub asym}) are preserved along the continuity path.

\item It is difficult to establish openness for complete cusp metrics when $D$ has simple normal crossings, since the linearized operator may fail to have a bounded inverse. Consequently, one needs to understand the asymptotic behavior of solutions to the $J$-equation near $D$. Such behavior can be derived when $D$ is smooth without singularity (see Theorem \ref{thm asy}). However, in the presence of simple normal crossings or simple normal self-intersections, the asymptotic behavior remains unknown.
\end{enumerate}

To overcome these difficulties, we propose a new two-parameters continuity path:

\begin{equation}\label{new path}
\omega_{\varphi_{\epsilon,t}}^n = C_{[\omega],[\chi]} e^{\epsilon \varphi_{\epsilon,t}}  \omega_{\varphi_{\epsilon,t}}^{n-1} \wedge (t\chi+(1-t)\omega), \,\,\,\,\,\,\,\,\, (*_{\epsilon,t})
\end{equation}
with $\epsilon>0$. Here $\omega$ is the subsolution satisfying (\ref{e sub0}) and (\ref{sub asym}). For $t=0$, (\ref{new path}) always has a solution $\varphi_{\epsilon,0}=0$. First, we prove that if $\epsilon \varphi_{\epsilon,t}\ge -\delta_0$ for some uniformly small constant $\delta_0$ on $M\setminus D$, then we have that $||\varphi_{\epsilon,t}||_{L^{\infty}}\le C$ for some uniform constant $C$, see the Proposition \ref{prop improve linfty}. When $\epsilon$ is small we have that $\frac{\delta_0}{\epsilon} >> C$. So this can be seen as an improvement of $L^{\infty}$ estimate of $\varphi_{\epsilon}$. Then we can use this result to prove uniform $L^{\infty}$ estimate for $\varphi_{\epsilon,t}$, independent of $\epsilon.$ This is one of the key ideas of this paper. The uniform $L^{\infty}$ estimate for $\varphi_{\epsilon,t}$ implies that $|\epsilon \varphi_{\epsilon,t}|$ is small when $\epsilon$ is small. This implies higher order a priori estimate for $\varphi_{\epsilon,t}$. Then we can let $\epsilon$ go to zero and the limit of $\varphi_{\epsilon,t}$ solves Poincar\'e type J-equation.

There are two motivations of this new continuity path which correspond to the two difficulties in using traditional continuity path as we explained above:\\
\begin{enumerate}

    \item We use $(t\chi+(1-t)\omega)$ in (\ref{new path}) which is inspired by Chen's continuity path to solve a cscK metric (c.f. \cite{C1}).  In order to explain our idea better, we ignore the $e^{\epsilon \varphi_{\epsilon,t}}$ in (\ref{new path}) for now and look at the following continuity path:
\begin{equation}\label{new path 2}
    \omega_{\varphi_{t}}^n = C_{[\omega],[\chi]} e^{C_t}\omega_{\varphi_t}^{n-1}\wedge (t\chi+ (1-t)\omega).
\end{equation}
One advantage of using $(t\chi+(1-t)\omega)$ in the continuity path is that we have a very nice characterization of $C_t$ and this is a key point when we want to have a subsolution satisfying an asymptotic behaviour near $D$ along the continuity path. Comparing with the argument by Guo-Song \cite{GS}, it is also easier to show that we always have a subsolution along continuity path.

\item  The part $e^{\epsilon \varphi_{\epsilon,t}}$ in (\ref{new path}) is motivated by Tian-Yau's work on complete Ricci flat metric \cite{TY1} and Auvray's work on solving Poincar\'e type Monge-Amp\'ere equation \cite{A}. The advantage of this method is that it is easier to study the linearized operators to get openness of continuity path, since the extra term $e^{\epsilon \varphi_{\epsilon,t}}$ helps with $L^{\infty}$ estimate for linearized operator.  
\end{enumerate}

\subsection{Constants normalization}In the rest of the paper, up to multiplying $\chi$ and $\omega$ by an appropriate constant we can assume that
\begin{equation}\label{normalize}
\int_X \chi \wedge \omega^{n-1}= \int_X \omega^n=1
\end{equation}
which implies that
\begin{equation}\label{norm c}
    C_{[\omega],[\chi]}=1.
\end{equation}
Then the constant $C_t$ in (\ref{new path 2}) can be expressed as 
\begin{equation*}
e^{C_t} = \frac{\int_X \omega_{\varphi_t}^n}{ \int_X (t\chi + (1-t)\omega) \wedge \omega^{n-1}} = \frac{\int_X \omega^n}{ \int_X t \chi \wedge \omega^{n-1} + (1-t) \int_X \omega^n}=  1
\end{equation*}
So (\ref{new path 2}) becomes
\begin{equation}\label{new path1}
\omega_{\varphi_t}^n =  \omega_{\varphi_t}^{n-1} \wedge (t\chi + (1-t)\omega).
\end{equation}
Note $\omega$ is a subsolution satisfying the subsolution condition (\ref{e sub0}) and asymptotic behavior (\ref{sub asym}). We will show that $\omega$ also satisfy the subsolution condition and asymptotic behavior with respect to (\ref{new path1}). Indeed, since $\omega_{\underline{u}}$ satisfies
\begin{equation*}
n \omega^{n-1}> (1+\delta)(n-1) \omega^{n-1},
\end{equation*}
 we have that
\begin{equation*}
n \omega^{n-1} \ge (1+\delta) (n-1) \omega^{n-2} \wedge (t \chi + (1-t) \omega).
\end{equation*}
We also have that $\underline{u}$ satisfies the asymptotic behavior:
\begin{equation*}
\frac{\omega^n}{\omega^{n-1}\wedge (t \chi + (1-t)\omega)} = 1+ O(\rho^{-\eta }).
\end{equation*}

\subsection{Organzation}In section 2, we introduce some background knowledge about Poincar\'e type K\"ahler metrics. In section 2, we prove the openness of continuity path (\ref{new path}). In section 4, we prove the Proposition \ref{c11} which is an improvement of $L^{\infty}$ estimate. Then, we prove that uniform $L^{\infty}$ estimate implies higher order estimates. In section 5, we prove the Theorem \ref{main thm}. In section 6, we prove the Theorem \ref{prop 2d}, Corollary \ref{negative self} and the Corollary \ref{thm lower k} for K\"ahler surfaces. In section 7, we prove the Theorem \ref{thm asy}.\\


{\bf Acknowledgement}
The second author thanks Junbang Liu for discussion about Poincar\'e type K\"ahler metrics in the case that $D$ has simple normal crossings.
\section{Background knowledge for Poincar\'e type K\"ahler metrics}
\subsection{Background metric of Poincar\'e type}
Denote $D=\Sigma_{j=1}^k D_j$.  First, we briefly describe Auvray's construction \cite{A} of a background Poincar\'e type K\"ahler metric in the case that $D_j$ doesn't have self-intersections for any $j$.  Take a holomorphic defining section $S_j \in (\mathcal{O}([D_j]),|\cdot|)$ for $D_j$. Define 
$$\rho_j \triangleq -\log(|S_j|^2)\ge 1,\,\,\, \rho \triangleq \Pi_{j=1}^k \rho_j$$ 
out of $D_j$, equivalently, $|S_j|^2\leq e^{-1}$. 
Let $\lambda$ be a nonnegative real constant to be determined. Set 
$$\mathbf{u}\triangleq \Sigma_{i=1}^k \log(\lambda+\rho_j).$$ 
Auvray shows in \cite[Lemma 1.1]{A} that for any $A>0$ and for sufficiently large $\lambda$ depending on $A$ and $\omega_X$, the $(1,1)$-form $\omega_X-Ai \partial \bar \partial \mathbf{u}$ is a Poincar\'e type K\"ahler metric. 

Now we want to generalize the above construction to the case where $D_j$ may have self intersections. Compared with Auvray's construction, we have to let the constant $A$ be small enough.  We consider the following easiest case first: Assume that the dimension of $M$ is two and $D$ is an irreducible divisor on $M$ with one simple normal self intersection point $p$. Take a local coordinate $(z_1,z_2)$ around $p$ such that $D=\{(z_1,z_2): z_1=0 \text{ or } z_2=0\}$. Take a cut-off function $\chi$ such that $\chi=1$ on $D_{\epsilon} \triangleq \{(z_1,z_2): |z_1|^2 \le \epsilon, |z_2|^2 \le \epsilon\}$ and $\chi$ is supported in $D_{2\epsilon}$ which is contained in the above coordinate $(z_1,z_2)$.  Define function $\varphi_1 \triangleq -\log (-\log |z_1|^2)- \log (-\log |z_2|^2)$ in this coordinate chart. Define $\varphi_2 \triangleq -\log (\lambda- \log |S|^2)$ where $S \in (\mathcal{O}([D]),|\cdot|)$ is a defining section for $D$. 

Claim: $\omega_X + A dd^c (\chi \varphi_1 + (1-\chi)\varphi_2)$ is a Poincar\'e type metric for positive $A$ that is small enough. 

Proof of the Claim: We compute that
\begin{equation}\label{back 2}
\begin{split}
    &\omega_X + A dd^c (\chi \varphi_1 + (1-\chi)\varphi_2) =\omega_X + A   dd^c (\varphi_2 + \chi (\varphi_1 - \varphi_2)) \\
    & = \omega_X + A dd^c \varphi_2 + A dd^c \chi  +A \chi dd^c (\varphi_1 -\varphi_2)+ A d\chi \wedge d^c (\varphi_1 -\varphi_2)+ A d(\varphi_1- \varphi_2)\wedge d^c \chi \\
    & =\omega_X + A (1-\chi)dd^c \varphi_2 + A \chi dd^c \varphi_1 + A (\varphi_1-\varphi_2) dd^c \chi  + A d\chi \wedge d^c (\varphi_1 -\varphi_2) + A d(\varphi_1 -\varphi_2) \wedge d^c \chi
\end{split}
\end{equation}
Note that in the above coordinate around $p$, we can write $|S|^2=|z_1|^2 |z_2|^2 e^F$. Then we have that 
\begin{equation*}
    \varphi_2 = -\log (\lambda -\log |z_1|^2 -\log |z_2|^2 -F).
\end{equation*}
Note that $d\chi$, $d^c \chi$ and $dd^c \chi$ are supported in 
\begin{equation*}
    D_{2\epsilon}\setminus D_{\epsilon} \subset \{(z_1,z_2): \epsilon \le |z_1| \le 2\epsilon, |z_2|\le 2\epsilon\} \cup \{(z_1,z_2): \epsilon \le |z_2| \le 2\epsilon, |z_1|\le 2\epsilon\}
\end{equation*}
In $\{(z_1,z_2): \epsilon \le |z_2| \le 2\epsilon, |z_1|\le 2\epsilon\}$,
we can compute that 
\begin{equation*}
\begin{split}
& d(\varphi_2 - \varphi_1)= -d \log (1+\frac{-\lambda + \log |z_2|^2 + F}{\log |z_1|^2}) + d \log (-\log |z_2|^2) \\
&=-\frac{1}{1+\frac{-\lambda +\log |z_2|^2 +F}{\log |z_1|^2}} (\frac{d \log |z_2|^2 +dF}{\log |z_1|^2} -\frac{(-\lambda +\log|z_2|^2 +F)d \log |z_1|^2}{(\log |z_1|^2)^2}) + d \log (-\log |z_2|^2)\\
&= O(\frac{1}{|\log |z_1||}) + d \log (-\log |z_2|^2).
\end{split}
\end{equation*}
Here $d \log (-\log |z_2|^2)$ is a smooth metric in $\{(z_1,z_2): \epsilon \le |z_2| \le 2\epsilon, |z_1|\le 2\epsilon\}$. 
Similarly, we can get that 
\begin{equation*}
    d(\varphi_2 - \varphi_1)= O(\frac{1}{|\log |z_2||}) + d \log (-\log |z_1|^2)
\end{equation*}
in $\{(z_1,z_2): \epsilon \le |z_1| \le 2\epsilon, |z_2|\le 2\epsilon\}$ and $d \log (-\log |z_1|^2)$ is a smooth metric in this region.

This implies that 
\begin{equation}\label{dchi}
    d \chi \wedge d^c (\varphi_1- \varphi_2) +d (\varphi_1- \varphi_2) \wedge d^c \chi = \omega_1+ O(\frac{1}{\log |z_1|}+ \frac{1}{\log |z_2|})
\end{equation}
for some smooth form $\omega_1$.
We can also compute similarly that
\begin{equation}\label{ddcchi}
    (\varphi_1- \varphi_2) dd^c \chi= \omega_2 + O(\frac{1}{\log |z_1|}+ \frac{1}{\log |z_2|})
\end{equation}
for some smooth form $\omega_2$. Note that $\omega_X+ A dd^c \varphi_2$ is a Poincar\'e type K\"ahler metric on $X\setminus D_{\epsilon}$ and $\omega_X+ A dd^c \varphi_1$ is a Poincar\'e type K\"ahler metric on $D_{2\epsilon}$, if we let $A$ be small enough. Then we can use (\ref{back 2}), (\ref{dchi}) and (\ref{ddcchi}) to get that $\omega_X + A dd^c (\chi \varphi_1 + (1-\chi)\varphi_2)$ is a Poincar\'e type metric on $X$ if we let $A$ be small enough. Next, we show how to generalize the above construction to general case. Consider a finite covering $\{U_i\}_{i=1}^N$ of $D$ satisfying that for each $i$, $U_i \cap D= \cup_{l=1}^{k_i}\{z_l=0\}.$ Take an open set $U_{N+1}\subset X\setminus D$ such that $X\subset \cup_{i=1}^{N+1} U_i$. Then we take a partition of unity $\chi_i$ with respect to this covering. For each $i \le N$, we define $\varphi_i = \Sigma_{l=1}^{k_i} -\log (-\log |z_l|^2)$. Then, we can define $\varphi = \Sigma_{l=1}^{N} \chi_l \varphi_l$. Then  we can get that $\omega_X + A dd^c \varphi$ is a Poincar\'e type metric on $X$ if we let $A$ be small enough using similar calculation as in the above example.

We define $\rho \triangleq \Sigma_i \chi_i \rho_i$, where $\rho_i=\Pi_{l=1}^{k_i}(-\log |z_l|^2)$ is defined in $U_i$.

We define the space of Poincar\'e type K\"ahler metrics of class $\Omega$ as $ \mathcal{PM}_{\Omega}$ using the Definition \ref{def 1.1}. Denote the space of potentials (with respect to the background metric $\omega$) as
$\widetilde{\mathcal{PM}}_{\Omega}$.

\subsection{Quasi coordinates}
Next, the quasi coordinates, see \cite{TY},  is used to define function spaces using Poincar\'e type K\"ahler metrics.
Let $\Delta$ be a unit disc and let $\Delta^*$ be a punctured unit disc. For any $\delta\in (0,1)$, we can set $$\varphi_{\delta}: \frac{3}{4}\Delta \rightarrow \Delta^*,\quad\xi \mapsto exp(-\frac{1+\delta}{1-\delta}\frac{1+\xi}{1-\xi}).$$ For any $\delta =(\delta_1,...,\delta_k) \in (0,1)^k$, we can define \begin{align*}
\Phi_{\delta}: \mathcal{P} \triangleq (\frac{3}{4}\Delta)^k \times \Delta^{n-k}  &\rightarrow  (\Delta^*)^k \times \Delta^{n-k} ,\quad \delta \in (0,1),\\
(z_{1},...,z_{k}, z_{k+1},...,z_n)&\mapsto (\phi_{\delta_1}(z_1),...,\phi_{\delta_k}(z_{k}),z_{k+1},...,z_n).
 \end{align*} 
 We say that a holomorphic coordinate of $X$ is a cusp coordinate if in this coordinate we have $D=\{z_n=0\}$. Let $\omega_0$ be the standard Poincar\'e type metric given by (\ref{standard cusp}). We can directly compute that
 \begin{equation*}
     \Phi_{\delta}^* \omega_0= \Sigma_{i=1}^k \frac{\sqrt{-1}dz^i \wedge d \bar z^i }{(1-|z_i|^2)^2} + \Sigma_{i=k+1}^n \sqrt{-1}dz^i \wedge d\bar z^i
 \end{equation*}
 which is uniformly quasi-isometric to the Euclidean metric independent of $\delta$. Since any Poincar\'e type metric $\omega$ is quasi-isometric to $\omega_0$ in each cusp coordinate, we have that $\Phi^*_{\delta}\omega$ is quasi-isometric to Euclidean metric in each cusp coordinate.

\subsection{Function spaces}

Auvray \cite{A} used the idea of Tian-Yau \cite{TY} to define the following function spaces:
\begin{defn}
If $U$ is a polydisc neighborhood of $D$ with $U \cap D$ given by $\{z_1=z_2=...=z_k=0\}$, we define for $f\in C^{p,\alpha}_{loc}(U \setminus D), (p,\alpha)\in \mathbb{N}\times [0,1),$
\begin{equation*}
||f||_{C^{p,\alpha}(U \setminus D)} \triangleq \sup_{\delta \in (0,1)^k}||\Phi_{\delta}^* f||_{C^{p,\alpha}(\mathcal{P})},
\end{equation*}
assuming that $U \subset   (c\Delta)^k \times \Delta^{n-k}.$ 

Then given a finite number of such open sets $U\in \mathcal{U}$, covering $D$ and an open set $V \subset \subset X \setminus D$ such that $X=V \cup \bigcup_{U \in \mathcal{U}} U$ and a partition of unity $\{\chi_V\} \cup \{\chi_U: U \in \mathcal{U}\},$ we can define the H\"older space 
\begin{equation*}
C^{p,\alpha}(M) \triangleq \{f\in C_{loc}^{p,\alpha}(M) : ||\chi_V f||_{C^{p,\alpha}(V)}+\max_{U\in 
 \mathcal{U}} ||\chi_U f||_{C^{p,\alpha}(U \setminus D)} < \infty \}.
\end{equation*}
\end{defn}
\begin{defn}
We can define the weighted H\"older norm:
\begin{equation*}
C_{\eta}^{l,\alpha} \triangleq \{f\in C^{l,\alpha}_{loc}(M) : ||\chi_V f||_{C^{l,\alpha}(V)}+\sup_{U\in \mathcal{U}} \sup_{\delta \in (0,1)}||\Pi_{i=1}^k(1-\delta_i)^{\eta}\Phi_{\delta}^* (\chi_U f)||_{C^{l,\alpha}(\mathcal{P})} < \infty\}.
\end{equation*}
Since  $\Pi_{i=1}^k\frac{1}{C(1-\delta_i)} \le \Phi_{\delta}^* \rho \le \Pi_{i=1}^k\frac{C}{1-\delta_i}$ for some constant $C$, $||\Pi_{i=1}^k(1-\delta_i)^{\eta}\Phi_{\delta}^* (\chi_U f)||_{C^{l,\alpha}(\mathcal{P})} $ is equivalent to $||\Phi_{\delta}^* (\rho^{-\eta} \chi_U f)||_{C^{k,\alpha}(\mathcal{P})} $.
Heuristically, $f\in C_{\eta}^{l,\alpha}$  implies that $f =O(\rho^{\eta})$. We can also define:
\begin{equation*}
    C_{\eta}^{\infty}= \cap_{l=0}^{\infty} C_{\eta}^{l,\alpha}.
\end{equation*}
\end{defn}
Then we can define 
\begin{equation*}
    C^{\infty}(M)= \cap_{k=1}^{\infty}C^{k,\alpha}(M).
\end{equation*}

\begin{defn}
 We can also define the weighted Sobolev space:
\begin{equation*}
W_{\eta}^{k,p} \triangleq \{v \in W^{k,p}_{loc}(M): \int_{M} \Sigma_{i=0}^k |\nabla_i v|^p \rho^{-\eta p}\omega^n < \infty \}.
\end{equation*}
\end{defn}
Clearly, $W_{\eta}^{k,p} \subset W_{\eta'}^{k,p} $, when $\eta\leq \eta'$.

\subsection{Energy functionals}
Next we define several functionals defined on $\widetilde{\mathcal{PM}}_{\Omega}$:
\begin{equation}\label{mathcal E}
    \mathcal{E}(\varphi)\triangleq \int_X \varphi  \Sigma_{j=0}^n \omega_\varphi^{n-j}\wedge \omega^j.
\end{equation}
Given a closed $(1,1)$-form (or current) $T$ which satisfies that $T\le C \omega$ for some constant $C$ and all the covariant derivatives of $T$ defined by $\omega$ are bounded with respect to $\omega$.

\begin{equation*}
    \mathcal{E}^T (\varphi )\triangleq \int_X \varphi  \Sigma_{j=0}^{n-1}\omega_\varphi ^{n-j-1}\wedge \omega^j \wedge T.
\end{equation*}
 Denote $\mu_0=\omega^n$. For any measure $\mu$ which is absolutely continuous with respect to $\mu_0$, we can also define the entropy term:
\begin{equation*}
    H_{\mu_0}(\mu)\triangleq \int_X log(\frac{d\mu}{d\mu_0})d\mu
\end{equation*}
The $K$-energy can be expressed as
\begin{equation}\label{decom kenergy}
    \mathcal{M}(\varphi )\triangleq \frac{\bar R}{n+1}\mathcal{E}(\varphi )-\mathcal{E}^{Ric_{\omega}}(\varphi ) +H_{\mu_0}(\omega_\varphi ^n).
\end{equation}
These functionals are well defined, because of the definition of $\widetilde{\mathcal{PM}}_{\Omega}$.

\subsection{Fiber bundle structure of a neighbourhood of D}
Suppose that $D$ is smooth. Let $S$ be a defining section of $D$ in $\mathcal{O}([D])$. According to \cite[Section 3]{A2}, a neighbourhood of $D$, denoted as $\mathcal{N}_A$, can be seen as a $S^1$ bundle over $[A,\infty) \times D$. This fiber bundle can be written as $$q: \mathcal{N}_A \setminus D \xrightarrow{q=(t,p)} [A,\infty) \times D.$$   The function $t$ is defined in \cite{A2}. We have that $t= \log (-\log |S|^2)$ up to a perturbation which is a $O(e^{-t})$, that is, a $O(\frac{1}{|log |S||})$, as well as its derivatives of any order with respect to Poincar\'e metrics. Denote $p$ as the projection from $\mathcal{N}_A \setminus D$ to $D$. We can also define a connection $\widetilde{\eta}$ in $\mathcal{N}_A \setminus D$ which can be seen as a  a 1-form on each $S^1$ fibre such that  $J$, as complex structure on $X$, satisfying

$$J dt =2e^{-t} \widetilde{\eta} +O(e^{-t}).$$
In a cusp coordinate $(z_1,...,z_n=r e^{i\theta})$, one has 
\begin{equation}\label{eta dtheta}
\widetilde{\eta} =d \theta +O(1)
\end{equation}
in the sense that $\widetilde{\eta}- d\theta$ and all the derivatives of it of any order with respect to $\omega$ is bounded. 
Given an arbitrary function $f$ supported in a neighbourhood $\mathcal{N}_A$ of $D$, we can decompose $f$ as:
\begin{equation}\label{f decom}
f= f_0(t,p) + f^{\bot},
\end{equation}  
where $$f_0(t,p)=\frac{1}{2\pi}\int_{q^{-1}(t,p)} f \widetilde{\eta}$$ is the $S^1$ invariant part and $f^{\bot}$ is the part that is perpendicular to $S^1$ invariant functions using the integral on each $S^1$ fiber with volume form $\widetilde{\eta}$.

\subsection{Basic Inequality}

Auvray proved the following Poincar\'e Inequality for Poincar\'e type K\"ahler metrics (c.f.  \cite[Lemma 1.11]{A}):
\begin{lem}\label{poincare ine}
Assume $X$ is equipped with a Poincar\'e type K\"ahler metric $\omega$. Then there exists a constant $C_P>0$ such that for all $v\in W^{1,2}_0 (X,\omega)$, we have 
\begin{equation*}
\int_{X}|v-a|^2  \omega^n \le C_P \int_{X}|dv|_{\omega}^2 \omega^n,
\end{equation*}
where $a=\int_{X}v \omega^n$.
\end{lem}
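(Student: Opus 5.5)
We prove Lemma \ref{poincare ine} by comparing $\omega$ with a model metric, splitting $X$ into a compact piece and a cusp neighbourhood of $D$, and gluing.

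\textbf{Reduction to a model metric.} By Definition \ref{def 1.1}, $\omega$ is uniformly quasi-isometric to $\omega_0$ in each cusp coordinate. It is also quasi-isometric to the background metric of Section 2.1. Both sides of the inequality change only by bounded factors under quasi-isometry: the volume form and $|dv|^2_\omega$ are each comparable. Moreover, the mean $a$ can be replaced by any constant at the cost of a factor, because $\int|v-a|^2\le\int|v-c|^2$ for every $c$ when $a$ is the (normalized) mean. So it suffices to prove the inequality for one fixed model Poincar\'e type metric. This also shows the result does not depend on which $\omega\in\mathcal{PM}_\Omega$ is used.

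\textbf{Argument by contradiction plus compactness.} Suppose the inequality fails. Then there are $v_k$ with mean zero, $\int v_k^2\omega^n=1$, and $\int|dv_k|^2\omega^n\to0$. On every compact $K\subset M$ we have $W^{1,2}(K)\hookrightarrow L^2(K)$ compactly (Rellich). Hence $v_k\to v$ in $L^2_{loc}$, and $dv=0$, so $v$ is constant. The real content is to rule out loss of $L^2$ mass into the cusp. That is, we need a uniform tail estimate
\[
\int_{\{\rho>R\}}|v-c_R|^2\omega^n\le \epsilon(R)\int_X|dv|^2\omega^n,
\]
with $\epsilon(R)\to0$ and $c_R$ the average of $v$ on a collar $\{R/2<\rho<R\}$. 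Given this, the limiting constant carries mass $1$ and mean $0$, which is a contradiction.

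\textbf{The cusp estimate, which is the main obstacle.} In the coordinates $t_j=\log(-2\log r_j)$ of \eqref{standard cusp polar}, the model metric on a factor is $dt^2+e^{-2t}d\theta^2$ up to constants, with volume element $e^{-t}\,dt\,d\theta$. Thus a cusp end is a half-line with exponentially decaying weight $e^{-t}$, times bounded factors; with normal crossings we get a product of such ends. For the exponential weight there is a one-dimensional Hardy-type inequality, which we prove by integrating by parts against $e^{-t}$:
\[
\int_T^\infty |f(t)-f(T)|^2e^{-t}\,dt\le 4\int_T^\infty|f'|^2e^{-t}\,dt.
\]
The key point is that the weight's logarithmic derivative is $-1$, bounded away from zero. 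We apply this in each $t_j$ direction, with the other variables (including $\theta$ and the smooth $z_{k+1},\dots,z_n$) frozen. Then we integrate and control the mismatch between the boundary values $f(T)$ on the collar by the ordinary Poincar\'e inequality on the compact collar region.

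Near self-intersection and crossing points we must handle several $t_j$ at once. There we iterate the Hardy inequality coordinate by coordinate on the product of half-lines. The multiplicative weight $\prod e^{-t_j}$ makes this work, since each factor separately satisfies the same one-dimensional inequality.

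Because $\int_{\{\rho>R\}}\omega^n\to0$, we obtain smallness, $\epsilon(R)\to0$, which closes the compactness argument. Finally, $W^{1,2}_0$ functions are approximated by compactly supported smooth ones, so it suffices to argue for those.
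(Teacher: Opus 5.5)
Your outline is sound, and it takes a genuinely different route from the paper, because the paper gives no argument of its own. It imports Auvray's Lemma 1.11, which is stated for divisors whose components have no self-intersections, and relies on the remark that the analysis is local. Your proof is self-contained:
\begin{itemize}
\item quasi-isometry reduces to the model $\omega_0$ on finitely many cusp charts;
\item the weighted inequality $\int_T^\infty|f-f(T)|^2e^{-t}dt\le 4\int_T^\infty|f'|^2e^{-t}dt$ controls the ends;
\item Rellich on compact subsets of the connected set $M$ does the rest.
\end{itemize}
Because it uses only the local product structure of $\omega_0$, it treats self-intersection points exactly like crossings. It therefore proves the extension that the paper only asserts; the citation buys brevity. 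Two points need correcting.

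\textbf{The claim $\epsilon(R)\to0$ is false.} The fact that $\int_{\{\rho>R\}}\omega^n\to0$ does not imply it. The one-dimensional inequality is invariant under $t\mapsto t+T$, since both sides scale by $e^{-T}$. Taking $f=e^{\alpha(t-T)}$ with $\alpha\uparrow\frac12$ shows that the constant $4$ is sharp on every $[T,\infty)$. So a function vanishing on the collar and growing like this in the tail keeps the tail ratio bounded below uniformly in $R$. The inequality reflects a spectral gap of the cusp, not small volume. You never need smallness, though. A single tail estimate with a fixed constant $C$ gives $\int_{\{\rho>R\}}|v_k-c_{R,k}|^2\omega^n\le C\int_X|dv_k|^2\omega^n\to0$. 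Since the collar is relatively compact in $M$, $c_{R,k}\to c$, so $v_k\to c$ in $L^2(X)$ and the contradiction follows. Combining that tail estimate with the Poincar\'e inequality on one fixed compact domain would even give $C_P$ directly.

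\textbf{The boundary step fails at crossings and self-intersections as written.} The slice $\{t_1=T\}$ runs into the $z_2$-cusp, so it is not compact. The ``ordinary Poincar\'e inequality on the compact collar'' therefore does not control $v|_{\{t_1=T\}}$. Iterating Hardy from that base value would need the trace of $\partial_{t_2}v$ on the slice, which $\int|dv|^2$ does not bound. There are two standard fixes.
\begin{itemize}
\item Average the base point over $t_1\in[T,T+1]$. By Jensen, the averaged function satisfies $|\partial_{t_2}\bar v|^2\le\int_T^{T+1}|\partial_{t_2}v|^2dt_1$, so the second Hardy step sees only bulk derivatives.
\item More cleanly, work on shrunken charts $\{0<|z_j|<c\}^k\times\Delta^{n-k}$ with $c<1$, since $\omega_0$ blows up as $|z_j|\to1$. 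There $\omega_0$ is a product of finite-volume factors. Each factor satisfies a Poincar\'e inequality: Hardy plus a compact annulus for a cusp factor, the classical one for a disc. Then use subadditivity of variance for product probability measures,
\[
\mathrm{Var}_{\mu_1\otimes\mu_2}(f)\le\int\mathrm{Var}_{\mu_1}(f(\cdot,y))\,d\mu_2(y)+\int\mathrm{Var}_{\mu_2}(f(x,\cdot))\,d\mu_1(x).
\]
\end{itemize}
Either way you get a Poincar\'e inequality on each chart. Finitely many charts, together with a compact piece of $M$, are then glued through their positive-measure overlaps. With these two adjustments your proof is complete.
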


Although the standard Sobolev Inequality for $\omega$ is not yet, Auvray proved the following weighted Sobolev Inequality (See the Lemma 4.4 in \cite{A});
\begin{lem}\label{weighted sobolev}
Let $\omega$ be a Poincar\'e type K\"ahler metric. Then for any $p>0$ and $q>0$ with $\frac{1}{q}< \frac{1}{p}+\frac{1}{2n}$, there exists a constant $C$ such that 
\begin{equation*}
||u||_{W^{k,p}_{-\frac{1}{p}}} \le C ||u||_{W^{k+1,q}_{-\frac{1}{q}}}.
\end{equation*}
\end{lem}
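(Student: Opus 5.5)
The statement immediately above is Auvray's weighted Sobolev inequality (Lemma \ref{weighted sobolev}). I would prove it by working locally in the quasi-coordinates $\Phi_\delta$ of Section 2.2 and summing with the correct weights.

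\textbf{Reduction to local pieces.} Using the partition of unity $\{\chi_V\}\cup\{\chi_U\}$, it suffices to treat three cases.
\begin{itemize}
\item On $V\subset\subset X\setminus D$, the inequality is the ordinary Sobolev embedding $W^{k+1,q}\hookrightarrow W^{k,p}$. This holds on a compact region in real dimension $2n$ exactly when $\frac1q<\frac1p+\frac1{2n}$ (or for any $p$ if $q>2n$).
\item Commutators with the cut-offs only produce lower-order terms with bounded coefficients. These are controlled because derivatives of $\chi_U$ are bounded with respect to $\omega$.
\item It remains to bound $\|\chi_U u\|_{W^{k,p}_{-1/p}}$ for a cusp chart $U$. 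For clarity I take $k_U=1$ first, i.e.\ $D=\{z_1=0\}$.
\end{itemize}

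\textbf{Slicing into unit-size pieces.} Since $\omega$ is quasi-isometric to $\omega_0$, I may replace $\omega$ by $\omega_0$. In the coordinates $t=\log(-\log|z_1|^2)$, the model metric has volume form comparable to $e^{-t}dt\,d\theta\,dV_{\mathbb{C}^{n-1}}$, with $\rho\approx e^t$.

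The weight $\rho^{-\eta p}$ with $\eta=-1/p$ equals $\rho$. So the weighted norm is
\[
\int |\nabla^i u|^p\,\rho\,\omega_0^n\approx\int|\nabla^i u|^p\,dt\,d\theta\,dV.
\]
This is the Lebesgue norm for the product-type measure in which each unit $t$-interval has comparable mass. The weight exactly cancels the exponentially shrinking circle.

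I cover the region $t\ge A$ by slabs $S_m=\{m\le t\le m+1\}$. Each slab is a union of boundedly many images of $\Phi_\delta(\mathcal P)$ with $1-\delta\approx e^{-m}$.

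The key difficulty is that, for large $m$, the $\theta$-circle in $S_m$ has $\omega_0$-length about $e^{-m}$. This is small, so a naive Euclidean Sobolev inequality on the slab degenerates. I handle this through the $S^1$ decomposition (\ref{f decom}), $u=u_0+u^\perp$.
\begin{itemize}
\item \emph{Invariant part $u_0$.} The function $u_0$ is $S^1$-invariant, so it effectively lives on the $(2n-1)$-dimensional base $[A,\infty)\times D$ with measure $dt\,dV_D$. The base metric is uniformly bounded geometry. The Sobolev embedding there holds in dimension $2n-1$, which is better than $\frac1q<\frac1p+\frac1{2n}$ requires. The slab estimates then sum because the weighted measures match. (Differentiation commutes with averaging up to bounded errors, since $\widetilde\eta=d\theta+O(1)$.)
\item \emph{Oscillating part $u^\perp$.} Lift $u^\perp$ via $\Phi_\delta$ to the polydisc $\mathcal P$, where the metric is uniformly Euclidean. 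There I apply the standard Sobolev inequality on unit balls. Summing over the boundedly overlapping covering by quasi-coordinate charts gives the result, provided the measures match.
\end{itemize}

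\textbf{Main obstacle: matching the measures.} On a chart, $\Phi_\delta^*(\omega_0^n)$ is Euclidean. However, the chart maps onto the slab with multiplicity about $e^{m}$, since the chart unwraps the circle. Hence
\[
\int_{S_m}|f|\,\omega_0^n\approx e^{-m}\int_{\mathcal P}|\Phi_\delta^*f|.
\]
The weight $\rho^{1}\approx e^m$ is exactly what restores a scale-free identity for each $p$ separately. For the left side with exponent $p$ and weight $\rho^{1}$, and the right side with exponent $q$ and weight $\rho^{1}$, I get on each slab
\[
\|u\|_{L^p(\rho\,\omega_0^n,S_m)}\le C\|u\|_{W^{1,q}(\rho\,\omega_0^n,S_m)}
\]
uniformly in $m$. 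This uses the local inequality $\|\cdot\|_{L^p(\mathcal P)}\le C\|\cdot\|_{W^{1,q}(\mathcal P)}$ applied on a fundamental domain in $\mathcal P$ of unit size, which is possible after rescaling the circle direction.

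Summing over $m$ uses $\sum a_m^{p/q}\le(\sum a_m)^{p/q}$ when $p\ge q$. I expect $p\ge q$ to be automatic in the intended regime; otherwise I would use Hölder with the finite total weighted mass on each region.

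\textbf{Extensions.} For $k_U>1$ the same argument is applied with products of slabs in each $t_j$, with weight $\rho=\prod\rho_j$. Higher $k$ follows by applying the $k=0$ case to $\nabla^k u$, since covariant derivatives of $\omega_0$ are bounded in quasi-coordinates.
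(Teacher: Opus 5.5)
\textbf{Where your argument holds.} The paper does not prove this lemma; it quotes it from Auvray [A, Lemma 4.4], so there is no internal argument to compare against. Your core argument is sound when $q\le p$. On each slab $S_m=\{m\le t\le m+1\}$ the weight $\rho\approx e^{m}$ exactly compensates the $e^{m}$-fold unwrapping of the circle by $\Phi_\delta$. Hence $\int_{S_m}|f|^p\rho\,\omega_0^n\approx\int_{\mathcal P}|\Phi_\delta^*f|^p$, and likewise for exponent $q$ and for gradients. The Euclidean Sobolev inequality on $\mathcal P$ then gives a slab estimate uniform in $m$, and $\sum_m a_m^{p/q}\le(\sum_m a_m)^{p/q}$ sums these estimates. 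That covers the only use in the paper, namely $(p,q)=(2(1+\eta),2)$ in the proof of Proposition \ref{prop improve linfty}.

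\textbf{The gap: the case $p<q$.} Your fallback there rests on a false premise. The weighted measure $\rho\,\omega^n\approx dt\,d\theta\,dV$ has infinite total mass near $D$, so there is no global H\"older step. H\"older on individual slabs gives estimates that cannot be summed when $p/q<1$. This cannot be repaired, because the inequality itself is false for $p<q$. Take a cusp chart $\{|z_1|<c\}\times B'$ at a smooth point of $D$ and set $u=\chi(t)\,\psi(z')\,t^{-1/p}$. Here $t=\log(-\log|z_1|^2)$, $\chi$ vanishes for $t\le A$ and equals $1$ for $t\ge A+1$, and $\psi\in C^\infty_c(B')$ equals $1$ on a smaller ball. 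Since $|du|_\omega\le C\,t^{-1/p}$, we get
\[
\int_X(|u|^q+|du|_\omega^q)\,\rho\,\omega^n\le C\int_A^\infty t^{-q/p}\,dt<\infty,\qquad \int_X|u|^p\,\rho\,\omega^n\ge c\int_{A+1}^\infty t^{-1}\,dt=\infty .
\]
So the lemma as written is too broad. You should add the hypothesis $q\le p$ rather than claim the full range.

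\textbf{Simplifications.} Your splitting $u=u_0+u^\perp$ is a detour. The measure-matching in quasi-coordinates never uses that $u^\perp$ has zero fibre averages, so it applies to $u$ directly. The ``small circle'' obstruction you identify never actually bites: apply Sobolev on $\mathcal P$ itself, which already has unit size, so no fundamental domain or rescaling is needed.

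An even shorter route is to compare with the flat cylinder metric $g_{\mathrm{cyl}}=dt^2+d\theta^2+|dz'|^2$, or a product of such factors at crossings. Its volume form is comparable to $\rho\,\omega_0^n$. Moreover $|du|^2_{\omega_0}\approx|\partial_tu|^2+e^{2t}|\partial_\theta u|^2+|d'u|^2$ with $e^{2t}\ge1$, where $d'$ denotes differentiation in the $z'$ directions, so $|du|_{g_{\mathrm{cyl}}}\le C|du|_{\omega_0}$. Sobolev on this bounded-geometry cylinder then gives the case $k=0$ (for $q\le p$) at once. Kato's inequality $|d|\nabla^iu||\le|\nabla^{i+1}u|$ then yields all $k$.
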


\subsection{Stokes Theorem}
Throughout this paper, we need Gaffney's Stokes theorem \cite{MR0062490} to do integration by parts using Poincar\'e type K\"ahler metrics:
\begin{lem}\label{G-S}
Let $(X,g)$ be a complete n-dimensional Riemannian manifold where $g$ is a $C^2$ metric tensor. Let $\Theta$ be a $C^1$ $(n-1)$ form on $M$  such that both $|\Theta|_g$ and $|d\Theta|_g$ are in $L^1$. Then we have that $\int_X d\Theta = 0$.
\end{lem}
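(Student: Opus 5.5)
We need to prove Gaffney's Stokes theorem (Lemma \ref{G-S}): on a complete Riemannian manifold, if $\Theta$ is a $C^1$ $(n-1)$-form with $|\Theta|_g, |d\Theta|_g \in L^1$, then $\int_X d\Theta = 0$. The plan is the standard cutoff argument. Completeness gives a family of compactly supported Lipschitz cutoffs with uniformly bounded gradients. Stokes on compact support then applies, and dominated convergence removes the cutoff.

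\textbf{Step 1: cutoffs.} Fix a point $x_0$ and let $r(x)=d_g(x_0,x)$, which is $1$-Lipschitz, so $|dr|_g\le 1$ almost everywhere. For $R>0$ define
\begin{equation*}
\psi_R(x)=\min\bigl(1,\max(0,2-r(x)/R)\bigr).
\end{equation*}
Then $\psi_R=1$ on $B_R(x_0)$, $\psi_R=0$ outside $B_{2R}(x_0)$, and $|d\psi_R|_g\le 1/R$ almost everywhere. By completeness and Hopf--Rinow, closed balls are compact, so $\psi_R$ has compact support. If one wants smooth cutoffs, mollify $r$ to a smooth function $\tilde r$ with $|\tilde r-r|\le 1$ and $|d\tilde r|\le 2$; nothing below changes.

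\textbf{Step 2: compact Stokes.} The form $\psi_R\Theta$ is Lipschitz and compactly supported. Since Stokes' theorem holds for compactly supported Lipschitz forms (approximate by smooth forms, or use smooth $\psi_R$), we get
\begin{equation*}
0=\int_X d(\psi_R\Theta)=\int_X \psi_R\, d\Theta+\int_X d\psi_R\wedge\Theta .
\end{equation*}
The pointwise bound $|d\psi_R\wedge\Theta|_g\le C_n|d\psi_R|_g|\Theta|_g$ gives
\begin{equation*}
\Bigl|\int_X d\psi_R\wedge\Theta\Bigr|\le \frac{C_n}{R}\int_X|\Theta|_g\,dV_g\to 0 .
\end{equation*}

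\textbf{Step 3: limit.} We have $\psi_R\to 1$ pointwise and $|\psi_R\,d\Theta|\le|d\Theta|_g\in L^1$. Dominated convergence therefore gives $\int_X\psi_R\,d\Theta\to\int_X d\Theta$, which is finite, and hence $\int_X d\Theta=0$.

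The only delicate point is the regularity in Step 2: justifying Stokes for the Lipschitz product. Using the smoothed distance function avoids this entirely. The $C^2$ hypothesis on $g$ ensures geodesic balls and the distance function behave as required. In this paper the lemma is applied to $X\setminus D$ with a Poincaré type metric, which is complete, so the hypothesis holds.
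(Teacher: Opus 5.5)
Your proof is correct. The paper gives no proof of its own and simply cites Gaffney, and your argument is the standard proof of Gaffney's theorem: compactly supported cutoffs built from the distance function with $|d\psi_R|_g\le 1/R$, Stokes on compact support, then dominated convergence. One hypothesis is implicit and worth stating: $X$ must be oriented for $\int_X d\Theta$ to make sense, which holds in the paper's application to the complex manifold $X\setminus D$.
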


\section{Openness of continuity path}

In this section, we will fix $\epsilon>0$ and prove the openness of the continuity path $(\ref{new path})$ in $t$ direction. Denote $\chi_t = t\chi+(1-t)\omega$ Then the linearized operator is
\begin{equation*}
    Lu= g_{\varphi_{\epsilon,t}}^{i \bar j}(\chi_{t})_{\bar j l}g_{\varphi_{\epsilon,t}}^{l\bar k} u_{\bar k i}-n \epsilon e^{-\epsilon \varphi_{\epsilon,t}}u.
\end{equation*}
The openness of continuity path follows from the implicit function theory and the following Proposition:
\begin{prop}\label{prop open}
    Suppose that $\omega_{\varphi_{\epsilon,t}}$ is a Poincar\'e type metric. Then, for any $k$, $L$ is an isometry from $C^{k+2,\alpha}(M)$ to $C^{k,\alpha}(M)$.
\end{prop}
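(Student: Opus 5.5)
We will show that $L$ is a bounded linear isomorphism (bounded with bounded inverse) $C^{k+2,\alpha}(M)\to C^{k,\alpha}(M)$; this is what ``isometry'' should mean here, and it is all the implicit function theorem needs. The whole argument runs on the quasi-coordinates $\Phi_\delta$. Pulled back by $\Phi_\delta$, the metrics $\omega_{\varphi_{\epsilon,t}}$, $\chi$ and $\omega$ are uniformly equivalent to the Euclidean metric on $\mathcal{P}$, with all derivatives bounded independently of $\delta$ (Definition \ref{def 1.1}). Since $\chi_t>0$ and $\omega_{\varphi_{\epsilon,t}}$ is Poincar\'e type, the coefficient matrix $a^{i\bar k}=g_{\varphi}^{i\bar j}(\chi_t)_{\bar j l}g_\varphi^{l\bar k}$ is therefore uniformly elliptic in every quasi-coordinate chart, with uniform $C^{k,\alpha}$ bounds. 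The zeroth-order coefficient $c=n\epsilon e^{-\epsilon\varphi_{\epsilon,t}}$ satisfies $c\ge c_0>0$, because $\varphi_{\epsilon,t}$ is bounded: by Definition \ref{def 1.1} it differs from a bounded function by $O(\log\rho)$ terms, and we will use that $\varphi$ lies in the bounded class along the path. Boundedness of $L:C^{k+2,\alpha}\to C^{k,\alpha}$ is then immediate from the definition of the norms.

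\textbf{Step 1 (a priori estimates).} First, the maximum principle on the complete manifold. Let $u\in C^{2,\alpha}(M)$, so $u$ is bounded. Apply the Omori--Yau maximum principle, or perturb $u$ by $\pm\eta\log\rho$, which is $\omega$-bounded in $C^2$ and tends to $-\infty$ near $D$ after adjusting the sign, so that the maximum is attained in $M$. At that maximum point $a^{i\bar k}u_{\bar k i}\le 0$, which gives
$$\sup|u|\le c_0^{-1}\sup|Lu|.$$
Second, apply interior Schauder estimates in each chart $\Phi_\delta$, with constants uniform in $\delta$:
$$\|u\|_{C^{k+2,\alpha}(M)}\le C\big(\|Lu\|_{C^{k,\alpha}(M)}+\sup|u|\big)\le C'\|Lu\|_{C^{k,\alpha}(M)}.$$
This gives injectivity and a closed range.

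\textbf{Step 2 (surjectivity).} Given $f\in C^{k,\alpha}(M)$, exhaust $M$ by smooth relatively compact domains $\Omega_j$ and solve the Dirichlet problem $Lu_j=f$ on $\Omega_j$ with $u_j=0$ on $\partial\Omega_j$. This is solvable because $c>0$ gives uniqueness, so the Fredholm alternative applies. The maximum principle gives $|u_j|\le c_0^{-1}\sup|f|$. The same local Schauder estimates in quasi-coordinates, applied away from $\partial\Omega_j$, give uniform $C^{k+2,\alpha}$ bounds on compact sets. A diagonal Arzel\`a--Ascoli argument then yields a limit $u$ with $Lu=f$, and since the quasi-coordinate bounds are uniform, $u\in C^{k+2,\alpha}(M)$. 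Alternatively, one can use the method of continuity from $\Delta-c_0$ together with the estimate of Step 1.

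\textbf{Main obstacle.} The delicate point is the maximum principle at infinity, that is, making sure the sup is controlled even though $M$ is noncompact. I would handle it with the barrier $-\eta\log\rho$, letting $\eta\to0$; this uses that $dd^c\log\rho$ is $\omega$-bounded, which follows from the construction in Section 2.1. The other point needing care is the uniform positive lower bound on $e^{-\epsilon\varphi}$, which requires $\varphi_{\epsilon,t}$ to be bounded rather than merely of $O(\log\rho)$ growth; this is the regime considered along the path.
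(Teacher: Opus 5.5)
Your proposal is correct and follows essentially the same route as the paper. Injectivity and the sup bound come from the maximum principle on the complete manifold: the paper invokes Yau's generalized maximum principle, while you use Omori--Yau or the barrier $-\eta\log\rho$. Surjectivity then comes from Dirichlet problems on an exhaustion with a uniform $L^\infty$ bound, uniform Schauder estimates in the quasi-coordinates $\Phi_\delta$, and a diagonal limit.

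Your version also makes explicit two points the paper uses silently: the a priori estimate $\|u\|_{C^{k+2,\alpha}}\le C\|Lu\|_{C^{k,\alpha}}$, which gives a bounded inverse, and the need for $\varphi_{\epsilon,t}$ to be bounded so that $n\epsilon e^{-\epsilon\varphi_{\epsilon,t}}\ge c_0>0$.
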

\begin{proof}
   The proof of this Proposition is similar to the Lemma 1.3 of \cite{TY}. For completeness we include the proof here. The injective of $L$ follows immediately from the generalized maximum principle of Yau on complete manifolds. For surjective, given a function $f\in C^{k,\alpha}(M)$, we want to find $u\in C^{k+2,\alpha}(M)$ such that $Lu =f.$ Fix a point $p\in M$, we take a sequence of geodesic balls $B_k(p)$ and solve the equation:
   \begin{equation*}
       \begin{split}
           L u_k &= f \text{ on } B_k(p) \\
           u_k & = 0 \text{ on } \partial B_k(p)
       \end{split}
   \end{equation*}
   By maximum principle, we have that 
   \begin{equation*}
       ||u_k||_{L^{\infty}(B_k(p))}\le \frac{||f||_{L^{\infty}(M)}}{C}.
   \end{equation*}
   where $C$ is a positive constant such that $n\epsilon e^{-\epsilon \varphi_{\epsilon,t}} \ge C$. For any quasi coordinate  $(U,\Phi_{\delta})$, we have that $U \subset B_k(p)$ if $k$ is big enough. Then we can apply schauder estimate and bootstrapping argument to $\Phi_{\delta}^* L (\Phi_{\delta}^* u)= \Phi_{\delta}^* f$, using the fact that the coefficients  of $\Phi_{\delta}^* L$ and $\Phi_{\delta}^* f$ are bounded in $C^{k,\alpha}$, we can get that $||u_k||_{C^{k+2,\alpha}(U)}$ is uniformly bounded. Then we can take a diagonal argument to take a subsequence of $u_k$ such that it converge locally uniformly to $u$. Moreover $u\in C^{k+2,\alpha}(M)$ and solves the equation 
   \begin{equation*}
       Lu=f.
   \end{equation*}
\end{proof}

\section{Closedness of continuity path}
Throughout this section, we use the normalization (\ref{normalize}).
\subsection{$L^{\infty}$ estimate}
The main Proposition that we want to prove in this section is as follows. Note that Sun \cite{S} first used pluripotential theory to obtain $L^{\infty}$ estimate for J-equation.
\begin{prop}\label{prop improve linfty}
    Let $\varphi_{\epsilon,t}$ be a solution to (\ref{new path}). Suppose that $\omega$ satisfies the strict subsolution condition:
    \begin{equation*}
n \omega^{n-1} \ge (1+\delta) (n-1) \omega^{n-2} \wedge \chi
\end{equation*}
and
\begin{equation*}
\frac{\omega^{n-1}\wedge \chi}{\omega^{n}}= 1 +O(\rho^{-\eta}).
\end{equation*}
for some $\eta >0$. Then there exist constants $C_0$ and $\delta_0$ depending on $\delta$ such that if $\epsilon \varphi_{\epsilon,t}\ge -\delta_0$, we have that
\begin{equation*}
    ||\varphi_{\epsilon,t}||_{L^{\infty}}\le C_0.
\end{equation*}
\end{prop}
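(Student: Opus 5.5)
We will bound $\sup\varphi$ and $\inf\varphi$ separately, writing $\varphi=\varphi_{\epsilon,t}$, $\omega_\varphi=\omega+dd^c\varphi$ and $\chi_t=t\chi+(1-t)\omega$. The equation $(*_{\epsilon,t})$ reads $\omega_\varphi^n=e^{\epsilon\varphi}\omega_\varphi^{n-1}\wedge\chi_t$, i.e. $\mathrm{tr}_{\omega_\varphi}\chi_t=n e^{-\epsilon\varphi}$. As noted at the end of the introduction, $\omega$ is a strict subsolution for every $\chi_t$, and $\mathrm{tr}_\omega\chi_t=n+O(\rho^{-\eta})$.

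\textbf{Sign information.} Since $\omega_\varphi$ is of Poincar\'e type and $\varphi$ is bounded with bounded derivatives, Yau's generalized maximum principle applies at an almost-maximum point of $\varphi$. There $dd^c\varphi\le o(1)\omega$, so $\omega_\varphi\le(1+o(1))\omega$. Hence $ne^{-\epsilon\sup\varphi}\ge \mathrm{tr}_\omega\chi_t-o(1)$. Using the asymptotics, $\mathrm{tr}_\omega\chi_t$ is at least about $n$ near $D$, and $\mathrm{tr}_\omega\chi_t\le n/(1+\delta)\cdot$const in the interior is no obstacle for this direction. The cleaner route is integral: integrating the equation and using $\int\omega_\varphi^n=\int\omega_\varphi^{n-1}\wedge\chi_t=1$ (Lemma \ref{G-S} justifies the cohomological computation) gives $\int(e^{-\epsilon\varphi}-1)\omega_\varphi^{n}=0$. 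Therefore $\varphi$ changes sign, so $\sup\varphi\ge0\ge\inf\varphi$. This normalization replaces the usual $\sup\varphi=0$.

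\textbf{Oscillation bound via the subsolution.} We adapt the pluripotential / ABP-type argument of Sun \cite{S} (or Székelyhidi's $C^0$ estimate). The hypothesis $\epsilon\varphi\ge-\delta_0$ gives $\mathrm{tr}_{\omega_\varphi}\chi_t\le ne^{\delta_0}\le n(1+\delta/2)$ for $\delta_0$ small in terms of $\delta$. This means that at every point, $\omega_\varphi$ satisfies the J-cone inequality with a slack comparable to the strict subsolution $\omega=\omega_{\underline u}$, $\underline u=0$. The standard argument then proceeds as follows. At a point where $\varphi-\underline u$ attains its minimum (again via Yau's maximum principle, perturbing by $\epsilon'\log\rho$ if needed to localize away from $D$), the subsolution property and the cone condition give local concavity information. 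Feeding this into an Alexandrov–Bakelman–Pucci estimate on a quasi-coordinate chart $\Phi_\delta$, where all metrics are uniformly Euclidean, bounds $\sup\varphi-\inf\varphi$ by $C(\delta)$ once we know $\int|\varphi|\,\omega^n\le C$. That $L^1$ bound comes from the sign change together with Lemma \ref{poincare ine}, or from the usual Green-function / $\omega$-psh compactness argument for the Poincar\'e type class, since $\varphi$ is $\omega$-psh with bounded oscillation on compact pieces.

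\textbf{Main obstacle.} The key difficulty is that the ABP argument must be run uniformly near $D$, where the manifold is non-compact. Two ingredients handle this. The uniform quasi-coordinates make the local ABP constants independent of $\delta$. The asymptotic condition $\mathrm{tr}_\omega\chi_t=n+O(\rho^{-\eta})$ is needed to rule out the extremum escaping to $D$. Concretely, I would consider $\varphi+\sigma\log\rho$ type barriers, for which $dd^c\log\rho$ is bounded with respect to $\omega$. I would show that near $D$ the equation forces $|e^{-\epsilon\varphi}-1|\lesssim\rho^{-\eta}$ at extremal points. This pins down the behaviour at infinity and lets the minimum of $\varphi$ be attained (or almost attained) in a compact region, where the subsolution argument yields $\|\varphi\|_{L^\infty}\le C_0(\delta)$, independent of $\epsilon$.
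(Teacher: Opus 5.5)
Your route (a sign change plus an ABP/Sz\'ekelyhidi-type $C^0$ estimate) differs from the paper's, and it has a genuine gap at the integral control that ABP requires. ABP only bounds $\inf\varphi-\sup\varphi$ in terms of an $L^1$ bound on $\varphi-\sup\varphi$ over the chart, and neither of your sources for that bound works on $M$. First, Lemma \ref{poincare ine} needs a bound on $\int|d\varphi|^2\omega^n$, which you never produce. The sign change, being with respect to $\omega_\varphi^n$, does not control any mean either. Second, the $\omega$-psh compactness argument is false here. Near $D$, with $t=\log(-\log|S|^2)$ and the model Poincar\'e form $\omega_P=-dd^c t$, one has $dd^c f(t)=(f''-f')\,\omega_P$. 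So a bounded $\omega$-psh function can increase toward $D$ with any slope $\kappa$ that is small compared with the normal coefficient of $\omega$. Smoothing $\kappa(\min\{\max\{t,t_1\},T\}-T)$ gives $\sup\varphi=0$ but $\varphi=-\kappa(T-t_1)$ on the whole bulk, so $\int|\varphi|\omega^n\to\infty$ as $T\to\infty$.

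This missing oscillation estimate is exactly what the paper supplies, and it is the one place where $\epsilon\varphi\ge-\delta_0$ is genuinely used. In Lemma \ref{l2.1}, that hypothesis makes $\omega_\varphi$ an almost-subsolution. Concavity of $\log(\sigma_{n-1}/\sigma_{n-2})$ then bounds the weighted Dirichlet energy by $C\int\varphi|\varphi|^{p-1}(\omega^n-\omega^{n-1}\wedge\chi)$. The Poincar\'e inequality and Jensen (Lemmas \ref{l2.2}, \ref{l2}) give a uniform $L^2$ bound. Moser iteration with the weighted Sobolev inequality of Lemma \ref{weighted sobolev} then gives $L^\infty$; the asymptotic hypothesis enters only as the weight $\rho^{-\eta}$.

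Your localization near $D$ also fails as stated. ABP in a quasi-coordinate chart near $D$ is not uniform: $\Phi_\delta$ wraps around the circle fibres on the order of $\rho$ times. Hence the Euclidean $L^1$ norm on the chart is about $\rho$ times the $\omega^n$-integral over its image, and you need the extremum to lie in a compact set independent of $\epsilon$. Your estimate $|e^{-\epsilon\varphi}-1|\lesssim\rho^{-\eta}$ at extremal points only gives $|\varphi|\lesssim\rho^{-\eta}/\epsilon$. That localizes to $\{\rho\lesssim\epsilon^{-1/\eta}\}$, which exhausts $M$ as $\epsilon\to0$. The barrier $\sigma\log\rho$ is unbounded, so it cannot be removed at the end. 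A bounded barrier such as $K\rho^{-\eta}$ is nearly psh near $D$, since $dd^c\rho^{-\eta}\approx\eta(1+\eta)\rho^{-\eta}\omega_P$, and it would give $\epsilon$-independent sign information at extrema in $\{\rho\ge R\}$. Even then you would still need an oscillation bound on a fixed compact set, which brings you back to the missing integral estimate above.
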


\begin{lem}\label{l2.1}
Assume the same assumption as the Proposition \ref{prop improve linfty}. Then there exists a constants $\delta_0$ and $C$ depending on $\delta$ such that if $\epsilon \varphi_{\epsilon,t}\ge -\delta_0$, we have that for any $p \ge 1$,
\begin{equation}\label{e linfty 4}
\begin{split}
    &\int_{X\setminus D} p |\varphi_{\epsilon,t}|^{p-1} |d \varphi_{\epsilon,t}|^2 \omega^n + \Sigma_{i=1}^{n-2}\int_{X\setminus D} p |\varphi_{\epsilon,t}|^{p-1} \sqrt{-1} \partial \varphi_{\epsilon,t} \wedge \bar \partial \varphi_{\epsilon,t} \wedge \omega_{\varphi_{\epsilon,t}}^i \wedge \omega^{n-2-i} \wedge \chi \\
    &\le C\int_{X\setminus D} \varphi_{\epsilon,t} |\varphi_{\epsilon,t}|^{p-1} (\omega^n - \omega^{n-1}\wedge \chi)
\end{split}
\end{equation}
\end{lem}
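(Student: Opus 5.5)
We will derive the estimate by testing the equation against $\varphi|\varphi|^{p-1}$ and integrating by parts. Throughout, write $\varphi=\varphi_{\epsilon,t}$, $\omega_\varphi=\omega+dd^c\varphi$, $\chi_t=t\chi+(1-t)\omega$, and use the normalization (\ref{normalize}).

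\textbf{Step 1: a difference of top forms.} Rewrite $(*_{\epsilon,t})$ as
\begin{equation*}
\omega_\varphi^n - \omega_\varphi^{n-1}\wedge\chi_t = (1-e^{-\epsilon\varphi})\,\omega_\varphi^n .
\end{equation*}
Subtract $\omega^n-\omega^{n-1}\wedge\chi_t$ from both sides. Expanding with $\omega_\varphi-\omega=dd^c\varphi$ gives
\begin{equation*}
\omega_\varphi^n-\omega^n=dd^c\varphi\wedge\sum_{i=0}^{n-1}\omega_\varphi^i\wedge\omega^{n-1-i},
\qquad
\omega_\varphi^{n-1}\wedge\chi_t-\omega^{n-1}\wedge\chi_t=dd^c\varphi\wedge\sum_{i=0}^{n-2}\omega_\varphi^i\wedge\omega^{n-2-i}\wedge\chi_t .
\end{equation*}
Since $\omega^n-\omega^{n-1}\wedge\chi_t=t(\omega^n-\omega^{n-1}\wedge\chi)$, this yields
\begin{equation*}
dd^c\varphi\wedge\Big(\sum_{i=0}^{n-1}\omega_\varphi^i\wedge\omega^{n-1-i}-\sum_{i=0}^{n-2}\omega_\varphi^i\wedge\omega^{n-2-i}\wedge\chi_t\Big)=(1-e^{-\epsilon\varphi})\omega_\varphi^n - t(\omega^n-\omega^{n-1}\wedge\chi).
\end{equation*}

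\textbf{Step 2: integrate against the test function.} Multiply by $-\varphi|\varphi|^{p-1}$ and integrate over $X\setminus D$. Integrate by parts using Lemma \ref{G-S}; this is justified because $\varphi$ is bounded with bounded derivatives in the Poincar\'e sense and the volume is finite. The left side becomes $p\int|\varphi|^{p-1}\sqrt{-1}\partial\varphi\wedge\bar\partial\varphi\wedge\Theta$, where
\begin{equation*}
\Theta=\sum_{i=0}^{n-1}\omega_\varphi^i\wedge\omega^{n-1-i}-\sum_{i=0}^{n-2}\omega_\varphi^i\wedge\omega^{n-2-i}\wedge\chi_t .
\end{equation*}
We group $\Theta$ term by term as $\omega_\varphi^i\wedge\omega^{n-2-i}\wedge(\omega-\chi_t)$ plus the top term $\omega_\varphi^{n-1}$.

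\textbf{Step 3: positivity, the main obstacle.} The sign of $\omega-\chi_t=t(\omega-\chi)$ is indefinite, so we cannot argue term by term. Instead we use the subsolution condition $n\omega^{n-1}\ge(1+\delta)(n-1)\omega^{n-2}\wedge\chi$, in the spirit of Sun / Song--Weinkove, with the following bookkeeping.
\begin{itemize}
\item For the pure $\omega$ term ($i=0$), the condition gives $\omega^{n-1}-\omega^{n-2}\wedge\chi_t\ge c_\delta\,\omega^{n-1}$ when tested on $\sqrt{-1}\partial\varphi\wedge\bar\partial\varphi$, since $\omega^{n-2}\wedge\chi\le \frac{n}{(n-1)(1+\delta)}\omega^{n-1}$. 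This produces the $|d\varphi|^2\omega^n$ term.
\item For the mixed terms, we compare pointwise in coordinates diagonalizing $\omega$ and $\chi$, with $\omega_\varphi$ arbitrary positive. The expected difficulty is extracting the positive terms $\omega_\varphi^i\wedge\omega^{n-2-i}\wedge\chi$ on the left while keeping an honest bound.
\item If a pointwise comparison of the mixed terms fails, we will instead use the equation itself, $\omega_\varphi^{n-1}\wedge\chi_t = e^{-\epsilon\varphi}\omega_\varphi^n$, to control them.
\end{itemize}

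\textbf{Step 4: right-hand side and the role of $\delta_0$.} The right side is
\begin{equation*}
-\int\varphi|\varphi|^{p-1}(1-e^{-\epsilon\varphi})\omega_\varphi^n + t\int\varphi|\varphi|^{p-1}(\omega^n-\omega^{n-1}\wedge\chi).
\end{equation*}
The first term is $\le 0$ because $\varphi(1-e^{-\epsilon\varphi})\ge0$, so it can be dropped. The second is the desired right side of (\ref{e linfty 4}) up to the factor $t\le1$ and the constant $C$. We also rewrite $\omega_\varphi^{n-1}\wedge\chi_t = e^{-\epsilon\varphi}\omega_\varphi^n$, and use $e^{-\epsilon\varphi}\le e^{\delta_0}$, which is close to $1$ when $\epsilon\varphi\ge-\delta_0$. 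The smallness of $\delta_0$ relative to $\delta$ absorbs the resulting error $(e^{\delta_0}-1)$ into the gain $c_\delta$, which is why $\delta_0$ and $C$ depend only on $\delta$.
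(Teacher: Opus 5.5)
\textbf{There is a genuine gap in Step 3, which is where the lemma's content lies.} Your Steps 1--2 and the sign argument at the start of Step 4 are correct. They are equivalent to the paper's reduction, which replaces $e^{\epsilon\varphi}\omega_\varphi^{n-1}\wedge\chi$ by $\omega_\varphi^{n-1}\wedge\chi$ using $\varphi(e^{\epsilon\varphi}-1)\ge 0$. Your $\Theta$ coincides with $\int_0^1\big(n\omega_s^{n-1}-(n-1)\omega_s^{n-2}\wedge\chi_t\big)\,ds$, where $\omega_s=(1-s)\omega+s\omega_\varphi$, which is how the paper writes it. The concrete claims in Step 3, however, fail:
\begin{enumerate}
\item \emph{First bullet.} The bound $\omega^{n-2}\wedge\chi\le\frac{n}{(n-1)(1+\delta)}\omega^{n-1}$ only gives the coefficient $t\big(1-\frac{n}{(n-1)(1+\delta)}\big)$. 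Under the normalization (\ref{normalize}) this is never positive. The subsolution inequality gives $\mathrm{tr}_\omega\chi\le\frac{n^2}{(n-1)(1+\delta)}$ pointwise, while $\mathrm{tr}_\omega\chi$ has $\omega^n$-average $n$; together these force $(n-1)(1+\delta)\le n$. For $\chi=\omega$, which is admissible, every term $\omega_\varphi^i\wedge\omega^{n-2-i}\wedge(\omega-\chi_t)$ vanishes identically. So the $i=0$ term yields no gain $c_\delta$, and the absorption of $e^{\delta_0}-1$ in Step 4 has nothing to absorb into.
\item \emph{Second bullet.} For an arbitrary positive $\omega_\varphi$, $\Theta$ is not positive on rank-one forms. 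Already for $n=2$, $\Theta=\omega+\omega_\varphi-\chi_t$ is negative in any direction where $\chi_t>\omega$ and $\omega_\varphi$ is small.
\item \emph{Third bullet.} It points the right way, but does not say how a scalar identity between top forms controls the $(n-1,n-1)$-form $\Theta$ tested against $\beta=\sqrt{-1}\partial\varphi\wedge\bar\partial\varphi$.
\end{enumerate}

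\textbf{The missing idea.} The equation gives $\mathrm{tr}_{\omega_\varphi}\chi_t=ne^{-\epsilon\varphi}\le ne^{\delta_0}$. After simultaneous diagonalization, this yields the almost-cone condition $n\omega_\varphi^{n-1}\ge e^{-\delta_0}(n-1)\omega_\varphi^{n-2}\wedge\chi_t$, while $\omega$ satisfies the cone condition with constant $1+\delta$. For fixed $\beta$, the ratio $n\omega_s^{n-1}\wedge\beta/\big((n-1)\omega_s^{n-2}\wedge\chi_t\wedge\beta\big)$ equals $\sigma_{n-1}/\sigma_{n-2}$ of $\omega_s$ restricted to $\ker\partial\varphi$, taken relative to $\chi_t$. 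By concavity of $\log(\sigma_{n-1}/\sigma_{n-2})$ along the segment $s\mapsto\omega_s$, this ratio is at least $(1+\delta)^{1-s}e^{-s\delta_0}$. Expanding $\omega_s^{n-2}$ binomially and integrating in $s$ gives
\[
\Theta\wedge\beta\ \ge\ \sum_{i=0}^{n-2}C_{\delta_0,i}\,\omega_\varphi^i\wedge\omega^{n-2-i}\wedge\chi_t\wedge\beta,\qquad C_{\delta_0,i}=(n-1)\binom{n-2}{i}\int_0^1\big((1+\delta)^{1-s}e^{-s\delta_0}-1\big)s^i(1-s)^{n-2-i}\,ds .
\]
Since $C_{0,i}>0$, we get $C_{\delta_0,i}>0$ for $\delta_0$ small depending on $\delta$ and $n$; this is exactly where $\delta_0$ is fixed. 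Finally, $\chi_t\ge c\,\omega$ and $\chi_t\ge c\,\chi$ uniformly. Hence the $i=0$ term gives the $|d\varphi|^2\omega^n$ term, and the remaining terms give the mixed terms on the left of (\ref{e linfty 4}).
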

\begin{proof}
     Note that each equation $(*_{\epsilon,t})$ in the continuity path (\ref{new path}) has the same form as $(*_{\epsilon,1})$ which only replace $\chi$ by another  K\"ahler form $\chi_t= (1-t)\omega+ t\chi$. Since there is a uniform constant $C$ independent of $t$ such that $1/C \omega \le \chi_t \le C \omega$ and all the derivatives of $\chi$ are bounded with respect to $\omega$, and $\omega$ satisfies the subsolution condition 
\begin{equation}
n \omega^{n-1} \ge (1+\delta) (n-1) \omega^{n-2} \wedge \chi_t
\end{equation}
 according to the discussion in the introduction section, we can just do the estimate for $(*_{\epsilon,1})$ and denote $\varphi_{\epsilon,1}$ as $\varphi_{\epsilon}$ just for simplicity.  The key idea is that in this case $\varphi_{\epsilon}$ is almost a subsolution to $\omega_{\varphi_{\epsilon}}^n = \omega_{\varphi_{\epsilon}}^{n-1}\wedge \chi$. In fact, we can compute that
\begin{equation*}
    \omega_{\varphi_{\epsilon}}^n = e^{\epsilon \varphi_{\epsilon}} \omega_{\varphi_{\epsilon}}^{n-1}\wedge \chi \ge e^{-\delta_0} \omega_{\varphi_{\epsilon}}^{n-1}\wedge \chi.
\end{equation*}
This implies that
\begin{equation}\label{almost sub}
    n \omega_{\varphi_{\epsilon}}^{n-1} \ge e^{-\delta_0} (n-1) \omega_{\varphi_{\epsilon}}^{n-2}\wedge \chi.
\end{equation}

We compute that
\begin{equation}\label{e linfty 1.4}
\begin{split}
&\int_{X\setminus D} \varphi_{\epsilon} |\varphi_{\epsilon}|^{p-1} (-\omega^n + \omega^{n-1}\wedge \chi ) \\
&=\int_{X\setminus D} \varphi_{\epsilon} |\varphi_{\epsilon}|^{p-1} \Big(  (\omega_{\varphi_{\epsilon}}^n -\omega^n )- (e^{\epsilon \varphi_{\epsilon}}\omega_{\varphi_{\epsilon}}^{n-1} \wedge \chi - \omega^{n-1} \wedge \chi) \Big)  \\
& \le \int_{X\setminus D} \varphi_{\epsilon} |\varphi_{\epsilon}|^{p-1} \Big(  (\omega_{\varphi_{\epsilon}}^n -\omega^n )- (\omega_{\varphi_{\epsilon}}^{n-1} \wedge \chi - \omega^{n-1} \wedge \chi) \Big)
\end{split}
\end{equation}
Here in the last line we use that $\varphi_{\epsilon}(e^{\epsilon \varphi_{\epsilon}}-1)\ge 0$.
Note that 
\begin{equation}\label{e lin}
\begin{split}
&(\omega_{\varphi_{\epsilon}}^n -\omega^n )- (\omega_{\varphi_{\epsilon}}^{n-1} \wedge \chi  - \omega^{n-1} \wedge \chi) \\
&=\int_0^1 \sqrt{-1} \partial \bar \partial \varphi_{\epsilon} \wedge \Big(n \omega_{t \varphi_{\epsilon}}^{n-1} - (n-1) \omega_{t\varphi_{\epsilon}}^{n-2} \wedge \chi \Big) dt.
\end{split}
\end{equation}
Let $\sigma_{l}$ be the $l-th$ elementary symmetric polynomial. It is a standard result that $\log(\frac{\sigma_{n-1}(\lambda(A))}{\sigma_{n-2}(\lambda(A))})$ is a concave function over positive symmetric $(n-1) \times (n-1)$ matrices $A$, where $\lambda(A)$ is the vector of the eigenvalues of $A$. Thus we can use (\ref{almost sub}) to compute the pointwise inequality for $1\le i \le n$:
\begin{equation}\label{e sub1}
    \begin{split}
       & \frac{\sqrt{-1} dz^i \wedge d \bar z^i \wedge n \omega_{t \varphi_{\epsilon}}^{n-1}}{\sqrt{-1} dz^i \wedge d \bar z^i \wedge (n-1)\omega_{t \varphi_{\epsilon}}^{n-2}\wedge \chi}  \\
       & \ge e^{t \log (\frac{\sqrt{-1}dz^i \wedge d \bar z^i \wedge n \omega_{\varphi_{\epsilon}}^{n-1}}{\sqrt{-1}dz^i \wedge d \bar z^i \wedge (n-1)\omega_{\varphi_{\epsilon}}^{n-2}\wedge \chi})+ (1-t)\log (\frac{\sqrt{-1} dz^i \wedge d \bar z^i \wedge \omega^{n-1}}{\sqrt{-1} dz^i \wedge d \bar z^i \wedge (n-1)\omega^{n-2}\wedge \chi})} \\
       &  \ge e^{t(-\delta_0) +(1-t)\log (1+\delta)}.
    \end{split}
\end{equation}

Combining (\ref{e linfty 1.4}), (\ref{e lin}) and (\ref{e sub1}), we get
\begin{equation}\label{e lp mos}
\begin{split}
     &-\int_{X\setminus D} \varphi_{\epsilon} |\varphi_{\epsilon}|^{p-1} (-\omega^n + \omega^{n-1}\wedge \chi ) \ge -\int_{X\setminus D} \int_0^1 \varphi_{\epsilon} |\varphi_{\epsilon}|^{p-1} \sqrt{-1} \partial \bar \partial \varphi_{\epsilon} \wedge \Big(n \omega_{t \varphi_{\epsilon}}^{n-1} - (n-1) \omega_{t\varphi_{\epsilon}}^{n-2} \wedge \chi \Big) dt \\
     & =  \int_{X\setminus D} \int_0^1 p |\varphi_{\epsilon}|^{p-1} \sqrt{-1}\partial \varphi_{\epsilon} \wedge \bar \partial \varphi_{\epsilon} \wedge \Big(n \omega_{t \varphi_{\epsilon}}^{n-1} - (n-1) \omega_{t\varphi_{\epsilon}}^{n-2} \wedge \chi \Big) dt \\
     & \ge  \int_{X\setminus D} \int_0^1 p |\varphi_{\epsilon}|^{p-1} \sqrt{-1}\partial \varphi_{\epsilon} \wedge \bar \partial \varphi_{\epsilon} \wedge (e^{-\delta_0 t +(1-t)\log (1+\delta)}-1)(n-1) \omega_{t\varphi_{\epsilon}}^{n-2} \wedge \chi dt \\
     & = \int_{X\setminus D} p |\varphi_{\epsilon}|^{p-1} \sqrt{-1} \partial \varphi_{\epsilon} \wedge \bar \partial \varphi_{\epsilon} \wedge \Sigma_{i=0}^{n-2} C_{\delta_0, i} \omega_{\varphi_{\epsilon}}^i \wedge \omega^{n-2-i}\wedge \chi dt
\end{split}
\end{equation}
In the second line above, we use the Lemma \ref{G-S} to do integration by parts. Note that $C_{0,i}>0$. Thus, there exists $\delta_0$ small such that for any $i$, we have that $C_{\delta_0,i}>0$. 
\end{proof}

We can prove the following estimate:
\begin{lem}\label{l2.2}
Assume the same assumptions as in the Proposition \ref{prop improve linfty}. Define $a_{\epsilon,t}= \int_{X\setminus D}\varphi_{\epsilon,t} \omega^n$ and $b_{\epsilon,t}= \int_{X\setminus D}\varphi_{\epsilon,t} \omega^{n-1}\wedge \chi$. Then there exists a constants $\delta_0$ and $C$ depending on $\delta$ such that if $\epsilon \varphi_{\epsilon,t}\ge -\delta_0$, we have that  $\varphi_{\epsilon,t}$  satisfies that
\begin{equation*}
||\varphi_{\epsilon,t}- a_{\epsilon,t}||_{L^{2}}\le C,\,\,\, ||\varphi_{\epsilon,t}- b_{\epsilon,t}||_{L^{2}}\le C.
\end{equation*}
\end{lem}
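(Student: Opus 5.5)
The plan is to use Lemma \ref{l2.1} with $p=1$ together with the Poincar\'e inequality, Lemma \ref{poincare ine}. As in the proof of Lemma \ref{l2.1}, it suffices to treat $(*_{\epsilon,1})$ and write $\varphi=\varphi_{\epsilon,1}$. For $p=1$, the left side of (\ref{e linfty 4}) controls $\int_{X\setminus D}|d\varphi|^2\omega^n$, since all the other terms are nonnegative. Hence
\begin{equation*}
\int_{X\setminus D}|d\varphi|^2_\omega\,\omega^n\le C\int_{X\setminus D}\varphi\,(\omega^n-\omega^{n-1}\wedge\chi).
\end{equation*}

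The normalization (\ref{normalize}) gives $\int(\omega^n-\omega^{n-1}\wedge\chi)=0$. Consequently the right-hand side is unchanged if $\varphi$ is replaced by $\varphi-a_{\epsilon,1}$, giving
\begin{equation*}
\int|d\varphi|^2\omega^n\le C\int(\varphi-a)\,f\,\omega^n,\qquad f:=1-\frac{\omega^{n-1}\wedge\chi}{\omega^n}.
\end{equation*}
By the asymptotic condition, $f=O(\rho^{-\eta})$, so $f$ is bounded on $X\setminus D$. The total volume is finite, so $\|f\|_{L^2(\omega^n)}\le C$. Cauchy--Schwarz and Lemma \ref{poincare ine} then give
\begin{equation*}
\|d\varphi\|_{L^2}^2\le C\|\varphi-a\|_{L^2}\|f\|_{L^2}\le C\sqrt{C_P}\,\|d\varphi\|_{L^2}.
\end{equation*}
Therefore $\|d\varphi\|_{L^2}\le C$, and applying Lemma \ref{poincare ine} once more yields $\|\varphi-a\|_{L^2}\le C$. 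This step requires $\varphi\in W^{1,2}$ so that Poincar\'e applies. That holds because $\varphi\in C^{k,\alpha}(M)$ is bounded with bounded gradient in the quasi-coordinate sense, while the volume is finite. Note that for this $L^2$ bound, mere boundedness of $f$ suffices; the decay rate of $f$ is not needed.

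For $b$, observe that
\begin{equation*}
a-b=\int\varphi\,(\omega^n-\omega^{n-1}\wedge\chi)=\int(\varphi-a)\,f\,\omega^n,
\end{equation*}
which is bounded by $\|\varphi-a\|_{L^2}\|f\|_{L^2}\le C$. Since the total volume is $1$, we get
\begin{equation*}
\|\varphi-b\|_{L^2}\le\|\varphi-a\|_{L^2}+|a-b|\le C.
\end{equation*}

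The main obstacle is justifying the integrations by parts and the finiteness of all integrals on the complete noncompact manifold. These are handled by Lemma \ref{G-S} as in Lemma \ref{l2.1}, using the boundedness of $\varphi$ and its derivatives with respect to the Poincar\'e type metric.
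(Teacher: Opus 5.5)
Your proposal is correct. For the bound on $\|\varphi_{\epsilon,t}-a_{\epsilon,t}\|_{L^2}$ you follow the paper: Lemma \ref{l2.1} with $p=1$, the normalization, and Lemma \ref{poincare ine}. You also spell out the Cauchy--Schwarz absorption step that the paper leaves implicit.

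For the bound on $\|\varphi_{\epsilon,t}-b_{\epsilon,t}\|_{L^2}$ you take a genuinely different and shorter route. The paper uses Auvray's existence theorem for Poincar\'e-type Monge--Amp\`ere equations to produce $\omega'\in\mathcal{PM}_{[\omega]}$ with $\omega'^n=\omega^{n-1}\wedge\chi$. Then $b$ is the $\omega'^n$-mean of $\varphi$, and the paper reruns the Poincar\'e-inequality argument with respect to $\omega'$. This is where the decay hypothesis $O(\rho^{-\eta})$ enters. You instead note that $a-b=\int(\varphi-a)f\,\omega^n$ by the normalization, and bound this by Cauchy--Schwarz. That uses only the boundedness of $f$, which already follows from $\omega$ and $\chi$ being quasi-isometric.

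Your route needs neither a Monge--Amp\`ere solution nor any decay of $f$. It also gives $|a_{\epsilon,t}-b_{\epsilon,t}|\le C$ directly, which makes step (\ref{ab}) in Lemma \ref{l2} redundant (the paper derives that bound there from the two $L^2$ estimates). The paper's route treats $a$ and $b$ symmetrically as Poincar\'e means, but at the cost of heavier machinery. That machinery is not needed even on its own terms: a Poincar\'e inequality for any probability measure uniformly comparable to $\omega^n$ follows directly from Lemma \ref{poincare ine}.
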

\begin{proof}
According to the discussion at the beginning of the proof of the Lemma \ref{l2.1}, we can use $\varphi_{\epsilon}$ instead of $\varphi_{\epsilon,t}.$ Define $a_{\epsilon}= \int_{X\setminus D}\varphi_{\epsilon} \omega^n$ and $b_{\epsilon}= \int_{X\setminus D}\varphi_{\epsilon} \omega^{n-1}\wedge \chi$. Then, we get from Poincar\'e Inequality, i.e. the Lemma \ref{poincare ine}, and the Lemma \ref{l2.1}:
\begin{equation*}
    \int_{X\setminus D}|\varphi_{\epsilon}- a_{\epsilon}|^2 \omega^n \le C \int_{X\setminus D} |d \varphi_{\epsilon}|^2 \omega^n \le C \int_{X\setminus D} \varphi_{\epsilon} (\omega^n - \omega^{n-1}\wedge \chi)= C \int_{X\setminus D} (\varphi_{\epsilon}- a_{\epsilon})(\omega^n- \omega^{n-1}\wedge \chi).
\end{equation*}
Here we use the normalization $\int_{X\setminus D} \omega^n = \int_{X\setminus D}\omega^{n-1}\wedge \chi$.
Thus we can get that
\begin{equation}\label{varphia}
    \int_{X\setminus D}|\varphi_{\epsilon}-a_{\epsilon}|^2\omega^n \le C.
\end{equation} 

Next we estimate $||\varphi_{\epsilon,t}- b_{\epsilon,t}||_{L^{2}}$. First, we use the Theorem 3.2 in \cite{A} and the assumption that $\frac{\omega^n}{\omega^{n-1}\wedge \chi}=1+O(\rho^{-\eta})$ to solve the Poincar\'e type complex Monge-Amp\'ere equation for $\omega'\in \mathcal{PM}_{[\omega]}$:
\begin{equation*}
    \omega'^n = \omega^{n-1} \wedge \chi,
\end{equation*}
using the normalization $\int_{X\setminus D} \omega^n = \int_{X\setminus D}\omega^{n-1}\wedge \chi$. Then we can use Poincar\'e Inequality using metric $\omega'$ and the Lemma \ref{l2.1} to get that:
\begin{equation*}
\begin{split}
       &\int_{X\setminus D}|\varphi_{\epsilon}- b_{\epsilon}|^2 \omega^n \le C \int_{X\setminus D}|\varphi_{\epsilon}- b_{\epsilon}|^2 \omega'^n \le C \int_{X\setminus D} |d \varphi_{\epsilon}|^2 \omega'^n \\
       & \le C \int_{X\setminus D} |d \varphi_{\epsilon}|^2 \omega^n \le C \int_{X\setminus D} \varphi_{\epsilon} (\omega^n - \omega^{n-1}\wedge \chi)= C \int_{X\setminus D} (\varphi_{\epsilon}- b_{\epsilon})(\omega^n- \omega^{n-1}\wedge \chi).
\end{split}
\end{equation*}

Thus, we can get that 
\begin{equation}\label{varphib}
    \int_{X\setminus D}|\varphi_{\epsilon}-b_{\epsilon}|^2\omega^n \le C.
\end{equation} 
\end{proof}

Then we can prove the following $L^2$ estimate:
\begin{lem}\label{l2}
    Assume the same assumptions as in the Proposition \ref{prop improve linfty}. Then there exists a constants $\delta_0$ and $C$ depending on $\delta$ such that if $\epsilon \varphi_{\epsilon,t}\ge -\delta_0$, we have that $\varphi_{\epsilon,t}$ satisfies that
\begin{equation*}
||\varphi_{\epsilon,t}||_{L^{2}}\le C.
\end{equation*}
\end{lem}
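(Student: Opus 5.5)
The plan is to combine Lemma \ref{l2.2}, which controls the oscillation of $\varphi_{\epsilon,t}$ about the two averages $a_{\epsilon,t}$ and $b_{\epsilon,t}$, with integral identities from the equation that bound these averages themselves. As in Lemma \ref{l2.1}, I reduce to $t=1$ and write $\varphi=\varphi_{\epsilon}$, $a=a_{\epsilon}$, $b=b_{\epsilon}$. By (\ref{normalize}) the total volume is $1$. Hence $\|\varphi\|_{L^2}\le |a|+\|\varphi-a\|_{L^2}$, and by Lemma \ref{l2.2} it suffices to bound $|a|$. Also $|a-b|=\|a-b\|_{L^2}\le \|\varphi-a\|_{L^2}+\|\varphi-b\|_{L^2}\le 2C$, so it suffices to prove $a\ge -C$ and $b\le C$. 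All integrations by parts are justified by Lemma \ref{G-S}, using that $\varphi$ has bounded derivatives with respect to $\omega$ and grows at most like $\log\rho$.

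\emph{Lower bound for $a$.} Integrating the equation and using $\int\omega_\varphi^n=\int\omega^n=1=\int\omega^{n-1}\wedge\chi=\int\omega_\varphi^{n-1}\wedge\chi$ (cohomological invariance, via Lemma \ref{G-S}) gives
$\int_{X\setminus D}(e^{\epsilon\varphi}-1)\,\omega_\varphi^{n-1}\wedge\chi=0$.
The elementary inequality $xe^x\ge e^x-1$ then yields $\int \varphi e^{\epsilon\varphi}\omega_\varphi^{n-1}\wedge\chi\ge 0$, that is, $\int\varphi\,\omega_\varphi^n\ge 0$. The standard energy identity gives
$\int\varphi(\omega_\varphi^n-\omega^n)=-\int\sqrt{-1}\partial\varphi\wedge\bar\partial\varphi\wedge\sum_j\omega_\varphi^j\wedge\omega^{n-1-j}\le 0$.
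Together these give $a\ge\int\varphi\,\omega_\varphi^n\ge 0$.

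\emph{Upper bound for $b$.} Using $e^x-1\ge x$ in the integrated equation gives $\int\varphi\,\omega_\varphi^{n-1}\wedge\chi\le 0$. Integration by parts gives
$b-\int\varphi\,\omega_\varphi^{n-1}\wedge\chi=\int\sqrt{-1}\partial\varphi\wedge\bar\partial\varphi\wedge\sum_{i=0}^{n-2}\omega_\varphi^{i}\wedge\omega^{n-2-i}\wedge\chi$.
The right-hand side is exactly the kind of gradient term controlled by Lemma \ref{l2.1} with $p=1$. That lemma bounds $\int|d\varphi|^2\omega^n$ together with the mixed terms by $C\int\varphi(\omega^n-\omega^{n-1}\wedge\chi)=C(a-b)\le C$. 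Hence $b\le C+\int\varphi\,\omega_\varphi^{n-1}\wedge\chi\le C$.

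Combining these, $0\le a\le b+2C\le C'$, so $\|\varphi\|_{L^2}\le C$ with $C$ independent of $\epsilon$ and $t$. The step I expect to need the most care is the upper bound on $b$. Lemma \ref{l2.1} as stated displays the sum $\sum_{i=1}^{n-2}$, so the $i=0$ term $\omega^{n-2}\wedge\chi$ has to be recovered separately. I would do this by noting that its proof actually produces all $i$ from $0$ to $n-2$ with positive coefficients $C_{\delta_0,i}$. Alternatively, the $i=0$ term is bounded by $C\int|d\varphi|^2\omega^n$, since $\chi\le C\omega$. A secondary point is checking the decay hypotheses of Lemma \ref{G-S} for forms like $\varphi\, d^c\varphi\wedge\omega_\varphi^{j}\wedge\omega^{n-1-j}$. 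This follows from the boundedness of $|\nabla\varphi|_\omega$ and the finite volume of Poincar\'e type metrics.
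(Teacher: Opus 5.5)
Your proposal is correct and follows essentially the same route as the paper. Both arguments bound $b$ from above via $\int\varphi\,\omega_\varphi^{n-1}\wedge\chi\le 0$, integration by parts and Lemma \ref{l2.1}; they bound $a$ from below via $\int\varphi\,\omega_\varphi^{n}\ge 0$ and the energy monotonicity; and they close with $|a-b|\le C$ from Lemma \ref{l2.2}. Your pointwise inequalities $e^x-1\ge x$ and $xe^x\ge e^x-1$ simply replace the paper's two applications of Jensen's inequality. You also correctly handle the $i=0$ gradient term, which the statement of Lemma \ref{l2.1} omits and the paper passes over: either read it off that lemma's proof or bound it by $C\int|d\varphi|^2\omega^n$ using $\chi\le C\omega$.
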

\begin{proof}
According to the discussion at the begining of the proof of the Lemma \ref{l2.1}, we can use $\varphi_{\epsilon}$ instead of $\varphi_{\epsilon,t}.$ For simplicity, we can also replace $\omega_{\underline{u}}$ by $\omega$.
Recall that we define $a_{\epsilon}= \int_{X\setminus D}\varphi_{\epsilon} \omega^n$ and $b_{\epsilon}= \int_{X\setminus D}\varphi_{\epsilon} \omega^{n-1}\wedge \chi$.  

We will show that $a_{\epsilon}$ and $b_{\epsilon}$ are uniformly bounded from above and below independent of $\epsilon$. Then the bound on $||\varphi_{\epsilon}||_{L^2(\omega^n)}$ is immediate from the Lemma \ref{l2.2}.

First, we can use Jensen's inequality to get that
\begin{equation*}
    \int_{X\setminus D}\epsilon \varphi_{\epsilon}\omega_{\varphi_{\epsilon}}^{n-1}\wedge \chi \le \log (\int_{X\setminus D} e^{\epsilon \varphi_{\epsilon}}\omega_{\varphi_{\epsilon}}^{n-1}\wedge \chi)=\log (\int_{X\setminus D} \omega_{\varphi_{\epsilon}}^n)=0
\end{equation*}
Here we use the equation for $\varphi_{\epsilon}$ and the normalization that  $\omega^n$ and $\omega^{n-1}\wedge \chi$ are probability measures, so are $\omega_{\varphi_{\epsilon}}^{n-1}\wedge \chi$ and $\omega_{\varphi_{\epsilon}}^n$. Then we have:
\begin{equation}\label{phie u}
\begin{split}
    &0 \ge \int_{X\setminus D}\varphi_{\epsilon}\omega_{\varphi_{\epsilon}}^{n-1}\wedge \chi = \int_{X\setminus D} \varphi_{\epsilon}\omega_{\varphi_{\epsilon}}^{n-2}\wedge \chi \wedge \omega + \int_{X\setminus D}\varphi_{\epsilon}\omega_{\varphi_{\epsilon}}^{n-2}\wedge \chi \wedge \sqrt{-1} \partial  \bar \partial \varphi_{\epsilon}\\
    & = ... = \int_{X\setminus D}\varphi_{\epsilon} \omega^{n-1}\wedge \chi - \Sigma_{i=0}^{n-2}\int_{X\setminus D} \sqrt{-1} \partial \varphi_{\epsilon} \wedge \bar \partial \varphi_{\epsilon} \wedge \omega_{\varphi_{\epsilon}}^{i}\wedge \chi \wedge \omega^{n-i-2}
\end{split}
\end{equation}
Using the Lemma \ref{l2.1} and the Lemma \ref{l2.2}, we can get that:
\begin{equation}\label{gra l2}
    \Sigma_{i=0}^{n-2}\int_{X\setminus D} \sqrt{-1} \partial \varphi_{\epsilon} \wedge \bar \partial \varphi_{\epsilon} \wedge \omega_{\varphi_{\epsilon}}^i \wedge \omega^{n-2-i} \wedge \chi \le C
\end{equation}
Combining (\ref{phie u}) and (\ref{gra l2}), we get that
\begin{equation}\label{bup}
b_{\epsilon}=\int_{X\setminus D}\varphi_{\epsilon}\omega^{n-1}\wedge \chi \le C
\end{equation}
for some uniform constant $C$ independent of $\epsilon.$

Next, we use Jensen's Inequality to get
\begin{equation*}
    1= \int_{X\setminus D} \omega_{\varphi_{\epsilon}}^{n-1}\wedge \chi = \int_{X\setminus D} e^{-\epsilon \varphi_{\epsilon}}\omega_{\varphi_{\epsilon}}^n \ge e^{\int_{X\setminus D}-\epsilon \varphi_{\epsilon}\omega_{\varphi_{\epsilon}}^n}.
\end{equation*}
Thus we have
\begin{equation*}
    \int_{X\setminus D}\varphi_{\epsilon}\omega_{\varphi_{\epsilon}}^n \ge 0.
\end{equation*}
Then we can compute 
\begin{equation}\label{adown}
\begin{split}
       & 0 \le  \int_{X\setminus D}\varphi_{\epsilon}\omega_{\varphi_{\epsilon}}^n = \int_{X\setminus D} \varphi_{\epsilon}\omega_{\varphi_{\epsilon}}^{n-1}\wedge \omega + \int_{X\setminus D}\varphi_{\epsilon} \omega_{\varphi_{\epsilon}}^{n-1}\wedge \sqrt{-1}\partial  \bar \partial \varphi_{\epsilon} \\
       &=\int_{X\setminus D} \varphi_{\epsilon}\omega_{\varphi_{\epsilon}}^{n-1}\wedge \omega- \int_{X\setminus D} \sqrt{-1}\partial \varphi_{\epsilon}\wedge \bar \partial \varphi_{\epsilon} \wedge \omega_{\varphi_{\epsilon}}^{n-1} \le \int_{X\setminus D}\varphi_{\epsilon}\omega_{\varphi_{\epsilon}}^{n-1}\wedge \omega \\
       & \le ... \le \int_{X\setminus D}\varphi_{\epsilon}\omega^{n}=a_{\epsilon}.
\end{split}
\end{equation}

Using the Lemma \ref{l2.2}, we get that
\begin{equation}\label{ab}
    |a_{\epsilon}-b_{\epsilon}|\le ( 2\int_{X\setminus D} |\varphi_{\epsilon}-a_{\epsilon}|^2 \omega^n + 2\int_{X\setminus D}|\varphi_{\epsilon}-b_{\epsilon}|^2 \omega^n)^{\frac{1}{2}} \le C.
\end{equation}
Combining (\ref{bup}), (\ref{adown}) and (\ref{ab}), we get that
\begin{equation*}
    |a_{\epsilon}|\le C
\end{equation*}
for some uniform constant $C$. Combining this with the Lemma \ref{l2.2}, we get that
\begin{equation*}
    \int_{X\setminus D}|\varphi_{\epsilon}|^2\omega^n \le C
\end{equation*}
for some uniform constant $C$.
\end{proof}

\begin{proof}
    (of Proposition \ref{prop improve linfty}.) 
Note that $\frac{|d \rho|_{\omega_0}}{\rho} \le C$ for some uniform constant $C$. Then we can compute
\begin{equation}\label{e weight}
\begin{split}
    & \int_{X\setminus D} |d(\rho^{-\frac{1}{2}}|\varphi_{\epsilon}|^{\frac{p+1}{2}})|^2 \rho \omega^n  +\int_{X\setminus D}|\varphi_{\epsilon}|^{p+1} \omega^n \\
    &\le 2 \int_{X\setminus D} |d|\varphi_{\epsilon}|^{\frac{p+1}{2}}|^2 \omega^n + C \int_{X\setminus D} |\varphi_{\epsilon}|^{p+1}\omega^n \\
    & \le C \int_{X\setminus D} |d|\varphi_{\epsilon}|^{\frac{p+1}{2}}|^2 \omega^n + C (\int_{X\setminus D} |\varphi_{\epsilon}|^{\frac{p+1}{2}}\omega^n)^{2} \\
     & \le \frac{C(p+1)^2}{p} \int_{X\setminus D} \varphi_{\epsilon} |\varphi_{\epsilon}|^{p-1} (\omega^n - \omega^{n-1}\wedge \chi)+ C (\int_{X\setminus D} |\varphi_{\epsilon}|^{\frac{p+1}{2}}\omega^n)^{2}\\
     & \le \frac{C(p+1)^2}{p} \int_{X\setminus D} \varphi_{\epsilon} |\varphi_{\epsilon}|^{p-1} (\omega^n - \omega^{n-1}\wedge \chi)  + C(\int_{X\setminus D}|\varphi_{\epsilon}|^{p+1}\rho^{-\eta} \omega^n)(\int_{X\setminus D} \rho^{\eta} \omega^n) \\
     & \le  \frac{C(p+1)^2}{p} \int_{X\setminus D} \varphi_{\epsilon} |\varphi_{\epsilon}|^{p-1} (\omega^n - \omega^{n-1}\wedge \chi)  + C\int_{X\setminus D}|\varphi_{\epsilon}|^{p+1}\rho^{-\eta} \omega^n
\end{split}
\end{equation}
In the third line above, we use Poincar\'e Inequality for $|\varphi_{\epsilon}|^{\frac{p+1}{2}}$.
Then, we apply the weighted Sobolev Inequality, i.e. the Lemma \ref{weighted sobolev}, to $\rho^{-\frac{1}{2}}|\varphi_{\epsilon}|^{\frac{p+1}{2}}$ with $(p,q)$ in the Lemma \ref{weighted sobolev} to be $(2(1+\eta),2)$. Here we can assume that $\eta$ is small such that $\frac{1}{2}< \frac{1}{p}+\frac{1}{2n}$. Then we get:
\begin{equation*}
\begin{split}
        (\int_{X\setminus D}|\varphi_{\epsilon}|^{(p+1)(1+\eta)} \rho^{-\eta} \omega^n )^{\frac{1}{1+\eta}} &\le C \int_{X\setminus D} |d(\rho^{-\frac{1}{2}}|\varphi_{\epsilon}|^{\frac{p+1}{2}})|^2 \rho \omega^n +C \int_{X\setminus D} |\varphi_{\epsilon}|^{p+1}\omega^n\\
    & \le \frac{C(p+1)^2}{p} \int_{X\setminus D} \varphi_{\epsilon} |\varphi_{\epsilon}|^{p-1} (\omega^n - \omega^{n-1}\wedge \chi) +C\int_{X\setminus D}|\varphi_{\epsilon}|^{p+1}\rho^{-\eta} \omega^n \\
    & \le Cp \int_{X\setminus D} (|\varphi_{\epsilon}|^{p+1}+|\varphi_{\epsilon}|^p) \rho^{-\eta}\omega^n.
\end{split}
\end{equation*}
In the second line of the above formula, we use (\ref{e weight}). In the last line above, we use the assumption that $tr_{\omega}\chi=n+O(\frac{1}{\rho^{\eta}})$. Then we use the standard Moser iteration argument to get that:
\begin{equation*}
||\varphi_{\epsilon}||_{L^{\infty}}\le C (||\varphi_{\epsilon}||_{L^2}+1).
\end{equation*}
Since we have obtained the $L^2$ estimate for $\varphi_{\epsilon}$ in the Lemma \ref{l2}, this concludes the proof of this Proposition. 
\end{proof}

\subsection{High order estimate}

In this section, we want to prove the following Proposition:
\begin{prop}\label{c11}
Suppose there exists a subsolution $\underline{u}$ as in the Theorem \ref{main thm}. Let $C_1$ be a positive constant. Then there exists a constant $\epsilon_0$ depending on $\delta$, $\omega$, $C_1$ and $\chi$. There exist $A$ and $C$ independent of $\epsilon$ such that for any $\epsilon\in (0,\epsilon_0)$ and $t\in [0,1]$, the solution $\varphi_{\epsilon,t}$ to (\ref{new path}), if satisfying $|\varphi_{\epsilon,t}|_{L^{\infty}}\le C_1$, must also satisfy that
\begin{equation*}
tr_{\chi} \omega_{\varphi_{\epsilon,t}} \le Ce^{A(\varphi_{\epsilon,t}- \inf_X \varphi_{\epsilon,t})}.
\end{equation*}
\end{prop}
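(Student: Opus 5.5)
We follow the Song--Weinkove second order estimate for the $J$-equation, adapted to the perturbed equation $(*_{\epsilon,t})$ and the complete Poincar\'e type setting. As in the proof of Lemma \ref{l2.1}, it suffices to treat $(*_{\epsilon,1})$ with $\chi$ replaced by $\chi_t$. The bounds $\frac1C\omega\le\chi_t\le C\omega$, the bounded covariant derivatives of $\chi_t$, and the subsolution inequality for $\omega$ all hold uniformly in $t$. We write $\varphi=\varphi_{\epsilon,t}$, $g=g_\varphi$, and $F=\epsilon\varphi$. The equation can be rewritten as
\begin{equation*}
tr_{\omega_\varphi}\chi = n e^{-\epsilon\varphi}.
\end{equation*}
This is the $J$-equation with a right-hand side $h=ne^{-\epsilon\varphi}$. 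Because $|\varphi|\le C_1$, the right-hand side satisfies $|h-n|\le C\epsilon C_1$. Choosing $\epsilon_0$ small makes $h$ as close to $n$ as we like, so the subsolution $\omega$ (with margin $\delta$) remains a uniform strict subsolution for this equation.

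We would apply the maximum principle to the quantity
\begin{equation*}
Q=\log tr_{\chi}\omega_\varphi - A\varphi.
\end{equation*}
Since the manifold is complete and noncompact, we use Yau's generalized maximum principle, or equivalently work in quasi-coordinates. This requires $Q$ to be bounded above. That holds because $\omega_\varphi$ is a Poincar\'e type metric, so $tr_\chi\omega_\varphi$ is bounded, though not uniformly. Hence along a sequence $x_k$ we have $Q(x_k)\to\sup Q$, $|\nabla Q|(x_k)\to0$, and $\limsup_k \tilde L Q(x_k)\le0$. Here $\tilde L$ is the linearized operator $\tilde L u = g^{i\bar q}\chi_{p\bar q}g^{p\bar j}u_{i\bar j}$.

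The computation proceeds as follows. We choose coordinates normal for $\chi$ in which $\omega_\varphi$ is diagonal. Differentiating the equation $g^{i\bar j}\chi_{i\bar j}=h$ twice gives the formula for $\tilde L(\log tr_\chi\omega_\varphi)$. There are three sources of terms.

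\emph{Curvature terms.} These come from $\chi$ and are bounded by $C\,tr_{\omega_\varphi}\chi\cdot(\ldots)$.

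\emph{Third order terms.} These are handled by the concavity of the operator together with the standard Song--Weinkove and Guan-type argument, which controls $|\nabla tr_\chi\omega_\varphi|^2$ at the maximum point.

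\emph{The term from $\partial\bar\partial h$.} We have
\begin{equation*}
\partial\bar\partial h=-\epsilon h\,\partial\bar\partial\varphi+\epsilon^2 h\,\partial\varphi\wedge\bar\partial\varphi .
\end{equation*}
Taking the trace with respect to $\chi$, this term enters with a sign. The gradient piece is nonnegative, so it enters favorably or is absorbed. The $\partial\bar\partial\varphi$ piece contributes $\epsilon h\,(tr_\chi\omega_\varphi - tr_\chi\omega)$ divided by $tr_\chi\omega_\varphi$, which is bounded by $C\epsilon$.

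For the term $-A\tilde L\varphi$, we write
\begin{equation*}
\tilde L\varphi = g^{i\bar q}\chi_{p\bar q}g^{p\bar j}\big((g_\varphi)_{i\bar j}-\omega_{i\bar j}\big) = h-g^{i\bar q}\chi_{p\bar q}g^{p\bar j}\omega_{i\bar j}.
\end{equation*}
The key input is the Song--Weinkove lemma: because $\omega$ is a strict subsolution for the equation with right-hand side $h$ (uniformly, thanks to the smallness of $\epsilon_0$), either
\begin{equation*}
g^{i\bar q}\chi_{p\bar q}g^{p\bar j}\omega_{i\bar j}-h\ge\theta\sum_i g^{i\bar i}\chi_{i\bar i}\,,
\end{equation*}
or $tr_\chi\omega_\varphi$ is already bounded at that point. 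In the first case, taking $A$ large, the good term $A\theta\sum_i g^{i\bar i}$ dominates the curvature error. Evaluating at the sequence $x_k$ then yields a bound $tr_\chi\omega_\varphi\le C$ there. This gives $\sup Q\le C-A\inf\varphi$, which is the claimed estimate.

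The main obstacle is verifying that the Song--Weinkove cone condition holds uniformly. This requires that $\omega$ is a strict subsolution with a fixed margin for the equation with the varying right-hand side $h=ne^{-\epsilon\varphi}$, with all constants independent of $\epsilon$ and $t$. This is exactly where the hypotheses $|\varphi|\le C_1$ and $\epsilon<\epsilon_0(\delta,C_1)$ are used. A second point to check is that the use of the maximum principle on a complete manifold is justified: $Q$ must be bounded above, and the geometry must be controlled in quasi-coordinates. Both follow from $\omega_\varphi\in\mathcal{PM}$ and the uniform bounds on $\chi$ and $\omega$ in quasi-coordinates.
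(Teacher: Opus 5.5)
Your overall scheme is the paper's. It uses the quantity $\log tr_\chi\omega_\varphi-A\varphi$, Yau's generalized maximum principle, and Weinkove's inequality (Lemma \ref{lem nable tr}) for the gradient of $tr_\chi\omega_\varphi$. It uses the fact that $tr_{\omega_\varphi}\chi=ne^{-\epsilon\varphi}\le C$ forces $\omega_\varphi\ge\chi/C$, so curvature and $\epsilon$-errors are uniformly bounded. It then uses the subsolution at the maximum point, with $\epsilon_0$ chosen so that $ne^{-\epsilon C_1}$ is close to $n$.

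However, the one term that is genuinely new for this perturbed equation has the wrong sign in your proposal. Since $A\mapsto tr(A^{-1}\chi)$ is convex, differentiating $tr_{\omega_\varphi}\chi=h$ twice and contracting with $\chi^{k\bar l}$ gives $\tilde L(tr_\chi\omega_\varphi)=(\text{two nonnegative third-order terms})+(\text{curvature})-\chi^{k\bar l}h_{k\bar l}$, and
\[
-\chi^{k\bar l}h_{k\bar l}=\epsilon h\,\chi^{k\bar l}\varphi_{k\bar l}-\epsilon^2h\,|\partial\varphi|_\chi^2 .
\]
So the gradient piece enters \emph{unfavorably}. As a check, take $n=1$ and flat $\chi$: with $\lambda=1/h$ and $\tilde L=\lambda^{-2}\partial\bar\partial$ one gets exactly $\tilde L\lambda=2|h_z|^2/h-h_{z\bar z}$. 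You have no uniform gradient bound on $\varphi$, only $|\varphi|\le C_1$, and $-A\tilde L\varphi$ produces no first-order terms. So this term cannot be dropped, and ``or is absorbed'' needs an actual mechanism.

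The missing ingredient is exactly the paper's Lemma \ref{lem nable tr2}. Write $\epsilon^2h|\partial\varphi|^2_\chi=|\partial h|_\chi^2/h$. In coordinates normal for $\chi$ with $\omega_\varphi=\mathrm{diag}(\lambda_a)$, one has $\partial_kh=\partial_k tr_{\omega_\varphi}\chi=-\sum_a\lambda_a^{-2}\nabla_k(g_\varphi)_{a\bar a}$ and $h=\sum_a\lambda_a^{-1}$. Cauchy--Schwarz then gives
\[
\frac{|\partial h|_\chi^2}{h}\le\sum_{a,k}\lambda_a^{-3}|\nabla_k(g_\varphi)_{a\bar a}|^2\le\sum_{a,b,k}\lambda_a^{-2}\lambda_b^{-1}|\nabla_k(g_\varphi)_{b\bar a}|^2 .
\]
The right-hand side is the \emph{second} of the two third-order terms. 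The first is already consumed by Weinkove's inequality for $|\widetilde\nabla tr_\chi\omega_\varphi|^2/tr_\chi\omega_\varphi$ after passing to the logarithm. With constant right-hand side only one of these terms is needed; here both are, and for $n=1$ both are used up with equality.

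The paper packages this by differentiating $\log tr_{\omega_\varphi}\chi=\log n-\epsilon\varphi$ instead. Then the right side has no gradient term, but the same quantity $\chi^{k\bar l}h_kh_{\bar l}/h$ reappears from the logarithm. With this step added, the rest of your argument goes through as in the paper.
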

\begin{proof}
As is explained in the beginning of the proof of the Lemma \ref{l2}, we can just do the estimate for $(*_{\epsilon,1})$ and denote $\varphi_{\epsilon,1}$ as $\varphi_{\epsilon}$ just for simplicity. Denote $h^{k \bar l}= g_{\varphi_{\epsilon}}^{k \bar j} g_{\varphi_{\epsilon}}^{i \bar l} \chi_{i \bar j}$.
Denote the operator $\widetilde{\Delta}$ as
\begin{equation*}
\widetilde{\Delta} u = h^{k \bar l} u_{k \bar l}.
\end{equation*}
Then we calculate in a normal coordinate for $\chi$ as
\begin{equation}\label{delta trchi}
\begin{split}
&\widetilde{\Delta} tr_{\chi} \omega_{\varphi_{\epsilon}} = h^{k \bar l} \Big(\chi^{i \bar j} (g_{\varphi_{\epsilon}})_{i \bar j} \Big)_{k \bar l} \\
& = h^{k \bar l}\Big( R^{i \bar j}_{k \bar l} (g_{\varphi_{\epsilon}})_{i \bar j} + \chi^{i \bar j} (g_{\varphi_{\epsilon}})_{i \bar j k \bar l} \Big)
\end{split}
\end{equation}
Here $R^{i \bar j}_{k \bar l}$ is the curvature for $\chi$. Note that  $tr_{\omega_{\varphi_{\epsilon}}}\chi = ne^{-\epsilon \varphi_{\epsilon}}$. Take logarithm of this formula and then take second derivatives of it. We can get that
\begin{equation}\label{2d equ}
\begin{split}
-\epsilon (\varphi_{\epsilon})_{k \bar l} &= \frac{1}{tr_{\omega_{\varphi_{\epsilon}}}\chi}(- h^{\bar a b} (g_{\varphi_{\epsilon}})_{b \bar a k \bar l} +h^{b \bar \alpha} g_{\varphi_{\epsilon}}^{\beta \bar a} (g_{\varphi_{\epsilon}})_{\beta \bar \alpha \bar l} (g_{\varphi_{\epsilon}})_{b \bar a k} + h^{\bar a \beta}(g_{\varphi_{\epsilon}})_{b \bar a k} (g_{\varphi_{\epsilon}})_{\beta \bar \alpha \bar l}g_{\varphi_{\epsilon}}^{b \bar \alpha} + g_{\varphi_{\epsilon}}^{i \bar j}\chi_{i \bar j k \bar l}) \\
& -\frac{(tr_{\omega_{\varphi_{\epsilon}}}\chi)_k (tr_{\omega_{\varphi_{\epsilon}}}\chi)_{\bar l}}{(tr_{\omega_{\varphi_{\epsilon}}}\chi)^2}
\end{split}
\end{equation}
Combining (\ref{delta trchi}) and (\ref{2d equ}), we can get that:
\begin{equation*}
\begin{split}
    \widetilde{\Delta}(tr_{\chi}\omega_{\varphi_{\epsilon}}) &= h^{k \bar l} R^{i \bar j}_{k \bar l} (g_{\varphi_{\epsilon}})_{i \bar j} + \chi^{k \bar l} h^{b \bar \alpha} g_{\varphi_{\epsilon}}^{\beta \bar a} (g_{\varphi_{\epsilon}})_{\beta \bar \alpha \bar l} (g_{\varphi_{\epsilon}})_{b \bar a k}+ \chi^{k \bar l}h^{\bar a \beta} (g_{\varphi_{\epsilon}})_{b \bar a k}(g_{\varphi_{\epsilon}})_{\beta \bar \alpha \bar l}g_{\varphi_{\epsilon}}^{b \bar \alpha} - g_{\varphi_{\epsilon}}^{i \bar j} R_{i \bar j}  \\
&-\frac{\chi^{k \bar l}(tr_{\omega_{\varphi_{\epsilon}}}\chi)_k (tr_{\omega_{\varphi_{\epsilon}}}\chi)_{\bar k}}{tr_{\omega_{\varphi_{\epsilon}}}\chi}+\epsilon (\varphi_{\epsilon})_{k \bar l} \chi^{k \bar l}tr_{\omega_{\varphi_{\epsilon}}}\chi \\
& \ge  h^{k \bar l} R^{i \bar j}_{k \bar l} (g_{\varphi_{\epsilon}})_{i \bar j} + \chi^{k \bar l} h^{b \bar \alpha} g_{\varphi_{\epsilon}}^{\beta \bar a} (g_{\varphi_{\epsilon}})_{\beta \bar \alpha \bar l} (g_{\varphi_{\epsilon}})_{b \bar a k} - g_{\varphi_{\epsilon}}^{i \bar j} R_{i \bar j} +\epsilon (\varphi_{\epsilon})_{k \bar l} \chi^{k \bar l}tr_{\omega_{\varphi_{\epsilon}}}\chi
\end{split}
\end{equation*}
Here in the last line above, we use the Lemma \ref{lem nable tr2}. Using the Lemma \ref{lem nable tr}, we can get that 
\begin{equation*}
\begin{split}
&\widetilde{\Delta} \log (tr_{\chi}\omega_{\varphi_{\epsilon}}) = \frac{\widetilde{\Delta} (tr_{\chi}\omega_{\varphi_{\epsilon}})}{tr_{\chi} \omega_{\varphi_{\epsilon}}}-\frac{|\widetilde{\nabla} (tr_{\chi}\omega_{\varphi_{\epsilon}})|^2}{(tr_{\chi} \omega_{\varphi_{\epsilon}})^2} \\
& = \frac{1}{tr_{\chi}\omega_{\varphi_{\epsilon}}}\Big(  h^{k \bar l} R^{i \bar j}_{k \bar l} (g_{\varphi_{\epsilon}})_{i \bar j} + \chi^{k \bar l} h^{b \bar \alpha} g_{\varphi_{\epsilon}}^{\beta \bar a} (g_{\varphi_{\epsilon}})_{\beta \bar \alpha \bar l} (g_{\varphi_{\epsilon}})_{b \bar a k} - g_{\varphi_{\epsilon}}^{i \bar j} R_{i \bar j}+\epsilon (\varphi_{\epsilon})_{k \bar l} \chi^{k \bar l}tr_{\omega_{\varphi_{\epsilon}}}\chi -\frac{|\widetilde{\nabla} (tr_{\chi}\omega_{\varphi_{\epsilon}})|^2}{tr_{\chi} \omega_{\varphi_{\epsilon}}} \Big) \\
& \ge \frac{1}{tr_{\chi}\omega_{\varphi_{\epsilon}}} \Big(h^{k \bar l} R^{i \bar j}_{k \bar l} (g_{\varphi_{\epsilon}})_{i \bar j} - g_{\varphi_{\epsilon}}^{i \bar j} R_{i \bar j} +\epsilon (\varphi_{\epsilon})_{k \bar l} \chi^{k \bar l}tr_{\omega_{\varphi_{\epsilon}}}\chi \Big).
\end{split}
\end{equation*}
Thus we have that
\begin{equation*}
\widetilde{\Delta} (\log (tr_{\chi}\omega_{\varphi_{\epsilon}})- A\varphi_{\epsilon}) \ge  \frac{1}{tr_{\chi}\omega_{\varphi_{\epsilon}}} \Big(h^{k \bar l} R^{i \bar j}_{k \bar l} (g_{\varphi_{\epsilon}})_{i \bar j} - g_{\varphi_{\epsilon}}^{i \bar j} R_{i \bar j} +\epsilon tr_{\chi} \omega_{\varphi_{\epsilon}} tr_{\omega_{\varphi_{\epsilon}}}\chi -\epsilon tr_{\chi}\omega tr_{\omega_{\varphi_{\epsilon}}}\chi \Big) -A (ne^{-\epsilon \varphi_{\epsilon}}- h^{i \bar j}g_{i \bar j}).
\end{equation*}
Since we know that $tr_{\chi}\omega_{\varphi_{\epsilon}}$ is qualitatively bounded and $\varphi_{\epsilon}$ is bounded, we can use Yau's generalized maximum principle to get that there exists a sequence $\{x_k\}$ such that 
\begin{equation*}
\lim_{k \rightarrow \infty} \log (tr_{\chi}\omega_{\varphi_{\epsilon}})- A\varphi_{\epsilon} (x_k) = \sup_{X\setminus D} \log (tr_{\chi}\omega_{\varphi_{\epsilon}})- A\varphi_{\epsilon}
\end{equation*}
and
\begin{equation*}
\limsup_{k \rightarrow \infty} \widetilde{\Delta} (\log (tr_{\chi}\omega_{\varphi_{\epsilon}})- A \varphi_{\epsilon})(x_k) \le 0.
\end{equation*}
Since $tr_{\omega_{\varphi_{\epsilon}}}\chi =ne^{-\epsilon \varphi_{\epsilon}}$ and we assume that $||\varphi_{\epsilon_{\epsilon}}||\le C_1$ for some uniform constant $C_1$, we have that
\begin{equation*}
\omega_{\varphi_{\epsilon}} \ge \frac{\chi}{C}.
\end{equation*}
Then we have that
\begin{equation*}
|\frac{1}{tr_{\chi}\omega_{\varphi_{\epsilon}}} \Big( h^{k \bar l} R^{i \bar j}_{k \bar l} (g_{\varphi_{\epsilon}})_{i \bar j} - g_{\varphi_{\epsilon}}^{i \bar j} R_{i \bar j} +\epsilon tr_{\chi} \omega_{\varphi_{\epsilon}} tr_{\omega_{\varphi_{\epsilon}}}\chi -\epsilon tr_{\chi}\omega tr_{\omega_{\varphi_{\epsilon}}}\chi \Big)| \le C.
\end{equation*}
Then, for $k$ big enough, we have that
\begin{equation}\label{e hg}
h^{i \bar j} g_{i \bar j} (x_k)-n e^{-\epsilon \varphi_{\epsilon}}(x_k) \le \frac{2C}{A}.
\end{equation}
We can choose a normal coordinate around $x_k$ for $\omega$  such that $(g_{\varphi_{\epsilon}})_{i \bar j} (x_k)= \lambda_j \delta_{ij}$ and denote $\chi_{i \bar i} (x_k)= \mu_i$.
Since $\omega$ satisfies the subsolution condition
\begin{equation*}
n \omega^{n-1} \ge (1+\delta)(n-1)\omega^{n-2}\wedge \chi,
\end{equation*}
we have that
\begin{equation}\label{e upper mu}
\frac{n}{1+\delta} \ge \Sigma_{k \neq i}\mu_k
\end{equation}
for any $i$. Since we assume that $||\varphi_{\epsilon}||_{L^{\infty}}\le C_1$ for a constant $C_1$ independent of $\epsilon,$ we can let $\epsilon$ be small enough such that 
\begin{equation}\label{e lower e}
    n(e^{-\epsilon C_1}-1) \ge -\frac{\delta n}{4(1+\delta)}.
\end{equation}

Then, we can calculate that
\begin{equation*}
\begin{split}
& \frac{2C}{A}  \ge \Sigma_k \frac{\mu_k}{\lambda_k^2} - n e^{- \epsilon \varphi_{\epsilon}} = \Sigma_k \frac{\mu_k}{\lambda_k^2} -2 \frac{\mu_k}{\lambda_k}+n e^{- \epsilon \varphi_{\epsilon}} \\
& \ge  \Sigma_{k\neq i} \mu_k (\frac{1}{\lambda_k}-1)^2 - \Sigma_{k\neq i} \mu_k + \frac{\mu_i}{\lambda_i^2} -2\frac{\mu_i}{\lambda_i} +n -\frac{\delta n}{4 (1+\delta)}\\
& \ge -\Sigma_{k\neq i} \mu_k + \frac{\mu_i}{\lambda_i^2} -2\frac{\mu_i}{\lambda_i}  +n -\frac{\delta n}{4 (1+\delta)} \ge -\frac{n}{1+\delta} + \frac{\mu_i}{\lambda_i^2} -2\frac{\mu_i}{\lambda_i}   +n -\frac{\delta n}{4 (1+\delta)} \\
&\ge \frac{\delta n}{1+\delta} -\frac{2\mu_i}{\lambda_i} -\frac{\delta n}{4 (1+\delta)}
\end{split}
\end{equation*}

In the first line of the above calculation, we use (\ref{e hg}) and $n e^{- \epsilon \varphi_{\epsilon}}= tr_{\omega_{\varphi_{\epsilon}}}\chi =\Sigma_k \frac{\mu_k}{\lambda_k}$. In the second line, we use (\ref{e lower e}). In the third line, we use (\ref{e upper mu}).
Then we can let $A$ be big such that $\frac{2C}{A}\le \frac{\delta n}{4(1+\delta)}$. Thus we get that
\begin{equation*}
\frac{\mu_i}{\lambda_i} \ge C>0
\end{equation*}
for some constant $C$. Then we have that 
\begin{equation*}
\log (tr_{\chi}\omega_{\varphi_{\epsilon}}) -A \varphi_{\epsilon} \le \sup (\log (tr_{\chi}\omega_{\varphi_{\epsilon}}) -A \varphi_{\epsilon} )= \limsup_{k \rightarrow \infty} (\log (tr_{\chi}\omega_{\varphi_{\epsilon}}) -A \varphi_{\epsilon} ) (x_k) \le C_1 -A \inf \varphi_{\epsilon}.
\end{equation*}
This concludes the proof of the $C^{1,1}$ estimate.
\end{proof}

In the proof above, we use the following Lemma which  is the Lemma 3.2 in \cite{W}:
\begin{lem}\label{lem nable tr}
Denote $h^{k \bar l}= g_{\varphi_{\epsilon}}^{k \bar j} g_{\varphi_{\epsilon}}^{i \bar l} \chi_{i \bar j}$. Then
\begin{equation*}
|\widetilde{\nabla}(tr_{\chi}\omega_{\varphi_{\epsilon}})|^2 \le tr_{\chi}\omega_{\varphi_{\epsilon}} \chi^{i \bar j} h^{r \bar q} g_{\varphi_{\epsilon}}^{p\bar s}  (g_{\varphi_{\epsilon}})_{r\bar s i} (g_{\varphi_{\epsilon}})_{p \bar q \bar j}
\end{equation*}
\end{lem}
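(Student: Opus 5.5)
This is the pointwise inequality of Weinkove's Lemma 3.2, and I would prove it by Cauchy--Schwarz at a single point, in coordinates adapted to both $\chi$ and $\omega_{\varphi_\epsilon}$. Fix a point $x\in X\setminus D$. Since both sides are tensorial, I may choose holomorphic normal coordinates for $\chi$ at $x$, so that $\chi_{i\bar j}=\delta_{ij}$ and $d\chi=0$ at $x$. After a further unitary change I may also assume $(g_{\varphi_\epsilon})_{i\bar j}=\lambda_i\delta_{ij}$ at $x$. In these coordinates $h^{k\bar l}=\lambda_k^{-2}\delta_{kl}$ and $tr_\chi\omega_{\varphi_\epsilon}=\sum_i\lambda_i$.

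First I compute the gradient. Because the first derivatives of $\chi$ vanish at $x$, we have $\partial_r(tr_\chi\omega_{\varphi_\epsilon})=\sum_i (g_{\varphi_\epsilon})_{i\bar i r}$. Closedness of $\omega_{\varphi_\epsilon}$ gives the symmetry $(g_{\varphi_\epsilon})_{i\bar i r}=(g_{\varphi_\epsilon})_{r\bar i i}$. Hence
\[
|\widetilde\nabla(tr_\chi\omega_{\varphi_\epsilon})|^2=\sum_r \frac{1}{\lambda_r^2}\Big|\sum_i (g_{\varphi_\epsilon})_{r\bar i i}\Big|^2 .
\]

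Next I apply Cauchy--Schwarz with weights $\lambda_i$. For each fixed $r$,
\[
\Big|\sum_i (g_{\varphi_\epsilon})_{r\bar i i}\Big|^2=\Big|\sum_i \lambda_i^{1/2}\cdot\lambda_i^{-1/2}(g_{\varphi_\epsilon})_{r\bar i i}\Big|^2\le \Big(\sum_i\lambda_i\Big)\sum_i \frac{|(g_{\varphi_\epsilon})_{r\bar i i}|^2}{\lambda_i}.
\]
Summing over $r$ with the weights $\lambda_r^{-2}$ gives
\[
|\widetilde\nabla(tr_\chi\omega_{\varphi_\epsilon})|^2\le tr_\chi\omega_{\varphi_\epsilon}\sum_{i,r}\frac{|(g_{\varphi_\epsilon})_{r\bar i i}|^2}{\lambda_r^2\lambda_i}.
\]

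Finally I compare with the right-hand side of the lemma. In the chosen coordinates it equals
\[
tr_\chi\omega_{\varphi_\epsilon}\sum_{i,r,p}\frac{|(g_{\varphi_\epsilon})_{r\bar p i}|^2}{\lambda_r^2\lambda_p}.
\]
This is a sum of nonnegative terms. Our bound is exactly its diagonal part $p=i$, so it is dominated by the full sum, and the inequality follows.

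The only point needing care is the bookkeeping of the index symmetry $(g_{\varphi_\epsilon})_{i\bar i r}=(g_{\varphi_\epsilon})_{r\bar i i}$, which relies on the Kähler condition. One must also check that the index pattern on the right-hand side really matches the weights $\lambda_r^{-2}\lambda_p^{-1}$ in diagonal coordinates. Apart from that, the argument is purely pointwise linear algebra and needs no properties of the Poincaré type geometry.
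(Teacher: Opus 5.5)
Your proof is correct and is the standard argument. The paper gives no proof of this lemma and instead quotes it as Lemma 3.2 of Weinkove, whose proof is exactly this pointwise Cauchy--Schwarz in $\chi$-normal coordinates that diagonalize $\omega_{\varphi_\epsilon}$: it uses the K\"ahler symmetry $(g_{\varphi_\epsilon})_{i\bar i r}=(g_{\varphi_\epsilon})_{r\bar i i}$ and then drops the off-diagonal terms $p\neq i$, the same technique the paper uses for its companion Lemma \ref{lem nable tr2}.
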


We prove the following Lemma which deals with additional terms that appear for our new PDEs:
\begin{lem}\label{lem nable tr2}
    Denote $h^{k \bar l}= g_{\varphi_{\epsilon}}^{k \bar j} g_{\varphi_{\epsilon}}^{i \bar l} \chi_{i \bar j}$. Then
    \begin{equation*}
       \frac{h^{\bar a b} (g_{\varphi_{\epsilon}})_{\bar a b k} h^{\bar \alpha \beta} (g_{\varphi_{\epsilon}})_{\bar \alpha \beta \bar l} \chi^{k \bar l}}{tr_{\omega_{\varphi_{\epsilon}}}\chi} \le  h^{\bar a \beta} g_{\varphi_{\epsilon}}^{b \bar \alpha} \chi^{k \bar l} (g_{\varphi_{\epsilon}})_{b \bar a k} (g_{\varphi_{\epsilon}})_{\beta \bar \alpha \bar l} .
    \end{equation*}
\end{lem}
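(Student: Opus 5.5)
The inequality is pointwise and is a Cauchy--Schwarz argument, so I will prove it at an arbitrary point $x\in X\setminus D$ in well-chosen coordinates. Both sides are tensorial contractions and therefore independent of the holomorphic coordinates. At $x$ I will choose coordinates in which $\chi_{i\bar j}=\delta_{ij}$ and $(g_{\varphi_{\epsilon}})_{i\bar j}=\lambda_i\delta_{ij}$ with $\lambda_i>0$. This is possible by simultaneous diagonalization, since $\chi$ is positive definite and $\omega_{\varphi_\epsilon}$ is a K\"ahler metric on $M$. In these coordinates $\chi^{k\bar l}=\delta_{kl}$, $g_{\varphi_\epsilon}^{i\bar j}=\lambda_i^{-1}\delta_{ij}$ and $h^{a\bar b}=\lambda_a^{-2}\delta_{ab}$. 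Also $tr_{\omega_{\varphi_\epsilon}}\chi=\sum_a \lambda_a^{-1}$.

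Next I compute both sides. For the left-hand side, the contractions $h^{\bar a b}(g_{\varphi_\epsilon})_{\bar a b k}$ reduce to $\sum_a \lambda_a^{-2}(g_{\varphi_\epsilon})_{a\bar a k}$. The factor with $\bar l$ is its complex conjugate, because $g_{\varphi_\epsilon}$ is Hermitian, so $\overline{(g_{\varphi_\epsilon})_{a\bar a k}}=(g_{\varphi_\epsilon})_{a\bar a\bar k}$. Hence
\begin{equation*}
\mathrm{LHS}=\frac{\sum_k \big|\sum_a \lambda_a^{-2}(g_{\varphi_\epsilon})_{a\bar a k}\big|^2}{\sum_a \lambda_a^{-1}}.
\end{equation*}
For the right-hand side, the diagonal forms of $h$ and $g_{\varphi_\epsilon}^{-1}$ force $\beta=a$ and $\alpha=b$. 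The factor $(g_{\varphi_\epsilon})_{a\bar b\bar k}$ is the conjugate of $(g_{\varphi_\epsilon})_{b\bar a k}$. This gives
\begin{equation*}
\mathrm{RHS}=\sum_k\sum_{a,b}\frac{|(g_{\varphi_\epsilon})_{b\bar a k}|^2}{\lambda_a^2\lambda_b}\ \ge\ \sum_k\sum_a \frac{|(g_{\varphi_\epsilon})_{a\bar a k}|^2}{\lambda_a^3},
\end{equation*}
where I drop the nonnegative off-diagonal terms $a\neq b$.

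Finally, for each fixed $k$ I write $\lambda_a^{-2}(g_{\varphi_\epsilon})_{a\bar a k}=\lambda_a^{-1/2}\cdot \lambda_a^{-3/2}(g_{\varphi_\epsilon})_{a\bar a k}$. Cauchy--Schwarz then gives
\begin{equation*}
\Big|\sum_a \lambda_a^{-2}(g_{\varphi_\epsilon})_{a\bar a k}\Big|^2\le \Big(\sum_a\lambda_a^{-1}\Big)\Big(\sum_a \lambda_a^{-3}|(g_{\varphi_\epsilon})_{a\bar a k}|^2\Big).
\end{equation*}
Dividing by $\sum_a\lambda_a^{-1}=tr_{\omega_{\varphi_\epsilon}}\chi$ and summing over $k$ yields LHS $\le$ RHS.

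There is no real obstacle here. The only care needed is the index bookkeeping: one must check which index pairs the diagonal $h$ and $g_{\varphi_\epsilon}^{-1}$ force to coincide, and that the two third-derivative factors are complex conjugates of each other, so that each summand on the right is an honest $|\cdot|^2$. Only then can the off-diagonal terms be discarded as nonnegative.
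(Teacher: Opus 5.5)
Your proof is correct and is essentially the paper's argument: the same coordinates at the point ($\chi_{i\bar j}=\delta_{ij}$, $(g_{\varphi_\epsilon})_{i\bar j}=\lambda_i\delta_{ij}$), the same intermediate bound $\sum_{b,k}\lambda_b^{-3}|(g_{\varphi_\epsilon})_{b\bar b k}|^2$, and the same dropping of the $a\neq b$ terms. The only difference is cosmetic: you apply Cauchy--Schwarz once in $a$ for each fixed $k$, while the paper applies it first in $k$ and then in $a$. One small caveat: partial third derivatives of $g_{\varphi_\epsilon}$ are not tensorial under arbitrary holomorphic coordinate changes, so the statement is only invariant under linear changes at the point (or in $\chi$-normal coordinates, as in the paper's application), but that is all your simultaneous diagonalization needs.
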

\begin{proof}
    Take a normal coordinate for $\chi$ in which $(g_{\varphi_{\epsilon}})_{i \bar j}= \lambda_i \delta_{ij}$. Then we have that $h^{i \bar j}=\frac{1}{\lambda_i^2}\delta_{ij}$. Then we can calculate that
    \begin{equation*}
        \begin{split}
        & \frac{h^{\bar a b} (g_{\varphi_{\epsilon}})_{\bar a b k} h^{\bar \alpha \beta} (g_{\varphi_{\epsilon}})_{\bar \alpha \beta \bar l} \chi^{k \bar l}}{tr_{\omega_{\varphi_{\epsilon}}}\chi} =\frac{\Sigma_{a,\beta,k}\frac{1}{\lambda_a^2}(g_{\varphi_{\epsilon}})_{\bar a a k} \frac{1}{\lambda_{\beta}^2} (g_{\varphi_{\epsilon}})_{\beta \bar \beta \bar k}}{\Sigma_i \frac{1}{\lambda_i}} \\
        &\le \frac{1}{\Sigma_i \frac{1}{\lambda_i}} \Sigma_{a, \beta} (\Sigma_k \frac{1}{\lambda_a^4}|(g_{\varphi_{\epsilon}})_{a \bar a k}|^2)^{\frac{1}{2}}(\Sigma_k \frac{1}{\lambda_{\beta}^4}|(g_{\varphi_{\epsilon}})_{\bar \beta \beta \bar k}|^2)^{\frac{1}{2}} \\
        & =\frac{1}{\Sigma_i \frac{1}{\lambda_i}} (\Sigma_a \frac{1}{\lambda_a^{\frac{1}{2}}}(\Sigma_k \lambda_a \frac{1}{\lambda_a^4}|(g_{\varphi_{\epsilon}})_{\bar a a k}|^2)^{\frac{1}{2}})^2 \le \frac{1}{\Sigma_i \frac{1}{\lambda_i}} \Sigma_a \frac{1}{\lambda_a} \Sigma_{b,k} \frac{1}{\lambda_b^3}|(g_{\varphi_{\epsilon}})_{b\bar b k}|^2 = \Sigma_{b,k} \frac{1}{\lambda_b^3}|(g_{\varphi_{\epsilon}})_{b\bar b k}|^2 \\
        &\le \Sigma_{a,b,k}\frac{1}{\lambda_a^2}\frac{1}{\lambda_b}|(g_{\varphi_{\epsilon}})_{b \bar a k}|^2 =h^{\bar a \beta} g_{\varphi_{\epsilon}}^{b \bar \alpha} \chi^{k \bar l} (g_{\varphi_{\epsilon}})_{b \bar a k} (g_{\varphi_{\epsilon}})_{\beta \bar \alpha \bar l}.
        \end{split}
    \end{equation*}
\end{proof}

After we use Evans-Krylov estimate and the standard bootstrapping in each coordinate chart, we can get the following a priori estimate:
\begin{prop}\label{higher}
Suppose there exists a subsolution $\underline{u}$ as in the Theorem \ref{main thm}. Let $C_1$ be a positive constant. Then there exists a constant $\epsilon_0$ depending on $\delta$, $\omega$, $C_1$ and $\chi$. For any $k$, there exist $C_k$ independent of $\epsilon$ such that for any $\epsilon\in (0,\epsilon_0)$ and $t\in [0,1]$, the solution $\varphi_{\epsilon,t}$ to (\ref{new path}), if satisfying $||\varphi_{\epsilon,t}||_{L^{\infty}}\le C_1$, must also satisfy that:
\begin{equation*}
||\varphi_{\epsilon,t}||_{C^k(X\setminus D)} \le C_k.
\end{equation*}
\end{prop}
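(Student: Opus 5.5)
The plan is to upgrade the $C^2$ bound of Proposition \ref{c11} to uniform $C^{2,\alpha}$ bounds by Evans--Krylov, working chart by chart in the quasi-coordinates $\Phi_\delta$, and then to bootstrap with Schauder estimates. Throughout, as in the proof of Lemma \ref{l2.1}, I replace $\chi$ by $\chi_t$ and write $\varphi_\epsilon$. Since $|\varphi_\epsilon|\le C_1$, Proposition \ref{c11} gives $tr_\chi\omega_{\varphi_\epsilon}\le Ce^{2AC_1}$. The equation reads $tr_{\omega_{\varphi_\epsilon}}\chi = ne^{-\epsilon\varphi_\epsilon}$, and $e^{-\epsilon\varphi_\epsilon}\in[e^{-\epsilon_0C_1},e^{\epsilon_0C_1}]$. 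Together these give the two-sided bound $C^{-1}\chi\le\omega_{\varphi_\epsilon}\le C\chi$, with $C$ independent of $\epsilon\in(0,\epsilon_0)$ and $t$. Because $\chi$ and $\omega$ are Poincar\'e type, in every quasi-coordinate chart $\mathcal{P}$ the pullbacks $\Phi_\delta^*\omega$, $\Phi_\delta^*\chi_t$ are uniformly equivalent to the Euclidean metric, with all derivatives bounded independently of $\delta$. Hence $\Phi_\delta^*\varphi_\epsilon$ has $L^\infty$ and $\Delta$-bounds uniform in $\delta$, and so a uniform $C^{1,\beta}$ bound on $\frac34\mathcal{P}$.

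In each chart I write the equation as $F(D^2 u, x) := -\frac{\sigma_{n-1}}{\sigma_n}(\lambda(\chi_t^{-1}\omega_u)) = -ne^{-\epsilon u}$, i.e. $F(u_{i\bar j},x) = -\,tr_{\omega_u}\chi_t$. Equivalently I use $\log$ of this, or $-\left(tr_{\omega_u}\chi_t\right)$, which is concave in $u_{i\bar j}$ since $A\mapsto -tr(A^{-1}B)$ is concave on positive matrices. By the previous step the operator is uniformly elliptic on the relevant set of matrices. The right-hand side $-ne^{-\epsilon u}$ depends on $u$ only through a term with uniform $C^{1}$ bound, since $\epsilon\le\epsilon_0$ and $u\in C^{1,\beta}$. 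The $x$-dependence through $\Phi_\delta^*\chi_t$ is uniformly smooth. I then apply the complex Evans--Krylov theorem, in the version for concave operators with $x$-dependence and Hölder right-hand side (e.g. Tosatti--Wang--Weinkove--Yang, or Caffarelli's version). This yields $\|\Phi_\delta^*\varphi_\epsilon\|_{C^{2,\alpha}(\frac12\mathcal{P})}\le C$ uniformly in $\delta,\epsilon,t$. On the compact region $V\subset\subset X\setminus D$ the same argument applies directly.

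Next I bootstrap. Differentiating the equation in a coordinate direction gives a linear equation $L_{\varphi}(\partial u)= \text{(terms in }\partial\chi_t,\ \epsilon e^{-\epsilon u}\partial u)$. Its coefficients $h^{k\bar l}=g_\varphi^{k\bar j}g_\varphi^{i\bar l}(\chi_t)_{i\bar j}$ are uniformly elliptic and $C^\alpha$ by the previous step. Schauder estimates then give $C^{3,\alpha}$, and induction gives $C^{k,\alpha}$ on shrinking polydiscs. Since the quasi-coordinate images of $\frac12\mathcal{P}$ still cover a neighborhood of $D$, taking the sup over $\delta$ and using the partition of unity gives $\|\varphi_\epsilon\|_{C^k(X\setminus D)}\le C_k$ in the norm of the function spaces section.

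The main obstacle I expect is the Evans--Krylov step: checking that the constants are genuinely independent of $\delta$ and $\epsilon$. This requires that the pulled-back data $\Phi_\delta^*\chi_t$ have $\delta$-uniform $C^k$ bounds, which follows from Definition \ref{def 1.1} (2) for both $\chi$ and $\omega$. It also requires that the $\epsilon\varphi$ term does not spoil ellipticity bounds. The latter is where $\epsilon<\epsilon_0(C_1)$ enters, exactly as it did in Proposition \ref{c11}. The lower bound $\omega_{\varphi_\epsilon}\ge\chi/C$ comes for free from the equation, so the only delicate input is the upper bound of Proposition \ref{c11}.
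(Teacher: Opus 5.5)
Your proposal is correct and follows the same route as the paper: the paper's entire justification is the sentence that the $C^2$ bound of Proposition \ref{c11}, combined with the Evans--Krylov estimate in each (quasi-)coordinate chart and standard bootstrapping, gives the $C^k$ bounds. Your write-up supplies the details the paper leaves implicit, namely the two-sided bound $C^{-1}\chi\le\omega_{\varphi_\epsilon}\le C\chi$ from the equation, the concavity and uniform ellipticity of $-tr_{\omega_u}\chi_t$, and the $\delta$-uniform control of the pulled-back data under $\Phi_\delta$.
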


This concludes the proof of the closedness of the continuity path (\ref{new path}).

\section{Proof of the Theorem \ref{main thm}}

Define $T_{\epsilon,\delta_0}=\sup\{s\in [0,1]: \forall t \in [0,s]: (*_{\epsilon,t}) \text{ has a solution } \varphi_{\epsilon,t} \in C^{\infty}(X\setminus D), \text{ with }\epsilon |\varphi_{\epsilon,t}|\le \delta_0\}$.
\begin{lem}\label{lem solve 1}
    Suppose that $\omega$ satisfies the strict subsolution condition:
    \begin{equation*}
n \omega^{n-1} \ge (1+\delta) (n-1) \omega^{n-2} \wedge \chi
\end{equation*}
and
\begin{equation*}
\frac{\omega^{n-1}\wedge \chi}{\omega^{n}}= 1 +O(\rho^{-\eta}).
\end{equation*}
for some $\eta >0$. Let $\delta_0>0$ be small enough depending on $\delta$. There there exists a constant $\epsilon_0$ such that for any $\epsilon\in (0,\epsilon_0)$, we have that $T_{\epsilon,\delta_0}=1$. Moreover, $(*_{\epsilon,1})$ admits a solution.
\end{lem}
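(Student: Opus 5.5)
We will run a continuity argument in $t$ for fixed small $\epsilon$. The structure is: the set of admissible $t$ contains $0$, is open by Proposition \ref{prop open}, and is closed by the a priori estimates; the constraint $\epsilon|\varphi_{\epsilon,t}|\le\delta_0$ is kept by a bootstrap.

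First, at $t=0$ the function $\varphi_{\epsilon,0}=0$ solves $(*_{\epsilon,0})$, since $\chi_0=\omega$ and $C_{[\omega],[\chi]}=1$. So $T_{\epsilon,\delta_0}\ge 0$ and the defining set is nonempty.

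Next, fix $\delta_0$ small, depending only on $\delta$, so that Proposition \ref{prop improve linfty} applies. It gives a constant $C_0=C_0(\delta)$, independent of $\epsilon$ and $t$, such that any solution with $\epsilon\varphi_{\epsilon,t}\ge-\delta_0$ satisfies $\|\varphi_{\epsilon,t}\|_{L^\infty}\le C_0$. Take $C_1=C_0+1$ in Propositions \ref{c11} and \ref{higher}, which yields an $\epsilon_0$. Shrinking $\epsilon_0$ further, we may assume $\epsilon_0(C_0+1)<\delta_0$.

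For $\epsilon<\epsilon_0$ the bootstrap then runs as follows. Any solution in the admissible range automatically satisfies the strictly better bound $\epsilon|\varphi_{\epsilon,t}|\le\epsilon C_0<\delta_0/2$, say, after adjusting $\epsilon_0$. By Proposition \ref{higher} it also has uniform $C^k(X\setminus D)$ bounds, in the quasi-coordinate sense, independent of $t$.

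Openness: if $(*_{\epsilon,s})$ is solvable with $\varphi_{\epsilon,s}\in C^\infty(M)$, the implicit function theorem applies. The map $(\varphi,t)\mapsto \log\frac{\omega_\varphi^n}{\omega_\varphi^{n-1}\wedge\chi_t}-\epsilon\varphi$ acts from $C^{k+2,\alpha}(M)\times[0,1]$ to $C^{k,\alpha}(M)$. Its linearization is $L$ up to a positive factor, and $L$ is invertible by Proposition \ref{prop open}. This gives solutions for $t$ near $s$ that are close to $\varphi_{\epsilon,s}$ in $C^{k+2,\alpha}$, hence close in $L^\infty$. The open condition $\epsilon|\varphi|<\delta_0$ persists, and smoothness follows by bootstrapping. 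One point needs care here: $\omega_\varphi$ must remain of Poincaré type and in $\mathcal{PM}$. This holds because $C^{k,\alpha}(M)$-small perturbations preserve the quasi-isometry to $\omega_0$ and the bounded derivatives in Definition \ref{def 1.1}.

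Closedness: take $t_j\uparrow T=T_{\epsilon,\delta_0}$. The uniform bounds from Proposition \ref{higher}, together with the lower bound $\omega_{\varphi}\ge\chi/C$ coming from $tr_{\omega_\varphi}\chi=ne^{-\epsilon\varphi}$ and the $L^\infty$ bound, give uniform equivalence $\omega_{\varphi_{\epsilon,t_j}}\simeq\omega$. Arzelà–Ascoli in each quasi-coordinate chart plus a diagonal argument then produces a limit $\varphi_{\epsilon,T}$ solving $(*_{\epsilon,T})$. This limit has the same $C^k$ bounds, so $\omega_{\varphi_{\epsilon,T}}\in\mathcal{PM}_{[\omega]}$, and it satisfies $\epsilon|\varphi_{\epsilon,T}|\le\epsilon C_0<\delta_0$. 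Hence $T$ is attained. If $T<1$, openness extends the solution family beyond $T$, where the a priori bound keeps $\epsilon|\varphi|<\delta_0$; this contradicts the definition of $T$. Therefore $T_{\epsilon,\delta_0}=1$ and $(*_{\epsilon,1})$ is solvable.

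The main obstacle is making the bootstrap genuinely closed. The $L^\infty$ bound from Proposition \ref{prop improve linfty} is only available under the hypothesis $\epsilon\varphi\ge-\delta_0$. We therefore need continuity in $t$ of $\sup|\varphi_{\epsilon,t}|$ along the path, which comes from the implicit function theorem in $C^{2,\alpha}(M)$, so that the hypothesis cannot jump from being satisfied to failing. A second delicate point is that the limit in the closedness step really lies in $\mathcal{PM}$, including the growth condition $\varphi=O(\log\rho)$. That condition is automatic here because $\varphi$ is bounded. We also need the uniform estimates to hold in the weighted quasi-coordinate norms and not just locally, which is exactly what Proposition \ref{higher} provides.
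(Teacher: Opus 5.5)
Your proposal is correct and follows the paper's proof: nonemptiness from $\varphi_{\epsilon,0}=0$, openness from Proposition \ref{prop open} and the implicit function theorem using the improved bound $\epsilon|\varphi_{\epsilon,t}|\le \epsilon C_0<\delta_0/2$ from Proposition \ref{prop improve linfty}, and closedness from Propositions \ref{c11} and \ref{higher} together with a compactness argument. You are more explicit than the paper about three points: choosing $\epsilon_0$ relative to $C_0$ and $\delta_0$, showing that the supremum $T_{\epsilon,\delta_0}$ is attained by a limit lying in $\mathcal{PM}_{[\omega]}$, and checking that the linearization is $L$ times a positive factor. The structure of the argument is otherwise the same.
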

\begin{proof}
  Since $T_{\epsilon,\delta_0}$ is connected, it suffices to prove that $T_{\epsilon,\delta_0}$ is nonempty, closed and open.

  Since $0 \in T_{\epsilon,\delta_0}$, we have that $T_{\epsilon,\delta_0}$ is nonempty.
  
  For closedness, if there exists a sequence $\{t_k\}_{k=1}^{\infty}$ in $T_{\epsilon,\delta_0}$, which converge to $t^*$, then we have that $\epsilon |\varphi_{\epsilon,t_k}|\le \delta_0$. By the Proposition \ref{prop improve linfty}, we have that there exists a constant $C$ such that 
  \begin{equation*}
      |\varphi_{\epsilon,t_k}|\le C.
  \end{equation*}
  for a uniform constant depending on $\delta_0$. Then the $C^{1,1}$ estimate follows from the Proposition \ref{c11}. Estimates of higher order derivatives of $\varphi_{\epsilon,t_k}$ follows from the Proposition \ref{higher}. Thus we can take a subsequence of $\varphi_{\epsilon,t_k}$ converging to $\varphi_{\epsilon,t^*}\in C^{\infty}(X\setminus D)$ which solves $(*_{\epsilon,t^*})$. 

  For openness, if $t^* \in T_{\epsilon,\delta_0}$, then there exists solution $\varphi_{\epsilon,t^*}\in C^{\infty}(X\setminus D)$ to $(*_{\epsilon,t^*})$ with $\epsilon|\varphi_{\epsilon,t}|\le \delta_0$. By the Proposition \ref{prop improve linfty}, we have that $|\varphi_{\epsilon,t_k}|\le C$. If $\epsilon \le \frac{\delta_0}{2C}$, we have that $\epsilon |\varphi_{\epsilon,t_k}|\le \frac{\delta_0}{2}$. Using the Proposition \ref{prop open}, there exists $\delta_1>0$ such that for any $t \in (t^*-\delta_1, t^* + \delta_1)$, we can get a solution $\varphi_{\epsilon,t}\in C^{\infty}(X\setminus D)$ to $(*_{\epsilon, t})$ with $\epsilon |\varphi_{\epsilon,t_k}|\le \delta_0$. Thus we have that $(t^* -\delta_1, t^* + \delta_1)\subset T_{\epsilon,\delta_0}$.
\end{proof}

\begin{proof}
    (of the Theorem \ref{main thm}). Without loss of generality, we use the normalization (\ref{normalize}). 
    According to the Lemma \ref{lem solve 1}, for any $(\epsilon,t)\in (0,\epsilon_0)\times [0,1]$, there exists a solution $\varphi_{\epsilon,t}\in C^{\infty}(X\setminus D)$ to $(*_{\epsilon,t})$ with $\epsilon |\varphi_{\epsilon,t}|\le \delta_0$. Using the Proposition \ref{prop improve linfty}, we can get that 
    \begin{equation*}
        |\varphi_{\epsilon,t}|\le C
    \end{equation*}
    for a uniform constant $C$. Then by the Proposition \ref{higher}, we can get uniform estimate of the $C^k$ norm of $\varphi_{\epsilon,1}$ for any $k$. Then, we can take a subsequence of $\varphi_{\epsilon_i,1}$ as $\epsilon_i$ goes to zero, such that $\varphi_{\epsilon_i,1}$ converge to some function $\varphi$ which solves $(*_{0,1})$. On the other hand, if we have a Poincar\'e type metric $\omega_{\omega_{\varphi}}$ solving $(*_{0,1})$, then $\omega_{\varphi}$ satisfies the subsolution condition
    \begin{equation*}
        n \omega_{\varphi}^{n-1}\ge (1+\delta) (n-1) \omega_{\varphi_{\epsilon}}^{n-2}\wedge \chi
    \end{equation*}
    for some positive constant $\delta$ and the asymptotic behaviour 
    \begin{equation*}
        \frac{\omega_{\varphi}^n}{\omega_{\varphi}^{n-1}\wedge \chi} =1 +O(\rho^{-\eta}).
    \end{equation*}
    Next we prove the uniqueness of solution to Poincar\'e type J-equation. Suppose that there exist two Poincar\'e type metrics $\omega_{\varphi_{i}}$, $i=1,2$, solving the J-equation:
    \begin{equation*}
        \omega_{\varphi_i}^n = \omega_{\varphi_i}^{n-1}\wedge \chi.
    \end{equation*}
    Then we can compute that
    \begin{equation}\label{e uni1}
        \begin{split}
            &0=\int_{X\setminus D} \Big( (\omega_{\varphi_1}^n - \omega_{\varphi_2}^n)- (\omega_{\varphi_1}^{n-1}\wedge \chi - \omega_{\varphi_2}^{n-1}\wedge \chi) \Big) (\varphi_1 -\varphi_2) \\
            &= \int_{X\setminus D} \int_0^1 dt dd^c(\varphi_1- \varphi_2) \wedge  \Big(n \omega_{t(\varphi_1-\varphi_2)+\varphi_2}^{n-1}-(n-1)\omega_{t(\varphi_1- \varphi_2)+ \varphi_2}^{n-2}\wedge \chi \Big)  (\varphi_1- \varphi_2) \\
            & = - \int_{X\setminus D} \int_0^1 dt d(\varphi_1- \varphi_2) \wedge d^c(\varphi_1- \varphi_2) \wedge  \Big(n \omega_{t(\varphi_1-\varphi_2)+\varphi_2}^{n-1}-(n-1)\omega_{t(\varphi_1- \varphi_2)+ \varphi_2}^{n-2}\wedge \chi \Big) 
        \end{split}
    \end{equation}
    As in the proof of the Lemma \ref{l2.1}, we have that:
    \begin{equation}\label{e uni2}
    \begin{split}
       & \frac{\sqrt{-1} dz^i \wedge d \bar z^i \wedge n \omega_{t(\varphi_1- \varphi_2)+ \varphi_2}^{n-1}}{\sqrt{-1} dz^i \wedge d \bar z^i \wedge (n-1)\omega_{t(\varphi_1- \varphi_2)+ \varphi_2}^{n-2}\wedge \chi}  \\
       & \ge e^{t \log (\frac{\sqrt{-1}dz^i \wedge d \bar z^i \wedge n \omega_{\varphi_1}^{n-1}}{\sqrt{-1}dz^i \wedge d \bar z^i \wedge (n-1)\omega_{\varphi_1}^{n-2}\wedge \chi})+ (1-t)\log (\frac{\sqrt{-1} dz^i \wedge d \bar z^i \wedge \omega_{\varphi_2}^{n-1}}{\sqrt{-1} dz^i \wedge d \bar z^i \wedge (n-1)\omega_{\varphi_2}^{n-2}\wedge \chi})} \\
       &  \ge e^{t\log (1+\delta) +(1-t)\log (1+\delta)} = 1+\delta,
    \end{split}
\end{equation}
where $\delta>0$ is a constant such that $n \omega_{\varphi_i}^{n-1} \ge (1+\delta)(n-1) \omega_{\varphi_i}^{n-2}\wedge \chi$ for $i=1,2$. Combining (\ref{e uni1}) and (\ref{e uni2}), we get that
\begin{equation*}
    \int_{X\setminus D} \int_0^1 dt d(\varphi_1- \varphi_2) \wedge d^c(\varphi_1- \varphi_2) \wedge  \delta (n-1)\omega_{t(\varphi_1- \varphi_2)+ \varphi_2}^{n-2}\wedge \chi.
\end{equation*}
This implies that $\varphi_1 = \varphi_2+C$ for some constant $C$. Thus we have $\omega_{\varphi_1}=\omega_{\varphi_2}.$
\end{proof}

\section{Poincar\'e type J-equation on K\"ahler surfaces}

In this section we prove the Theorem \ref{prop 2d} and the Corollary \ref{negative self}.

\begin{proof}
(of the Theorem \ref{prop 2d}.) Without loss of generality, we assume that $D$ only has one smooth irreducible component. Let $S\in \mathcal{O}[D]$ be a defining section of $D$. Consider
\begin{equation*}
\begin{split}
     \chi &= \chi_0 + dd^c(\eta_0 p^* \varphi_D') -a\, dd^c  \log (\lambda-\log |S|^2)+ dd^c (h) \in \mathcal{PM}_{[\chi_0]} \\
     \omega &= \omega_0 -b\, dd^c  \log (\lambda- \log |S|^2).
\end{split}
\end{equation*}
Here  $0\le \eta_0 \le 1$ is a cut-off function which is equal to $1$ when $\log (-\log |S|^2)\ge 2N$ and is zero for $\log (-\log |S|^2)\le N$, satisfying that $|\nabla_{\omega} \eta_0|_{\omega} \le \frac{C}{N}$ and $|\nabla^2_{\omega} \eta_0|_{\omega}\le \frac{C}{N}$. (See for example Lemma 4.3 in \cite{XZ}). $\varphi_D'$ is a function satisfying that $\chi_0 |_D + dd^c  \varphi_D'$ is a K\"ahler metric on $D$. $a,b,\lambda$ are positive constants. We will let $\lambda$ be big and then choose $\eta_0$ and then choose $h$ such that $dd^c h$ be small later on. Note that if we just want to prove the Theorem \ref{prop 2d}, we can just take $\varphi_D'=0$ and $h=0$. We need to consider such general metric in the proof of the Corollary \ref{thm lower k} below. Since $\omega_0$ and $\chi_0$ are smooth K\"ahler metrics on $X$, we can write 
    \begin{equation*}
        \chi_0=p^* \chi_D + O(\frac{1}{-\log|S|^2}),  \omega_0=p^* \omega_D + O(\frac{1}{-\log|S|^2})
    \end{equation*}
    for some K\"ahler metrics $\chi_D$ and $\omega_D$ on $D$. Here $O(\frac{1}{-\log|S|^2})$ is with respect to a Poincar\'e type metric. In a cusp coordinate where $D=\{z_2=0\}$, we can write $|S|^2=|z_2|^2 e^F$ for some smooth function $F$. Then we can compute
\begin{equation*}
    \begin{split}
        & ( - \log (\lambda-\log |S|^2))_{1\bar 1} = \frac{F_{1\bar 1}}{\lambda- \log |z_2|^2 -F} +\frac{F_1 F_{\bar 1}}{(\lambda- \log |z_2|^2-F)^2},\\
        &( - \log (\lambda-\log |S|^2))_{1\bar 2}= \frac{F_{1 \bar 2}}{\lambda-\log |z_2|^2 -F} +\frac{F_1 (1+\bar z_2 F_{\bar 2})}{\bar z_2 (\lambda- \log |z_2|^2 - F)^2} ,\\
        &( - \log (\lambda-\log |S|^2))_{2\bar 1}=\frac{F_{\bar 1  2}}{\lambda-\log |z_2|^2 -F} + \frac{F_{\bar 1} (1+ z_2 F_{ 2})}{ z_2 (\lambda- \log |z_2|^2 - F)^2},\\
        & ( - \log (\lambda-\log |S|^2))_{2\bar 2}= \frac{F_{2\bar 2}}{\lambda- \log |z_2|^2 -F} + \frac{1}{|z_2|^2 \log^2 |z_2|^2} \frac{(-\log |z_2|^2)^2}{(\lambda- \log |z_2|^2 -F)^2}(1+z_2 F_2)(1+\bar z_2 F_{\bar 2}).
    \end{split}
\end{equation*}
We compute that
\begin{equation}\label{e 2d asy}
\begin{split}
     &\frac{\omega^2}{C_{[\omega_0],[\chi_0]} \chi \wedge \omega} \\
     &= \frac{2 \omega_D \wedge \frac{b}{|z_2|^2 (\lambda- \log |z_2|^2 -F)^2} \sqrt{-1}dz^2 \wedge d \bar z^2}{C_{[\omega_0],[\chi_0]}( (\chi_D +dd^c \varphi_D')\wedge \frac{b}{|z_2|^2 (\lambda- \log |z_2|^2 -F)^2} \sqrt{-1}dz^2 \wedge d \bar z^2 + \omega_D \wedge \frac{a}{|z_2|^2 (\lambda- \log |z_2|^2 -F)^2} \sqrt{-1}dz^2 \wedge d \bar z^2)} \\
     &+ O(\frac{1}{(\lambda- \log |z_2|-F)})\\
     &= \frac{2}{C_{[\omega_0],[\chi_0]}(\frac{\chi_D +dd^c \varphi_D'}{\omega_D} +\frac{a}{b})} + O(\frac{1}{(\lambda- \log |z_2|-F)})
\end{split}
\end{equation}
 Since $D$ is a Riemann surface, there exists a constant $C_D$ such that $[\omega_D]= C_D [\chi_D]$. So there exists a smooth function $\varphi_D$ on $D$ such that $\omega_D + dd^c \varphi_D = C_D (\chi_D+ dd^c \varphi_D')$. Since $[2\omega_0 - C_{[\omega_0],[\chi_0]} \chi_0]$ is a K\"ahler class on $X$, its restriction to $D$, which is $[2\omega_D - C_{[\omega_0],[\chi_0]} \chi_D]$ is a K\"ahler class on $D$. This implies that 
 \begin{equation*}
     2C_D > C_{[\omega_0],[\chi_0]}.
 \end{equation*}
 Define 
 \begin{equation*}
     \omega'= \omega + dd^c (\eta_0 p^* \varphi_D)= \omega+ \eta_0 dd^c p^* \varphi_D + d \eta_0 \wedge d^c p^* \varphi_D + d p^* \varphi_D \wedge d^c \eta_0 + p^* \varphi_D dd^c \eta_0,
 \end{equation*}

Next, we want to show that $\omega'$ can be used as a background metric satisfying (\ref{e sub0}) and (\ref{sub asym}) in the Theorem \ref{main thm}. First, since $2C_D >C_{[\omega_0],[\chi_0]}$, we can select $b>0$ such that 
\begin{equation}\label{e cxcd}
    \frac{2}{C_{[\omega_0],[\chi_0]}(\frac{1}{C_D}+\frac{a}{b})}=1.
\end{equation}
Then the definition of $\varphi_D$ and (\ref{e 2d asy}) imply that 
\begin{equation*}
    \frac{\omega'^2 }{C_{[\omega_0],[\chi_0]} \chi \wedge \omega'}=1+ O(\frac{1}{-\log |S|^2}).
\end{equation*}
which implies (\ref{sub asym}). We compute
\begin{equation*}
\begin{split}
    2\omega'- C_{[\omega_0],[\chi_0]} \chi & = \eta_0( 2 \omega_0 +2 dd^c p^* \varphi_D - C_{[\omega_0],[\chi_0]} (\chi_0+ dd^c \varphi_D') ) +(1-\eta_0)(2\omega_0 - C_{[\omega_0],[\chi_0]} \chi_0) \\
    &+2 d \eta_0 \wedge d^c p^* \varphi_D +2 d p^* \varphi_D \wedge d^c \eta_0 +2p^* \varphi_D dd^c \eta_0 \\
    & - C_{[\omega_0],[\chi_0]} d\eta_0 \wedge d^c p^* \varphi_D'-C_{[\omega_0],[\chi_0]} d p^* \varphi_D' \wedge d^c \eta_0 - C_{[\omega_0],[\chi_0]} p^* \varphi_D' dd^c \eta_0 - C_{[\omega_0],[\chi_0]} dd^c h\\
    &-(2b-C_{[\omega_0],[\chi_0]} a) dd^c \log (\lambda -\log |S|^2).
\end{split}
\end{equation*}
 Since $2\omega_0 -C_{[\omega_0],[\chi_0]} \chi_0>0$ and 
 \begin{equation*}
 2\omega_D + 2dd^c \varphi_D  - C_{[\omega_0],[\chi_0]} (\chi_D+ dd^c \varphi_D')  = (2C_D- C_{[\omega_0],[\chi_0]})(\chi_D+ dd^c \varphi_D') >0,    
 \end{equation*}
 we can get that 
 \begin{equation*}
     \eta_0( 2 \omega_0 +2 dd^c p^* \varphi_D - C_{[\omega_0],[\chi_0]} \chi_0 ) +(1-\eta_0)(2\omega_0 - C_{[\omega_0],[\chi_0]} \chi_0) \ge \epsilon \omega_0- C \eta_0 \sqrt{-1} dz^2 \wedge d \bar z^2
 \end{equation*}
for some positive constant $\epsilon$ as long as $N \ge N_0$ for some constant $N_0$ depending only on $\omega_0$ and $\chi_0$. According to (\ref{e cxcd}), we have that $2b >aC_{[\omega_0],[\chi_0]}$ which implies that
\begin{equation*}
    \epsilon \omega_0  -(2b-C_{[\omega_0],[\chi_0]} a) dd^c \log (\lambda -\log |S|^2) \ge \epsilon_1 \chi
\end{equation*}
is a Poincar\'e type K\"ahler metric if we let $\lambda$ be big. From now on, we fix $\lambda$. Then we let $N$ be big and then let $dd^c h$ be small such that 
\begin{equation*}
\begin{split}
& 2\omega'- C_{[\omega_0],[\chi_0]} \chi \\
&\ge  \epsilon \omega_0 - C \eta_0 \sqrt{-1} dz^2 \wedge d \bar z^2 + 2 d \eta_0 \wedge d^c p^* \varphi_D +2 d p^* \varphi_D \wedge d^c \eta_0 +2p^* \varphi_D dd^c \eta_0\\
    & - C_{[\omega_0],[\chi_0]} d\eta_0 \wedge d^c p^* \varphi_D'-C_{[\omega_0],[\chi_0]} d p^* \varphi_D' \wedge d^c \eta_0 - C_{[\omega_0],[\chi_0]} p^* \varphi_D' dd^c \eta_0 - C_{[\omega_0],[\chi_0]} dd^c h\\
&-(2b-C_{[\omega_0],[\chi_0]} a) dd^c \log (\lambda -\log |S|^2) \\
& \ge \epsilon_1 \chi - C \eta_0 \sqrt{-1} dz^2 \wedge d \bar z^2 + 2 d \eta_0 \wedge d^c p^* \varphi_D +2 d p^* \varphi_D \wedge d^c \eta_0 +2p^* \varphi_D dd^c \eta_0 \\
& - C_{[\omega_0],[\chi_0]} d\eta_0 \wedge d^c p^* \varphi_D'-C_{[\omega_0],[\chi_0]} d p^* \varphi_D' \wedge d^c \eta_0 - C_{[\omega_0],[\chi_0]} p^* \varphi_D' dd^c \eta_0 - C_{[\omega_0],[\chi_0]} dd^c h\\
& \ge \epsilon_2 \chi
\end{split}
\end{equation*}
is a Poincar\'e type metric. Thus we can solve the Poincar\'e type J-equation by the Theorem \ref{main thm} using $\omega'$ as a background metric.
\end{proof}

\begin{proof}
  (of the Corollary \ref{negative self}.) We compute that
  \begin{equation*}
      [2\omega -C_{[\omega_0],[\chi_0]} K_X[D]]^2 = 4[\omega]^2 -4 C_{[\omega_0],[\chi_0]} [\omega]\cdot K_X[D] + C_{[\omega_0],[\chi_0]}^2 K_X[D]^2 = C_{[\omega_0],[\chi_0]}^2 K_X[D]^2 >0.
  \end{equation*}
  and
  \begin{equation*}
      [2\omega -C_{[\omega_0],[\chi_0]} K_X[D]]\cdot [\omega]= 2[\omega]^2 -\frac{[\omega]^2}{K_X[D] \cdot [\omega]} K_X[D] \cdot [\omega] =[\omega]^2 >0
  \end{equation*}
  Since there is no curve of negative self-intersections, By Proposition 4.5 in \cite{SW} which is first proved in \cite{L}, $[2\omega]- C_{[\omega_0],[\chi_0]} K_X[D]$ is a K\"ahler class. Thus we can use the Theorem \ref{prop 2d} to prove this Corollary.
\end{proof}

\begin{lem}\label{lem K-energy}
    Suppose that $K_X[D]$ is ample. Suppose that there exist Poincar\'e type metrics $\omega$ and $\chi$ such that $- Ric  (\omega) = \chi$.  Then if Poincar\'e type J-equation is solvable:
    \begin{equation*}
        \omega_{\varphi}^n = C \omega_{\varphi}^{n-1}\wedge \chi,
    \end{equation*}
    then we have that the K-energy is bounded from below in the space of K\"ahler metrics $\mathcal{PM}_{[\omega]}$.
\end{lem}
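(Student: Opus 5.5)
We follow Chen's decomposition of the K-energy, as recalled in (\ref{decom kenergy}). Since $\chi=-Ric(\omega)$, the K-energy with respect to the background $\omega$ reads
\begin{equation*}
\mathcal{M}(\varphi)=H_{\mu_0}(\omega_\varphi^n)+\Big(\frac{\bar R}{n+1}\mathcal{E}(\varphi)+\mathcal{E}^{\chi}(\varphi)\Big).
\end{equation*}
Here $\bar R=-n\frac{[\chi]\cdot[\omega]^{n-1}}{[\omega]^n}=-nC^{-1}$. The plan has three steps.
\begin{enumerate}
\item Show that the entropy $H_{\mu_0}(\omega_\varphi^n)$ is bounded below.
\item Show that the remaining term $J_\chi(\varphi)\triangleq \mathcal{E}^{\chi}(\varphi)-\frac{n}{C(n+1)}\mathcal{E}(\varphi)$ is, up to sign and normalization, the $J$-functional whose Euler--Lagrange equation is exactly $\omega_\varphi^n=C\omega_\varphi^{n-1}\wedge\chi$.
\item Show that $J_\chi$ attains its infimum at the Poincar\'e type solution $\varphi$ given in the hypothesis.
\end{enumerate}

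Step 1 is Jensen's inequality. Both $\omega^n$ and $\omega_\varphi^n$ have the same finite total mass on $X\setminus D$, so after normalizing to probability measures, $\int\log(\frac{d\mu}{d\mu_0})d\mu\ge 0$. This holds because Poincar\'e type metrics have finite volume equal to the cohomological one.

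For Step 2, differentiate along a path $\varphi_s$ in $\widetilde{\mathcal{PM}}_{[\omega]}$. The derivative of $\mathcal{E}$ is $(n+1)\int\dot\varphi\,\omega_{\varphi_s}^n$, and the derivative of $\mathcal{E}^\chi$ is $n\int\dot\varphi\,\omega_{\varphi_s}^{n-1}\wedge\chi$. The integrations by parts needed here are justified by Lemma \ref{G-S}, using the bounded derivatives in Definition \ref{def 1.1} and finite volume. It follows that $\frac{d}{ds}J_\chi=n\int\dot\varphi(\omega_{\varphi_s}^{n-1}\wedge\chi-C^{-1}\omega_{\varphi_s}^n)$, whose critical points are the J-equation solutions.

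Step 3 uses convexity. Take any $\psi\in\widetilde{\mathcal{PM}}_{[\omega]}$ and the linear path $\varphi_s=(1-s)\varphi+s\psi$, which stays in the Poincar\'e type space. The second derivative is
\begin{equation*}
\frac{d^2}{ds^2}J_\chi=n(n-1)\int\sqrt{-1}\partial\dot\varphi\wedge\bar\partial\dot\varphi\wedge\omega_{\varphi_s}^{n-2}\wedge\chi\ge0.
\end{equation*}
Hence $s\mapsto J_\chi(\varphi_s)$ is convex, and its derivative at $s=0$ vanishes because $\varphi$ solves the equation. Therefore $J_\chi(\psi)\ge J_\chi(\varphi)$, and so $\mathcal{M}(\psi)\ge J_\chi(\varphi)+\text{const}$ uniformly in $\psi$.

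The main obstacle is justifying the integrations by parts on the complete noncompact $X\setminus D$. This includes checking that the functionals are finite, since potentials may grow like $\log\rho$. I would handle it with Lemma \ref{G-S}: the forms $\dot\varphi\, d^c\dot\varphi\wedge\omega_{\varphi_s}^{k}\wedge\omega^{j}\wedge\chi^l$ have $\omega$-norms bounded by $C\log\rho$, which is integrable because $\log\rho\in L^1(\omega^n)$. Their exterior derivatives are controlled in the same way. A secondary point is the normalization of $\mathcal{E}$ against adding constants, which is harmless because $\mathcal{M}$ is invariant under $\varphi\mapsto\varphi+c$, the two mass terms cancelling by the choice of $\bar R$.
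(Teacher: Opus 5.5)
Your Steps 1 and 2 are fine, but Step 3 fails because the second variation of $J_\chi$ along a linear path is not the expression you wrote. Take $u=\psi-\varphi$, so that $\ddot\varphi_s=0$. Differentiate $\frac{d}{ds}J_\chi=n\int u\,(\omega_{\varphi_s}^{n-1}\wedge\chi-C^{-1}\omega_{\varphi_s}^n)$ once more and integrate by parts, using $\int u\,dd^cu\wedge T=-\int du\wedge d^cu\wedge T$ for closed $T$. This gives
\begin{equation*}
\frac{d^2}{ds^2}J_\chi(\varphi_s)=n\int_{X\setminus D}\sqrt{-1}\,\partial u\wedge\bar\partial u\wedge\Big(\frac{n}{C}\,\omega_{\varphi_s}^{n-1}-(n-1)\,\omega_{\varphi_s}^{n-2}\wedge\chi\Big).
\end{equation*}
So the $\mathcal{E}^\chi$ part enters with a minus sign, since $\mathcal{E}^\chi$ is concave along linear paths. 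You have also dropped the positive contribution of $-\frac{n}{C(n+1)}\mathcal{E}$. As a check at $n=1$: $J_\chi$ contains $\frac{1}{2C}\int du\wedge d^cu$, so its second derivative is not $0$, contrary to what your formula predicts.

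The bracket is a nonnegative $(n-1,n-1)$-form exactly when $\omega_{\varphi_s}$ lies in the cone defined by the subsolution condition (\ref{e sub0}). This holds near $s=0$. It holds on the whole segment if $\omega_\psi$ itself lies in the cone, by concavity of $\log(\sigma_{n-1}/\sigma_{n-2})$ as in the paper's uniqueness argument. But it fails for $s$ near $1$ whenever $\omega_\psi$ leaves the cone somewhere, e.g.\ if $\omega_\psi$ is very small in some direction relative to $\chi$. Hence linear paths only give $J_\chi(\psi)\ge J_\chi(\varphi)$ for $\psi$ that are themselves subsolutions, not on all of $\widetilde{\mathcal{PM}}_{[\omega]}$.

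The missing idea is to use geodesics instead of segments, which is what the paper does. Join $\varphi$ to $\psi$ by a Poincar\'e type $C^{1,1}$ geodesic $U$, using Lemma 4.1 of \cite{XZ}. The Hessian of $\mathcal{E}$ is then a multiple of $\int(\pi^*\omega+dd^cU)^{n+1}=0$, so $\mathcal{E}$ is affine. The Hessian of $\mathcal{E}^\chi=-\mathcal{E}^{Ric(\omega)}$ is $\int\pi^*\chi\wedge(\pi^*\omega+dd^cU)^n\ge 0$, which uses only $\chi\ge0$ and no cone condition. So $J_\chi$ is convex along every such geodesic. Its first variation vanishes at the J-equation solution, which gives $J_\chi\ge J_\chi(\varphi)$ everywhere, and your Step 1 finishes the proof.
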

\begin{proof}
    Using the Lemma 4.1 in \cite{XZ}, we can get the expression of the Hessian of J-functional along Poincar\'e type $C^{1,1}$ geodesic:
    \begin{equation*}
    \begin{split}
        &d_R d^c_R (\frac{\overline{R}}{n+1}\mathcal{E}(\varphi)-\mathcal{E}^{Ric(\omega)}(\varphi))= \int_{X\setminus D} \frac{\overline{R}}{n+1}(\pi^* \omega+ dd^c U)^{n+1}- (\pi^* \omega+ dd^c U)^n \wedge \pi^* Ric(\omega)\\
        &= \int_{X\setminus D} - \pi^* Ric(\omega)\wedge (\pi^* \omega+ dd^c U)^n \ge 0.
    \end{split}
    \end{equation*}
    Here we see a geodesic $U(x,t)=u_t(x)$ as a $S^1$ invariant function on $(X\setminus D) \times R$, where $R=S^1 \times [0,1]$ is a cylinder. By geodesic equation, $(\pi^* \omega+ dd^c U)^{n+1}=0$. Thus we have that the functional $\frac{\overline{R}}{n+1}\mathcal{E}-\mathcal{E}^{Ric(\omega)}$ is convex along $C^{1,1}$ Poincar\'e type geodesics. Note that the Poincar\'e type J-equation is the critical point of $\frac{\overline{R}}{n+1}\mathcal{E}-\mathcal{E}^{Ric(\omega)}$, the functional $\frac{\overline{R}}{n+1}\mathcal{E}-\mathcal{E}^{Ric(\omega)}$ is bounded from below. Note that $H_{\mu_0}(\omega_{\varphi}^n)$ is bounded from below. The decomposition of K-energy (\ref{decom kenergy}) implies that K-energy is bounded from below. 
\end{proof}

\begin{lem}\label{lem negative ric}
Suppose that $ K_X[D] $ is ample. Then for any Kähler class $ \mathcal{PM}_{[\omega_1]} $, there exist Poincar\'e type Kähler metrics
$$
\chi \in \mathcal{PM}_{K_X[D]}, \qquad \omega \in \mathcal{PM}_{[\omega_1]},
$$
such that $ \chi $ is locally asymptotic to a product metric near $ D $ and
\begin{equation*}
- \mathrm{Ric}(\omega) = \chi.
\end{equation*}

Moreover, for any smooth Kähler metric $ \chi_0 \in K_X[D] $, there exists a function $ \varphi_D' $ on $ D $ such that we can choose $\chi$ to be
\begin{equation*}
\chi = \chi_0 + dd^c(\eta_0 p^* \varphi_D') - dd^c \log(\lambda - \log |S|^2) + dd^c h,
\end{equation*}
where $ \eta_0 $ is a cut-off function, $ \lambda $ is chosen so that $ \chi $ is of Poincar\'e type, and $ h $ satisfies that $ dd^c h $ is compactly supported in $ X \setminus D $ and can be made arbitrarily small once $ \eta_0 $ and $ \lambda $ are fixed.
\end{lem}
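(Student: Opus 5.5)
The approach is to first build the prescribed form $\chi$ as a Poincaré type representative of $K_X[D]$ that is asymptotically a product near $D$, and then to obtain $\omega$ by solving a Poincaré type Monge--Ampère equation of Kähler--Einstein type with Ricci form prescribed to be $-\chi$.

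\textbf{Step 1: construct $\chi$.} Fix a smooth Kähler metric $\chi_0\in K_X[D]$. The restriction $\chi_0|_D$ is Kähler on $D$. By adjunction, $K_X[D]|_D=K_D$, so $K_D$ is ample on $D$. By Aubin--Yau there is $\varphi_D'$ on $D$ with $\chi_D:=\chi_0|_D+dd^c\varphi_D'$ satisfying
\[
\mathrm{Ric}(\chi_D)=-\chi_D .
\]
This is the "product" factor we want. Set
\[
\chi_1=\chi_0+dd^c(\eta_0 p^*\varphi_D')-dd^c\log(\lambda-\log|S|^2).
\]
The computations in the proof of Theorem \ref{prop 2d} (the Hessian of $-\log(\lambda-\log|S|^2)$ in cusp coordinates), together with the bounds on $\nabla\eta_0,\nabla^2\eta_0$, show the following. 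For $\lambda$ large and then $N$ large, $\chi_1$ is a Poincaré type Kähler metric. Near $D$ it equals
\[
p^*\chi_D+2e^{-t}dt\wedge\widetilde\eta+O(e^{-t}),
\]
since $-dd^c\log(-\log|S|^2)$ has exactly the cusp part with coefficient $1$. Hence $\chi_1$ is locally asymptotic to a product metric. The term $dd^ch$ is added at the end.

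\textbf{Step 2: reduce $-\mathrm{Ric}(\omega)=\chi$ to a Monge--Ampère equation.} Take a background $\omega_b=\omega_1-b\,dd^c\log(\lambda-\log|S|^2)\in\mathcal{PM}_{[\omega_1]}$. Its volume form is
\[
\omega_b^n\sim \frac{\Theta}{|S|^2(\log|S|^2)^2},
\]
with $\Theta$ a smooth volume form. Consequently $-\mathrm{Ric}(\omega_b)$ lies in $K_X[D]$ up to the cusp correction, and
\[
-\mathrm{Ric}(\omega_b)-\chi_1=dd^cF
\]
for a function $F$ on $X\setminus D$. We seek $\omega=\omega_b+dd^c\psi$ with
\[
\omega^n=e^{F+c}\omega_b^n
\]
and then need $-\mathrm{Ric}(\omega)=-\mathrm{Ric}(\omega_b)-dd^c F=\chi_1$. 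Thus everything reduces to solving this Poincaré type Monge--Ampère equation via \cite[Theorem 3.2]{A}. That theorem requires $F$ to be bounded in $C^\infty(M)$ and, for the normalization, decaying like $O(\rho^{-\eta})$ (at least after adjusting the constant).

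\textbf{Step 3: the main obstacle, the decay of $F$.} Near $D$, the leading term of $-\mathrm{Ric}(\omega_b)$ along $D$ is $-\mathrm{Ric}$ of the restricted product, namely
\[
-\mathrm{Ric}(\omega_1|_D)+(\text{cusp part with coefficient }1).
\]
The cusp coefficient $1$ comes from the Poincaré factor $\frac{2|dz|^2}{|z|^2\log^2|z|^2}$, which has curvature $-1$. This explains the coefficient $1$ in front of $dd^c\log(\lambda-\log|S|^2)$ in $\chi$. The $D$-direction part, $-\mathrm{Ric}(\omega_1|_D)$, differs from $\chi_D$ by $dd^c_D f_D$ for some $f_D$ on $D$, so $F$ tends to $p^*f_D$ rather than decaying.

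To fix this, I would first modify $\omega_b$ by $dd^c(\eta_0 p^*u_D)$, where $u_D$ solves the Kähler--Einstein equation on $D$ in the class $[\omega_1|_D]$, suitably rescaled. Alternatively, I would absorb $p^*f_D$ by allowing the restriction of $\omega$ to $D$ to be determined by a Kähler--Einstein-type equation on $D$ solved first, so that the leading terms of $\omega_b^n$ and $e^{F}$ match. After this adjustment, $F=O(e^{-\eta t})=O(\rho^{-\eta})$ with all derivatives bounded. The cutoff errors from $\eta_0$ are compactly supported and $O(1/N)$, and they are absorbed by $dd^c h$, which is compactly supported in $X\setminus D$ and small.

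With $F$ in the right class, \cite{A} gives $\psi$ and hence $\omega\in\mathcal{PM}_{[\omega_1]}$ with $-\mathrm{Ric}(\omega)=\chi$. Here $\chi$ has exactly the stated form and, by Step 1, is locally asymptotic to a product metric. Making the matching of the $D$-direction leading terms precise is the step I expect to need the most care.
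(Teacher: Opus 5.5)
Your architecture matches the paper's. You build a product-asymptotic $\chi_1$ with prescribed $D$-part, solve a problem on $D$ so that the $D$-parts of $-\mathrm{Ric}(\omega_b)$ and $\chi_1$ agree (so the Ricci potential tends to a constant at $D$), and then apply Auvray's Poincar\'e type Monge--Amp\`ere theorem.

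\textbf{The gap: volume normalization versus decay.} After the matching, the potential is $F=c_\infty+F'$ with $F'=O(\rho^{-\eta})$, and $dd^cF$ determines $F$ only up to this single additive constant. The theorem you invoke needs two things at once:
\begin{itemize}
\item a decaying right-hand side, i.e.\ $F'$ with no constant;
\item the compatibility condition $\int_{X\setminus D}e^{F'}\omega_b^n=\int_{X\setminus D}\omega_b^n=[\omega_1]^n$.
\end{itemize}
Your one constant $c$ in $e^{F+c}$ cannot do both. If $c$ fixes the mass, the right-hand side tends to $e^{c}\neq 1$ at $D$, which is incompatible with a solution asymptotic to $\omega_b$ (it would force a different cusp coefficient). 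If $c$ restores decay, the mass condition generally fails.

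This conflict is exactly why the statement contains $dd^ch$. It is not there to absorb the $\eta_0$ cutoff terms. Those terms are already contained exactly in $F$, and they only affect positivity, which the $O(1/N)$ bounds on the derivatives of $\eta_0$ already handle.

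\textbf{How the paper closes it.} Following Auvray's Theorem 3.3, the paper changes $\chi$ rather than the constant. Let $f\in\mathbb{R}\oplus C^\infty_{-1}$ be the potential from Auvray's $\partial\bar\partial$-lemma.
\begin{itemize}
\item Choose $f_0\in C^\infty_{-1}$ with $\int e^{f_0}\omega_2^n=\int\omega_2^n$ and $f_0-f$ constant near $D$.
\item For instance, take $f_0=f-c_\infty+\kappa\,\eta_N$, where $\eta_N$ is a cutoff equal to $1$ on $\{t\le N\}$ and $0$ on $\{t\ge 2N\}$. Then $\kappa$ stays bounded and $dd^c(\kappa\eta_N)=O(1/N)$.
\item Set $\chi:=\chi_1+dd^c(f_0-f)$, so $h=f_0-f$, and solve $(\omega_2+dd^c\varphi)^n=e^{f_0}\omega_2^n$.
\end{itemize}

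\textbf{An alternative within your set-up.} You could instead spend the free cusp coefficient $b$. Here $\omega_b$ is already modified by $dd^c(\eta_0p^*u_D)$, and $\omega_D'=\omega_1|_D+dd^c_Du_D$. Once the $D$-parts match, $e^{F'_b}\omega_b^n$ equals a fixed volume form $V$ with $dd^c\log V=\chi_1$, times a constant proportional to $b$. This is because $\omega_b^n\approx nb\,p^*(\omega_D')^{n-1}\wedge(\text{cusp})$ near $D$. So its mass is linear in $b$ while $[\omega_1]^n$ is not, and one value of $b$ makes the data compatible with $h=0$.

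\textbf{Two smaller corrections.}
\begin{itemize}
\item The equation on $D$ is not a K\"ahler--Einstein equation in $[\omega_1|_D]$ unless that class is proportional to $c_1(K_D)$, which is automatic only when $D$ is a curve. What you need is Calabi--Yau: $\mathrm{Ric}(\omega_1|_D+dd^c_Du_D)=-\chi_D$. This is solvable because $[-\chi_D]=-[K_D]$ is the suitably normalized first Chern class of $D$. The paper does the same thing in the opposite order, choosing $\varphi_D$ first and then $\varphi_D'$.
\item If $\omega^n=e^F\omega_b^n$, then $-\mathrm{Ric}(\omega)=-\mathrm{Ric}(\omega_b)+dd^cF$. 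So the sign of your $F$ is flipped.
\end{itemize}
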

\begin{proof}
   This lemma follows from the Theorem 3.3 in \cite{A}. We provide a sketch of the proof for the reader’s convenience. First, we take a smooth K\"ahler metric $\omega_0 \in [\omega_1]$. Define $\omega = \omega_0 -b dd^c \log (\lambda-\log |S|^2)$ which is a Poincar\'e type K\"ahler metric. Denote $\omega_D = \omega_0 |_{D}$. Then $\omega$ can be written as 
   \begin{equation*}
       \omega = p^* \omega_D + b e^{-t} dt \wedge \widetilde{\eta} + O(e^{-\eta t}).
   \end{equation*}
   Since $K_X[D]$ is ample, we get that $K_D$ is ample. Since $-Ric (\omega_D) \in K_D$, we can find a smooth function $\varphi_D$ on $D$ such that $-Ric(\omega_D + dd^c \varphi_D)$ is a positive K\"ahler metric on $D$. Define 
   \begin{equation*}
       \omega_2 = \omega + dd^c (\eta_0 p^* \varphi_D) = p^* (\omega_D + dd_D^c \varphi_D) + b e^{-t} dt \wedge \widetilde{\eta} + O(e^{-\eta t}).
   \end{equation*}
       Then we have that
    \begin{equation*}
        \omega_2^n = n \Big(p^*(\omega_D + dd_D^c \varphi_D ) \Big)^{n-1}\wedge b e^{-t} dt \wedge \widetilde{\eta} + O(e^{-\eta t}).
    \end{equation*}    
    Then we can compute that
    \begin{equation*}
         Ric (\omega_2) = -dd^c \log \omega_2^n=  p^* Ric (\omega_D + dd_D^c \varphi_D) - e^{-t} dt \wedge \widetilde{\eta} + O(e^{-\eta t}) 
    \end{equation*}
    Note that although $-Ric (\omega_2)$ is a Poincar\'e type K\"ahler metric near $D$, it is possible that it may not be positive away from $D$. As a result, we can construct a Poincar\'e type K\"ahler metric $\chi_1 \in \mathcal{PM}_{K_X[D]}$ in the form
    \begin{equation*}
      \chi_1=  \chi_0 + dd^c (\eta_0 p^* \varphi_D') -dd^c \log (\lambda- \log |S|^2)
    \end{equation*}
    with $\varphi_D'$ to be a function defined on $D$ such that $\chi_0 |_D + dd_D^c \varphi_D' =- Ric (\omega_D + dd_D^c \varphi_D)$.
    Then by Auvray's $\partial \bar \partial-$lemma (Proposition 3.5 in \cite{A}) for Poincar\'e type K\"ahler metric, we can find $f \in \mathbb{R} \oplus C_{-1}^{\infty}(X\setminus D)$ such that 
    \begin{equation*}
        \chi_1= -Ric (\omega_2) + dd^c f.
    \end{equation*}
  Then following Auvray's argument in Theorem 3.3 in \cite{A}, we can construct another function $f_0 \in C_{-1}^{\infty}(X\setminus D)$ with $\int_{X\setminus D} e^{f_0} \omega_2^n =1$, such that $dd^c (f_0-f)$ can be made arbitrarily small and compactly supported in $X\setminus D$. In particular, we have 
    \begin{equation*}
       \chi \triangleq -Ric(\omega_2)+ dd^c f_0= \chi_1 + dd^c (f_0-f)= \chi_0 + dd^c (\eta_0 p^* \varphi_D')- dd^c \log (\lambda- \log |S|^2) + dd^c (f_0-f)
    \end{equation*}
    is a Poincar\'e type K\"ahler metric. Using Theorem 3.2 in \cite{A} to solve a Poincar\'e type complex Monge Amp\'ere equation:
    \begin{equation*}
        (\omega_2+ dd^c \varphi)^2 = e^{f_0} \omega_2^n,
    \end{equation*}
    we can find a Poincar\'e type K\"ahler metric $\omega_2+ dd^c \varphi \in \mathcal{PM}_{[\omega_1]}$ such that 
    \begin{equation*}
        -Ric (\omega_2+ dd^c \varphi)= \chi.
    \end{equation*}
\end{proof}

\begin{proof}
(of the Corollary \ref{thm lower k})
Using the assumption that $K_X[D]$ is ample, for any K\"ahler class $\mathcal{PM}_{[\omega]}$, we can use the Lemma \ref{lem negative ric} to construct
\begin{equation*}
\chi= \chi_0 + dd^c (\eta_0 p^* \varphi_D') -dd^c \log (\lambda- \log |S|^2) + dd^c h
\end{equation*}
such that $\chi= -Ric(\omega_1)$ for some $\omega_1 \in \mathcal{PM}_{[\omega]}$.
Here $\eta_0$ and $\lambda$ can be chosen freely as long as $\chi$ is positive and then $dd^c h$ can be made arbitrarily small. Under the assumption that $X$ is a K\"ahler surface with no curves of negative self-intersections and $K_X[D]$ is ample, we can use the proof of Theorem \ref{prop 2d} to solve the Poincar\'e type J-equation 
    \begin{equation*}
    \omega_{\varphi}^n = C \omega_{\varphi}^{n-1}\wedge \chi.
    \end{equation*}
    in $\mathcal{PM}_{[\omega]}$, as long as we let $\lambda$ be big and choose $\eta_0$ appropriately and let $dd^c h$ be small enough. Then we can use the Lemma \ref{lem K-energy} to get that K-energy is bounded from below on $\mathcal{PM}_{[\omega]}$.
\end{proof}

\section{Asymptotic behaviour of solution to J-equation}
Throughout this section, we use the normalization (\ref{normalize}). Without loss of generality, we assume that $D$ only has one irreducible component.

In this section, we use Auvray's idea in proving the asymptotic behaviour of Poincar\'e type extremal K\"ahler metrics, c.f. \cite{A2} to prove the asymptotic behaviour of the solution to J-equation which is the Theorem \ref{thm asy}

\subsection{Construction of background metric}

In this subsection, we use Auvray's idea in \cite{A3} to prove:
\begin{prop}\label{prop bdd}
    There exists a Poincar\'e type K\"ahler metric $\omega$ which is locally asymptotic to a product metric near $D$ such that the solution $\omega_{\varphi}=\omega+ dd^c \varphi$ to the J-equation satisfies that $\varphi$ is bounded.
\end{prop}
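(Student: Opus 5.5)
Throughout, $D$ is smooth with one component, and $\chi$ is locally asymptotic to a product, $\chi=p^*\chi_D+2b\,e^{-t}dt\wedge\widetilde\eta+O(e^{-\eta t})$. We are given a Poincar\'e type solution $\omega_{\varphi_1}=\omega_1+dd^c\varphi_1$ of (\ref{e j}) for some Poincar\'e type background $\omega_1$. By Definition \ref{def 1.1}, $\varphi_1=O(\log(-\log|S|))=O(t)$ and has bounded derivatives, but it need not be bounded. The plan is to find the model metric at infinity first, build a product-asymptotic background $\omega$ from it in the same way as in the proof of Theorem \ref{prop 2d} and Lemma \ref{lem negative ric}, and then show that the solution differs from this $\omega$ by a bounded potential.

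\textbf{Step 1 (limiting data on $D$).} Use the quasi-coordinates $\Phi_\delta$ and the uniform $C^k$ bounds on $\omega_{\varphi_1}$ from Definition \ref{def 1.1}. Translate along the cusp, $t\to\infty$, and average over the $S^1$ fibres using the decomposition (\ref{f decom}). Extract a subsequential limit of $\omega_{\varphi_1}$ on the model $D\times(\text{cusp})$. The limit is $S^1$-invariant, solves the limiting J-equation with respect to $p^*\chi_D+2be^{-t}dt\wedge\widetilde\eta$, and stays uniformly equivalent to the model.

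Next, use the subsolution property together with the convexity argument of Lemma \ref{l2.1} and the uniqueness proof of Theorem \ref{main thm}. On the cylinder the potentials have linear growth, so the integration by parts needs a cutoff. This should force the limit to be a product $p^*\omega_D+2ae^{-t}dt\wedge\widetilde\eta$. In it, $\omega_D\in[\omega_1|_D]$, and $a>0$ is determined by the cohomological coefficient of $-dd^c\log(\lambda-\log|S|^2)$ contained in $[\omega_1]$ near $D$. The class $[\omega_D]$ and the constant $a$ then fix $\omega_D$ as the solution of the J-equation $tr_{\omega_D}\chi_D=n-\frac{b}{a}$ on $D$. Rather than extracting this abstractly, I would try to define $a$ and $\omega_D$ first from cohomology plus solvability on $D$, since $D$ inherits the subsolution condition by restriction.

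\textbf{Step 2 (background metric).} Set
\begin{equation*}
\omega=\omega_0-a\,dd^c\log(\lambda-\log|S|^2)+dd^c(\eta_0p^*\varphi_D),
\end{equation*}
where $\omega_0|_D+dd^c\varphi_D=\omega_D$ and $\eta_0$ is the cutoff from Section 6. With $\lambda$ large and $\eta_0$ chosen as in the proof of Theorem \ref{prop 2d}, this $\omega$ is Poincar\'e type and locally asymptotic to a product. By construction,
\begin{equation*}
\frac{\omega^n}{\omega^{n-1}\wedge\chi}=1+O(e^{-\eta t}).
\end{equation*}

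\textbf{Step 3 (boundedness).} Write $\omega_{\varphi}=\omega+dd^c\varphi$ for the solution. A priori $\varphi=ct+O(1)$ plus a lower-order error. Compare $\varphi$ with the potentials $\pm\kappa\log(\lambda-\log|S|^2)$ for small $\kappa>0$ using Yau's generalized maximum principle, as in the proof of Proposition \ref{c11}. The linearized operator applied to $\log(\lambda-\log|S|^2)$ has a definite sign near $D$, since its Hessian is essentially $e^{-t}dt\wedge\widetilde\eta$. The J-equation error is $O(e^{-\eta t})$ and decays, so the barriers dominate it. This would show that $\varphi$ cannot grow linearly in $t$. The next correction beyond linear growth should be ruled out by the uniqueness of the product model from Step 1.

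Alternatively, and probably more robustly, redo the Moser iteration of Proposition \ref{prop improve linfty} for the equation at $\epsilon=0$ with this background. That proposition only needs the subsolution property and $tr_\omega\chi=n+O(\rho^{-\eta})$. What it requires in addition is an a priori $L^2$ bound, and that is where the identification of $a$ and $\omega_D$ enters. Concretely, $\varphi-\frac{1}{2\pi}\int\varphi\,\widetilde\eta$ decays, and the $S^1$-invariant part has no $t$-linear term because the limiting model coefficient equals $a$.

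\textbf{Main obstacle.} The hard step is Step 1 together with the $L^2$ bound in Step 3: showing that the limiting model of the solution along the cusp has exactly the coefficient $a$ and the metric $\omega_D$ of the background. This amounts to an ODE analysis in $t$ of the $S^1$-invariant part $\varphi_0(t,p)$ of $\varphi$, in the style of Auvray \cite{A2}. The constraint is that $\varphi_0$ has bounded $\omega$-derivatives and solves a linearized J-equation whose $t$-part is $\partial_t(e^{t}\partial_t)$ up to decaying terms. The solutions of the homogeneous equation are constants and $e^{-t}$-type functions. This rules out linear growth once the source term $O(e^{-\eta t})$ is integrable against the weight $e^{-t}$.
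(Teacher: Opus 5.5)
Your proposal does not yet contain an argument for the actual content of the proposition: that the $S^1$- and $D$-averaged part $\varphi_{00}(t)$ of the potential stays bounded. Several of your steps also rely on facts that the paper only obtains \emph{from} this proposition.

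First, the cusp coefficient is not cohomological. Since $\log(\lambda-\log|S|^2)$ is a function on $X\setminus D$, the form $\omega_X-A\,dd^c\log(\lambda-\log|S|^2)$ represents $[\omega_X]$ for every $A>0$. The coefficient is forced only by integrating the asymptotic equation over $D$: in your notation $n-\frac{b}{a}=C_D$, in the paper's $A=\frac{2b}{n-C_D}$. Its positivity is Lemma \ref{lem cd}.

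Second, Step 2 needs $tr_{\omega_D}\chi_D$ to be constant, i.e. a solution of the J-equation on $D$. For $\dim D\ge 2$ this is a nonlinear existence problem. It is precisely the last assertion of Theorem \ref{thm asy}, and the paper reaches it only through Propositions \ref{splitting} and \ref{p almost J}, which use boundedness of $\varphi$. Saying that $D$ inherits the subsolution condition by restriction presupposes the asymptotic shape of a Poincar\'e type subsolution, which is what is at stake. Step 1 has the same problem. The splitting argument (Lemma \ref{zero l2}) ends with ``$\dot\varphi_\infty$ is constant and $\varphi_\infty$ is bounded, so $\dot\varphi_\infty=0$''. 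Once linear growth is allowed, a nonzero constant $\dot\varphi_\infty$ is exactly the mode you must exclude.

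Third, neither version of Step 3 closes. Yau's principle applied to $\varphi\mp\kappa\log(\lambda-\log|S|^2)$ requires boundedness on the relevant side, which fails exactly when $\varphi$ grows faster than $\kappa t$. More fundamentally, locally along $D$ the limiting equation $tr_{\omega_D'}\chi_D=n-\frac{2b}{A'}$ can be met for any cusp coefficient $A'>\frac{2b}{n}$, so a radial pointwise comparison cannot detect a wrong coefficient. In the Moser route the $L^2$ bound is not the issue: for $\varphi=O(t)$ it is automatic, since $\omega^n\approx e^{-t}dt\wedge\widetilde\eta\wedge dvol_D$. What Lemma \ref{l2.1} actually needs is a background that is a strict subsolution on all of $X\setminus D$, which you do not check away from $D$. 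It also needs a pointwise asymptotic solution, which again requires the J-equation on $D$. A smaller error: on $S^1$-invariant functions the $t$-part of the Laplacian is $\partial_t^2-\partial_t=e^t\partial_t(e^{-t}\partial_t)$, whose homogeneous solutions are $1$ and $e^t$, not $e^{-t}$.

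The missing idea is the weighted flux identity used in the paper. It needs no J-equation on $D$, no subsolution property of the background, and no limiting model. Take $\omega=\omega_X-A\,dd^c\log(\lambda-\log|S|^2)$ with $A=\frac{2b}{n-C_D}$ and keep $\omega_D=\omega_X|_D$. Solve only the Poisson equation $tr_{\omega_D}(\chi_D+dd^c_D\widetilde\psi)=C_D$; with $\widetilde\chi=\chi+dd^c(\eta_0p^*\widetilde\psi)$ one has $\omega^{n-1}\wedge\widetilde\chi/\omega^n=1+O(e^{-\eta t})$. Then
\[ 0=\int_{\{t\le s\}}(\omega_\varphi^n-\omega_\varphi^{n-1}\wedge\chi)\,e^t . \]
The background contributes $O(1)$. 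The correction $\chi-\widetilde\chi$ is $dd^c$ of a bounded function and also contributes $O(1)$. The rest is $\int_{\{t\le s\}}e^t\,dd^c\varphi\wedge\Theta$ with $\Theta$ closed.

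Integrate by parts twice, using only that $\partial_t\varphi$ and $dd^c e^t$ are bounded and $\varphi=O(t)$. The single unbounded contribution is the boundary term $2\pi\varphi_{00}(s)\big(n[\omega_D]^{n-1}-(n-1)[\omega_D]^{n-2}\cdot[\chi_D]\big)$, whose coefficient is positive by Lemma \ref{lem cd}. Hence $\varphi_{00}=O(1)$. The pieces $\varphi_{01}$ and $\varphi^{\bot}$ are bounded by the Poincar\'e inequality on the slices and by decay along the fibres.

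The weight $e^t\approx-\log|S|^2$ is exactly the nonconstant harmonic mode you missed. Pairing the equation with it isolates the growth of $\varphi_{00}$. It also replaces any knowledge of the asymptotics of $\omega_\varphi$ by a cohomological computation on $D$.
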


For any function $u$ supported in a small neighbourhood of $D$, we can decompose it as
\begin{equation*}
u = \Pi_0 u + u^{\bot}
\end{equation*}
as explained in the section 2.5. Since the integral of $u^{\bot}$ on each $S^1$ fiber is zero and the length of the $S^1$ fiber exponentially decay to zero as $t$ goes to $\infty$, we have the following Lemma basically saying that the decay rate of $u^{\bot}$ con be improved if we have control on its higher order derivatives. See the section 3 of \cite{A2} and the Formula (3.6) of \cite{S}.

\begin{lem}\label{improve decay}
For any $\delta \in \mathbb{R}$ and $k\in \mathbb{N}$, there exists a constant $C$ such that:
\begin{equation*}
||u^{\bot}||_{W_{\delta}^{k,2}}\le C||u^{\bot}||_{W_{\delta+1}^{k+1,2}}. 
\end{equation*}
holds for any $k$ and $u$ such that $||u^{\bot}||_{W_{\delta+1}^{k+1,2}}< \infty$.
\end{lem}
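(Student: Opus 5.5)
The plan is to reduce the estimate to a fibrewise Wirtinger (Poincar\'e) inequality on the $S^1$ fibres of the bundle $q:\mathcal{N}_A\setminus D\to[A,\infty)\times D$ of Section 2.5. The needed gain comes from the shrinking fibres. In a cusp coordinate $z_n=re^{i\theta}$, the model metric (\ref{standard cusp}) restricted to a fibre is $\frac{d\theta^2}{\log^2(r^2)}$ up to constants. So the fibre has length $\ell(t)\approx 2\pi e^{-t}$, since $t=\log(-2\log r)$. The unit vector field along the fibre is $T\approx e^{t}\partial_\theta$, and by (\ref{eta dtheta}) we may replace $d\theta$ by $\widetilde{\eta}$ up to bounded terms. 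The weights compare as $\rho\approx -\log|S|^2\approx e^{t}$. Hence the weight $\rho^{-2(\delta+1)}$ on the right-hand side equals $\rho^{-2\delta}$ times $e^{-2t}$, which exactly matches the square of the fibre length. Since $u^{\bot}$ is supported in $\mathcal{N}_A$, only this region matters.

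\emph{Step 1 ($k=0$).} On each fibre $q^{-1}(t,p)$, $u^{\bot}$ has zero mean with respect to $\widetilde{\eta}$. Wirtinger's inequality on a circle of length $\ell(t)$ gives
\begin{equation*}
\int_{q^{-1}(t,p)}|u^{\bot}|^2\,\widetilde{\eta}\le \frac{\ell(t)^2}{4\pi^2}\int_{q^{-1}(t,p)}|\partial_s u^{\bot}|^2\,\widetilde{\eta}\le Ce^{-2t}\int_{q^{-1}(t,p)}|\nabla u^{\bot}|^2_{\omega}\,\widetilde{\eta},
\end{equation*}
where $s$ is arclength. Equivalently, expand $u^{\bot}=\sum_{m\neq0}u_m(t,p)e^{im\theta}$; then $|Tu^{\bot}|\approx e^{t}|m||u_m|\ge e^t|u_m|$ mode by mode. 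Multiply by $\rho^{-2\delta}\approx e^{-2\delta t}$ and integrate over $(t,p)$, using $\omega^n\approx e^{-t}dt\wedge\widetilde{\eta}\wedge p^*(\text{vol}_D)$ up to uniformly bounded factors. This yields
\begin{equation*}
\int|u^{\bot}|^2\rho^{-2\delta}\omega^n\le C\int|\nabla u^{\bot}|^2\rho^{-2\delta-2}\omega^n.
\end{equation*}

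\emph{Step 2 (higher $k$, by induction).} Fix an adapted frame: $T$, $e^{t}\partial_t$ (or its $J$-image), and horizontal lifts of a local frame on $D$ via the connection $\widetilde{\eta}$. Express $\nabla^i u^{\bot}$, for $i\le k$, through the frame derivatives $E_{j_1}\cdots E_{j_i}u^{\bot}$ plus lower-order terms whose coefficients are bounded, by the Poincar\'e-type bounds on all derivatives of $\omega$. The frame fields commute with the $S^1$-action up to $O(e^{-t})$ corrections, because the structure is $S^1$-invariant up to $O(e^{-t})$ perturbations (Section 2.5). Therefore $E_J u^{\bot}=(E_Ju^{\bot})^{\bot}+R_J$, where $R_J=\Pi_0(E_Ju^{\bot})$ is controlled by lower-order derivatives of $u^{\bot}$ times $e^{-t}$. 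Apply Step 1 to $(E_Ju^{\bot})^{\bot}$; it bounds that term by $E_J u^{\bot}$ differentiated once more, with the extra weight $\rho^{-2}$. The error $R_J$ already carries a factor $e^{-t}\approx\rho^{-1}$, so it is bounded by the $W^{i,2}_{\delta+1}$ norm, which is in turn dominated by the right-hand side. Summing over $i\le k$ gives the claim.

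The main obstacle is Step 2. The fibrewise mean-zero property is not exactly preserved by differentiation, since $\widetilde{\eta}$, $t$ and the metric are $S^1$-invariant only asymptotically, and the fibre projection does not commute with covariant derivatives. I would first try to work with the genuinely $S^1$-invariant model $p^*\omega_D+2e^{-t}dt\wedge\widetilde{\eta}$, for which $\Pi_0$ commutes with the frame derivatives exactly. I would then treat the true metric as an $O(e^{-t})$ perturbation, checking that every commutator term gains a full factor $\rho^{-1}$ and so fits into the weight shift $\delta\mapsto\delta+1$.
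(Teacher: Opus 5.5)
Your proposal is correct and follows the route the paper indicates: a fibrewise Wirtinger inequality, upgraded to higher derivatives. The paper gives no proof of its own; it cites Auvray's Section 3 together with the heuristic that $u^{\bot}$ has zero mean on each $S^1$ fibre and the fibre length is $\approx e^{-t}\approx\rho^{-1}$.

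Two small remarks. First, the bounded frame field transverse to the fibres is $\partial_t\approx\frac{1}{2}JT$, not $e^{t}\partial_t$, whose $\omega$-norm is $\approx e^{t}$. Second, the obstacle you anticipate in Step 2 disappears if you choose genuinely $S^1$-invariant frame fields: $e^{t}\partial_\theta$, the $\widetilde{\eta}$-horizontal lift of $\partial_t$, and horizontal lifts from $D$. Then $\Pi_0(E_J u^{\bot})=0$ exactly, so no error term $R_J$ arises. The non-invariance of $\omega$ enters only through bounded coefficients relating $\nabla^i u$ to the $E_J u$.
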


\begin{lem}\label{improve decay holder}
For any $\delta \in \mathbb{R}$ and $k\in \mathbb{N}$, there exists a constant $C$ such that:
\begin{equation*}
||u^{\bot}||_{C_{\delta}^{k,\alpha}}\le C||u^{\bot}||_{C_{\delta+1}^{k+1,\alpha}}. 
\end{equation*}
holds for any $k$ and $u$ such that $||u^{\bot}||_{C_{\delta+1}^{k+1,\alpha}}< \infty$.
\end{lem}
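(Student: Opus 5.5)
The plan is to deduce the H\"older statement from a pointwise Poincar\'e (Wirtinger) inequality on each $S^1$ fiber, applied in quasi-coordinates. Because Lemma \ref{improve decay} is the Sobolev analogue, I will repeat the same mechanism, but with sup norms instead of $L^2$ norms.

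\textbf{Step 1: the zeroth-order inequality.} Away from $\mathcal{N}_A$ the function $u^{\bot}$ vanishes, so everything is local near $D$. On a fiber $q^{-1}(t,p)$ the function $u^{\bot}$ has zero mean with respect to $\widetilde{\eta}$. Hence it vanishes somewhere on the fiber, and integrating its derivative along the fiber gives
\begin{equation*}
|u^{\bot}(x)| \le \int_{q^{-1}(t,p)} |\partial_\theta u^{\bot}|\, |\widetilde{\eta}| \le 2\pi \sup_{\text{fiber}} |\partial_\theta u^{\bot}|_{\text{Eucl}}.
\end{equation*}
By (\ref{eta dtheta}) and the form (\ref{standard cusp polar}) of the model metric, the $S^1$ vector field $\partial_\theta$ has length of order $e^{-t}$ for the Poincar\'e type metric $\omega$. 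Therefore
\begin{equation*}
|\partial_\theta u^{\bot}| \le C e^{-t} |\nabla u^{\bot}|_{\omega}.
\end{equation*}
Since $\rho \approx e^{t}$ when $D$ is smooth, this yields the pointwise bound
\begin{equation*}
\rho^{-\delta}|u^{\bot}| \le C \rho^{-\delta-1}\sup_{\text{fiber}}|\nabla u^{\bot}|_\omega \le C\|u^{\bot}\|_{C^{1,\alpha}_{\delta+1}}.
\end{equation*}
Here I use that $\rho$ is comparable along a fiber. Equivalently, in the quasi-coordinates $\Phi_\delta$, the fiber through a point lies in a single chart or in boundedly many charts.

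\textbf{Step 2: higher derivatives.} I will not differentiate the inequality directly. Instead I note that $\Pi_0$ commutes, up to $O(e^{-t})$ errors with bounded derivatives, with differentiation in the horizontal directions, in $\partial_t$ and in $\partial_\theta$; this follows from the bundle description in Section 2.5. Consequently $\nabla^j u^{\bot}$ also has zero fiber average, modulo error terms controlled by lower derivatives times $e^{-t}$. Applying Step 1 to each component of $\nabla^j u^{\bot}$ for $j\le k$ then bounds $\rho^{-\delta}|\nabla^j u^{\bot}|$ by $\|u^{\bot}\|_{C^{k+1,\alpha}_{\delta+1}}$.

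\textbf{Step 3: the H\"older seminorm.} For the $C^{k,\alpha}$ seminorm I interpolate. The quantity $\|\Phi_\delta^*(\rho^{-\delta}\nabla^k u^{\bot})\|_{C^{0,\alpha}}$ is bounded by its $C^0$ norm plus its $C^1$ norm on the chart. These are in turn controlled by the $C^{k+1}$ bounds from Step 2, together with the fact that $\Phi_\delta^*\rho^{-\delta}$ has bounded derivatives relative to its size. Summing over the finite cover then gives the claimed inequality.

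The main obstacle is Step 2, namely the commutation of the fiber projection with covariant derivatives. I will handle it using Auvray's description of the connection $\widetilde{\eta}$ and the fact that the fibers are exponentially short. If this is too delicate, the fallback is to work entirely in a cusp chart. There I expand $u^{\bot}$ in Fourier modes $e^{im\theta}$ with $m\neq 0$ and use $|\partial_\theta^{-1}| \le 1$ mode by mode, which gives Steps 1 and 2 simultaneously. The price is the perturbation $\widetilde{\eta}-d\theta=O(1)$, which I expect to contribute only lower-order corrections.
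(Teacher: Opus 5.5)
Steps 1 and 2 are the mechanism the paper relies on. The paper gives no proof; it only cites Auvray and the heuristic that $u^{\bot}$ has zero mean on fibers of length $\sim e^{-t}\approx \rho^{-1}$. Step 2 is cleanest if you differentiate along $S^1$-invariant frame fields, which commute exactly with $\Pi_0$: the rescaled generator $e^{t}\partial_\theta$, and the $\widetilde{\eta}$-horizontal lifts of $\partial_t$ and of vector fields on $D$.

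\textbf{The gap is in Step 3.} Bounding the $C^{0,\alpha}$ norm of $\Phi_\delta^*(\rho^{-\delta}\nabla^k u^{\bot})$ by its $C^1$ norm requires $\rho^{-\delta}|\nabla^{k+1}u^{\bot}|\le C\|u^{\bot}\|_{C^{k+1,\alpha}_{\delta+1}}$. Step 2 does not give this: it only covers $j\le k$, and $j=k+1$ would need $k+2$ derivatives in $C^0_{\delta+1}$. The bound is also false in general. In the model cusp, take a cut-off of $u=\rho^{\delta-\alpha}\sin\theta$.
\begin{itemize}
\item Its $C^{1,\alpha}_{\delta+1}$ and $C^{0,\alpha}_{\delta}$ norms are bounded, because in a quasi-coordinate chart $\sin\theta$ oscillates at scale $\rho^{-1}$.
\item Yet $\rho^{-\delta}|\nabla u|_\omega\sim\rho^{1-\alpha}\to\infty$ for $\alpha<1$.
\end{itemize}
So the quantity you interpolate through is infinite even though the lemma holds. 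As written, your argument only yields $\|u^{\bot}\|_{C^{k,\alpha}_{\delta}}\le C\|u^{\bot}\|_{C^{k+2}_{\delta+1}}$. That loss of a derivative is harmless for the paper's applications, where everything is smooth with all derivatives bounded, but it is not the stated lemma.

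\textbf{The missing idea.} Use the H\"older seminorm of the $(k+1)$-st derivatives, and split the H\"older quotient at the fiber scale.
\begin{itemize}
\item Setup: in a quasi-coordinate chart the fibers have Euclidean length $\epsilon\approx\rho^{-1}$, and $\rho$ is essentially constant. Put $B=\rho^{\delta+1}\|u^{\bot}\|_{C^{k+1,\alpha}_{\delta+1}}$ and $f=V_I u^{\bot}$, with $|I|=k$ and the $V_i$ invariant.
\item First-derivative bound: for every invariant frame field $V$, $Vf$ again has zero mean on each fiber, so it vanishes within distance $\epsilon$ of every point. Hence $|Vf|\le C\epsilon^{\alpha}[Vf]_{\alpha}\le C\epsilon^{\alpha}B$.
\item Sup bound: Wirtinger along the fiber applied to $f$ gives $|f|\le C\epsilon\sup|e^{t}\partial_\theta f|\le C\epsilon^{1+\alpha}B$.
\item Far points: if $|x-y|\ge\epsilon$, then $|f(x)-f(y)|\le 2C\epsilon^{1+\alpha}B\le 2C\epsilon B|x-y|^{\alpha}$.
\item Near points: if $|x-y|\le\epsilon$, then $|f(x)-f(y)|\le C\epsilon^{\alpha}B|x-y|\le C\epsilon B|x-y|^{\alpha}$.
\end{itemize}
Hence $[f]_{\alpha}\le C\epsilon B\approx C\rho^{\delta}\|u^{\bot}\|_{C^{k+1,\alpha}_{\delta+1}}$. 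This is exactly the top-order estimate your Step 3 misses, and together with Steps 1 and 2 it gives the full $C^{k,\alpha}_{\delta}$ bound.
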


We also have the following technical lemma:
\begin{lem}\label{u dec}
For any $\delta \in \mathbb{R}$, there exists a uniform constant $C$ such that $$||u^{\bot}||_{W_{\delta}^{k,2}}\le C ||u||_{W^{k,2}_{\delta}}\text{ and }||u_0||_{W_{\delta}^{k,2}}\le C ||u||_{W^{k,2}_{\delta}}$$ for any $u$ such that $||u||_{W^{k,2}_{\delta}}\le +\infty$.
\end{lem}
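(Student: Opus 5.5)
The plan is to exploit the fact that $u\mapsto u_0$ is an averaging operator over the $S^1$ fibers of $q:\mathcal{N}_A\setminus D\to[A,\infty)\times D$. Then $u^{\bot}=u-u_0$, so it is enough to prove the bound for $u_0$. The bound for $u^{\bot}$ then follows from the triangle inequality,
\begin{equation*}
||u^{\bot}||_{W^{k,2}_{\delta}}\le ||u||_{W^{k,2}_{\delta}}+||u_0||_{W^{k,2}_{\delta}}.
\end{equation*}
I would work fiberwise in the coordinates $(t,p,\theta)$, where $\theta$ is the fiber coordinate with $\widetilde{\eta}=d\theta+O(1)$ as in (\ref{eta dtheta}). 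Away from $\mathcal{N}_A$ nothing needs proving, since $u$ is supported near $D$.

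For $k=0$, I would use the cusp-model volume form $\omega^n\approx e^{-t}\,dt\wedge\widetilde{\eta}\wedge p^*\mathrm{vol}_D$, up to a bounded factor. The weight $\rho^{-2\delta}$ is comparable to $e^{-2\delta t}$, which is constant on each fiber. Cauchy--Schwarz on the fiber gives $|u_0(t,p)|^2\le \frac{1}{2\pi}\int_{q^{-1}(t,p)}|u|^2\widetilde{\eta}$. Integrating this against $e^{-2\delta t}e^{-t}dt\,\mathrm{vol}_D$ yields $||u_0||_{L^2_\delta}\le C||u||_{L^2_\delta}$.

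For higher $k$, I would use a frame adapted to the Poincar\'e metric: $\partial_t$, $e^{t}\partial_\theta$ (which has unit length because the fiber length is of order $e^{-t}$), and horizontal lifts of vector fields on $D$ defined with respect to the connection $\widetilde{\eta}$. These fields commute with fiber averaging up to error terms. The derivative $e^t\partial_\theta$ annihilates $u_0$. The averaging commutes with $\partial_t$ and with horizontal lifts, modulo terms coming from the variation of $\widetilde{\eta}$ and of the fiber measure. The corrections are bounded together with all their $\omega$-derivatives, since $\widetilde{\eta}-d\theta$ and the $O(e^{-t})$ perturbation of $t$ have bounded derivatives of every order. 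Consequently $\nabla^j u_0$ is a fiber average of $\nabla^{\le j}u$ with bounded coefficients, and the $k=0$ argument applied to each such term gives the claim.

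The main obstacle is precisely this commutator bookkeeping, that is, checking that $[\nabla,\Pi_0]$ is a zeroth-order operator with coefficients bounded uniformly in $t$. The hypothesis on $\widetilde{\eta}$ in (\ref{eta dtheta}) and Auvray's construction of $t$ in \cite{A2} should supply exactly this. As a cross-check, I would also argue in quasi-coordinates $\Phi_\delta$: there the fiber circle lifts to a $\mathbb{Z}$-periodic direction, averaging is an $L^2$-contraction commuting with derivatives up to smooth bounded errors, and the weight is constant on the relevant scale. This reduces the estimate to a uniform Euclidean one.
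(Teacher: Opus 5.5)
The paper states this lemma without proof. It appears as a ``technical lemma'' next to the decay lemmas, which are attributed to Section 3 of \cite{A2}. So there is no argument of the paper's to compare with. Your proposal is correct and is the natural proof.

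The $k=0$ step is right. $\omega^n$ is comparable to $e^{-t}\,dt\wedge\widetilde{\eta}\wedge p^*\mathrm{vol}_D$, and $\rho^{-2\delta}$ is comparable to the fiber-constant function $e^{-2\delta t}$. On each fiber, $\widetilde{\eta}$ restricts to the invariant measure of total mass $2\pi$, so Cauchy--Schwarz there gives $\|u_0\|_{L^2_\delta}\le C\|u\|_{L^2_\delta}$. The $u^{\bot}$ bound then follows from the triangle inequality, as you say.

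For higher $k$, the ``commutator bookkeeping'' you flag as the main obstacle essentially disappears.

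\begin{itemize}
\item Because $\widetilde{\eta}$ is a connection form, $u_0=\frac{1}{2\pi}\int_0^{2\pi}R_\alpha^*u\,d\alpha$, where $R_\alpha$ is the $S^1$-action.
\item The frame $\partial_t$, $e^t\partial_\theta$, together with the $\widetilde{\eta}$-horizontal lifts of vector fields on $D$, consists of $S^1$-invariant fields.
\item Hence $X(u_0)=(Xu)_0$ holds \emph{exactly} for every frame field $X$. There is no error term from the fiber measure or from the variation of $\widetilde{\eta}$.
\end{itemize}

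Bounded coefficients enter only when converting between $\nabla^j$ and $j$-fold frame derivatives. That conversion needs three facts:

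\begin{itemize}
\item the frame is uniformly nondegenerate for $\omega$;
\item its structure functions are bounded (e.g.\ $[X^h,Y^h]=[X,Y]^h-e^{-t}F(X,Y)\,e^t\partial_\theta$, with $F$ the curvature of $\widetilde{\eta}$);
\item the components of $\omega$ in this frame have bounded frame derivatives.
\end{itemize}

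All three are what the quasi-coordinate definition of Poincar\'e type provides.

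These coefficients are not $S^1$-invariant, since $\omega$ is not. So state the conclusion as the pointwise bound $|\nabla^j u_0|_\omega\le C\sum_{m\le j}\Pi_0\big(|\nabla^m u|_\omega\big)$, not as $\nabla^j u_0=\Pi_0(\nabla^j u)$. Then apply your $k=0$ estimate to the nonnegative functions $|\nabla^m u|_\omega$.
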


\begin{lem}\label{lem cd}
    Suppose that a Poincar\'e type K\"ahler metric $\omega$ is a strict subsolution:
    \begin{equation*}
        n\omega^{n-1} \ge (1+\delta)(n-1)\omega^{n-2}\wedge \chi
    \end{equation*}
    for some constant $\delta>0$. Then we have that on $D$:
    \begin{equation*}
        n([\omega]|_D)^{n-1} -(n-1)([\omega]|_D)^{n-2}\cdot [\chi]|_D >0
    \end{equation*}
\end{lem}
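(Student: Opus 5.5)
We reduce the cohomological inequality on $D$ to a pointwise inequality for an honest Kähler metric on $D$, obtained as a limit of $\omega$ along the cusp end. Since $D$ is smooth here (Section 7 setting), we work in the fibration $q=(t,p):\mathcal{N}_A\setminus D\to[A,\infty)\times D$. By Definition \ref{def 1.1}, $\omega=\omega_X+dd^c\varphi$ with $\varphi=O(\log(-\log|S|))$, and all $\omega_0$-derivatives of $\varphi$ are bounded. The same holds for $\chi$.

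\textbf{Step 1: restriction of $\omega$ to a horizontal slice.} For large $t$, consider the hypersurface $\{t=T\}$ and average over the $S^1$ fibres, i.e. take the $S^1$-invariant part $\varphi_0(T,\cdot)$ as in (\ref{f decom}). Set
\[
\omega_{D,T}:=\omega_X|_D+dd^c_D\,\varphi_0(T,\cdot).
\]
This is a form on $D$ in the class $[\omega]|_D=[\omega_X]|_D$. Similarly define $\chi_{D,T}$. Because the horizontal directions of $\omega_0$ near $D$ are uniformly comparable to a smooth metric on $D$, and the derivatives of $\varphi$ in horizontal directions are bounded, $\omega_{D,T}$ and $\chi_{D,T}$ are uniformly equivalent to a fixed Kähler metric on $D$. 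They have uniformly bounded derivatives. By Arzelà--Ascoli we can then pass to a subsequence $T_k\to\infty$ with $\omega_{D,T_k}\to\omega_D$ and $\chi_{D,T_k}\to\chi_D$ smoothly. The limits are Kähler forms representing $[\omega]|_D$ and $[\chi]|_D$.

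\textbf{Step 2: passing the subsolution inequality to the limit.} The subsolution condition says that for every unit direction the $(n-1)$-minor inequality holds. In particular, take wedges with $\sqrt{-1}\,dz^n\wedge d\bar z^n$ normalized by the cusp factor, where $z^n$ is the normal coordinate. Near $D$, write $\omega=\omega^{hor}+(\text{cusp part})+(\text{mixed terms})$. The mixed terms of $dd^c\varphi$ between the horizontal and $dt,\widetilde\eta$ directions decay, since fibre-direction derivatives gain $e^{-t}$ by Lemma \ref{improve decay holder} applied to $\varphi^{\bot}$, and the $t$-derivatives of $\varphi_0$ are bounded in the Poincaré norm. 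So, up to $o(1)$, the restriction of $\omega^{n-1}$ and $\omega^{n-2}\wedge\chi$ to horizontal $(n-1)$-planes equals that of $p^*\omega_{D,T}$ and $p^*\chi_{D,T}$. The choice $i=n$ in the pointwise minor inequality (as in (\ref{e sub1})) then gives
\[
(n-1)\omega_{D,T}^{n-2}\ge (1+\delta)(n-2)\,\omega_{D,T}^{n-3}\wedge\chi_{D,T}+o(1),
\]
where $n\omega^{n-1}$ wedged with the normal direction reproduces $(n-1)\omega^{n-2}$ on $D$. Taking the limit, we obtain that $\omega_D$ is a strict subsolution on $D$, uniformly for all horizontal directions.

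\textbf{Step 3: integration.} Contract the limiting pointwise inequality once more with $\omega_D$. Equivalently, wedge $n\omega^{n-1}-(n-1)\omega^{n-2}\wedge\chi\ge\delta(n-1)\omega^{n-2}\wedge\chi$ with the normal cusp factor and restrict to $D$. This gives the positive $(n-1,n-1)$-form $n\omega_D^{n-1}-(n-1)\omega_D^{n-2}\wedge\chi_D\ge c\,\omega_D^{n-1}$ on the $(n-1)$-dimensional manifold $D$, with $c>0$. It is a top form on $D$, so integrating and using that $\omega_D,\chi_D$ represent $[\omega]|_D,[\chi]|_D$ yields the claimed strict inequality of intersection numbers.

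The main obstacle is Step 2, namely justifying that the mixed and fibre terms of $dd^c\varphi$ vanish asymptotically. The definition only gives boundedness, not decay, and no asymptotic expansion of $\omega$ is assumed. If the decay argument via $\varphi^{\bot}$ fails, I would instead avoid the pointwise limit. I would integrate $n\omega^{n-1}-(n-1)\omega^{n-2}\wedge\chi$ against $dd^c$ of a cutoff $\eta_T(t)$ supported in $\{T\le t\le 2T\}$ and use Stokes (Lemma \ref{G-S}). The resulting boundary terms concentrate on $t$-level sets, and their leading parts compute the cohomological pairing on $D$ times the positive normal mass. Positivity then follows from the subsolution inequality by the interior integral.
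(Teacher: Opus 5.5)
Your skeleton matches the paper's proof: average the potentials over the $S^1$ fibres, use Lemma \ref{improve decay holder} to get $\varphi^{\bot},\psi^{\bot}=O(e^{-t})$, compare the pullbacks of $\omega,\chi$ to $H=\{z_n=c\}$ with $\omega_{D,T}=\omega_X|_D+dd^c_D\varphi_0(T,\cdot)$ and $\chi_{D,T}$, and integrate over $D$. There is, however, a genuine error in Step 2.

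\textbf{The Step 2 inequality does not follow.} The inequality $(n-1)\omega_{D,T}^{n-2}\ge(1+\delta)(n-2)\omega_{D,T}^{n-3}\wedge\chi_{D,T}+o(1)$ is not implied by the hypothesis. Taking $i=n$ in (\ref{e sub1}), i.e.\ wedging with $\sqrt{-1}dz^n\wedge d\bar z^n$, is the same as pulling back to $H$. It yields only the top-degree scalar inequality $n(\omega|_H)^{n-1}\ge(1+\delta)(n-1)(\omega|_H)^{n-2}\wedge\chi|_H$, i.e.\ $\mathrm{tr}_{\omega|_H}\chi|_H\le n/(1+\delta)$. It gives no cone condition on $D$ with constant $n-1$.

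For example, take $n=3$, $\chi=\mathrm{Id}$, $\omega=\mathrm{diag}(\Lambda_1,\Lambda_2,\Lambda_3)$ with $1/\Lambda_1=2.5/(1+\delta)$ and $1/\Lambda_2=1/\Lambda_3=0.4/(1+\delta)$. Every pair sum of the $1/\Lambda_j$ is at most $3/(1+\delta)$, so the subsolution condition holds. Yet $2\omega|_H\ge(1+\delta)\chi|_H$ fails on $H=\mathrm{span}(e_1,e_2)$.

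Even granting your inequality, ``contracting once more with $\omega_D$'' would not finish. For $n\ge 3$ it gives only $\mathrm{tr}_{\omega_D}\chi_D\le (n-1)^2/\big((1+\delta)(n-2)\big)$, which is $<n$ only if $\delta>1/(n(n-2))$. For $n=2$ it says nothing about $\chi_D$ at all.

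\textbf{Use the second formulation of Step 3.} The two formulations in Step 3 are therefore not equivalent, and you should drop the first. The second one (wedge $n\omega^{n-1}-(n-1)\omega^{n-2}\wedge\chi\ge\delta(n-1)\omega^{n-2}\wedge\chi$ with the normal factor and restrict) is exactly the paper's argument. It gives, pointwise on $D$ at level $T$,
\[
n\omega_{D,T}^{n-1}-(n-1)\omega_{D,T}^{n-2}\wedge\chi_{D,T}\ge\delta(n-1)\,\omega_{D,T}^{n-2}\wedge\chi_{D,T}+O(e^{-T}).
\]
Integrating over the compact $D$ turns both sides into intersection numbers. The right side becomes $\delta(n-1)[\omega]|_D^{n-2}\cdot[\chi]|_D+O(e^{-T})$, which is positive for $T$ large. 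So neither the limit in $T$ nor positivity of $\omega_{D,T}$ is needed.

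\textbf{The ``main obstacle'' is misdiagnosed.} Your argument that the mixed terms decay is wrong. The mixed components of $dd^c\varphi_0$, such as $dt\wedge d^c_D\partial_t\varphi_0$, are only $O(1)$: boundedness of $\partial_t\varphi_0$ gives no decay. Its decay is proved only later, in Proposition \ref{p almost J}, and only for actual solutions of the J-equation.

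But no decay of mixed terms is needed. Pulling back to $\{z_n=c\}$ kills $dz^n$ and $d\bar z^n$, and $dt$ and $e^{-t}\widetilde{\eta}$ restrict to $O(e^{-t})$ there. So comparing $\omega|_H$ with $p^*\omega_{D,T}$ uses only $dd^c\varphi^{\bot}=O(e^{-t})$ and the $O(e^{-t})$ horizontal derivatives of $t$. Your Stokes fallback with $dd^c\eta_T(t)$ is therefore unnecessary.
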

\begin{proof}
    First, we write $\omega= \omega_0 +dd^c \varphi$ and $\chi= \chi_0 +dd^c \psi$ for some smooth K\"ahler metrics $\omega_0$ and $\chi_0$ on $X$. It suffices to prove that
    \begin{equation*}
        \int_D n \omega_0|_D^{n-1}- (n-1)\omega_0 |_{D}^{n-2}\wedge \chi_0 |_D >0.
    \end{equation*}
    Let $\varphi= \varphi_0 (t,p) + \varphi^{\bot}$ and $\psi = \psi_0(t,p)+ \psi^{\bot}$ be the decomposition in section 2.5. Using the Lemma \ref{improve decay holder}, we get that 
    \begin{equation}\label{e bot}
        \varphi^{\bot}=O(e^{-t}) ,\,\,\,\,\psi^{\bot}= O(e^{-t}).
    \end{equation}
Take a cusp coordinate $(z)$ around an arbitrary point $p\in D$ such that in this coordinate $D=\{z_n=0\}$. Write $(z)=(z',z_n)$. In this coordinate we use (\ref{e bot}) to get that 
\begin{equation*}
    \omega_0(z',0)+ dd^c_D \varphi_0(z',t) = \omega_0(z',z_n)+ O(|z_n|) + dd^c_D \varphi (z',z_n) +O(e^{-t})
\end{equation*}
and
\begin{equation*}
    \chi_0 (z',0)+ dd^c_D \psi_0 (z',t) = \chi_0 (z',z_n)+ O(|z_n|)+ dd^c_D \psi(z',z_n)+ O(e^{-t}).
\end{equation*}
Then, using the subsolution condition for $\omega$, we can compute that
\begin{equation*}
    \begin{split}
        & n (\omega_0 (z',0)|_D+ dd^c_D \varphi_0 (z',t))^{n-1} -(n-1)(\omega_0 (z',0)|_D+ dd^c_D \varphi_0 (z',t))^{n-2}\wedge (\chi_0(z',0)|_D+ dd^c_D \psi_0(z',t))\\
        & = n (\omega_0 (z',z_n) |_D+ dd^c_D \varphi (z',z_n))^{n-1}\\
        &-(n-1)(\omega_0(z',z_n) |_D+ dd^c_D \varphi (z',z_n))^{n-2}\wedge (\chi_0(z',z_n) |_D+dd^c_D \psi(z',z_n)) +O(e^{-t}) \\
        & \ge \delta (n-1) (\omega_0(z',z_n) |_D + dd^c_D \varphi (z',z_n))^{n-2}\wedge (\chi_0 (z',z_n) |_D + dd^c_D \psi (z',z_n)) +O(e^{-t}) \\
        & \ge \delta(n-1) (\omega_0(z')|_D+ dd^c_D \varphi_0 (z',t))^{n-1}\wedge (\chi_0 (z')|_D+dd^c_D \psi_0(z',t)) +O(e^{-t}).
    \end{split}
\end{equation*}
Here $\beta |_D =\Sigma_{i,j=1}^{n-1} \beta_{i \bar j}$ means taking the part of $\beta$ corresponds to $z'$.
 Then we have that 
    \begin{equation*}
    \begin{split}
          &   \int_D n \omega_0|_D^{n-1}- (n-1)\omega_0 |_{D}^{n-2}\wedge \chi_0 |_D \\
        & = \int_D n (\omega_0 |_D + dd^c_D \varphi_0 (t,\cdot))^{n-1}- (n-1)(\omega_0 |_D + dd_D^c \varphi_0 (t, \cdot))^{n-2}\wedge (\chi_0 |_D + dd^c_D \psi_0(t,\cdot) ) \\
        & \ge \int_D \delta(n-1) (\omega_0(z')+ dd^c_D \varphi_0 (z',t))^{n-1}\wedge (\chi_0 (z')+dd^c_D \psi_0(z',t)) +O(e^{-t})\\
        & \int_D \delta (n-1) \omega_0^{n-1}\wedge \chi_0 + O(e^{-t})
    \end{split}
    \end{equation*}
    which is positive as long as we let $t$ be big enough.
\end{proof}

We denote the constant $C_D= (n-1)\frac{[\omega] |_D^{n-2} \cdot [\chi] |_D}{[\omega] |_D^{n-1}}$.

\begin{proof}
 (of the Proposition \ref{prop bdd}). We use Auvray's way of constructing Poincar\'e type K\"ahler metrics, see the section 2.1, by defining 
    \begin{equation*}
        \omega= \omega_X -Ai \partial \bar \partial \log (\lambda - \log |S|^2),
    \end{equation*}
    where $S\in (\mathcal{O}([D]))$ is a defining section of $D$. Near $D$, we have that:
    \begin{equation*}
        \omega= p^*\omega_D + A dt \wedge e^{-t} \widetilde{\eta} + O(e^{-t})
    \end{equation*}
    Here $\omega_D = \omega_X|_D$. Recall that $\chi$ is locally asymptotic to a product metric near $D$:
    \begin{equation*}
        \chi = b dt \wedge 2 e^{-t} \widetilde{\eta} + p^* \chi_D +O(e^{-\eta t}).
    \end{equation*}
    First, we solve the Poisson equation on $D$ to get a smooth function $\widetilde{\psi}$ such that 
    \begin{equation*}
        tr_{\omega_D}(\chi_D+ dd^c_D \widetilde{\psi})=C_D.
    \end{equation*}
    Define $\widetilde{\chi}\triangleq \chi+ dd^c (\eta_0 p^* \widetilde{\psi})$, where $0\le \eta_0 \le 1$ is a cut-off function which is equal to $1$ in a small neighbourhood of $D$ and is supported in a slightly bigger neighbourhood of $D$, satisfying that $|\nabla_{\omega} \eta_0|_{\omega}$ and $|\nabla^2_{\omega} \eta_0|_{\omega}$ can be made very small. (See for example Lemma 4.3 in \cite{XZ})
    Then we can compute that
    \begin{equation*}
    \begin{split}
          \frac{\omega^{n-1}\wedge \widetilde{\chi}}{\omega^n} &=\frac{b dt \wedge 2 e^{-t}\widetilde{\eta} \wedge p^* \omega_D^{n-1}+ p^* (\chi_D +dd^c_D \widetilde{\psi}) \wedge (n-1) Adt \wedge e^{-t}\widetilde{\eta} \wedge p^* \omega_D^{n-2}}{n A dt \wedge e^{-t}\widetilde{\eta} \wedge p^* \omega_D^{n-1}} +O(e^{-\eta t})\\
          & =\frac{\frac{2b}{A} + tr_{\omega_D}(\chi_D+dd^c_D \widetilde{\psi})}{n} +O(e^{-\eta t}) = \frac{\frac{2b}{A}+C_D}{n} + O(e^{-\eta t}).
    \end{split}
    \end{equation*}
    By the Lemma \ref{lem cd}, $C_D <n$. Thus we can let $A$ to be $\frac{2b}{n-C_D}>0$ such that $\omega$ is positive and satisfies
    \begin{equation}\label{tildechi asy}
         \frac{\omega^{n-1}\wedge \widetilde{\chi}}{\omega^n} =1+O(e^{-\eta t}).
    \end{equation}
    Then we can compute that
    \begin{equation}\label{e lin0}
    \begin{split}
         0 &= \int_{\{t \le s\}}(\omega_{\varphi}^n -\omega_{\varphi}^{n-1}\wedge \chi)e^t  =  \int_{\{t \le s\}}(\omega_{\varphi}^n -\omega_{\varphi}^{n-1}\wedge \widetilde{\chi})e^t + \int_{\{t\le s\}} \omega_{\varphi}^{n-1}\wedge dd^c (\eta_0 p^* \widetilde{\psi})e^t \\
       & = \int_{ \{t \le s\} } (\omega^n - \omega^{n-1}\wedge \widetilde{\chi})e^t +  \int_{\{t \le s\}} \Big( (\omega_{\varphi}^n -\omega_{\varphi}^{n-1}\wedge \chi) -(\omega^n - \omega^{n-1}\wedge \widetilde{\chi}) \Big)e^t \\
       & + \int_{\{t\le s\}} \omega_{\varphi}^{n-1}\wedge dd^c (\eta_0 p^* \widetilde{\psi})e^t  \\
       & = O(1)+  \int_{\{t \le s\}} \Big( (\omega_{\varphi}^n -\omega_{\varphi}^{n-1}\wedge \widetilde{\chi}) -(\omega^n - \omega^{n-1}\wedge \widetilde{\chi}) \Big)e^t + \int_{\{t\le s\}} \omega_{\varphi}^{n-1}\wedge dd^c (\eta_0 p^* \widetilde{\psi})e^t \\
       & =O(1) + I + II
    \end{split}
    \end{equation}
    In the fourth line above, we use (\ref{tildechi asy}). 
    Recall the decomposition $\varphi=\varphi_0(t,p)+ \varphi^{\bot}$ in section 2.5. We can further decompose $\varphi_0$ as 
    \begin{equation*}
        \varphi_0 (t,p) = \varphi_{00}(t)+\varphi_{01}(t,p)
    \end{equation*}
    where $\varphi_{01}$ satisfies that
    \begin{equation*}
        \int_{D} \varphi_{01}(t,p)dp =0
    \end{equation*}
    for any $t$. Note that all the derivatives of $\varphi_{01}(t,p)$ are bounded. Then we can use the Sobolev inequality on each $D-$slice: $D\times \{t\}$ to prove that $\varphi_{01}$ is bounded. Denote $\Theta= (\Sigma_{i=0}^{n-1}\omega_{\varphi}^i \wedge \omega^{n-1-i}) -(\Sigma_{i=0}^{n-2}\omega^i \wedge \omega_{\varphi}^{n-2-i} \wedge \widetilde{\chi})$. Then we can compute that
    \begin{equation}\label{e lin2}
    \begin{split}
         &I= \int_{\{t \le s\}} \Big( (\omega_{\varphi}^n -\omega_{\varphi}^{n-1}\wedge \widetilde{\chi}) -(\omega^n - \omega^{n-1}\wedge \widetilde{\chi}) \Big)e^t = \int_{\{t\le s\}} e^t dd^c \varphi \wedge \Theta  \\
        & = \int_{\{t\le s\}} \varphi dd^c (e^t)\wedge \Theta + e^s \int_{\{t =s \}} (d^c \varphi -\varphi d^c t)\wedge \Theta \\
        & =O(1) -e^s \int_{\{t =s\}} \varphi_{00}(s) d^c t \wedge \Theta \\
        & = O(1)+ 2\pi \varphi_{00}(s) \int_D \Theta \\
        & = 2\pi \varphi_{00}(s) (n [\omega_D]^{n-1}-(n-1)[\omega_D]^{n-2}\cdot[\chi_D]) +O(1).
    \end{split}
    \end{equation}
    In the third line above, we use the fact that $dd^c(e^t), d^c\varphi, \varphi_{01}, \varphi^{\bot}$ are all bounded. We also compute:
    \begin{equation}\label{e lin3}
        \begin{split}
            II&= \int_{\{t\le s\}} \omega_{\varphi}^{n-1}\wedge dd^c (\eta_0 p^* \widetilde{\psi})e^t \\
            &= \int_{\{t \le s\}} \omega_{\varphi}^{n-1}\wedge dd^c e^t \wedge \eta_0 p^* \widetilde{\psi} +\int_{\{t=s\}}e^s (d^c (\eta_0 p^* \widetilde{\psi}) - \eta_0 p^* \widetilde{\psi} d^c t)\wedge \omega_{\varphi}^{n-1} =O(1).
        \end{split}
    \end{equation}
    Here we use the fact that $dd^c(e^t), \widetilde{\psi}, d^c \widetilde{\psi}$ are all bounded.
    
    Combining (\ref{e lin0}), (\ref{e lin2}) and (\ref{e lin3}), we get
    \begin{equation*}
        \varphi_{00}(s) (n [\omega_D]^{n-1}-(n-1)[\omega_D]^{n-2}\cdot[\chi_D]) =O(1).
    \end{equation*}
    By the Lemma \ref{lem cd}, we have that $n [\omega_D]^{n-1}-(n-1)[\omega_D]^{n-2}\cdot[\chi_D]>0$. Thus we have that $\varphi_{00}$ is uniformly bounded. Since $\varphi_{01}$ and $\varphi^{\bot}$ are also bounded which we explained before, we have that $\varphi$ is bounded. This concludes the proof of the Proposition \ref{prop bdd}.
\end{proof}

\subsection{Splitting theorem}
Recall that $\chi$ is assumed to be locally asymptotic to a product metric:
\begin{equation*}
    \chi= p^* \chi_{D} + 2b e^{-t} dt \wedge \widetilde{\eta} +O(e^{-\eta t})
\end{equation*}
for some K\"ahler metric $\chi_D$ on $D$. Let $\omega$ be the metric that we constructed in the Proposition \ref{prop bdd} which is also locally asymptotic to a product metric:
\begin{equation*}
    \omega= p^* \omega_{D} + 2a e^{-t} dt \wedge \widetilde{\eta} +O(e^{-\eta t})
\end{equation*}
for some K\"ahler metric $\omega_D$ on $D$.

In this subsection, we want to prove a splitting theorem for J-equation. Take $z$ to be the coordinate on $\Delta^*$. Denote $z=r e^{i\theta}$ and $t=\log (-\log |z|^2)$.  Define 
 \begin{equation*}
\begin{split}
\chi_0 &= b dt \wedge 2 e^{-t} d\theta + p^* \chi_D \\
\omega_1 & = a dt \wedge 2 e^{-t} d\theta  + p^* \omega_D.
\end{split}
\end{equation*}
as metrics on $\Delta^* \times D$. Unlike the Poincar\'e type metric on a closed manifold, this metric is singular near $\partial \Delta$ which is an important property that we will use in this subsection.
In this subsection, we denote $\omega_{\varphi}= \omega_1 + dd^c \varphi$. We say that a function $\varphi \in C^{\infty}(\Delta^* \times D)$, if $\varphi$ and the covariant derivatives of $\varphi$ of any order defined using $\omega_1$ are bounded with respect to $\omega_1$.
Then we want to prove the following:
\begin{prop}\label{splitting}
Suppose that $\varphi \in C^{\infty}(\Delta^* \times D)$ is a $S^1$ invariant solution to 
\begin{equation*}
\omega_{\varphi}^n = \omega_{\varphi}^{n-1} \wedge \chi_0
\end{equation*} 
on $\Delta^* \times D$. Then we have that 
$\varphi$ is invariant in $\Delta^*$ direction.
\end{prop}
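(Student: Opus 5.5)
The plan is to exploit a hidden symmetry of the model space $\Delta^*\times D$ and then run the energy argument from the uniqueness part of the proof of Theorem \ref{main thm}. On the universal cover, put $w=\log z$. The form $2e^{-t}dt\wedge d\theta$ is, up to a constant, $\frac{\sqrt{-1}dz\wedge d\bar z}{|z|^2\log^2|z|^2}=\frac{\sqrt{-1}dw\wedge d\bar w}{4(\mathrm{Re}\,w)^2}$. This is invariant under the real dilations $w\mapsto \lambda w$, $\lambda>0$, which act on $S^1$-invariant functions as translations $t\mapsto t+\log\lambda$. Hence $\omega_1$ and $\chi_0$ are invariant under these maps on $S^1$-invariant objects. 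It follows that for every $s\in\mathbb{R}$ the function $\varphi_s(t,p)\triangleq\varphi(t+s,p)$ is again an $S^1$-invariant solution in $C^\infty(\Delta^*\times D)$, with the same uniform bounds on all covariant derivatives. Note that $t$ ranges over all of $\mathbb{R}$: $t\to+\infty$ is the cusp at $0$ and $t\to-\infty$ is $\partial\Delta$. This is where the singularity near $\partial\Delta$ enters, since only with both ends is the translation group available.

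Next, set $u=\varphi_s-\varphi$ and argue as in (\ref{e uni1})--(\ref{e uni2}). Subtracting the two equations gives $dd^c u\wedge\Theta_s=0$ with
\begin{equation*}
\Theta_s=\int_0^1\Big(n\omega_{\tau\varphi_s+(1-\tau)\varphi}^{n-1}-(n-1)\omega_{\tau\varphi_s+(1-\tau)\varphi}^{n-2}\wedge\chi_0\Big)d\tau,
\end{equation*}
a closed $(n-1,n-1)$-form. The solutions satisfy the subsolution inequality with a uniform $\delta>0$, because $\omega_\varphi\ge\chi_0/C$ and $\omega_\varphi\le C\omega_1$ from the $C^\infty$ bounds. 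So the concavity argument of (\ref{e uni2}) shows that $\Theta_s$ is uniformly positive. In particular $d(d^cu\wedge\Theta_s)=0$, and $u$ is bounded with bounded derivatives.

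I will multiply by $u$ and integrate over the slab $\{s_1\le t\le s_2\}$. Stokes gives
\begin{equation*}
\int_{\{s_1\le t\le s_2\}}du\wedge d^cu\wedge\Theta_s=\Big[\int_{\{t=\cdot\}}u\,d^cu\wedge\Theta_s\Big]_{s_1}^{s_2}.
\end{equation*}
The boundary terms at $s_2\to+\infty$ vanish, since the $S^1$-fibres have length $\sim e^{-t}$ while $u$, $|du|$ and $\Theta_s$ are bounded. The left side controls $\int|du|^2\omega_1^n$ on the slab, so the goal is to show the boundary term at $s_1\to-\infty$ stays bounded. That would force $du\in L^2$ on all of $\Delta^*\times D$.

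To control that term, first use the flux identity, which is the same Stokes argument used to prove Proposition \ref{prop bdd}, applied to $d(d^cu\wedge\Theta_s)=0$ without the factor $u$. The flux $F(\sigma)=\int_{\{t=\sigma\}}d^cu\wedge\Theta_s$ is then independent of $\sigma$. As $\sigma\to+\infty$ the slice volume decays like $e^{-\sigma}$ while $d^cu\wedge\Theta_s$ stays bounded, so $F(\sigma)\to0$ and hence $F\equiv0$. Then split $u=\bar u(\sigma)+(u-\bar u(\sigma))$ on each slice, where $\bar u(\sigma)$ is the slice average. The $\bar u$-part of the boundary term is $\bar u\,F=0$. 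The remainder is estimated by Cauchy--Schwarz together with a Poincar\'e inequality on the compact slice $D\times S^1$, using the $S^1$-invariance so that only the $D$-directions matter. This reduces the boundary term to slice integrals of $|du|^2$ weighted by $e^{\sigma}$, and those can be absorbed by a Gronwall/ODE argument in $s_1$: the monotone quantity $E(s_1)=\int_{\{t\ge s_1\}}|du|^2$ satisfies a differential inequality incompatible with boundedness of $u$ unless $E\equiv0$.

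This last absorption at the end $t\to-\infty$, where the volume $\int e^{-t}dt$ blows up, is where I expect the main difficulty, and I would try to handle it first by working directly with the $t$-derivative $v=\partial_t\varphi$. Here $v$ solves the linearized equation $\mathrm{div}(\Theta\, dv)=0$, and $|\partial_t\varphi|$ bounded in the Poincar\'e metric means $v$ is bounded. The flux of $v$ through each slice behaves like $c\,e^{t}\,\partial_t$ of the slice average, and the same $\pm\infty$ analysis forces $c=0$. Once $du\equiv0$ is obtained for every $s$, we get $\varphi(t+s,p)=\varphi(t,p)+c(s)$ with $c$ additive in $s$. Boundedness of $\varphi$ then forces $c\equiv0$, so $\varphi$ is independent of $t$ and also of $\theta$, i.e. invariant in the $\Delta^*$ direction, as claimed in Proposition \ref{splitting}.
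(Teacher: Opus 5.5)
\textbf{There is a genuine gap at the end $t\to-\infty$.} The earlier steps are fine. The dilations $w\mapsto\lambda w$ do make each translate $\varphi_s$ a solution; the paper uses the infinitesimal version $\dot\varphi$, Lemma \ref{dotvarphi laplacian}. Also $\Theta_s$ is closed and uniformly positive, the flux $F$ vanishes, and the boundary terms at $t\to+\infty$ die.

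The gap is the Gronwall step at $t\to-\infty$. There the slices $\{t=\sigma\}$ have $\omega_1$-area $\sim e^{-\sigma}\to\infty$, and this factor appears in both $B(\sigma)$ and $-E'(\sigma)$. Your Cauchy--Schwarz/Poincar\'e bounds therefore give $c_0E(\sigma)\le |B(\sigma)|\le -K E'(\sigma)$, where $K$ depends on the ellipticity of $\Theta_s$ and the Poincar\'e constant of $D$. This yields only $E(\sigma)\ge E(0)e^{c_0|\sigma|/K}$ as $\sigma\to-\infty$. But bounded $|du|$ already permits $E(\sigma)\le C\,\mathrm{Vol}_{\omega_1}(\{t\ge\sigma\})\le Ce^{|\sigma|}$. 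So you reach a contradiction only if $c_0/K>1$, and nothing guarantees that.

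The margin really is thin. In the product model, the mode $e^{rt}f$ ($f$ an eigenfunction on $D$, $r<0$ the root decaying at $+\infty$) has $E(\sigma)\sim e^{(1+2|r|)|\sigma|}$, and $|r|\to 0$ as the eigenvalue tends to $0$. Crude constants cannot separate this from the trivial rate $1$. Your fallback with $v=\partial_t\varphi$ runs into the same problem, since vanishing flux alone does not give $v\equiv 0$.

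\textbf{The missing idea is to weight by the pluriharmonic function $e^t=-\log|z|^2$.} Up to constants, $e^t\omega_1^n$ is the translation-invariant measure $dt\,d\theta\,dvol_D$. Since $dd^ce^t=0$ and $dd^cu\wedge\Theta=0$, one has
$d\big(e^tu\,d^cu\wedge\Theta-\tfrac12u^2d^ce^t\wedge\Theta\big)=e^t\,du\wedge d^cu\wedge\Theta$.
Its boundary terms are $O(1)$ at both ends, because $e^{\sigma}$ exactly cancels the fibre length $e^{-\sigma}$. This is Lemma \ref{lem l2 finite}.

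The paper then shows that the monotone, bounded boundary function $\psi_\varphi$ tends to $0$ at $\pm\infty$. It does this by translating in $t$ and extracting a limit solution with zero weighted energy on every slab (Lemma \ref{zero l2}).

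Your own Gronwall idea also closes once you use the weight. Besides $F\equiv 0$, Green's identity with $e^t$ shows that $\int_{\{t=\sigma\}}u\,d^ce^t\wedge\Theta_s$ and $\int_{\{t=\sigma\}}d^ce^t\wedge\Theta_s$ are independent of $\sigma$. So $u$ has a $\sigma$-independent weighted slice average $\bar u$. Apply the weighted identity to $u-\bar u$ on $\{|t|\le R\}$ and use Poincar\'e on the slices. This gives $J(R)\le CJ'(R)$ for $J(R)=\int_{\{|t|\le R\}}e^t|du|^2\omega_1^n$. Bounded $|du|$ now only allows $J(R)\le CR$, which forces $J\equiv 0$ and hence $du\equiv 0$.
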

First, we prove the following Lemma:
\begin{lem}\label{dotvarphi laplacian}
For $\varphi$ as in Proposition \ref{splitting}, set $\dot{\varphi}= \partial_t \varphi$. Then we have that
\begin{equation}\label{linearized J}
(n \omega_{\varphi}^{n-1}-(n-1)\omega_{\varphi}^{n-2}\wedge \chi_0) \wedge dd^c \dot{\varphi}=0.
\end{equation}
\end{lem}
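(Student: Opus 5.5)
The plan is to differentiate the equation along the flow of the vector field $\partial_t$ and to use that $\partial_t$ preserves both background forms. On $\Delta^*\times D$ set $\tau=-\log|z|^2=e^t$ and $\theta=\arg z$. Then $\omega_1$ and $\chi_0$ have coefficients independent of $t$ in the coordinates $(t,\theta,p)$: the factors $e^{-t}dt$ equal $d(-e^{-t})$ up to sign, so both are of the form $c\,d(e^{-t})\wedge d\theta+p^*(\cdot)$. The key point I have to check is that the real vector field $V=\partial_t$ (radial, $S^1$-invariant) is real holomorphic on $\Delta^*$. Indeed $t=\log(-\log|z|^2)$, and the flow $z\mapsto z^{e^{s}}$ on $S^1$-invariant functions is just the dilation $\tau\mapsto e^s\tau$. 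In the coordinate $w=\log z$, $\tau=-2\operatorname{Re}w$, and the dilation $w\mapsto e^s w$ is holomorphic. Hence $L_V$ commutes with $J$, and therefore with $dd^c$, on $S^1$-invariant objects.

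Step one: compute $L_V\omega_1$ and $L_V\chi_0$. The forms are $\omega_1=2a\,e^{-t}dt\wedge d\theta+p^*\omega_D$, and similarly for $\chi_0$. In terms of $\tau$ we have $e^{-t}dt=\tau^{-2}d\tau$, so $L_V(\tau^{-2}d\tau\wedge d\theta)=-\tau^{-2}d\tau\wedge d\theta$. So these forms are \emph{not} invariant; they scale by $-1$ in the fibre direction. I will handle this as follows. Write the fibre part as $dd^c(-2a\log\tau)$ up to a normalizing constant. Then $L_V(dd^c(-\log\tau))=dd^c(V(-\log\tau))=dd^c(-1)=0$, using that $L_V$ commutes with $dd^c$. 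Thus the fibre part is in fact $V$-invariant. The apparent discrepancy came from the non-holomorphic coordinate $\theta$ paired with $\tau$: the correct pairing is $d\tau\wedge d\theta/\tau^2$, and $d\theta$ is fixed while $\tau^{-2}d\tau$ scales. I would resolve this carefully by checking that $V$, suitably defined as $\operatorname{Re}$ of the holomorphic field $w\partial_w$, acts on $S^1$-invariant functions as $\tau\partial_\tau=\partial_t$, and that $dd^c\log\tau$ is invariant under the holomorphic dilation. With that in hand I get $L_V\omega_1=0=L_V\chi_0$.

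Step two: apply $L_V$ to $\omega_\varphi^n=\omega_\varphi^{n-1}\wedge\chi_0$. Using $L_V\omega_\varphi=dd^cV\varphi=dd^c\dot\varphi$ (valid since $\varphi$ is $S^1$-invariant and $V$ is real holomorphic), together with the Leibniz rule for $L_V$ on wedge products, I obtain
\[
n\,\omega_\varphi^{n-1}\wedge dd^c\dot\varphi-(n-1)\,\omega_\varphi^{n-2}\wedge\chi_0\wedge dd^c\dot\varphi=0,
\]
which is exactly \eqref{linearized J}. Smoothness of $\varphi$ justifies differentiating in $t$.

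The main obstacle is Step one, namely verifying rigorously that the holomorphic vector field generating $t$-translation on $S^1$-invariant functions preserves the model metrics. If that identification fails, I would instead verify \eqref{linearized J} by direct coordinate computation in $(t,\theta,p)$. Writing $\varphi=\varphi(t,p)$, one has $dd^c\varphi$ equal to its $D$-part, plus mixed terms $e^{t}$-weighted in $d_D\varphi_t\wedge(\cdots)$, plus a fibre term built from $\varphi_{tt}-\varphi_t$ times the fibre form. This shows that the equation is autonomous in $t$, with coefficients independent of $t$. Differentiating in $t$ then gives the linearized equation directly.
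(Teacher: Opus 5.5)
Your final argument is the paper's: Lie-differentiate the equation along the real holomorphic field $\mathrm{Re}(z\log z\,\partial_z)$ (your $\mathrm{Re}(w\partial_w)$, the paper's $Z$). This field annihilates $\omega_1=p^*\omega_D+c\,dd^c t$ and $\chi_0$ because it sends $t$ to a constant, and it sends $\omega_\varphi$ to a multiple of $dd^c\dot{\varphi}$ because $\varphi$ is $S^1$-invariant.

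You should correct Step one, though. $\partial_t$ itself is not real holomorphic, and it does not commute with $dd^c$ even on $S^1$-invariant functions: for instance $L_{\partial_t}dd^c t\neq 0=dd^c(\partial_t t)$. The holomorphic field is, locally and up to adding a constant multiple of $\partial_\theta$, equal to $\partial_t+\theta\partial_\theta$. Its $\theta\partial_\theta$ component gives $L_V d\theta=d\theta$, which exactly cancels the factor $-1$ you found for $\tau^{-2}d\tau$. So $d\theta$ is not fixed, and it is this cancellation, not any choice of pairing, that makes $\omega_1$ and $\chi_0$ invariant.
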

\begin{proof}
We use the complex coordinate $z= exp(-\frac{1}{2}e^t -i \theta)$ on $\Delta^*$ and denote the holomorphic vector field
\begin{equation*}
Z \triangleq Re [z \log z \frac{\partial}{ \partial z}].
\end{equation*}
Thus $Z$ can be written as 
\begin{equation*}
Z= \frac{1}{2} \frac{\partial}{\partial t} +\frac{1}{2} \theta \frac{\partial}{\partial \theta}, \text{ up to }\pi \frac{\partial}{\partial \theta}.
\end{equation*}
For any $S^1$ invariant function $f$, we have that
\begin{equation*}
Z \cdot f = \frac{1}{2} \dot{f}.
\end{equation*}
Note that $\mathcal{L}_Z \omega_1=\mathcal{L}_Z \omega_D + dd^c\Big(Z \cdot (-at) \Big)= dd^c (\frac{-a}{2} \dot{t})=0$ and similarly  $\mathcal{L}_Z \chi_0=0$.
Take the Lie derivative of the J-equation with respect to $Z$. We can get that
\begin{equation*}
\Big( n \omega_{\varphi}^{n-1} - (n-1) \omega_{\varphi}^{n-2} \wedge \chi_0 \Big) \wedge  dd^c \dot{\varphi}=0.
 \end{equation*}
\end{proof}
Note that $\omega_{\varphi}$ satisfies the J-equation 
\begin{equation*}
\omega_{\varphi}^n = \omega_{\varphi}^{n-1} \wedge \chi.
\end{equation*}
Choose a normal coordinate for $\chi$ such that in this coordinate $\omega_{\varphi}$ is diagonalized with diagonal elements to be $(\omega_{\varphi})_{i \bar i}= \lambda_i$. Then the J-equation becomes
\begin{equation*}
n= \Sigma_{i=1}^n \frac{1}{\lambda_i}.
\end{equation*}
Since by assumption we have that $\varphi \in C^{\infty}(X\setminus D)$. This implies that $\lambda_i \le C$ for some uniform constant $C$. Thus we have that
\begin{equation*}
n= \Sigma_{i=1}^n \frac{1}{\lambda_i} \ge \Sigma_{i\neq k} \frac{1}{\lambda_k} +\frac{1}{C}.
\end{equation*}
This implies that
\begin{equation*}
n \omega_{\varphi}^{n-1}- (n-1) \omega_{\varphi}^{n-2} \wedge \chi_0 \ge \frac{1}{C} \omega_{\varphi}^{n-1}.
\end{equation*}
This implies that (\ref{linearized J}) is a uniformly elliptic equation. However, Since $\omega_0$ doesn't have finite volume on $\Delta^* \times D$, we can not directly use integration by parts to get that $\dot{\varphi}$ is a constant.  As a result, first we prove the following Lemma:
\begin{lem}\label{lem l2 finite}
Let $\varphi$ be as in Proposition \ref{splitting}. Then we have that 
\begin{equation*}
\int_{\Delta^* \times D} e^t d \dot{\varphi} \wedge d^c \dot{\varphi} \wedge (n \omega_{\varphi}^{n-1}- (n-1) \omega_{\varphi}^{n-2} \wedge \chi_0) < +\infty.
\end{equation*}
\end{lem}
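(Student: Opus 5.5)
The idea is to exploit the fact that the weight $e^t$ is pluriharmonic on $\Delta^*$. Indeed, with $t=\log(-\log|z|^2)$ we have $e^t=-\log|z|^2$, so $dd^c e^t=0$. Hence we can integrate by parts on slabs $\{s_1\le t\le s_2\}$ and reduce everything to boundary terms, which are bounded uniformly because every derivative of $\varphi$ is bounded with respect to $\omega_1$. Denote $P=n\omega_{\varphi}^{n-1}-(n-1)\omega_{\varphi}^{n-2}\wedge\chi_0$. This is a closed, positive $(n-1,n-1)$-form; positivity was shown just above, namely $P\ge \frac1C\omega_\varphi^{n-1}$. So the integrand of the statement is nonnegative. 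By monotone convergence it suffices to show that
\begin{equation*}
E(s_1,s_2)=\int_{\{s_1\le t\le s_2\}} e^t\, d\dot\varphi\wedge d^c\dot\varphi\wedge P
\end{equation*}
is bounded independently of $s_1<s_2$, and then let $s_1\to-\infty$, $s_2\to+\infty$.

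\textbf{Step 1 (first integration by parts).} Write $e^t d\dot\varphi\wedge d^c\dot\varphi\wedge P = d(e^t\dot\varphi\, d^c\dot\varphi\wedge P) - e^t\dot\varphi\, dd^c\dot\varphi\wedge P - \dot\varphi\, d(e^t)\wedge d^c\dot\varphi\wedge P$, using $dP=0$. The middle term vanishes by Lemma \ref{dotvarphi laplacian}, that is, by (\ref{linearized J}). The exact term gives, by Stokes on the compact slab $[s_1,s_2]\times S^1\times D$, the boundary contributions $\pm e^{s}\int_{\{t=s\}}\dot\varphi\, d^c\dot\varphi\wedge P$ for $s=s_1,s_2$.

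\textbf{Step 2 (cross term).} We have $\dot\varphi\, d(e^t)\wedge d^c\dot\varphi\wedge P = d(e^t)\wedge d^c(\tfrac12\dot\varphi^2)\wedge P$. For the symmetric pairing of $(1,1)$-type this equals $d(\tfrac12\dot\varphi^2)\wedge d^c(e^t)\wedge P$, which is $d\big(\tfrac12\dot\varphi^2\, d^c e^t\wedge P\big)-\tfrac12\dot\varphi^2\,dd^c e^t\wedge P$. Since $dd^c e^t=0$, the cross term is again purely a boundary term, $\tfrac12 e^{s}\int_{\{t=s\}}\dot\varphi^2\, d^c t\wedge P$.

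\textbf{Step 3 (uniform bound on boundary terms).} This is the step I expect to require the most care, especially as $s\to-\infty$, where $\omega_1$ degenerates near $\partial\Delta$. On $\{t=s\}$ only the $d\theta$ and $D$-directions survive. In the model, $d^c t$ is a bounded multiple of $e^{-t}d\theta$, and $\omega_1$ has $t\theta$-part $2ae^{-t}dt\wedge d\theta$. Boundedness of $\dot\varphi$ and of $\nabla\dot\varphi$ in $\omega_1$-norm gives $d^c\dot\varphi|_{\{t=s\}}=O(e^{-s})\,d\theta+O(1)_{D}$. Moreover, $P$ restricted to the slice is bounded by $C\,\omega_D^{n-1}$ plus terms containing $e^{-s}dt\wedge d\theta$, which die on the slice. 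Thus each integrand is $O(e^{-s})\,d\theta\wedge\omega_D^{n-1}$, so after multiplying by $e^{s}$ every boundary term is bounded by $C\cdot 2\pi\,\mathrm{Vol}(D)$, independently of $s$. I will verify this by writing $\dot\varphi$, $d^c\dot\varphi$ and $P$ in the orthonormal coframe $\{e^{-t/2}dt,\ e^{-t/2}d\theta,\ \text{frame on } D\}$ of $\omega_1$; in that frame all coefficients are bounded by the hypothesis $\varphi\in C^\infty(\Delta^*\times D)$.

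Combining Steps 1–3 yields $0\le E(s_1,s_2)\le C$ uniformly, and monotone convergence gives the lemma.
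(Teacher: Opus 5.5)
Your proposal is correct and is essentially the paper's own proof. You integrate by parts on the slabs $\{s_1\le t\le s_2\}\times D$, kill $e^t\dot\varphi\,dd^c\dot\varphi\wedge P$ with (\ref{linearized J}), turn the cross term into a boundary term via $dd^c e^t=0$, and bound the boundary integrands by $Ce^{-s}\,d\theta\wedge dvol_D$ so that the factor $e^{s}$ cancels; your appeal to $P>0$ and monotone convergence only makes explicit what the paper leaves implicit. One slip in Step 3: since $Jdt=2e^{-t}d\theta$, the metric of $\omega_1$ in the $(t,\theta)$-directions is the cusp metric $a(dt^2+4e^{-2t}d\theta^2)$, so its orthonormal coframe is (up to constants) $\{dt,\ e^{-t}d\theta\}$, not $\{e^{-t/2}dt,\ e^{-t/2}d\theta\}$; it is in this coframe that $\dot\varphi$, $\ddot\varphi$, $d_D\dot\varphi$ are bounded and your estimates $d^c t=O(e^{-t})\,d\theta$ and $d^c\dot\varphi|_{\{t=s\}}=O(e^{-s})\,d\theta+O(1)_D$ hold uniformly in $s\in\mathbb{R}$ (also, $P|_{\{t=s\}}$ contains $O(e^{-s})\,d\theta\wedge d_D\dot\varphi\wedge(\cdots)$ terms, which are harmless for the same reason).
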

\begin{proof}
Denote $\Delta_s = \{z\in \Delta^*: -s \le t(z)\le s\}$.
Using integration by parts, we get that 
\begin{equation}\label{e l2 gradient}
\begin{split}
& \int_{\Delta_s \times D} e^t d \dot{\varphi} \wedge d^c \dot{\varphi} \wedge (n \omega_{\varphi}^{n-1}- (n-1) \omega_{\varphi}^{n-2} \wedge \chi_0) \\
& = - \int_{\Delta_s \times D} \dot{\varphi}  d e^t \wedge d^c \dot{\varphi} \wedge (n \omega_{\varphi}^{n-1}- (n-1) \omega_{\varphi}^{n-2} \wedge \chi_0) - \int_{\Delta_s \times D} e^t \dot{\varphi} dd^c \dot{\varphi} \wedge (n \omega_{\varphi}^{n-1}- (n-1) \omega_{\varphi}^{n-2} \wedge \chi_0) \\
& + \int_{\{t=s\}\times D} e^t \dot{\varphi}  d^c \dot{\varphi} \wedge (n \omega_{\varphi}^{n-1}- (n-1) \omega_{\varphi}^{n-2} \wedge \chi_0) - \int_{\{t=-s\}\times D}  e^t \dot{\varphi}  d^c \dot{\varphi} \wedge (n \omega_{\varphi}^{n-1}- (n-1) \omega_{\varphi}^{n-2} \wedge \chi_0)\\
&= -\int_{\Delta_s \times D} d e^t \wedge d^c \frac{1}{2}\dot{\varphi}^2 \wedge (n \omega_{\varphi}^{n-1}- (n-1) \omega_{\varphi}^{n-2} \wedge \chi_0) - 0 \\
& + \int_{\{t=s\}\times D} e^t \dot{\varphi}  d^c \dot{\varphi} \wedge (n \omega_{\varphi}^{n-1}- (n-1) \omega_{\varphi}^{n-2} \wedge \chi_0) - \int_{\{t=-s\}\times D}  e^t \dot{\varphi}  d^c \dot{\varphi} \wedge (n \omega_{\varphi}^{n-1}- (n-1) \omega_{\varphi}^{n-2} \wedge \chi_0)\\
&= - \int_{\Delta_s \times D}  \frac{\dot{\varphi}^2}{2} d^c d e^t \wedge (n \omega_{\varphi}^{n-1}- (n-1) \omega_{\varphi}^{n-2} \wedge \chi_0) \\
&+ \int_{\{t=s\}\times D} \big(e^t \dot{\varphi}  d^c \dot{\varphi} \wedge (n \omega_{\varphi}^{n-1}- (n-1) \omega_{\varphi}^{n-2} \wedge \chi_0) -\frac{1}{2}\dot{\varphi}^2 \wedge d^c e^t \wedge (n\omega_{\varphi}^{n-1}-(n-1)\omega_{\varphi}^{n-2}\wedge \chi_0) \big)\\
&- \int_{\{t=-s\}\times D}  \big(e^t \dot{\varphi}  d^c \dot{\varphi} \wedge (n \omega_{\varphi}^{n-1}- (n-1) \omega_{\varphi}^{n-2} \wedge \chi_0) -\frac{1}{2}\dot{\varphi}^2 \wedge d^c e^t \wedge (n\omega_{\varphi}^{n-1}-(n-1)\omega_{\varphi}^{n-2}\wedge \chi_0) \big) \\
& = \int_{\{t=s\}\times D} \big(e^t \dot{\varphi}  d^c \dot{\varphi} \wedge (n \omega_{\varphi}^{n-1}- (n-1) \omega_{\varphi}^{n-2} \wedge \chi_0) -\frac{1}{2}\dot{\varphi}^2 \wedge d^c e^t \wedge (n\omega_{\varphi}^{n-1}-(n-1)\omega_{\varphi}^{n-2}\wedge \chi_0) \big)\\
&- \int_{\{t=-s\}\times D}  \big(e^t \dot{\varphi}  d^c \dot{\varphi} \wedge (n \omega_{\varphi}^{n-1}- (n-1) \omega_{\varphi}^{n-2} \wedge \chi_0) -\frac{1}{2}\dot{\varphi}^2 \wedge d^c e^t \wedge (n\omega_{\varphi}^{n-1}-(n-1)\omega_{\varphi}^{n-2}\wedge \chi_0) \big)
\end{split}
\end{equation}
In the last line above, we use the fact that $dd^c e^t=0$. In fact, $d^c$ is defined by $d^c f = -\frac{1}{2}J df= -\frac{\sqrt{1}}{2}(\partial -\bar \partial) f$ and 
\begin{equation*}
dd^c e^t= -\frac{1}{2} dJd e^t = -\frac{1}{2} d (J e^t dt)= -\frac{1}{2} d (e^t \cdot 2e^{-t} d\theta)= 0    
\end{equation*}
Now it suffices to prove that all the boundary terms are uniformly bounded independent of $s$. In fact, since $\varphi \in C^{\infty}(\Delta^* \times D)$ and $\omega_{\varphi}$ is a Poincar\'e type metric, we have that 
\begin{equation*}
|  \dot{\varphi}  d^c \dot{\varphi} \wedge (n \omega_{\varphi}^{n-1}- (n-1) \omega_{\varphi}^{n-2} \wedge \chi_0) -\frac{1}{2}\dot{\varphi}^2 \wedge d^c t \wedge (n\omega_{\varphi}^{n-1}-(n-1)\omega_{\varphi}^{n-2}\wedge \chi_0) |\le C e^{-t} dvol_D \wedge \widetilde{\eta}.
\end{equation*}
Since $e^t$ cancels with $e^{-t}$, we get that 
\begin{equation*}
    \int_{\{t=s\}\times D} \big(e^t \dot{\varphi}  d^c \dot{\varphi} \wedge (n \omega_{\varphi}^{n-1}- (n-1) \omega_{\varphi}^{n-2} \wedge \chi_0) -\frac{1}{2}\dot{\varphi}^2 \wedge d^c e^t \wedge (n\omega_{\varphi}^{n-1}-(n-1)\omega_{\varphi}^{n-2}\wedge \chi_0) \big)
\end{equation*}
is uniformly bounded independent of $s$.
\end{proof}

Next, we prove the following Lemma:
\begin{lem}\label{zero l2}
Let $\varphi$ be as in Proposition \ref{splitting}. Then we have that 
\begin{equation*}
\int_{\Delta^* \times D} e^t d \dot{\varphi} \wedge d^c \dot{\varphi} \wedge (n \omega_{\varphi}^{n-1}- (n-1) \omega_{\varphi}^{n-2} \wedge \chi_0) =0.
\end{equation*}
\end{lem}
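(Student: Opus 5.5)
The plan is to run the same integration by parts as in the proof of Lemma \ref{lem l2 finite}, and to show that the boundary terms tend to zero along suitable sequences of truncation parameters, not merely that they stay bounded. Write $P = n \omega_{\varphi}^{n-1}- (n-1) \omega_{\varphi}^{n-2} \wedge \chi_0$. By (\ref{linearized J}) and $dd^c e^t=0$, formula (\ref{e l2 gradient}) shows that the integral of $e^t d\dot\varphi\wedge d^c\dot\varphi\wedge P$ over $\Delta_s\times D$ equals the difference of the flux terms
\begin{equation*}
B(s)=\int_{\{t=s\}\times D} \Big(e^t \dot{\varphi}\, d^c \dot{\varphi} -\tfrac{1}{2}\dot{\varphi}^2 d^c e^t\Big)\wedge P
\end{equation*}
at $t=s$ and at $t=-s$. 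It therefore suffices to find sequences $s_k\to\infty$ with $B(s_k)\to 0$ and $B(-s_k)\to 0$.

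First I treat the $\dot\varphi\,d^c\dot\varphi$ part. Since $\varphi$ is $S^1$ invariant, on the slice $\{t=s\}$ we have $d^c\dot\varphi\wedge P$ restricted to the slice, which is of the form $e^{-t}\ddot\varphi\,\widetilde\eta\wedge(\cdots)$ plus $D$-directional terms. The factor $e^t$ cancels $e^{-t}$, so this part is bounded by $C\int_{\{t=s\}\times D}|\dot\varphi|\,|\nabla\dot\varphi|_{\omega_1}$. The finiteness from Lemma \ref{lem l2 finite} gives, using uniform ellipticity $P\ge \frac1C\omega_\varphi^{n-1}$ and the volume form $e^{-t}dt\wedge d\theta\wedge dvol_D$ (the $e^t$ cancels),
\begin{equation*}
\int_{-\infty}^{\infty}\int_D |\nabla \dot\varphi|^2_{\omega_1}\, dvol_D\, dt<\infty.
\end{equation*}
Hence there are sequences $s_k\to\pm\infty$ along which $\int_{\{t=s_k\}\times D}|\nabla\dot\varphi|^2\to0$. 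Since $\dot\varphi$ is bounded, Cauchy--Schwarz makes the first part of $B(s_k)$ tend to $0$.

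The second part, $\frac12\dot\varphi^2 d^c e^t\wedge P$, is the main obstacle: it is of size $\int_D \dot\varphi^2$, which need not tend to zero. To handle it, I would rewrite this term. On a slice, $d^c e^t\wedge P$ restricted is a positive multiple of $\widetilde\eta\wedge$ (a volume form on $D$ built from $\omega_\varphi|_D$ and $\chi_D$), which is essentially independent of $s$ up to $\dot\varphi$-derivative terms. So $B(s)$ is, up to terms already controlled, $-c\int_D \dot\varphi^2 \,\mu_s$ for a positive measure $\mu_s$ with bounded density.

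I would then exploit that $\dot\varphi=\partial_t\varphi$ with $\varphi$ bounded. The $t$-average of $\dot\varphi$ over $[s,s+L]$ is $O(1/L)$. Combined with the $L^2$ smallness of $\nabla\dot\varphi$ (including $\partial_t\dot\varphi$) over long $t$-intervals near $\pm\infty$, a Poincar\'e-type argument in $t\times D$ shows that $\int_D\dot\varphi^2$ is small on most slices far out. Concretely, the oscillation of $\dot\varphi$ on $[s,s+L]\times D$ in $L^2$ is bounded by $L\int_s^{s+L}\int_D |\nabla\dot\varphi|^2$, which tends to $0$ for fixed $L$. Its mean is $O(1/L)$. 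Taking $L$ large and then $s$ large yields slices with $\int_D\dot\varphi^2$ arbitrarily small, and I can choose these slices within the good sequence from the previous step.

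Letting $s_k\to\infty$ then shows that the total integral is zero.
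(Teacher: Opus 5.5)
Your proof is correct, and it takes a different route from the paper after the common starting point, the flux identity (\ref{e l2 gradient}).

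\textbf{The paper's route.} The paper argues softly. The flux $\psi_\varphi(s)$ (your $B(s)$) is nondecreasing and bounded, so it has limits at $\pm\infty$. The paper then translates $\varphi$ along the flow of $Z$, which preserves $\omega_1$ and $\chi_0$, to the midpoints of $[t_k,t_{k+1}]$ with $t_k=k^2$, and extracts a limit $\varphi_\infty$. The weighted energy of $\varphi_\infty$ is bounded by $\lim_k(\psi_\varphi(t_{k+1})-\psi_\varphi(t_k))=0$. Hence $\dot\varphi_\infty$ is constant, so $\dot\varphi_\infty=0$ by boundedness, and $\psi_\varphi$ at the midpoints tends to $\psi_{\varphi_\infty}(0)=0$.

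\textbf{Your route.} You make the argument quantitative.
\begin{itemize}
\item The weight $e^t$ cancels the volume factor $e^{-t}$, so Lemma \ref{lem l2 finite} gives $\int_{\mathbb{R}}\int_D|\nabla\dot\varphi|^2\,dvol_D\,dt<\infty$.
\item Boundedness of $\varphi$ makes long $t$-averages of $\dot\varphi$ of size $O(1/L)$.
\item The flux satisfies $|B(s)|\le C\int_{\{s\}\times D}(\dot\varphi^2+|\nabla\dot\varphi|^2)$.
\end{itemize}

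\textbf{What each approach buys.} Your route avoids the $Z$-flow, the compactness extraction and the identification of the limit. It also needs only nonnegativity of the integrand and monotone convergence, not monotonicity of the flux. As a bonus it shows $\int_D\dot\varphi^2(t,\cdot)\to0$ as $|t|\to\infty$. The paper's route has the advantage that the same blow-down mechanism is reused in Proposition \ref{p almost J}.

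\textbf{Two presentation points.}
\begin{itemize}
\item The Poincar\'e constant of $[s,s+L]\times D$ is of order $L^2$, not $L$. This is harmless, since $L$ is fixed before $s\to\pm\infty$.
\item ``Choosing slices within the good sequence'' should be justified by a measure argument on $[s,s+L]$. You can avoid it altogether with a one-variable estimate. For $t\in[s,s+L]$,
\[
|\dot\varphi(t,p)|\le \frac{2\sup|\varphi|}{L}+\int_s^{s+L}|\ddot\varphi(\tau,p)|\,d\tau .
\]
This gives $\int_D\dot\varphi^2(t,\cdot)\le C/L^2+2L\int_s^{s+L}\int_D\ddot\varphi^2$ on every far slice.
\end{itemize}
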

\begin{proof}
Denote 
\begin{equation*}
  \psi_{\varphi}(s)=   \int_{\{t=s\}\times D} \big(e^t \dot{\varphi}  d^c \dot{\varphi} \wedge (n \omega_{\varphi}^{n-1}- (n-1) \omega_{\varphi}^{n-2} \wedge \chi_0) -\frac{1}{2}\dot{\varphi}^2 \wedge d^c e^t \wedge (n\omega_{\varphi}^{n-1}-(n-1)\omega_{\varphi}^{n-2}\wedge \chi_0) \big).
\end{equation*}
According to (\ref{e l2 gradient}), it suffices to prove that $\psi_{\varphi}(s)$ is a constant function. Since $e^t d \dot{\varphi} \wedge d^c \dot{\varphi} \wedge (n \omega_{\varphi}^{n-1}- (n-1) \omega_{\varphi}^{n-2} \wedge \chi_0)\ge 0$, we can see from (\ref{e l2 gradient}) that $\psi_{\varphi}(s)$ is a nondecreasing function. According to  (\ref{e l2 gradient}), $\psi_{\varphi}$ is a bounded function. Thus $\psi_{\varphi}(\infty)= \lim_{s \rightarrow \infty} \psi_{\varphi}(s)$ and $\psi_{\varphi}(-\infty) = \lim_{s\rightarrow -\infty}\psi_{\varphi}(s)$ both exist and are finite. Denote $\Delta_{a,b}\triangleq \{b \le t \le a\}$ and $\Delta_{a}\triangleq \Delta_{-a,a}$. Consider an increasing sequence $(t_j)_{j \ge 0}$ going to $+\infty$ with $(t_{j+1}-t_j)_{j \ge 0}$ increasing to $\infty$. We can take $t_j = j^2$ to satisfy the above assumptions. Take $\alpha_j = \frac{t_{j+1}-t_j}{2}$. Denote $\varphi_j $ as $\varphi (\cdot +t_{j+1}-\alpha_j, \cdot)$. Denote $Z=2Re [z \log z \frac{\partial}{ \partial z}].$ Denote the flow induced by $Z$ as $\Phi^Z_t$. According to the proof of the Lemma \ref{dotvarphi laplacian}, we have that
\begin{equation*}
(\Phi_t^Z)^* \omega_0 = \omega_0,\,\,\,\, (\Phi_t^Z)^* \chi_0 = \chi_0
\end{equation*}
and
\begin{equation*}
(\Phi_t^Z)^* \omega_{\varphi}= \omega_0 + dd^c \varphi (\cdot + t, \cdot). 
\end{equation*}
Combining the above formulae and the fact that $\omega_{\varphi}$ is quasi-isometric to $\omega_0$, we can get that $\omega_{\varphi_j}$ are uniformly quasi-isometric to $\omega_0$ and
\begin{equation*}
tr_{\omega_{\varphi_j}}\chi_0 = (\Phi_{t_{j+1}-\alpha_j}^Z)^* tr_{\omega_{\varphi}} \chi_0= n.
\end{equation*}
Up to taking a subsequence, we have that $\{\varphi_j\}$ locally uniformly converge to a function $\varphi_{\infty}$ and 
\begin{equation*}
tr_{\omega_{\varphi_{\infty}}} \chi_0=n.
\end{equation*}
Using the Lemma \ref{lem l2 finite}, we have that 
\begin{equation*}
\int e^t d \dot{\varphi_{\infty}} \wedge d^c \dot{\varphi_{\infty}} \wedge (n \omega_{\varphi_{\infty}}^{n-1}- (n-1)\omega_{\varphi_{\infty}}^{n-2} \wedge \chi_0)<\infty.
\end{equation*}
Then we can calculate that
\begin{equation}\label{e varphi infty}
\begin{split}
&\int_{\Delta_{\alpha_j} \times D} e^t d \dot{\varphi_{\infty}} \wedge d^c \dot{\varphi_{\infty}} \wedge (n \omega_{\varphi_{\infty}}^{n-1}- (n-1)\omega_{\varphi_{\infty}}^{n-2} \wedge \chi_0) \\
&= \lim_{k \rightarrow \infty} \int_{\Delta_{\alpha_j} \times D} e^t d \dot{\varphi_{k}} \wedge d^c \dot{\varphi_{k}} \wedge (n \omega_{\varphi_{k}}^{n-1}- (n-1)\omega_{\varphi_{k}}^{n-2} \wedge \chi_0)\\
&  \le \lim_{k \rightarrow \infty} \int_{\Delta_{\alpha_k} \times D} e^t d \dot{\varphi_{k}} \wedge d^c \dot{\varphi_{k}} \wedge (n \omega_{\varphi_{k}}^{n-1}- (n-1)\omega_{\varphi_{k}}^{n-2} \wedge \chi_0) \\
& = \lim_{k \rightarrow \infty} \psi_{\varphi_k}(\alpha_k)- \psi_{\varphi_k}(-\alpha_k).
\end{split}
\end{equation}

We can compute that
\begin{equation*}
    d \dot{\varphi_k} = \ddot{\varphi_k} dt + d_D \dot{\varphi_k},
\end{equation*}
and
\begin{equation*}
    d^c \dot{\varphi_k}= -\frac{1}{2}J d \dot{\varphi_k} =-\frac{1}{2}J \ddot{\varphi_k} dt + d_D^c \dot{\varphi_k} =-e^{-t}\ddot{\varphi_k} d\theta + d_D^c \dot{\varphi_k},
\end{equation*}
and
\begin{equation*}
    dd^c \varphi_k = -e^{-t}\ddot{\varphi_k} dt \wedge d\theta + e^{-t}\dot{\varphi_k} dt \wedge d\theta -e^{-t}d_D \dot{\varphi_k} \wedge d\theta + dt \wedge d^c_D \dot{\varphi_k} + dd^c_D \varphi_k.
\end{equation*}
Thus we have that 
\begin{equation*}
    \omega_{\varphi_k}= \omega_D + dd^c_D \varphi_k + (a -\frac{1}{2}(\ddot{\varphi_k}-\dot{\varphi_k}))dt \wedge 2e^{-t}d\theta-e^{-t}d_D \dot{\varphi_k} \wedge d\theta + dt \wedge d^c_D \dot{\varphi_k} .
\end{equation*}
We compute that 
\begin{equation}\label{bd1}
\begin{split}
    &e^t d \dot{\varphi_k} \wedge d^c \dot{\varphi_k} \wedge n \omega_{\varphi_k}^{n-1}\\
    & = n (-\ddot{\varphi_k}^2 dt \wedge d\theta \wedge (\omega_D+ dd^c_D \varphi_k)^{n-1}+ \ddot{\varphi_k} dt \wedge d_D^c \dot{\varphi_k}\wedge (n-1)(-\frac{1}{2}d_D \dot{\varphi_k} \wedge 2 d\theta)\wedge (\omega_D+ dd^c_D \varphi_k)^{n-2} \\
    & + d_D \dot{\varphi_k} (-\ddot{\varphi_k})d\theta \wedge (n-1)dt \wedge d_D^c \dot{\varphi_k}\wedge (\omega_D + dd^c_D \varphi_k)^{n-2}  \\
    &+ d_D \dot{\varphi_k}\wedge d_D^c \dot{\varphi_k}\wedge (n-1)(a-\frac{1}{2}(\ddot{\varphi_k}-\dot{\varphi_k}))dt \wedge 2 d\theta \wedge (\omega_D + dd^c_D \varphi_k)^{n-2}).
\end{split}
\end{equation}
and
\begin{equation}\label{bd2}
\begin{split}
    &e^t d \dot{\varphi_k} \wedge d^c \dot{\varphi_k} \wedge  \omega_{\varphi_k}^{n-2}\wedge \chi_o\\
    & = n (-\ddot{\varphi_k}^2 dt \wedge d\theta \wedge (\omega_D+ dd^c_D \varphi_k)^{n-2}\wedge \chi_D  \\
    &+ \ddot{\varphi_k} dt \wedge d_D^c \dot{\varphi_k}\wedge (n-2)(-\frac{1}{2}d_D \dot{\varphi_k} \wedge 2 d\theta)\wedge (\omega_D+ dd^c_D \varphi_k)^{n-3}\wedge \chi_D \\
    & + d_D \dot{\varphi_k} (-\ddot{\varphi_k})d\theta \wedge (n-2)dt \wedge d_D^c \dot{\varphi_k}\wedge (\omega_D + dd^c_D \varphi_k)^{n-2}\wedge \chi_D  \\
    &+ d_D \dot{\varphi_k}\wedge d_D^c \dot{\varphi_k}\wedge (n-2)(a-\frac{1}{2}(\ddot{\varphi_k}-\dot{\varphi_k}))dt \wedge 2 d\theta \wedge (\omega_D + dd^c_D \varphi_k)^{n-3}\wedge \chi_D \\
    & + d_D \dot{\varphi_k}\wedge d^c_D \dot{\varphi_k}\wedge (\omega_D + dd^c \varphi_k)^{n-2}\wedge b dt \wedge 2d\theta).
\end{split}
\end{equation}

(\ref{bd1}) and (\ref{bd2}) imply that 
\begin{equation*}
\tau_{t_{k+1}-\alpha_k}^* e^t d \dot{\varphi_{k}} \wedge d^c \dot{\varphi_{k}} \wedge (n \omega_{\varphi_{k}}^{n-1}- (n-1)\omega_{\varphi_{k}}^{n-2} \wedge \chi_0) = e^t d \dot{\varphi} \wedge d^c \dot{\varphi} \wedge (n \omega_{\varphi}^{n-1}- (n-1)\omega_{\varphi}^{n-2} \wedge \chi_0),
\end{equation*}
where $\tau_{t_{k+1}-\alpha_k}$ is a translation in the $t$ direction. Thus we have that:
\begin{equation}\label{e phi tran}
    \psi_{\varphi_k}(\alpha_k)- \psi_{\varphi_k}(-\alpha_k) = \psi_{\varphi}(t_{k+1}) - \psi_{\varphi}(t_k).
\end{equation}
Combining (\ref{e varphi infty}), (\ref{e phi tran}) and the fact that $\lim_{k \rightarrow \infty} \psi_{\varphi}(t_{k+1}) - \psi_{\varphi}(t_k)=0$, we can get that 
\begin{equation*}
\int_{\Delta^* \times D}e^t d \dot{\varphi_{\infty}} \wedge d^c \dot{\varphi_{\infty}} \wedge (n \omega_{\varphi_{\infty}}^{n-1}- (n-1)\omega_{\varphi_{\infty}}^{n-2} \wedge \chi_0) =0.
\end{equation*}
Thus, $\dot{\varphi_{\infty}}$ is a constant function. Since $\varphi_{\infty}$ is bounded, we must have that $\dot{\varphi_{\infty}}=0$.  
Thus we can use the dominated convergence theorem to get that
\begin{equation*}
\lim_{j \rightarrow \infty} \psi_{\varphi}(\frac{t_{j+1}+t_j}{2}) = \lim_{j \rightarrow \infty} \psi_{\varphi_j}(0)= \psi_{\varphi_{\infty}}(0).
\end{equation*}
Since $\dot{\varphi_{\infty}}=0$, we have that
\begin{equation*}
\psi_{\varphi_{\infty}} (0)= \int_{\{t=0\}\times D} \dot{\varphi_{\infty}}  d^c \dot{\varphi_{\infty}} \wedge (n \omega_{\varphi_{\infty}}^{n-1}- (n-1) \omega_{\varphi_{\infty}}^{n-2} \wedge \chi_0)=0.
\end{equation*}
Thus we have that $\psi_{\varphi}(\infty)=0$. Similarly we can also get that $\psi_{\varphi}(-\infty)=0$. This concludes the proof of this Lemma.
\end{proof}

Now we are ready to proof the main proposition in this subsection:
\begin{proof}
(of the Proposition \ref{splitting}). Using the Lemma \ref{lem l2 finite}, we can get that
\begin{equation*}
\dot{\varphi}=C
\end{equation*}
for some constant $C$. Since $\varphi$ is bounded on $\Delta^* \times D$, we must have that $\dot{\varphi}=0$. This concludes the proof of this Proposition.
\end{proof}

\subsection{Almost solving J-equation}
We define
\begin{defn}
We say that a family of K\"ahler metrics $\omega_s$ on a manifold $D$ almost solve the J-equation if the following hold:
\begin{enumerate}
\item For any $k \ge 0$, $tr_{\omega_s}\chi \rightarrow n$ and for any positive $l$, $\partial_s^l tr_{\omega_s}\chi  \rightarrow 0$, in $C^k(D)$, as $s$ goes to $\infty$.
\item $(\omega_s)_{s \ge 0}$ is bounded in $C^k$ for any $k$, and there is some positive constant $c$ such that for all $t \ge 0$, $\omega_s \ge c \chi$.
\end{enumerate}
We say moreover that such a family has extinguishing variation if for all positive $l$, $\partial^l_t \omega_t$ tends to $0$ in every $C^k(D)$, $k \ge 0$, as $t$ goes to $\infty$.
\end{defn}
\begin{rem}\label{rem almost solve J}
Using maximum principle, we can get that the solution to a J-equation is unique, denoted as $\omega_{\infty}$. Thus for any family of K\"ahler metrics $\omega_s$ on a manifold $D$ which almost solve the J-equation, we have that $\omega_s \rightarrow \omega_{\infty}$ in $C^k$ for any $k$.
\end{rem}

\begin{prop}\label{p almost J}
Assume  that $D$ is reduced to one component and that $tr_{\omega_{\varphi}}\chi=n$ on $X\setminus D$, with $\omega_{\varphi}= \omega+ dd^c \varphi$ of Poincar\'e type of class $[\omega_X]$. Let $\varphi= \varphi_0 + \varphi^{\bot}$ be the decomposition of $\varphi$ as in the section 2.5. Then for $T$ big enough, $(\omega_t^{\varphi})_{t\ge T}\triangleq (\omega_D + dd^c_D \varphi_0 (t,\cdot))_{t\ge T}$, with $\omega_D = \omega_X |_D$, is a family of K\"ahler metrics on $D$ which almost solve the J-equation, with extinguishing variation. Moreover, for any $(k,\alpha)$, and any positive $l$, $(\partial_t^l \varphi_0)_t \rightarrow 0$ in $C^{k,\alpha}(D)$ as $t$ goes to $\infty$.
\end{prop}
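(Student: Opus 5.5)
We follow Auvray's argument for extremal Poincaré type metrics in \cite{A2}. The idea is to translate in the $t$ direction, extract limits on the model $\Delta^*\times D$, and identify every limit through the splitting theorem, Proposition \ref{splitting}.

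The first step is to control the non-invariant part. Since $\omega_{\varphi}$ is of Poincaré type, $\varphi$ has bounded covariant derivatives of all orders with respect to $\omega$. By Proposition \ref{prop bdd} we may also assume that $\varphi$ is bounded, after taking $\omega$ asymptotic to a product. Applying Lemma \ref{improve decay holder} repeatedly gives $\varphi^{\bot}=O(e^{-t})$ together with all its derivatives, and after iterating, $O(e^{-mt})$ for any $m$ that the regularity allows. Consequently
\[
\omega_{\varphi}=p^*\big(\omega_D+dd^c_D\varphi_0(t,\cdot)\big)+\big(a-\tfrac12(\ddot\varphi_0-\dot\varphi_0)\big)\,dt\wedge 2e^{-t}\widetilde{\eta}+(\text{mixed terms in }d_D\dot\varphi_0)+O(e^{-\eta t}),
\]
and $\varphi_0$ is an $S^1$-invariant function whose derivatives in $t$ and along $D$ are all bounded.

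The second step is to show that $\varphi_0$ is asymptotically an $S^1$-invariant solution of the model equation, and to take limits. Substitute the above expression into $tr_{\omega_\varphi}\chi=n$, using that $\chi$ is asymptotic to $\chi_0$. This shows that $\varphi_0$ solves $\omega_{\varphi_0}^n=\omega_{\varphi_0}^{n-1}\wedge\chi_0$ on $\{t\ge T\}\times D\times S^1$ up to an error $O(e^{-\eta t})$ in every $C^k$. Now take any sequence $s_j\to\infty$ and set $\varphi_j=\varphi_0(\cdot+s_j,\cdot)$. By Arzelà--Ascoli and the uniform bounds, a subsequence converges in $C^\infty_{loc}(\mathbb{R}\times D)$ to some $\varphi_\infty$. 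The error terms disappear in the limit, so $\varphi_\infty$ is an $S^1$-invariant, bounded, smooth solution of the model J-equation on all of $\Delta^*\times D$. The translation invariance of $\omega_1$ and $\chi_0$ under the flow of $Z$, shown in Lemma \ref{dotvarphi laplacian}, guarantees that the limit equation has exactly the model form. In the limit, $\omega_{\varphi_\infty}\ge c\chi_0$ holds uniformly. This follows from $tr_{\omega_\varphi}\chi=n$ together with the quasi-isometry bound, by the same argument that made the linearized equation uniformly elliptic.

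The third step identifies the limit. By Proposition \ref{splitting}, $\partial_t\varphi_\infty=0$. Hence every $t$-derivative of the limit vanishes, and $\omega_D+dd^c_D\varphi_\infty$ solves, on $D$, the equation obtained by freezing the $dt\wedge\widetilde{\eta}$ coefficient. Since the sequence $s_j$ was arbitrary, a standard contradiction argument gives three conclusions:
\begin{itemize}
\item $\partial_t^l\varphi_0\to0$ in $C^{k,\alpha}(D)$ for all $l\ge1$;
\item the $D$-part $tr_{\omega_t^\varphi}\chi_D$ converges to the constant dictated by the equation, with $t$-derivatives tending to $0$;
\item $\omega_t^\varphi\ge c\chi_D$ and $\omega_t^\varphi$ is bounded in every $C^k$.
\end{itemize}
These are exactly the conditions that $(\omega_t^\varphi)_{t\ge T}$ almost solves the J-equation with extinguishing variation.

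The main obstacle is the second step. One must show that the decay of $\varphi^{\bot}$, and the difference between $\chi$ and the model $\chi_0$, enter the equation only through $O(e^{-\eta t})$ terms in every $C^k$ norm. Care is needed with the mixed components $e^{-t}d_D\dot\varphi_0\wedge d\theta$ and $dt\wedge d^c_D\dot\varphi_0$, which are not small a priori but are of unit size in the Poincaré metric. The plan is to work in quasi-coordinates, where all these quantities are uniformly controlled, so that the local convergence is genuinely $C^\infty$ and Proposition \ref{splitting} applies to the limit.
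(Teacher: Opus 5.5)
Your proposal is correct and follows essentially the same route as the paper. You control $\varphi^{\bot}$ via Lemma \ref{improve decay holder}, use Proposition \ref{prop bdd} for boundedness, translate $\varphi_0$ in $t$ to extract a $C^\infty_{loc}$ limit solving the model J-equation on $\Delta^*\times D$, apply Proposition \ref{splitting} to get $t$-independence, and conclude by a subsequence/contradiction argument. The only difference is organizational: you derive all the required properties at once from the fact that every translated sequence has a $t$-independent subsequential limit, whereas the paper writes out the contradiction only for the trace condition and says the remaining claims follow similarly.
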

 \begin{proof}
According to the Lemma \ref{improve decay holder}, $dd^c \varphi^{\bot}$ decays exponentially with respect to a Poincar\'e type metric. Thus, we have that $\omega_t^{\varphi}$ are K\"ahler metrics which are uniformly bounded from below and above for $t \ge A$ with $A$ big enough.  Then, we prove by contradiction. Assume that there exists $\epsilon>0$, a sequence $(t_j)$, $\lim_{j \rightarrow \infty} t_j = \infty$, and $(z_j)$ in $D$ such that $|tr_{\omega_{t_j}^{\varphi}}\chi_D-C_D|(z_j)\ge \epsilon$. We consider a subsequence of $(z_j)$, still denoted as $(z_j)$, converging to some $z\in D$. Note that for any $(k,\alpha) \in \mathbb{N} \times (0,1)$, $\varphi_0$ is bounded in $C^{k,\alpha}([A,\infty)\times D)$, according to the Proposition \ref{prop bdd}.  As a result, we can use a diagonal argument to get a function $\varphi_{\infty} \in C^{\infty}(\mathbb{R}\times D)$ such that $(\varphi_0)_j \triangleq (\varphi_0)(\cdot + t_j,\cdot)$ converges in $C^{\infty}([-N, N]\times D)$ for any $N$. We claim that: \\

The $(1,1)-$form $\omega_{\varphi_{\infty}}\triangleq \omega_D + dd^c (\varphi_{\infty}-at)$ solves the J-equation on $\Delta^* \times D$. 

In fact, near the divisor, we have that
\begin{equation}\label{expansion 1}
\omega_{\varphi}= (a-\frac{1}{2} (\partial_t^2 -\partial_t)\varphi_0) dt \wedge 2 e^{-t}\eta + dt \wedge d^c_D \partial_t (\varphi_0) -\frac{1}{2} d_D \partial_t (\varphi_0) \wedge 2e^{-t}\eta + p^* \omega_t^{\varphi} + O(e^{-t}).
\end{equation}

We calculate that
\begin{equation*}
\begin{split}
&n= tr_{\omega_{\varphi}}\chi = n \frac{\omega_{\varphi}^{n-1} \wedge \chi}{\omega_{\varphi}^n} \\
&= n \frac{b  (\omega_t^{\varphi})^{n-1} +(n-1)p^* \chi_D \wedge \Big(a-\frac{1}{2} (\partial_t^2 -\partial_t) \varphi_0 \Big) \wedge (\omega_t^{\varphi})^{n-2}  }{n \Big(a-\frac{1}{2} (\partial_t^2 -\partial_t)\varphi_0 \Big) (\omega_t^{\varphi})^{n-1}+\frac{n(n-1)}{2} d_D \partial_t(\varphi_0) \wedge d_D^c \partial_t(\varphi_0) \wedge (\omega_t^{\varphi})^{n-2}}\\
& +n \frac{ \frac{(n-1)(n-2)}{2}p^* \chi_D \wedge d_D \dot{\varphi_0} \wedge d_D^c \dot{\varphi_0}  \wedge (\omega_{t}^{\varphi})^{n-3}}{n \Big(a -\frac{1}{2} (\partial_t^2 -\partial_t)\varphi_0 \Big) (\omega_t^{\varphi})^{n-1}+\frac{n(n-1)}{2} d_D \partial_t(\varphi_0) \wedge d_D^c \partial_t(\varphi_0) \wedge (\omega_t^{\varphi})^{n-2}}\\
& +O(e^{-\eta t}). 
\end{split}
\end{equation*}
and
\begin{equation*}
\begin{split}
&n= tr_{\omega_{\varphi_{\infty}}}\chi = n \frac{\omega_{\varphi_{\infty}}^{n-1} \wedge \chi}{\omega_{\varphi_{\infty}}^n} \\
&= n \frac{b  (\omega_t^{\varphi_{\infty}})^{n-1} +(n-1)p^* \chi_D \wedge \Big(a-\frac{1}{2} (\partial_t^2 -\partial_t)  \varphi_{\infty} \Big) \wedge (\omega_t^{\varphi_{\infty}})^{n-2}  }{n \Big(a-\frac{1}{2} (\partial_t^2 -\partial_t) \varphi_{\infty} \Big) (\omega_t^{\varphi_{\infty}})^{n-1}+\frac{n(n-1)}{2} d_D \partial_t(\varphi_{\infty}) \wedge d_D^c \partial_t( \varphi_{\infty}) \wedge (\omega_t^{\varphi_{\infty}})^{n-2}}\\
& +n \frac{ \frac{(n-1)(n-2)}{2}p^* \chi_D \wedge d_D^c \dot{ \varphi_{\infty}} \wedge d_D \dot{\varphi_{\infty}}  \wedge (\omega_{t}^{\varphi_{\infty}})^{n-3}}{n \Big(a-\frac{1}{2} (\partial_t^2 -\partial_t) \varphi_{\infty} \Big) (\omega_t^{\varphi_{\infty}})^{n-1}+\frac{n(n-1)}{2} d_D \partial_t( \varphi_{\infty}) \wedge d_D^c \partial_t(\varphi_{\infty}) \wedge (\omega_t^{\varphi_{\infty}})^{n-2}}\\
& +O(e^{-\eta t}). 
\end{split}
\end{equation*}

Since we have that $(\varphi_0)_j$ locally compactly converges to $\varphi_{\infty}$, we can use the above formulae to get that $\varphi_{\infty}$ solves the J-equation. Then we can use the Proposition \ref{splitting} to get that $\varphi_{\infty}$ is invariant in the $\Delta^*$ direction. Thus, we can write
\begin{equation*}
\omega_{\varphi_{\infty}}= a dt \wedge 2 e^{-t} d\nu + \omega_D^{\psi},
\end{equation*}
with $\omega_D^{\psi}= \omega_D + dd^c_D \psi$, for $\psi \in C^{\infty}(D)$. Since $\omega_D^{\psi}$ is the $C^2-$limit of $(\omega_{t_j}^{\varphi})$, the fact that $\omega_{\varphi_{\infty}}$ solves a J-equation contradicts with the assumption that $|tr_{\omega_{t_j}^{\varphi}}\chi_D-C_D|\ge \epsilon$. This concludes the proof of the first part of the Proposition. The second part of the Proposition can be proved in a similar way.
 \end{proof}
 
\begin{prop}\label{asymp 1}
 Let $\omega_{\varphi}$ be a Poincar\'e type K\"ahler metric satisfying the J-equation
 \begin{equation*}
 tr_{\omega_{\varphi}}\chi = n.
 \end{equation*}
 Then we have that
 \begin{equation*}
 \omega_{\varphi}= p^* \omega_D + a dt \wedge 2e^{-t} \eta +o(1).
 \end{equation*}
\end{prop}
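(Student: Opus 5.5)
The plan is to derive the statement from Proposition \ref{p almost J} together with the exponential decay of the non-invariant part. Write $\varphi=\varphi_0(t,p)+\varphi^{\bot}$ as in (\ref{f decom}). By Lemma \ref{improve decay holder}, applied repeatedly and using that $\varphi\in C^{\infty}(M)$ (bounded by Proposition \ref{prop bdd}), $\varphi^{\bot}$ and all its Poincar\'e-covariant derivatives are $O(e^{-t})$. Hence $dd^c\varphi^{\bot}=O(e^{-t})$, and it suffices to understand $dd^c\varphi_0$. For this we use the expansion (\ref{expansion 1}):
\begin{equation*}
\omega_{\varphi}=\Big(a-\tfrac12(\partial_t^2-\partial_t)\varphi_0\Big)dt\wedge 2e^{-t}\widetilde{\eta}+dt\wedge d^c_D\partial_t\varphi_0-\tfrac12 d_D\partial_t\varphi_0\wedge 2e^{-t}\widetilde{\eta}+p^*\omega^{\varphi}_t+O(e^{-t}).
\end{equation*}

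Next we apply the last assertion of Proposition \ref{p almost J}. It gives $\partial_t^l\varphi_0\to0$ in every $C^{k,\alpha}(D)$ for $l\ge1$. Consequently the coefficient $a-\tfrac12(\partial_t^2-\partial_t)\varphi_0$ tends to $a$ uniformly. The mixed terms $dt\wedge d^c_D\partial_t\varphi_0$ and $e^{-t}d_D\partial_t\varphi_0\wedge\widetilde{\eta}$ are $o(1)$ with respect to the Poincar\'e metric, since $dt$ and $e^{-t}\widetilde{\eta}$ have bounded Poincar\'e norm. What remains is to show $p^*\omega^{\varphi}_t\to p^*\omega_D'$ for a fixed K\"ahler metric $\omega_D'$ on $D$, which we then rename $\omega_D$ in the statement.

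The family $(\omega_t^{\varphi})_{t\ge T}$ almost solves the J-equation on $D$ by Proposition \ref{p almost J}. By Remark \ref{rem almost solve J}, i.e.\ uniqueness of the J-equation on $D$ via the maximum principle, any sequential $C^k$ limit must equal the unique solution $\omega_\infty$ of $tr_{\omega_\infty}\chi_D=C_D$ in $[\omega_D]$. The uniform bounds $\omega_t^\varphi\ge c\chi_D$ and the $C^k$ bounds give precompactness. The whole family therefore converges, $\omega_t^{\varphi}\to\omega_\infty$ in $C^k(D)$. Pulling back by $p$, and noting that $p^*$ of a $C^k(D)$-small form is small in the Poincar\'e norm, yields $p^*\omega^\varphi_t=p^*\omega_\infty+o(1)$. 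Collecting terms gives $\omega_\varphi=p^*\omega_\infty+a\,dt\wedge2e^{-t}\widetilde{\eta}+o(1)$.

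The main obstacle is justifying the second part of Proposition \ref{p almost J}, namely $\partial_t^l\varphi_0\to0$, which the paper only asserts "in a similar way". If needed, I would argue by contradiction exactly as there. Take $t_j\to\infty$ with $|\partial_t^l\varphi_0|(t_j,z_j)\ge\epsilon$. Translate to obtain a limit $\varphi_\infty$ on $\mathbb{R}\times D$ solving the product J-equation, using the bounds from Proposition \ref{prop bdd}. Proposition \ref{splitting} then shows $\varphi_\infty$ is $t$-independent, so $\partial_t^l\varphi_\infty=0$, contradicting smooth local convergence. A secondary point is checking that the $O(e^{-t})$ error in (\ref{expansion 1}) and the $\widetilde{\eta}$ versus $d\theta$ discrepancy (\ref{eta dtheta}) are indeed $o(1)$ in the Poincar\'e norm; these follow from the bounded-geometry description in Section 2.5.
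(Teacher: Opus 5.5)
Your proposal is correct and follows essentially the same route as the paper's proof. Both use the last assertion of Proposition \ref{p almost J} to make the $\partial_t\varphi_0$ and $\partial_t^2\varphi_0$ terms in (\ref{expansion 1}) $o(1)$, and then Remark \ref{rem almost solve J} to get $\omega_t^{\varphi}\to\omega_\infty$; you additionally spell out the decay of $\varphi^{\bot}$, the precompactness argument, and the point that the $\omega_D$ in the conclusion must be the solution of the J-equation on $D$ rather than $\omega_X|_D$, which the paper's notation blurs.
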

\begin{proof}
According to Proposition \ref{p almost J}, we have that $\partial_t \varphi_0 =o(1)$ and $\partial_t^2 \varphi_0 =o(1)$ at any order with respect to a Poincar\'e type K\"ahler metric. Thus we can use (\ref{expansion 1}) to get that
\begin{equation}\label{omega varphi 1}
\omega_{\varphi} = a  dt \wedge 2 e^{-t}\eta + p^* \omega_t^{\varphi} + o(1).
\end{equation}
According to the Proposition \ref{p almost J}, we have that $(\omega_t^{\varphi})_{t\ge T}$ is a family of K\"ahler metrics which almost solve the J-equation, with extinguishing variation. According to the remark \ref{rem almost solve J}, we have that $\omega_t^{\varphi}= \omega_D +o(1)$. Plug this into (\ref{expansion 1}), we get that
\begin{equation*}
\omega_{\varphi} = a  dt \wedge 2 e^{-t}\eta + p^* \omega_D + o(1).
\end{equation*}
\end{proof}

Then we can prove the following Proposition:
\begin{prop}
Let $\widetilde{\Delta}$ be the linearized operator of the J-equation defined by
\begin{equation*}
\widetilde{\Delta} u = g_{\varphi}^{k \bar j} g_{\varphi}^{i \bar l} \chi_{i \bar j} u_{k \bar l}.
\end{equation*} 
For any $k$, one has on $\widetilde{\Delta}^0 \triangleq \Pi_0 \circ \widetilde{\Delta} \circ q^* : C^{k+2,\alpha} ([A,\infty)\times \mathbb{R}) \rightarrow C^{k,\alpha}([A,\infty)\times \mathbb{R})$ the asymptotic:
\begin{equation*}
\widetilde{\Delta}u=<\chi_D, dd^c_D u>_{\omega_D} -\frac{b}{a^2} (\partial_t - \partial_t^2)u +o(1).
\end{equation*}
\end{prop}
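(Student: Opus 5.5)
The plan is to compute the linearized operator directly from the leading-order form of $\omega_\varphi$ near $D$, acting on $S^1$-invariant functions $u=u(t,p)$ pulled back by $q^*$. The starting point is Proposition \ref{asymp 1}, which gives
\begin{equation*}
\omega_{\varphi}= p^* \omega_D + a\, dt \wedge 2e^{-t} \widetilde{\eta} +o(1),
\end{equation*}
together with the product asymptotics $\chi= p^* \chi_{D} + 2b e^{-t} dt \wedge \widetilde{\eta} +O(e^{-\eta t})$. In an adapted frame both $\omega_\varphi$ and $\chi$ are, up to $o(1)$, block-diagonal: one block is tangent to $D$ and the other is spanned by $dt$ and $e^{-t}\widetilde{\eta}$. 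The operator $\widetilde{\Delta}u=g_{\varphi}^{k\bar j}g_{\varphi}^{i\bar l}\chi_{i\bar j}u_{k\bar l}$ is the contraction $\langle \chi, dd^c u\rangle_{\omega_\varphi}$, meaning both indices are raised by $\omega_\varphi$. It therefore splits into a $D$-block contribution and a fibre-block contribution, plus cross terms of size $o(1)$.

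The first step is to compute $dd^c u$ for $u=u(t,p)$. Using $d^c t=-\frac12 J dt = -e^{-t}\widetilde{\eta}+O(e^{-t})$, the same computation as in the proof of Lemma \ref{zero l2} gives
\begin{equation*}
dd^c u = dd^c_D u + (\dot u-\ddot u)\, e^{-t} dt\wedge\widetilde{\eta} + dt\wedge d^c_D\dot u - e^{-t} d_D\dot u\wedge\widetilde{\eta} + O(e^{-t})\|u\|_{C^{2}}.
\end{equation*}
The corrections are controlled by the $C^{k+2,\alpha}$ norm of $u$ with respect to the Poincar\'e type metric.

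The second step contracts block by block. On the $D$-block, $\omega_\varphi\to\omega_D$ and $\chi\to\chi_D$, so the contribution is $\langle\chi_D,dd^c_D u\rangle_{\omega_D}+o(1)$. On the fibre block, the unit $(1,1)$-form is $e^{-t}dt\wedge\widetilde{\eta}$, with $\omega_\varphi$-coefficient $2a$ and $\chi$-coefficient $2b$. Raising twice by $\omega_\varphi$ gives the factor $\frac{2b}{(2a)^2}$, and pairing with the coefficient $(\dot u-\ddot u)$, with the normalization of $dd^c$ versus $\sqrt{-1}\partial\bar\partial$ fixed consistently, produces $-\frac{b}{a^2}(\partial_t-\partial_t^2)u$. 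I would fix the normalization constant by checking against the model $u=t$ with $dd^c e^t=0$, as in Lemma \ref{lem l2 finite}. Since $\omega_\varphi\to$ product, the inverse $\omega_\varphi^{-1}$ is block-diagonal up to $o(1)$. The off-diagonal terms $dt\wedge d^c_D\dot u$ pair only with off-diagonal entries of $\omega_\varphi^{-1}\chi\,\omega_\varphi^{-1}$, and those entries are $o(1)$. Finally, $\Pi_0$ applied to $S^1$-invariant quantities acts as the identity up to $O(e^{-t})$ errors coming from $\widetilde{\eta}-d\theta$.

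The main obstacle is making the $o(1)$ uniform in the operator sense $C^{k+2,\alpha}\to C^{k,\alpha}$, not merely pointwise. Proposition \ref{asymp 1} gives $o(1)$ for $\omega_\varphi$ itself. However, the coefficients of $\widetilde{\Delta}$ must converge in $C^{k,\alpha}$ of the quasi-coordinate charts. For this I would use Proposition \ref{p almost J}, which gives $\partial_t^l\varphi_0\to0$ and $\omega_t^\varphi\to\omega_D$ in every $C^{k,\alpha}(D)$, together with Lemma \ref{improve decay holder}, which gives $\varphi^\perp=O(e^{-t})$ in all norms. Taken together, these make every coefficient of $\widetilde{\Delta}$ converge in $C^{k,\alpha}$ to its product-model value. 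The operator-norm error is then $o(1)$, which yields the stated asymptotic for $\widetilde{\Delta}^0$.
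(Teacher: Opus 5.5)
Your proposal is the paper's argument written out. The paper's entire proof is the remark that the formula follows from Proposition \ref{asymp 1} and the product asymptotics of $\chi$; your block-by-block contraction, with Proposition \ref{p almost J} and Lemma \ref{improve decay holder} giving convergence of the coefficients in every $C^{k,\alpha}$, is that computation made explicit.

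One caution on the fibre term. The factor $2$ depends on normalizations that the paper itself does not keep consistent (compare Lemma \ref{lem iso2}), but the sign is not a matter of normalization. Your inputs as written, $dd^cu=(\dot u-\ddot u)e^{-t}dt\wedge\widetilde{\eta}$ together with a positive fibre part $2a\,e^{-t}dt\wedge\widetilde{\eta}$, use opposite orientations and produce $+\frac{b}{2a^2}(\partial_t-\partial_t^2)u$. Testing on $u=-t$, where $dd^c(-t)>0$, shows that $dd^cu=(\partial_t^2u-\partial_tu)\beta$ for the positive form $\beta$ with $\omega_\varphi\approx p^*\omega_D+2a\beta$. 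That is what gives the negative sign which Lemma \ref{lem iso1} relies on. Your proposed check $dd^ce^t=0$ only detects the combination $\partial_t-\partial_t^2$, not the sign.
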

\begin{proof}
This proposition follows immediately from the Proposition \ref{asymp 1} and the asymptotic behaviour of $\chi$.
\end{proof}

Next, we want to improve $o(1)$ in the above Proposition to $O(e^{-\eta t})$. 

Define $\widetilde{\Delta}^0: C^{k+2,\alpha}([A,\infty)\times D) \rightarrow C^{k,\alpha}([A,\infty)\times D)$ as
\begin{equation*}
\widetilde{\Delta}^0 u = <\chi_D, dd^c_D u> _{\omega_D} -\frac{b}{a^2} (\partial_t -\partial_t^2)u.
\end{equation*}

 For any function $u$ defined on $D \times [0,+\infty)$, we can decompose it into to parts $u=v+ w$, where $v$ is equal to some constant on each fiber $D\times \{t\}$ for each $t$ although such constant can be different for different $t$ and $w$ has zero integral in each fiber. 
 
 \begin{defn}
 Define the space 
 \begin{equation*}
 C^{k,\alpha}_{00,\eta} ([A,\infty)\times D)\triangleq \{u \in C^{k,\alpha}_{\eta} ([A,\infty)\times D): u|_{t=A}=0, \int_{\{t\}\times D} u dvol_D =0 \text{ for any } t\ge A \},
 \end{equation*}
  \begin{equation*}
 C^{k,\alpha}_{0,\eta} ([A,\infty)\times D)\triangleq \{u \in C^{k,\alpha}_{\eta} ([A,\infty)\times D): \int_{\{t\}\times D} u dvol_D =0 \text{ for any } t\ge A \},
 \end{equation*}
 \begin{equation*}
 C_{0,\eta}^{k,\alpha}([A,\infty))= \{u \in \mathbb{R} \oplus C_{\eta}^{k,\alpha}([A,\infty)) : u|_{t=A}=0\}.
 \end{equation*}
 \begin{equation*}
 W^{k,2}_{00,\eta} ([A,\infty)\times D)\triangleq \{u \in W^{k,2}_{\eta} ([A,\infty)\times D): u|_{t=A}=0, \int_{\{t\}\times D} u dvol_D =0 \text{ for any } t\ge A \},
 \end{equation*}
 and
  \begin{equation*}
 W^{k,2}_{0,\eta} ([A,\infty)\times D)\triangleq \{u \in W^{k,2}_{\eta} ([A,\infty)\times D):  \int_{\{t\}\times D} u dvol_D =0 \text{ for any } t\ge A \},
 \end{equation*}
 \end{defn}
 
 \begin{rem}\label{rem decom}
 Note that for $\eta <0$, $\{u \in \mathbb{R} \oplus C_{\eta}^{k,\alpha}([A,\infty) \times D): u|_{t=A}=0\}= C^{k,\alpha}_{00,\eta} ([A,\infty)\times D) \oplus C_{0,\eta}^{k,\alpha}([A,\infty))$. In fact, for $u=a+u^* \in \{u \in \mathbb{R} \oplus C_{\eta}^{k,\alpha}([A,\infty) \times D): u|_{t=A}=0\}$, we can consider $u_0\in C_{0,\eta}^{k,\alpha}([A,\infty))$ defined by $u_0(t) \triangleq \int_{D\times \{t\}} u \omega_D^n$. Since $\lim_{t \rightarrow \infty} u^* =0$, we have that
 \begin{equation*}
 \lim_{t \rightarrow \infty} u= \lim_{t \rightarrow \infty} u_0  =a .
 \end{equation*}
 Thus we have that $u-u_0 \in C_{\eta}^{k,\alpha}([A,\infty)\times D)$ which implies that $u-u_0 \in C^{k,\alpha}_{00,\eta} ([A,\infty)\times D)$.
 \end{rem}
 
 Before we proceed, we record the following Lemma:
 \begin{lem}\label{omegad j}
 Suppose that $\omega$ and $\chi$ are  Poincar\'e type K\"ahler metrics on $X$ which are locally asymptotic to a product metric near $D$, i.e.
 \begin{equation}\label{omega asy}
 \omega= p^* \omega_D - 2a e^{-t} dt \wedge d\theta + O(e^{-\eta t})
 \end{equation}
 and
 \begin{equation}\label{chi asy}
 \chi= p^* \chi_D - 2b e^{-t} dt \wedge d\theta + O(e^{-\eta t}).
 \end{equation}
 Suppose that $\omega$ satisfies that 
\begin{equation}\label{omega d}
 \frac{{\omega}^{n-1}\wedge \chi}{\omega^{n}}= \frac{1}{C} +O(e^{-\eta t}).
\end{equation}
Then we have that 
\begin{equation*}
tr_{\omega_D} \chi_D = C_D
\end{equation*}
for some constant $C_D$.
 \end{lem}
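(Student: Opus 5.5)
The plan is to expand the ratio in (\ref{omega d}) to leading order near $D$ and read off a pointwise identity on $D$.

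\textbf{Step 1: the leading terms.} Work in a cusp coordinate $(z',z_n)$ with $D=\{z_n=0\}$. By (\ref{omega asy}) and (\ref{chi asy}), up to $O(e^{-\eta t})$ with respect to the Poincar\'e type metric, $\omega$ and $\chi$ are the product forms
\begin{equation*}
\omega_{pr}= p^*\omega_D - 2a e^{-t} dt\wedge d\theta, \qquad \chi_{pr}=p^*\chi_D - 2b e^{-t}dt\wedge d\theta .
\end{equation*}
Since $\omega$ is quasi-isometric to the model metric, the top-degree forms $\omega^n$ and $\omega^{n-1}\wedge\chi$ equal $\omega_{pr}^n$ and $\omega_{pr}^{n-1}\wedge\chi_{pr}$, each up to a factor $1+O(e^{-\eta t})$.

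\textbf{Step 2: compute the ratio for the product forms.} Each top-degree term must contain exactly one factor $dt\wedge d\theta$. Hence
\begin{equation*}
\omega_{pr}^n = n(-2ae^{-t})\,dt\wedge d\theta\wedge p^*\omega_D^{n-1},
\end{equation*}
\begin{equation*}
\omega_{pr}^{n-1}\wedge\chi_{pr} = (-2be^{-t})\,dt\wedge d\theta\wedge p^*\omega_D^{n-1} + (n-1)(-2ae^{-t})\,dt\wedge d\theta\wedge p^*(\omega_D^{n-2}\wedge\chi_D).
\end{equation*}
Using $(n-1)\omega_D^{n-2}\wedge\chi_D = (tr_{\omega_D}\chi_D)\,\omega_D^{n-1}$ on the $(n-1)$-dimensional manifold $D$, the ratio becomes
\begin{equation*}
\frac{\omega^{n-1}\wedge\chi}{\omega^n} = \frac{1}{n}\Big(\frac{b}{a} + p^*(tr_{\omega_D}\chi_D)\Big) + O(e^{-\eta t}).
\end{equation*}
This is the same computation as in the proof of Proposition \ref{prop bdd}.

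\textbf{Step 3: conclude.} Combining with (\ref{omega d}) gives
\begin{equation*}
p^*\big(tr_{\omega_D}\chi_D\big) = \frac{n}{C} - \frac{b}{a} + O(e^{-\eta t}).
\end{equation*}
The left side is independent of $t$ and $\theta$. Fix a point $x\in D$ and let $t\to\infty$ along the fiber $p^{-1}(x)$; this gives $tr_{\omega_D}\chi_D(x)=\frac{n}{C}-\frac{b}{a}$ for every $x$. So $C_D=\frac nC-\frac ba$ is a constant.

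\textbf{Main obstacle.} The only delicate point is justifying that the $O(e^{-\eta t})$ errors, which are measured in the Poincar\'e metric, still give a relative error $O(e^{-\eta t})$ after dividing by $\omega^n$. Here we use that $\omega^n$ is comparable to the Poincar\'e volume form, so any error term bounded by $e^{-\eta t}$ times the Poincar\'e volume contributes only $O(e^{-\eta t})$ to the ratio. The remaining step is routine: the cross terms between $O(e^{-\eta t})$ corrections and the leading forms are controlled by uniform equivalence with the model metric in quasi-coordinates.
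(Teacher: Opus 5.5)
Your proposal is correct and is essentially the paper's own argument: you expand $\omega^{n-1}\wedge\chi$ and $\omega^n$ using the product asymptotics, observe that each surviving top-degree term carries exactly one factor $e^{-t}dt\wedge d\theta$, and read off $\frac{1}{n}\big(\frac{b}{a}+tr_{\omega_D}\chi_D\big)=\frac{1}{C}+O(e^{-\eta t})$. Letting $t\to\infty$ along a fiber is what the paper means by ``restrict to $D$'', and your constant $C_D=\frac{n}{C}-\frac{b}{a}$ agrees with the paper's identity $\frac{\omega_D^{n-2}\wedge\chi_D}{\omega_D^{n-1}}=\frac{n}{(n-1)C}-\frac{b}{a(n-1)}$.
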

 \begin{proof}
 (\ref{omega asy}) and (\ref{chi asy}) imply that 
 \begin{equation*}
 \frac{\omega^{n-1}\wedge \chi}{\omega^n} = \frac{p^* \omega_D^{n-1} \wedge (-2b)e^{-t}dt \wedge d\theta + (n-1) p^* \omega_D^{n-2} \wedge p^* \chi_D \wedge (-2a) e^{-t}dt \wedge d\theta}{ n p^* \omega_D^{n-1} \wedge (-2a)e^{-t}dt \wedge d\theta } +O(e^{-\eta t}).
 \end{equation*}
 Then we restrict to $D$, we can use (\ref{omega d}) to get that
 \begin{equation*}
\frac{\omega_D^{n-2}\wedge \chi_D}{\omega_D^{n-1}} =\frac{n}{(n-1)C} -\frac{b}{a(n-1)}.
 \end{equation*}
 This concludes the proof of this Lemma.
 \end{proof}

 \begin{lem}\label{lem iso1}
 There exists $\delta_0$ small enough such that for any $0< \eta \le \delta_0$, we have that
 \begin{enumerate}
 \item $\widetilde{\Delta}^0:  C^{k+2,\alpha}_{00,-\eta} ([A,\infty)\times D) \rightarrow C^{k,\alpha}_{0,-\eta} ([A,\infty)\times D)$ is an isometry.
 \item $\widetilde{\Delta}^0:  W^{k+2,2}_{00,-\eta} ([A,\infty)\times D) \rightarrow  W^{k,2}_{0,-\eta} ([A,\infty)\times D)$ is an isometry.
 \end{enumerate}
 \end{lem}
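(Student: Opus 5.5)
The operator is a constant-coefficient (in $t$) product operator on the cylinder $[A,\infty)\times D$:
\[
\widetilde{\Delta}^0 u=\langle \chi_D,dd^c_D u\rangle_{\omega_D}+\frac{b}{a^2}\bigl(\partial_t^2-\partial_t\bigr)u .
\]
The first term is a self-adjoint elliptic operator $P$ on the compact manifold $D$. It is negative semidefinite, and its kernel is the constants. I would therefore separate variables using an eigenbasis of $P$. Some care is needed with self-adjointness: $P u=\chi_D^{i\bar j}$-type contraction of $dd^c_Du$ against $\omega_D$. This is self-adjoint with respect to a suitable volume form, e.g. $\omega_D^{n-2}\wedge\chi_D$, which is closed. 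If the measure in the fiber-average condition is $dvol_D$ instead, I would either use Lemma \ref{omegad j} or re-normalize $\chi_D$ within its class so that the two averages agree.

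Let $\{e_m\}$ be the eigenfunctions, with $Pe_m=-\mu_m e_m$, $\mu_m>0$ for $m\ge1$, and $\mu_1$ the spectral gap. The condition $\int_{\{t\}\times D}u=0$ removes exactly the $m=0$ mode. Writing $u=\sum_{m\ge1}u_m(t)e_m$, the equation $\widetilde{\Delta}^0u=f$ reduces to the family of ODEs
\[
\frac{b}{a^2}\bigl(u_m''-u_m'\bigr)-\mu_m u_m=f_m,\qquad u_m(A)=0 .
\]
The characteristic roots are $r_\pm=\tfrac12\pm\sqrt{\tfrac14+a^2\mu_m/b}$, so $r_+>1$ and $r_-<0$. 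Moreover $r_-\le r_-(\mu_1)=:-\kappa<0$ uniformly in $m$.

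The weight $C_{-\eta}$ means, roughly, decay like $\rho^{-\eta}\sim e^{-\eta t}$. For $0<\eta<\kappa$ I would define the solution by the variation-of-parameters Green's function that discards the growing mode $e^{r_+t}$ and enforces $u_m(A)=0$:
\[
u_m(t)=\int_A^\infty G_m(t,s)f_m(s)\,ds .
\]
For the homogeneous problem, any decaying solution with $u_m(A)=0$ vanishes, which gives injectivity. This can also be seen via the maximum principle on $[A,T]\times D$, since $P$ has no zeroth-order term and the boundary data are zero and decaying; the zero-mean condition forbids constants. With $\eta<\kappa$ one gets $|u_m(t)|\le C\mu_m^{-1/2}e^{-\eta t}\sup_s|e^{\eta s}f_m(s)|$, with $C$ independent of $m$. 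This is where the smallness $\delta_0<\kappa$ enters.

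For the Sobolev statement (2), I would work directly in $L^2$. Multiply by $e^{2\eta t}$-weights, or conjugate with $u=e^{-\eta t}v$. The conjugated operator is $P+\frac{b}{a^2}\bigl(\partial_t^2-(1+2\eta)\partial_t+\eta^2+\eta\bigr)$. On the zero-mean subspace its symbol is uniformly invertible once $\eta(\eta+1)b/a^2<\mu_1$. Then Parseval over modes together with Plancherel in $t$ on the half-line (with Dirichlet condition at $A$) gives $\|u\|_{W^{2,2}_{-\eta}}\le C\|f\|_{W^{0,2}_{-\eta}}$. Higher $k$ follows by commuting with $\partial_t$ and with powers of $P$, since both commute with the operator, followed by elliptic regularity on $D$.

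For the H\"older statement (1), I would pass from the weighted $L^2$/$L^\infty$ bound for $u$ to $C^{k+2,\alpha}_{-\eta}$ by local Schauder estimates. These are applied on unit cylinders $[T-1,T+1]\times D$, uniform in $T$ because the coefficients are $t$-independent, and near $t=A$ with the Dirichlet condition. Surjectivity onto $C^{k,\alpha}_{0,-\eta}$ then comes from approximating $f$ by finite mode sums, solving, and using the uniform bounds to pass to the limit.

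I expect the main obstacle to be obtaining the sup bound uniformly over infinitely many modes, i.e.\ summing the mode estimates in $C^0$. I would try to avoid summation by proving $\|e^{\eta t}u\|_{L^\infty}\le C\|e^{\eta t}f\|_{L^\infty}$ directly via a barrier. The function $w=Ke^{-\eta t}$ satisfies
\[
\widetilde{\Delta}^0w=\frac{b}{a^2}(\eta^2+\eta)Ke^{-\eta t}>0,
\]
which has the wrong sign to give a supersolution by itself. So the zero-mean condition must be used, and that is inherently spectral. The fallback is to combine the weighted $L^2$ estimate with local Schauder/De Giorgi bounds on unit cylinders, which turns weighted $L^2$ into weighted $L^\infty$ without summing modes.
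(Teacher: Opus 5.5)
Your argument is correct in substance, but it takes a different route from the paper.

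\textbf{How the paper argues.} The paper uses a single weighted energy identity. It pairs $\widetilde{\Delta}^0 w$ with $w e^{2\delta t}$ and integrates by parts in $t$; the boundary term at $t=A$ vanishes since $w(A)=0$. It also integrates by parts on each slice $D\times\{t\}$, which is legitimate because $tr_{\omega_D}\chi_D$ is constant by Lemma \ref{omegad j}. It then absorbs the bad term $\frac{b}{a^2}\delta(1+2\delta)\int w^2e^{2\delta t}$ using the fiberwise Poincar\'e inequality for zero-mean functions. This gives $\int w\,\widetilde{\Delta}^0 w\, e^{2\delta t}\le -C_0\int(|\nabla_D w|^2+w^2+w_t^2)e^{2\delta t}$. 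Part (1) is only asserted to follow ``in a similar way''.

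\textbf{How your route compares.} Your conjugated $L^2$ step for (2) is this identity in disguise. With $v=e^{\eta t}w$ the $t$-part becomes $-\int v_t^2+\eta(1+\eta)\int v^2$, which is exactly your threshold $\frac{b}{a^2}\eta(1+\eta)<\mu_1$, i.e.\ $\eta<\kappa$. Your separation of variables adds three things:
\begin{itemize}
\item an explicit solution operator, hence surjectivity, which the paper leaves implicit (its coercivity would need a Lax--Milgram step it does not write);
\item the sharp range of weights;
\item an actual mechanism for the weighted sup-norm estimate in (1). An energy identity does not give this, and, as your barrier computation correctly shows, no naive maximum principle gives it either.
\end{itemize}
The paper's route is shorter and needs no spectral decomposition of $P$.

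\textbf{Three points need fixing.}

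First, self-adjointness. Closedness of a volume form is irrelevant, since every top-degree form is closed. Integrating $v\,Pu$ by parts on $D$ produces the extra term $g_D^{\alpha\bar\beta}\partial_{\bar\beta}(tr_{\omega_D}\chi_D)\,u_\alpha v$. So $P$ is symmetric with respect to $\omega_D^{n-1}$ exactly when $tr_{\omega_D}\chi_D$ is constant, and in general it is not symmetric with respect to $\omega_D^{n-2}\wedge\chi_D$ either. Without this, $\widetilde{\Delta}^0$ would not even preserve the zero-mean condition, since $\int_D Pu\,\omega_D^{n-1}$ is a multiple of $\int_D u\,\Delta_{\omega_D}(tr_{\omega_D}\chi_D)\,\omega_D^{n-1}$. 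Lemma \ref{omegad j} is therefore the essential input, as in the paper, not an optional way to reconcile measures. Re-normalizing $\chi_D$ is not available, because $\chi_D$ is fixed by the asymptotics of $\chi$. Once the lemma holds, $\omega_D^{n-2}\wedge\chi_D$ is a constant multiple of $\omega_D^{n-1}$ and your worry about measures disappears.

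Second, higher regularity. $\partial_t u$ need not vanish at $t=A$, so commuting with $\partial_t$ destroys the Dirichlet condition. Commute only with operators along $D$ (such as $P$), which preserve $u|_{t=A}=0$. Then recover $\partial_t^2 u$ from the equation, or invoke boundary regularity.

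Third, the fallback for (1) loses decay if it goes through a global weighted $L^2$ bound. Data that are merely $O(e^{-\eta t})$ do not lie in $L^2(e^{2\eta t}\,dt\,dvol_D)$, so you would only get $u=O(e^{-\eta' t})$ for $\eta'<\eta$. Instead, use your kernels slice-wise:
\begin{itemize}
\item Since $r_++r_-=1$ and $r_+-r_-\ge 1$, one has $|G_m(t,s)|\le \frac{a^2}{b}e^{-\kappa|t-s|}$ uniformly in $m$.
\item Hence the diagonal operator kernel has $L^2(D)$-norm at most $\frac{a^2}{b}e^{-\kappa|t-s|}$.
\item Minkowski's inequality then gives $\|u(t)\|_{L^2(D)}\le C e^{-\eta t}\sup_s e^{\eta s}\|f(s)\|_{L^2(D)}$ for $\eta<\kappa$, with no summation of sup norms.
\item Local Schauder estimates on $[T-1,T+1]\times D$, and near $t=A$ with the Dirichlet condition, then give the $C^{k+2,\alpha}_{-\eta}$ bound without loss.
\end{itemize}
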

 \begin{proof}
 We can calculate that
 \begin{equation*}
 \begin{split}
 &\int_{[A,\infty)\times D} w \widetilde{\Delta}^0 w e^{2\delta t} dt dVol_D \\
 &= \int_{[A,\infty)\times D} <\chi_D, dd^c_D w>_{\omega_D} w e^{2\delta t} dt dvol_D -\frac{b}{a^2} \int_{[A,\infty)\times D} (\partial_t- \partial_t^2)w \cdot w e^{2\delta t}dt dvol_D\\
 & = -  \int_{[A,\infty)\times D}  g_D^{\alpha \bar \beta} (\chi_D)_{\bar \beta l} g_D^{l \bar k} w_{\bar k} w_{\alpha} e^{2\delta t} dt dvol_D -\frac{b}{a^2} \Big(  \int_{[A,\infty)\times D}  e^{2\delta t} w_t^2 dt dvol_D -\delta (1+2\delta) \int e^{2\delta t}w^2 dt dvol_D \Big)\\
 &\le -C  \int_{[A,\infty)\times D} |\nabla_D w|^2 e^{2\delta t} dt dvol_D -(C-\frac{b\delta (1+2\delta)}{a^2})  \int_{[A,\infty)\times D} e^{2\delta t}w^2 dt dvol_D \\
 &-\frac{b}{a^2}  \int_{[A,\infty)\times D}  e^{2\delta t}w_t^2 dt dvol_D
 \end{split}
 \end{equation*}
In the third line above, we use the fact that $\omega_D$ solves the J-equation $tr_{\omega_D}\chi_D =C_D$ on $D$ according to the Lemma \ref{omegad j}, which enables us to do integration by parts for the first integral. In the fourth line we use the fact that $w$ has zero integral on each fiber $D\times \{t\}$ such that we can use Poincar\'e Inequality on each $D\times\{t\}$. For $\delta>0$ small enough, the above formula implies that
\begin{equation*}
\int_{[A,\infty)\times D} w \widetilde{\Delta}^0 w e^{2\delta t} dt dVol_D \le -C_0 \int_{[A,\infty)\times D} ( |\nabla_D w|^2 + w^2 + w_t^2)e^{2\delta t} dt dvol_D.
\end{equation*}
This concludes the proof of part (2) of this Proposition. Part (1) can be proved in a similar way.
\end{proof}

\begin{lem}\label{lem iso2}
For any $\eta<0$, we have that
\begin{equation*}
\widetilde{\Delta}^0 : C^{k+2,\alpha}_{0,\eta}([A,\infty)) \rightarrow C^{k,\alpha}_{\eta}([A,\infty))
\end{equation*}
is an isometry.
\end{lem}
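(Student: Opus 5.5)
On functions of $t$ alone the operator reduces to the constant-coefficient ODE operator
\begin{equation*}
\widetilde{\Delta}^0 u = -\frac{b}{a^2}(u'-u''),
\end{equation*}
since $dd^c_D u=0$. So the plan is to solve $u''-u'=\frac{a^2}{b}f$ explicitly and read off the weighted H\"older bounds directly. The domain is $C^{k+2,\alpha}_{0,\eta}([A,\infty))$: functions $u=c+u^*$ with $c\in\mathbb{R}$, $u^*\in C^{k+2,\alpha}_{\eta}$ (so $u^*=O(e^{\eta t})$ decays, as $\eta<0$), and $u(A)=0$. The target is $C^{k,\alpha}_{\eta}([A,\infty))$.

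\textbf{Injectivity.} The homogeneous solutions are $c_1+c_2e^{t}$. Boundedness forces $c_2=0$, and $u(A)=0$ then forces $c_1=0$.

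\textbf{Surjectivity, by explicit inversion.} Given $f\in C^{k,\alpha}_\eta$, write $g=\frac{a^2}{b}f$ and set $v=u'$, so that $v'-v=g$. The unique bounded solution is
\begin{equation*}
v(t)=-\int_t^\infty e^{t-s}g(s)\,ds .
\end{equation*}
Since $|g(s)|\le Ce^{\eta s}$ and $\eta<0$, we get
\begin{equation*}
|v(t)|\le C\int_t^\infty e^{t-s}e^{\eta s}\,ds=\frac{C}{1-\eta}e^{\eta t},
\end{equation*}
which lies in the weight $e^{\eta t}$. Then set
\begin{equation*}
u(t)=\int_A^t v(s)\,ds=c+u^*(t),\qquad c=\int_A^\infty v,\qquad u^*(t)=-\int_t^\infty v(s)\,ds .
\end{equation*}
The constant $c$ is finite, and $|u^*(t)|\le \frac{C}{|\eta|}e^{\eta t}$. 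This integration step is where $\eta<0$ is genuinely used, and it is why the domain carries the $\mathbb{R}\oplus$ factor: the limit of $u$ at infinity need not vanish.

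\textbf{Regularity.} Higher derivatives follow from $u''=u'+g$ and its differentiated versions. This gives $|u^{(j)}|\le Ce^{\eta t}$ for $1\le j\le k+2$. The H\"older seminorms on unit intervals, weighted by $e^{-\eta t}$, follow by interior Schauder estimates for the ODE on $[t-1,t+1]$, or directly from the integral formulas. Together these give the bound
\begin{equation*}
\|u\|\le C\|f\|_{C^{k,\alpha}_\eta},
\end{equation*}
so $\widetilde{\Delta}^0$ is a bounded bijection with bounded inverse, i.e.\ an isomorphism. This is the sense in which ``isometry'' is used in Lemma \ref{lem iso1} and Proposition \ref{prop open}.

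\textbf{Main obstacle.} The only delicate point is bookkeeping. The weighted norm must be consistent with the paper's convention $\rho^{-\eta}$ versus $e^{\eta t}$, since $\rho\sim e^{t}$ in the smooth divisor case. The explicit kernel $e^{t-s}$ must also be checked to preserve the weight uniformly, which holds because $1-\eta>0$ and $|\eta|>0$.
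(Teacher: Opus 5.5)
Your proposal is correct and is essentially the paper's proof. The paper inverts the ODE by the same explicit formula $v(t)=\int_A^t e^s\int_s^{\infty}e^{-u}\frac{g_0(u)}{a}\,du\,ds$, which is your $u(t)=\int_A^t v$ with $v(t)=-\int_t^\infty e^{t-s}g(s)\,ds$ up to the normalizing constant (the paper writes the operator as $-a(\partial_t^2-\partial_t)$ instead of $\frac{b}{a^2}(\partial_t^2-\partial_t)$, which is immaterial); you additionally supply the injectivity argument and the weighted bounds that the paper only asserts by writing $v\in C^{k+2,\alpha}_{0,\eta}([A,\infty))$.
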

\begin{proof}
Given $g_0 \in C^{k,\alpha}_{\eta}([A,\infty))$, we can define
\begin{equation*}
v(t)= \int_A^t e^s ds \int_s^{+\infty} e^{-u} \frac{g_0(u)}{a} du.
\end{equation*}
By direct calculation, we can get that
\begin{equation*}
\widetilde{\Delta}^0 v=-a (\partial_t^2 -\partial_t)v= g_0.
\end{equation*}
Note that $v(A)=0$, then we have that
\begin{equation*}
v(t) \in C^{k+2,\alpha}_{0,\eta}([A,\infty)).
\end{equation*}
This concludes the proof of this Lemma.
\end{proof}

Then we can prove that:
 \begin{prop}\label{solve widetilde delta}
 Suppose that $\omega$ satisfies J-equation. Then there exists $\delta_0>0$ such that for any $\eta \in (0, \delta_0)$,
 \begin{equation*}
 \widetilde{\Delta}:  \{u \in C^{k,\alpha}_{-\eta}([A,\infty)\times D) \oplus \mathbb{R}: u|_{t=A}=0\} \rightarrow C^{k,\alpha}_{-\eta}([A,\infty)\times D)
 \end{equation*}
 is an isometry.
 \end{prop}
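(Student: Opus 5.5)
We first reduce to the model operator and then treat the difference as a small perturbation. Near $D$, use the fiber bundle structure of section 2.5 and split $u$ (supported near $D$) as $u=\Pi_0 u + u^{\bot}$. Then split the $S^1$-invariant part further into its $D$-average $v(t)$ and the mean-zero part $w(t,p)$. By Remark \ref{rem decom}, the domain $\{u\in C^{k,\alpha}_{-\eta}\oplus\mathbb{R}: u|_{t=A}=0\}$ decomposes as $C^{k,\alpha}_{00,-\eta}([A,\infty)\times D)\oplus C^{k,\alpha}_{0,-\eta}([A,\infty))$, plus the $\bot$-component.

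On the $S^1$-invariant part, $\widetilde{\Delta}^0$ is invertible. It is an isomorphism on the mean-zero piece by Lemma \ref{lem iso1}, where the integration by parts uses that $\omega_D$ solves the J-equation on $D$, via Lemma \ref{omegad j}. It is an isomorphism on the averaged piece by Lemma \ref{lem iso2}, using the explicit integral formula. The constant direction $\mathbb{R}$ is exactly what Lemma \ref{lem iso2} needs, since $v(t)$ tends to a constant rather than decaying. Combining these gives a bounded inverse $G_0$ of $\widetilde{\Delta}^0$ between the weighted spaces with weight $-\eta$, for $\eta<\delta_0$.

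\textbf{The perturbation step.} This is the main obstacle. The error $\widetilde{\Delta}-\widetilde{\Delta}^0$ on invariant functions is only $o(1)$ by the preceding asymptotic proposition, not $O(e^{-\eta t})$. Still, $o(1)$ in operator norm on $[A,\infty)$ suffices once $A$ is large: $\|(\widetilde{\Delta}^0-\Pi_0\widetilde{\Delta}q^*)G_0\|<1/2$ for $A\gg1$, so a Neumann series inverts $\Pi_0\widetilde{\Delta}q^*$. Here the norms of $G_0$ are independent of $A$, which we check by the same weighted energy argument.

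\textbf{The $\bot$-part.} We handle it using its faster decay. The coupling terms $\Pi_0\widetilde{\Delta}u^{\bot}$ and $(\widetilde{\Delta}\Pi_0 u)^{\bot}$ are small: they involve the $O(e^{-t})$ non-invariant parts of $\omega_\varphi$ and $\chi$, and Lemma \ref{improve decay holder} gains a factor $e^{-t}$. On the $\bot$-component itself, $\widetilde{\Delta}$ is injective with a priori estimates. Integrating $u^{\bot}\widetilde{\Delta}u^{\bot}e^{2\eta t}$ and using the fiberwise Poincaré inequality (Lemma \ref{improve decay}) with a fiber length of $e^{-t}$ gives a coercive bound that absorbs the weight. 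Surjectivity on this component follows as in Proposition \ref{prop open}: solve Dirichlet problems on exhausting domains, bound them uniformly with these weighted estimates, and pass to the limit by Schauder estimates in quasi-coordinates.

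\textbf{Assembly and regularity.} Writing $\widetilde{\Delta}$ as a block operator that is a small perturbation of $\mathrm{diag}(\Pi_0\widetilde{\Delta}q^*, \widetilde{\Delta}|_{\bot})$ gives the isomorphism in weighted $W^{k,2}$ spaces. Weighted Schauder estimates in quasi-coordinates then upgrade this to the Hölder spaces, which is the claimed isometry (meaning isomorphism).
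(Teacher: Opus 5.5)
Your first step is exactly the paper's proof. The paper splits domain and target via Remark~\ref{rem decom} into the fibrewise mean-zero part and the $t$-only part (including the $\mathbb{R}$ direction). It inverts on the first with Lemma~\ref{lem iso1} and on the second with Lemma~\ref{lem iso2}, and stops there. Those lemmas concern the model operator $\widetilde{\Delta}^0$, and the proposition is later quoted for $\widetilde{\Delta}^0$ in the proof of Theorem~\ref{thm asy}. The paper never compares $\widetilde{\Delta}$ with $\widetilde{\Delta}^0$.

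Your Neumann-series step is therefore a genuine addition, not a different route to the same result. The difference $\Pi_0\widetilde{\Delta}q^*-\widetilde{\Delta}^0$ is second order with no zeroth-order term, and its coefficients tend to $0$ in every $C^k$ (Propositions~\ref{p almost J} and \ref{asymp 1}). So it kills the $\mathbb{R}$ direction and has small norm into $C^{k,\alpha}_{-\eta}$ on $[A,\infty)\times D$ once $A$ is large. Meanwhile the inverse of $\widetilde{\Delta}^0$ stays bounded uniformly in $A$, because its coefficients are $t$-independent. This buys invertibility of the actual invariant part of the linearized operator, which is what the statement literally says. It also matches the situation in the proof of Theorem~\ref{thm asy}, where the equation actually available is $\widetilde{\Delta}(\partial_t\varphi_0)=O(e^{-\eta t})$, not the corresponding one for $\widetilde{\Delta}^0$. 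The price is enlarging $A$, whereas the paper's model statement holds for every $A$.

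You should drop the $\bot$-part and the block-operator assembly. The spaces in the statement consist of functions on $[A,\infty)\times D$, which are already $S^1$-invariant, so neither the domain nor the target has a $\bot$ component. This portion is also the weakest part of your sketch:
\begin{itemize}
\item The map $u^{\bot}\mapsto(\widetilde{\Delta}u^{\bot})^{\bot}$ involves fibre averages, so it is not a local differential operator. ``Solve Dirichlet problems on exhausting domains as in Proposition~\ref{prop open}'' therefore does not apply verbatim.
\item The weighted $W^{k,2}$-to-H\"older upgrade is unnecessary if you work directly with the H\"older isomorphisms of Lemmas~\ref{lem iso1} and \ref{lem iso2}.
\end{itemize}
Without this portion, your first two steps already give a complete argument.
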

\begin{proof}
From the Lemma \ref{lem iso1} and the Lemma \ref{lem iso2}, we have that $\widetilde{\Delta}:  C^{k+2,\alpha}_{00,-\eta} ([A,\infty) \times D) \oplus C^{k+2,\alpha}_{0,-\eta}([A,\infty)) \rightarrow C^{k,\alpha}_{0,-\eta}  ([A,\infty) \times D) \oplus C^{k,\alpha}_{-\eta}([A,\infty))$ is an isometry. 
According to the Remark \ref{rem decom}, we have
\begin{equation*}
C^{k+2,\alpha}_{00,-\eta}  ([A,\infty) \times D) \oplus C^{k+2,\alpha}_{0,-\eta}([A,\infty)) =  \{u \in C^{k,\alpha}_{-\eta}([A,\infty)\times D)\oplus \mathcal{R}: u|_{t=A}=0\}
\end{equation*}
and
\begin{equation*}
C^{k,\alpha}_{0,-\eta}  ([A,\infty) \times D) \oplus C^{k,\alpha}_{-\eta}([A,\infty)) = C^{k,\alpha}_{-\eta}([A,\infty)\times D).
\end{equation*}
This concludes the proof of this proposition.
\end{proof}

Now we are ready to prove the main theorem in this section:
\begin{proof}
(of the Theorem \ref{thm asy})
As before, we use the holomorphic vector field $Z= Re [z^n \log z^n \partial_{z^n}]$ to act on the J-equation
\begin{equation*}
tr_{\omega_{\varphi}}\chi=n.
\end{equation*}
Note that 
\begin{equation*}
\omega_{\varphi}= \omega+ dd^c \varphi= \omega_0  + dd^c \varphi_0 + O(e^{-\eta t})
\end{equation*}
and
\begin{equation*}
\chi= \chi_0 + O(e^{-\eta t}).
\end{equation*}
Thus, we have that
\begin{equation*}
\mathcal{L}_Z \omega_{\varphi}= \mathcal{L}_Z  \omega_0 + dd^c (Z\cdot  \varphi_0) +O(e^{-\eta t})= dd^c \frac{\partial_t (\varphi_0)}{2} +O(e^{-\eta t})
\end{equation*}
and
\begin{equation*}
\mathcal{L}_Z \chi = \mathcal{L}_Z \chi_0+  O(e^{-\eta t}) =O(e^{-\eta t}).
\end{equation*}
Then we have that 
\begin{equation*}
0= \mathcal{L}_Z n=\mathcal{L}_Z (tr_{\omega_{\varphi}}\chi)= -<\chi, \mathcal{L}_Z \omega_{\varphi}>_{\omega_{\varphi}} + tr_{\omega_{\varphi}} \mathcal{L}_Z \chi= -\frac{1}{2}<\chi, dd^c \partial_t(\varphi_0)>_{\omega_{\varphi}} + O(e^{-\eta t}).
\end{equation*}
Thus we have that $\widetilde{\Delta}(\partial_t(\varphi_0))= O(e^{-\eta t})$ which implies that 
\begin{equation}\label{delta varphit}
\widetilde{\Delta}^0(\chi \partial_t(\varphi_0))= O(e^{-\eta t}).
\end{equation}
Here $\chi$ is a cut-off function which is equal to $1$ for $t \ge 2A$ and vanish near $t=A$.
According to the Proposition \ref{solve widetilde delta},   $\widetilde{\Delta}^0$ is an isometry from $ \{u \in C^{k,\alpha}_{-\eta}([A,\infty)\times D) \oplus \mathbb{R}: u|_{t=A}=0\}$ to $C^{k,\alpha}_{-\eta}([A,\infty)\times D)$. Thus, (\ref{delta varphit}) implies that $\chi \partial_t(\varphi_0) \in \{u \in C^{k,\alpha}_{-\eta}([A,\infty)\times D) \oplus \mathbb{R}: u|_{t=A}=0\}$. Taking an integral with respect to $t$, we can get that
\begin{equation*}
\varphi_0 \in p^*C^{k+2,\alpha}(D) \oplus C_{-\eta}^{k+2,\alpha}(X\setminus D) \oplus t \mathbb{R}.
\end{equation*}
Thus we can write $\varphi_0 = p^*v_1 + v_2 +ct$, where $v_1 \in C^{k+2,\alpha}(D)$ and $v_2 \in  C_{-\eta}^{k+2,\alpha}$.
Using the Lemma \ref{improve decay holder} and the fact that $\varphi \in C^{\infty}$, we can get that $\varphi^{\bot}\in C_{-1}^\infty$ and $c=0$. Then we can get that
\begin{equation*}
\omega_{\varphi}= \omega_0 + O(e^{-\eta t }) + dd^c \varphi_0 + dd^c \varphi^{\bot} = \omega_0 + dd^c v_1 + dd^c v_2 = (\omega_D+ dd^c_D v_1) - a dt \wedge 2e^{-t  } \widetilde{\eta}  + O(e^{-\eta t}). 
\end{equation*}
This concludes the proof of the asymptotic behaviour of $\omega_{\varphi}.$
\end{proof}

\end{document}